\def\H{\mathcal{H}}
\def\M{\mathcal{M}}
\def\N{\mathbb N}
\def\Q{\mathbb Q}
\def\R{\mathbb R}
\def\C{\mathbf{C}}
\def\Om{\Omega}
\def\S{\Sigma}
\def\a{\alpha}
\def\g{\gamma}
\def\de{\delta}
\def\e{\varepsilon}
\def\l{\lambda}
\def\s{\sigma}
\def\om{\omega}
\def\vphi{\varphi}
\def\Div{{\rm div}\,}
\def\dist{{\rm dist}}
\def\spt{{\rm spt}}
\def\weak{\rightharpoonup}
\def\toloc{\stackrel{{\rm loc}}{\to}}
\def\ov{\overline}
\def\pa{\partial}
\def\pae{\partial^{{\rm e}}}
\def\p{\mathbf{p}}
\def\q{\mathbf{q}}
\def\C{\mathbf{C}}
\def\D{\mathbf{D}}
\newcommand{\essinf}[1]{\mathop{#1\textrm{-}\mathrm{ess\,inf}}}
\DeclareMathOperator*{\aplim}{ap\,lim}
\newtheorem{theorem}{Theorem}[section]
\newtheorem{remark}{Remark}[section]
\newtheorem{definition}{Definition}[section]
\newtheorem{theoremletter}{Theorem}
\newtheorem{proposition}[theorem]{Proposition}
\newtheorem{lemma}[theorem]{Lemma}
\newtheorem{corollary}[theorem]{Corollary}
\newtheorem{example}[theorem]{Example}
\numberwithin{equation}{section}
\numberwithin{figure}{section}
\begin{document}

\title{Rigidity of equality cases \\ in Steiner's perimeter inequality}

\author{F. Cagnetti}

\address{University of Sussex, Pevensey 2, Department of Mathematics, BN1 9QH, Brighton, United Kingdom}
\email{f.cagnetti@sussex.ac.uk}

\author{M. Colombo}
\address{Scuola Normale Superiore di Pisa, p.za dei Cavalieri 7, I-56126 Pisa, Italy}
\email{maria.colombo@sns.it}

\author{G. De Philippis}
\address{Institute for Applied Mathematics, University of Bonn, Endenicher Allee 60, D-53115 Bonn, Germany}
\email{guido.de.philippis@hcm.uni-bonn.de}

\author{F. Maggi}
\address{Department of Mathematics, The University of Texas at Austin,  2515 Speedway Stop C1200, Austin, Texas 78712-1202, USA}
\email{maggi@math.utexas.edu}

\maketitle

\begin{center}
  {\it Dedicated to Nicola Fusco, for his mentorship}
\end{center}

\begin{abstract}
  {\rm Characterizations results for equality cases and for rigidity of equality cases in Steiner's perimeter inequality are presented. (By rigidity, we mean the situation when all equality cases are vertical translations of the Steiner's symmetral under consideration.) We achieve this through the introduction of a suitable measure-theoretic notion of connectedness and a fine analysis of barycenter functions for sets of finite perimeter having segments as orthogonal sections with respect to an hyperplane.}
\end{abstract}

\tableofcontents

\newpage

\section{Introduction}

\subsection{Overview} Steiner's symmetrization is a classical and powerful tool in the analysis of geometric variational problems. Indeed, while volume is preserved under Steiner's symmetrization, other relevant geometric quantities, like diameter or perimeter, behave monotonically. In particular, Steiner's perimeter inequality asserts the crucial fact that perimeter is decreased by Steiner's symmetrization, a property that, in turn, lies at the heart of a well-known proof of the Euclidean isoperimetric theorem; see \cite{DeGiorgi58ISOP}. In the seminal paper \cite{ChlebikCianchiFuscoAnnals05}, that we briefly review in section \ref{section fuscoannals}, Chleb{\'{\i}}k, Cianchi and Fusco discuss Steiner's inequality in the natural framework of sets of finite perimeter, and provide a sufficient condition for the rigidity of equality cases. By {\it rigidity of equality cases} we mean that situation when equality cases in Steiner's inequality are solely obtained in correspondence of translations of the Steiner's symmetral. Roughly speaking, the sufficient condition for rigidity found in \cite{ChlebikCianchiFuscoAnnals05} amounts in asking that the Steiner's symmetral has ``no vertical boundary'' and ``no vanishing sections''. While simple examples show that rigidity may indeed fail if one of these two assumptions is dropped, it is likewise easy to construct polyhedral Steiner's symmetrals such that rigidity holds true and {\it both} these conditions are violated. In particular, the problem of a geometric characterization of rigidity of equality cases in Steiner's inequality was left open in \cite{ChlebikCianchiFuscoAnnals05}, even in the fundamental case of polyhedra.

In the recent paper \cite{ccdpmGAUSS}, we have fully addressed the rigidity problem in the case of Ehrhard's inequality for Gaussian perimeter. Indeed, we obtain a {\it characterization} of rigidity, rather than a mere sufficient condition for it. A crucial step in proving (and, actually, formulating) this sharp result consists in the introduction of a measure-theoretic notion of connectedness, and, more precisely, of what it means for a Borel set to ``disconnect'' another Borel set; see section \ref{section connectedness} for more details.

In this paper, we aim to exploit these ideas in the study of Steiner's perimeter inequality. In order to achieve this goal we shall need a sharp description of the properties of the barycenter function of a set of finite perimeter having segments as orthogonal sections with respect to an hyperplane (Theorem \ref{thm tauM b delta}). With these ideas and tools at hand, we completely characterize equality cases in Steiner's inequality in terms of properties of their barycenter functions (Theorem \ref{thm characterization with barycenter}). Starting from this result, we obtain a general sufficient condition for rigidity (Theorem \ref{thm sufficient bv}), and we show that, if the slice length function is of special bounded variation with locally finite jump set, then equality cases are necessarily obtained by at most countably many vertical translations of ``chunks'' of the Steiner's symmetral (Theorem \ref{thm mv per v sbv}); see section \ref{section intro baricentro}.

In section \ref{section characterization}, we introduce several {\it characterizations} of rigidity. In Theorem \ref{thm characterization no vertical SUFF} we provide {\it two} geometric characterizations of rigidity under the ``no vertical boundary'' assumption considered in \cite{ChlebikCianchiFuscoAnnals05}. In Theorem \ref{thm characterization poliedri SUPERIOR} we characterize rigidity in the case when the Steiner's symmetral is a generalized polyhedron. (Here, the generalization of the usual notion of polyhedron consists in replacing affine functions over bounded polygons with $W^{1,1}$-functions over sets of finite perimeter and volume.) We then characterize rigidity when the slice length function is of special bounded variation with locally finite jump set, by introducing a condition we call {\it mismatched stairway property} (Theorem \ref{thm characterization sbv}). Finally, in Theorem \ref{thm characterization R2}, we prove two characterizations of rigidity in the planar setting.

By building on the results and methods introduced in this paper, it is of course possible to analyze the rigidity problem for Steiner's perimeter inequalities in higher codimension. Although it would have been natural to discuss these issues in here, the already considerable length and technical complexity of the present paper suggested us to do this in a separate forthcoming paper.

\subsection{Steiner's inequality and the rigidity problem}\label{section fuscoannals} We begin by recalling the definition of Steiner's symmetrization and the main result from \cite{ChlebikCianchiFuscoAnnals05}. In doing so, we shall refer to some concepts from the theory of sets of finite perimeter and functions of bounded variation (that are summarized in section \ref{section sofp}), and we shall fix a minimal set of notation used through the rest of the paper. We decompose $\R^n$, $n\ge 2$, as the Cartesian product $\R^{n-1}\times\R$, denoting by $\p:\R^n\to\R^{n-1}$ and $\q:\R^n\to\R$ the horizontal and vertical projections, so that $x=(\p x,\q x)$, $\p x=(x_1,\dots,x_{n-1})$, $\q x=x_n$ for every $x\in\R^n$. Given a function $v:\R^{n-1}\to[0,\infty)$ we say that a set $E\subset\R^n$ is {\it $v$-distributed} if, denoting by $E_z$ its vertical section with respect to $z\in\R^{n-1}$, that is
\[
E_z=\Big\{t\in\R:(z,t)\in E\Big\}\,,\qquad z\in\R^{n-1}\,,
\]
we have that
\[
v(z)=\H^1(E_z)\,,\qquad \mbox{for $\H^{n-1}$-a.e. $z\in\R^{n-1}$}\,.
\]
(Here, $\H^k(S)$ stands for the $k$-dimensional Hausdorff measure on the Euclidean space containing the set $S$ under consideration.) Among all $v$-distributed sets, we denote by $F[v]$ the (only) one that is symmetric by reflection with respect to $\{\q x=0\}$, and whose vertical sections are segments, that is, we set
\[
F[v]=\Big\{x\in\R^n:|\q x|<\frac{v(\p x)}2\Big\}\,.
\]
If $E$ is a $v$-distributed set, then the set $F[v]$ is the {\it Steiner's symmetral} of $E$, and is usually denoted as $E^s$. (Our notation reflects the fact that, in addressing the structure of equality cases, we are more concerned with properties of $v$, rather than with the properties of a particular $v$-distributed set.) The set $F[v]$ has finite volume if and only if $v\in L^1(\R^{n-1})$, and it is of finite perimeter if and only if $v\in BV(\R^{n-1})$ with $\H^{n-1}(\{v>0\})<\infty$; see Proposition \ref{proposition insieme u1u2}. Denoting by $P(E;A)$ the relative perimeter of $E$ with respect to the Borel set $A\subset\R^n$ (so that, for example, $P(E;A)=\H^{n-1}(A\cap\pa E)$ if $E$ is an open set with Lipschitz boundary in $\R^n$), {\it Steiner's perimeter inequality} implies that, if $E$ is a $v$-distributed set of finite perimeter, then
\begin{equation}\label{steiner inequality}
P(E;G\times\R)\ge P(F[v];G\times\R)\qquad\mbox{for every Borel set $G\subset\R^{n-1}$}\,.
\end{equation}
Inequality \eqref{steiner inequality} was first proved in this generality by De Giorgi \cite{DeGiorgi58ISOP}, in the course of his proof of the Euclidean isoperimetric theorem for sets of finite perimeter. Indeed, an important step in his argument consists in showing that if a set $E$ satisfies \eqref{steiner inequality} with equality, then, for $\H^{n-1}$-a.e. $z\in G$, the vertical section $E_z$ is $\H^1$-equivalent to a segment; see \cite[Chapter 14]{maggiBOOK}. The study of equality cases in Steiner's inequality was then resumed by Chleb{\'{\i}}k, Cianchi and Fusco in \cite{ChlebikCianchiFuscoAnnals05}. We now recall two important results from their paper. The first theorem, that is easily deduced by means of \cite[Theorem 1.1, Proposition 4.2]{ChlebikCianchiFuscoAnnals05}, completes De Giorgi's analysis of necessary conditions for equality, and, in turn, provides a characterization of equality cases whenever $\pa^*E$ has no vertical parts. Given a Borel set $G\subset\R^{n-1}$, we set
\begin{equation}
  \label{MGv}
  \M_G(v)=\Big\{E\subset\R^n:\mbox{$E$ $v$-distributed and $P(E;G\times\R)=P(F[v];G\times\R)$}\Big\}\,,
\end{equation}
to denote the family of sets achieving equality in \eqref{steiner inequality}, and simply set $\M(v)=\M_{\R^{n-1}}(v)$.

\begin{theoremletter}[\cite{ChlebikCianchiFuscoAnnals05}]\label{thm ccf1} Let $v\in BV(\R^{n-1})$ and let $E$ be a $v$-distributed set of finite perimeter. If $E\in\M_G(v)$, then, for $\H^{n-1}$-a.e. $z\in G$, $E_z$ is $\H^1$-equivalent to a segment $(t^-,t^+)$,  with $(z,t^+)$, $(z,t^-)\in\pa^*E$, $\p\nu_E(z,t^+)=\p\nu_E(z,t^-)$, and $\q\nu_E(z,t^+)=-\q\nu_E(z,t^-)$. The converse implication is true provided $\pa^*E$ has no vertical parts above $G$, that is,
  \begin{equation}
    \label{no vertical parts on G}
      \H^{n-1}\Big(\Big\{x\in\pa^*E\cap (G\times \R):\q\nu_{E}(x)=0\Big\}\Big)=0\,,
  \end{equation}
  where, $\pa^*E$ denotes the reduced boundary to $E$, while $\nu_E$ is the measure-theoretic outer unit normal of $E$; see section \ref{section sofp}.
\end{theoremletter}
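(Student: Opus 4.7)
The theorem has a necessary direction (equality forces segment-shaped slices with matched normals) and a converse (those conditions suffice when $\pa^*E$ has no vertical parts above $G$); I would attack both by slicing the perimeter along vertical lines. For the necessary direction, the half stating that each slice $E_z$ is $\H^1$-equivalent to a segment $(t^-(z),t^+(z))$ is De Giorgi's classical argument recalled in the introduction (see \cite[Chapter 14]{maggiBOOK}), and Vol'pert's theorem on the slicing of reduced boundaries then places $(z,t^\pm(z))\in\pa^*E$ for $\H^{n-1}$-a.e.\ $z\in G$. To obtain the normal identities, my plan is to introduce the endpoint functions $u^\pm(z):=t^\pm(z)$ on $\{v>0\}\cap G$, check that they lie in $BV_\loc$, and rewrite the absolutely continuous part of $P(E;G\times\R)$ as
\[
\int_{G\cap\{v>0\}}\Big(\sqrt{1+|\nabla u^+|^2}+\sqrt{1+|\nabla u^-|^2}\Big)\,dz.
\]
Since $v=u^+-u^-$, the corresponding absolutely continuous part of $P(F[v];G\times\R)$ equals $\int_{G\cap\{v>0\}}\sqrt{4+|\nabla v|^2}\,dz$, and the elementary inequality
\[
\sqrt{1+|a|^2}+\sqrt{1+|b|^2}\ge\sqrt{4+|a-b|^2},\qquad a,b\in\R^{n-1},
\]
(valid with equality if and only if $a=-b$) together with an analogous superposition bound for the singular parts of $Du^\pm$ forces $\nabla u^+=-\nabla u^-$ a.e.\ and a matching of the singular parts. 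Unpacking this via $\nu_\pm=(\mp\nabla u^\pm,\pm 1)/\sqrt{1+|\nabla u^\pm|^2}$ yields exactly $\p\nu_E(z,t^+)=\p\nu_E(z,t^-)$ and $\q\nu_E(z,t^+)=-\q\nu_E(z,t^-)$.

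For the converse, assumption \eqref{no vertical parts on G} gives $|\q\nu_E|>0$ $\H^{n-1}$-a.e.\ on $\pa^*E\cap(G\times\R)$, so the area formula lets me represent this reduced boundary as the disjoint union of two BV graphs $\{t=u^+(z)\}$ and $\{t=u^-(z)\}$ above $G$, with $u^+-u^-=v$. The two hypothesized normal identities translate cleanly into $Du^+=-Du^-$ as $\R^{n-1}$-valued measures on $\{v>0\}\cap G$, and combined with $u^+-u^-=v$ they force $u^+=b+v/2$ and $u^-=b-v/2$ for some measurable function $b$ with $Db=0$; substituting back into the area-type formula for $P(E;G\times\R)$ then reproduces $P(F[v];G\times\R)$ term by term.

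The main obstacle in both directions will be transferring the pointwise rigidity from the absolutely continuous part to the singular (jump and Cantor) parts of $Du^\pm$ while correctly matching the perimeter contributed by $\pa^*E$ above $\pa^*\{v>0\}$. Since the whole analysis takes place on the merely measurable set $\{v>0\}\cap G$, careful handling of the one-sided traces of $u^\pm$ at $\pa^*\{v>0\}$, and of how they match the \emph{vertical wall} contributions to $P(F[v];\cdot)$ there, is the technically delicate step.
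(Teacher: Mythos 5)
The paper does not prove Theorem~\ref{thm ccf1}: it is imported from \cite{ChlebikCianchiFuscoAnnals05}, with the text saying it ``is easily deduced by means of [Theorem 1.1, Proposition 4.2]'' there, so there is no in-paper argument for you to be compared against. Your sketch is, however, close in spirit to the machinery the paper \emph{does} develop for its own characterization (Theorems~\ref{lemma u1u2}, \ref{thm tauM b delta}, \ref{thm characterization with barycenter}): the inequality $\sqrt{1+|a|^2}+\sqrt{1+|b|^2}\ge\sqrt{4+|a-b|^2}$, with equality iff $a=-b$, is precisely the strict convexity of $\xi\mapsto\sqrt{1+|\xi|^2}$ used there (take $a=\nabla u^+$, $b=\nabla u^-$; equality forces $\nabla u^+=-\nabla u^-$, i.e.\ $\nabla b_E=0$ for the barycenter $b_E=(u^++u^-)/2$), and the translation of $\nabla u^+=-\nabla u^-$ into the two normal identities is correct at points of approximate differentiability.

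The genuine gap is the parenthetical ``check that they lie in $BV_\loc$.'' The paper itself records that this is false in general: Proposition~\ref{proposition insieme u1u2} warns that the regularity of $u_1,u_2$ ``completely degenerates as we approach the set where $u_1$ and $u_2$ coincide,'' and Remark~\ref{remark fifi} exhibits $v$-distributed sets of finite perimeter with segments as sections for which the barycenter --- and hence the endpoint functions $u^\pm=b_E\pm v/2$ --- fails to be in $BV$, or even in $L^1_{\loc}$. The regularity actually available is the one established in Theorem~\ref{thm tauM b delta}: only the cut-off $1_{\{v>\de\}}\,b_E$ is in $GBV(\R^{n-1})$, so the area-type formula \eqref{dead0} must be applied to truncations $\tau_M$ on strips $\{\de<v<L\}$ and pushed to the limit in $\de$, $L$, $M$. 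You flag this as ``the main obstacle'' at the end, which is a good sign, but until that machinery is supplied the rewriting of $P(E;G\times\R)$ as $\int(\sqrt{1+|\nabla u^+|^2}+\sqrt{1+|\nabla u^-|^2})\,dz$ plus singular terms --- the very first step in both directions --- has no foundation. The same issue makes the converse more subtle than ``reproducing $P(F[v];G\times\R)$ term by term'': one must also control the contributions above $\pae\{v>0\}$, which Proposition~\ref{corollario v uguale 0} handles but your sketch does not.
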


The second main result from \cite[Theorem 1.3]{ChlebikCianchiFuscoAnnals05} provides a sufficient condition for the rigidity of equality cases in Steiner's inequality over an open connected set. Notice indeed that some assumptions are needed in order to expect rigidity;
\begin{figure}
  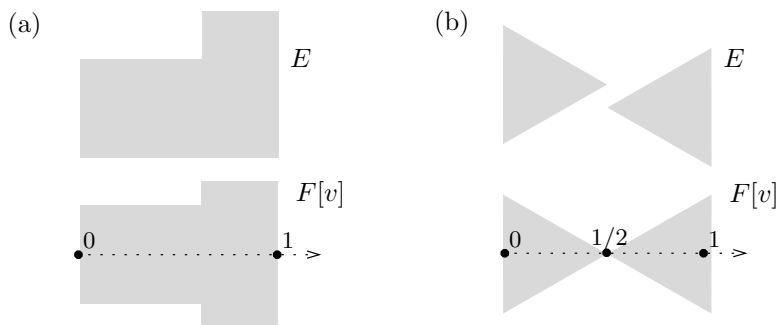\caption{{\small In case (a), $\pa^*F[v]$ has vertical parts over $\Om=(0,1)$ and \eqref{rigidity steiner omega} does not hold true; in case (b), $\pa^*F[v]$ has no vertical parts over $\Om=(0,1)$, but \eqref{v>0 Hn-2} fails (indeed, $0=v^\vee(1/2)=v^\wedge(1/2)$), and, again, \eqref{rigidity steiner omega} does not hold true.}}\label{fig basic}
\end{figure}
see Figure \ref{fig basic}.

\begin{theoremletter}[\cite{ChlebikCianchiFuscoAnnals05}]\label{thm ccf2}
  If $v\in BV(\R^{n-1})$, $\Om\subset\R^{n-1}$ is an open connected set with $\H^{n-1}(\Om)<\infty$, and
  \begin{eqnarray}
    \label{no vertical parts CCF}
    D^sv\llcorner\Om=0\,,&&
    \\\label{v>0 Hn-2}
    v^\wedge>0\,,&&\qquad\mbox{$\H^{n-2}$-a.e. on $\Om$}\,,
  \end{eqnarray}
  then for every $E\in\M_\Om(v)$ we have
  \begin{equation}\label{rigidity steiner omega}
  \H^n\Big(\Big(E\Delta (t\,e_n+F[v])\Big)\cap(\Om\times\R)\Big)=0\,,\qquad\mbox{for some $t\in\R$}\,.
  \end{equation}
\end{theoremletter}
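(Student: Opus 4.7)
The plan is to translate the problem, via the trace information from Theorem~\ref{thm ccf1}, into showing that the barycenter of the vertical sections of $E$ is constant, and then to combine the two hypotheses to promote ``locally constant'' to ``constant on $\Om$''.

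Given $E\in\M_\Om(v)$, Theorem~\ref{thm ccf1} produces Borel functions $t^-,t^+:\Om\to\R$ with $v=t^+-t^-$ $\H^{n-1}$-a.e.\ on $\Om$, such that
\[
E\cap(\Om\times\R)\stackrel{\H^n}{=}\{(z,t)\in\Om\times\R:t^-(z)<t<t^+(z)\},
\]
and such that, for $\H^{n-1}$-a.e.\ $z\in\Om$, the points $(z,t^\pm(z))$ lie in $\pa^*E$ with $\p\nu_E(z,t^+(z))=\p\nu_E(z,t^-(z))$ and $\q\nu_E(z,t^+(z))=-\q\nu_E(z,t^-(z))$. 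Setting $b=(t^++t^-)/2$, the desired identity \eqref{rigidity steiner omega} becomes the assertion that $b$ is $\H^{n-1}$-a.e.\ constant on $\{v>0\}\cap\Om$.

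Exploiting \eqref{no vertical parts CCF}, the assumption $D^sv\llcorner\Om=0$ places $v$ in $W^{1,1}(\Om)$ and excludes any vertical contribution to $P(F[v];\Om\times\R)$; the equality $P(E;\Om\times\R)=P(F[v];\Om\times\R)$ then forces $\pa^*E$ itself to carry no vertical part over $\Om$, so that the horizontal and vertical components of $\nu_E$ on the graphs of $t^\pm$ are given by $\nu_E(z,t^\pm(z))=(\mp\nabla t^\pm,\pm 1)/\sqrt{1+|\nabla t^\pm|^2}$. The normal-reflection identity from Theorem~\ref{thm ccf1} then upgrades to the pointwise relation $\nabla t^+(z)=-\nabla t^-(z)$ for a.e.\ $z\in\{v^\wedge>0\}\cap\Om$, and hence $\nabla b\equiv 0$ there. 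Thus $b$ is locally constant on the open set $\{v^\wedge>0\}\cap\Om$.

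It remains to bridge across $\{v^\wedge=0\}\cap\Om$, and this is precisely where hypothesis \eqref{v>0 Hn-2} enters: the exceptional set is $\H^{n-2}$-null and therefore cannot disconnect $\Om$ in any sense relevant to a bounded $W^{1,1}$-function (such sets have vanishing $1$-capacity in $\R^{n-1}$ and are removable singularities for $W^{1,1}$). The main obstacle is exactly this step, since $t^\pm$ and hence $b$ are intrinsically defined only on $\{v>0\}\cap\Om$, and one has to ensure that the common value of $b$ cannot jump across the potentially fragmenting set $\{v^\wedge=0\}\cap\Om$. One may do so either via the capacitary removability argument just sketched, or---in the measure-theoretic spirit of the present paper---by invoking the notion of connectedness modulo null sets developed in section~\ref{section connectedness}, which shows that an $\H^{n-2}$-null subset cannot disconnect the open connected set $\Om$ in the sense relevant to Steiner rigidity. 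Once $b$ is shown to be $\H^{n-1}$-a.e.\ constant on $\Om$, the conclusion \eqref{rigidity steiner omega} follows immediately with $t$ equal to that common value.
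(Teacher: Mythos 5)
Your approach---reduce to the constancy of the barycenter $b=(t^++t^-)/2$ via Theorem~\ref{thm ccf1}, then show $b$ is constant---is the natural plan, and the first two steps are essentially right in spirit. But there are two genuine gaps in the bridge from ``$\nabla b=0$ a.e.'' to ``$b$ constant on $\Omega$''.

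First, the conclusion in your third paragraph, \emph{``Thus $b$ is locally constant on the open set $\{v^\wedge>0\}\cap\Om$,''} does not follow from $\nabla b=0$ a.e. in the pointwise (approximate-differential) sense. The absolutely continuous part of $Db$ being zero does not exclude a jump part or a Cantor part; one must separately control $[b]$ and $|D^cb|$. This is exactly the content of the full characterization in Theorem~\ref{thm characterization with barycenter}: equality in Steiner forces $\nabla b_E=0$, \emph{and} $2[b_E]\le[v]$ on $\{v^\wedge>0\}$, \emph{and} $2|D^cb_E|^+\le|D^cv|$. The hypothesis $D^sv\llcorner\Om=0$ then makes the latter two vanish on $\Om$, but deriving these two additional estimates from the normal-reflection identity alone (as you do for $\nabla b$) is nontrivial and is omitted. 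Also, $\{v^\wedge>0\}$ is merely Borel, not open, so ``locally constant on an open set'' is not even well-posed as stated.

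Second, the removability/capacity step is not set up carefully enough to carry the weight you place on it. To use ``$\H^{n-2}$-null sets are removable for $W^{1,1}$,'' you would need to know $b\in W^{1,1}_{\mathrm{loc}}$ (or at least $BV_{\mathrm{loc}}$) on an open set and that $b$ is bounded near the exceptional set, neither of which is available a priori: $b_E$ is in general only $GBV$ after localization to $\{v>\delta\}$ (Theorem~\ref{thm tauM b delta}), it can be unbounded and can jump arbitrarily near $\{v^\wedge=0\}$ (Remark~\ref{remark fifi} and Figure~\ref{fig jumps}), and $\{v^\wedge=0\}$ is not closed. The right tool here is precisely the one you mention as an alternative at the very end: the coarea formula \eqref{coarea-a-ah ottimali} together with essential connectedness. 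That is the route the paper actually takes: one shows that if $b_E$ were nonconstant, the essential boundaries of its superlevel sets would lie in $\{v^\wedge=0\}\cup S_v\cup K$, which under the hypotheses cannot essentially disconnect $\{v>0\}$. One further structural difference worth noting: the statement is localized to $\Omega$, and the paper handles this by an approximation lemma (Lemma~\ref{lemma omega}) that exhausts $\Omega$ by bounded connected sets of finite perimeter $\Omega_k$ and applies the global Theorem~\ref{thm sufficient bv} to each $v_k=1_{\Omega_k}v$; your proposal does not address how the global machinery is transported to the relative setting.
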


\begin{remark}\label{remark approx limits}
  {\rm Here, $D^sv$ stands for the singular part of the distributional derivative $Dv$ of $v$, while $v^\wedge$ and $v^\vee$ denote the approximate lower and upper limits of $v$ (so that if $v_1=v_2$ a.e. on $\R^{n-1}$, then $v_1^\vee=v_2^\vee$ and $v_1^\wedge=v_2^\wedge$ {\it everywhere} on $\R^{n-1}$). We call $[v]=v^\vee-v^\wedge$ the jump of $v$, and define the approximate discontinuity set of $v$ as $S_v=\{v^\vee>v^\wedge\}=\{[v]>0\}$, so that $S_v$ is countably $\H^{n-2}$-rectifiable, and there exists a Borel vector field $\nu_v:S_v\to S^{n-1}$ such that $D^sv=\nu_v\,[v]\,\H^{n-2}\llcorner S_v+D^cv$, where $D^cv$ stands for the Cantorian part of $Dv$. These concepts are reviewed in sections \ref{section approximate limits} and \ref{section sofp}.}
\end{remark}

\begin{remark}
  {\rm Assumption \eqref{no vertical parts CCF} is clearly equivalent to asking that $v\in W^{1,1}(\Om)$ (so that $v^\wedge=v^\vee$ $\H^{n-2}$-a.e. on $\Om$), and, in turn, it is also equivalent to asking that $\pa^*F[v]$ has no vertical parts above $\Om$, that is, compare with \eqref{no vertical parts on G},
  \begin{equation}
    \label{no vertical parts on Omega}
      \H^{n-1}\Big(\Big\{x\in\pa^*F[v]\cap (\Om\times \R):\q\nu_{F[v]}(x)=0\Big\}\Big)=0\,;
  \end{equation}
  see \cite[Proposition 1.2]{ChlebikCianchiFuscoAnnals05} for a proof.}
\end{remark}

\begin{remark}\label{remark exampless}
  {\rm Although assuming the ``no vertical parts'' \eqref{no vertical parts CCF} and ``no vanishing sections'' \eqref{v>0 Hn-2} conditions appears natural in light of the examples sketched in Figure \ref{fig basic}, it should be noted that these assumptions are far from being necessary for having rigidity. For example, Figure \ref{fig casetta} shows the case of a polyhedron in $\R^3$ such that \eqref{rigidity steiner omega} holds true, but the ``no vertical parts'' condition fails.
  \begin{figure}
  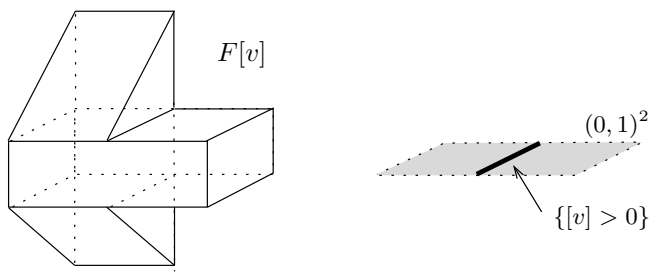\caption{{\small A polyhedron in $\R^3$ such that the rigidity condition \eqref{rigidity steiner omega} holds true (with $\Om=(0,1)^2$) but the ``no vertical parts'' condition fails.}}\label{fig casetta}
  \end{figure}
  Similarly, in Figure \ref{fig salsicciotto}, we have a polyhedron in $\R^3$ such that \eqref{rigidity steiner omega} and \eqref{no vertical parts CCF} hold true, but such that \eqref{v>0 Hn-2} fails.
  \begin{figure}
  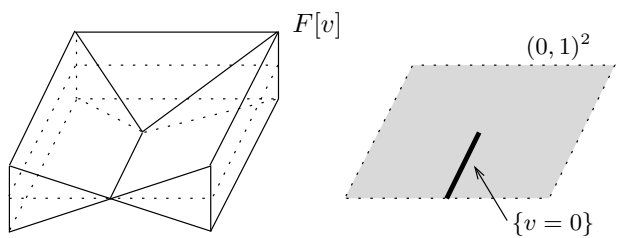\caption{{\small A polyhedron in $\R^3$ such that the rigidity condition \eqref{rigidity steiner omega} and the ``no vertical parts'' condition hold true (with $\Om=(0,1)^2$), but such that the ``no vanishing sections'' condition fails.}}\label{fig salsicciotto}
  \end{figure}}
\end{remark}

\subsection{Essential connectedness}\label{section connectedness} The examples discussed in Figure \ref{fig basic} and Remark \ref{remark exampless} suggest that, in order to characterize rigidity of equality cases in Steiner's inequality, one should first make precise, for example, in which sense a $(n-2)$-dimensional set like $S_v=\{v^\wedge<v^\vee\}$ (contained into the projection of vertical boundaries) may disconnect the $(n-1)$-dimensional set $\{v>0\}$ (that is, the projection of $F[v]$). In the study of rigidity of equality cases for Ehrhard's perimeter inequality, see \cite{ccdpmGAUSS}, we have satisfactorily addressed this kind of question by introducing the following definition.

\begin{definition}\label{definition ess con}
  {\rm Let $K$ and $G$ be Borel sets in $\R^m$. One says that $K$ {\it essentially disconnects} $G$ if there exists a non-trivial Borel partition $\{G_+,G_-\}$ of $G$ modulo $\H^m$ such that
\begin{equation}
  \label{K disconnects G}
  \H^{m-1}\Big(\Big(G^{(1)}\cap\pae  G_+\cap\pae G_-\Big)\setminus K\Big)=0\,;
\end{equation}
conversely, one says that $K$ {\it does not essentially disconnect} $G$ if, for every non-trivial Borel partition $\{G_+,G_-\}$ of $G$ modulo $\H^m$,
\begin{equation}
  \label{K does not disconnect G}
  \H^{m-1}\Big(\Big(G^{(1)}\cap\pae  G_+\cap\pae G_-\Big)\setminus K \Big)>0\,.
\end{equation}
Finally, $G$ is {\it essentially connected} if $\emptyset$ does not essentially disconnect $G$.}
\end{definition}

\begin{remark}
  {\rm By a non-trivial Borel partition $\{G_+,G_-\}$ of $G$ modulo $\H^m$ we mean that
  \[
  \H^m(G_+\cap G_-)=0\,,\qquad  \H^m(G\Delta(G_+\cup G_-))=0\,,\qquad \H^m(G_+)\,\H^m(G_-)>0\,.
  \]
  Moreover, $\pae G$ denotes the essential boundary of $G$, that is defined as
  \[
  \pae G=\R^m\setminus(G^{(0)}\cup G^{(1)})\,,
  \]
  where $G^{(0)}$ and $G^{(1)}$ denote the sets of points of density $0$ and $1$ of $G$; see section \ref{section approximate limits}.}
\end{remark}

\begin{remark}\label{remark essential connected}
  {\rm If $\H^m(G\Delta G')=0$ and $\H^{m-1}(K\Delta K')=0$, then $K$ essentially disconnects $G$ if and only if $K'$ essentially disconnects $G'$;
  \begin{figure}
  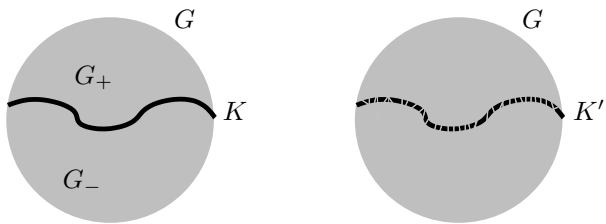\caption{{\small In the picture on the left, $G$ is a disk and $K$ is a smooth curve that divides $G$ in two open regions $G_+$ and $G_-$, in such a way that \eqref{K disconnects G} holds true: thus, $K$ essentially disconnects $G$. Let now $K'$ be obtained by removing some points from $K$. If we remove a set of length zero, that is, if $\H^1(K\setminus K')=0$, then $K'$ still essentially disconnects $G$ (although $G\setminus K'$ may be easily topologically connected!); if, instead, $\H^1(K\setminus K')>0$, then $K'$ does not essentially disconnect $G$, since \eqref{K does not disconnect G} holds true (with $K'$ in place of $K$).}}\label{fig disco}
  \end{figure}
  see Figure \ref{fig disco}.}
\end{remark}

\begin{remark}\label{remark iiii}
  {\rm We refer to \cite[Section 1.5]{ccdpmGAUSS} for more comments on the relation between this definition and the notions of indecomposable currents \cite[4.2.25]{FedererBOOK} and indecomposable sets of finite perimeter \cite[Definition 2.11]{dolzmannmu} or \cite[Section 4]{ambrosiocaselles} used in Geometric Measure Theory. We just recall here that a set of finite perimeter $E$ is said {\it indecomposable} if $P(E)<P(E_+)+P(E_-)$ whenever $\{E_+,E_-\}$ is a non-trivial partition modulo $\H^n$ of $E$ by sets of finite perimeter, and that, in turn, inequality $P(E)<P(E_+)+P(E_-)$ is equivalent to $\H^{n-1}(E^{(1)}\cap\pae E_+\cap\pae E_-)>0$. This measure-theoretic notion of connectedness is compatible with essential connectedness: indeed, as proved in \cite[Remark 2.3]{ccdpmGAUSS}, a set of finite perimeter is indecomposable if and only if it is essentially connected.}
\end{remark}

\subsection{Equality cases and barycenter functions}\label{section intro baricentro} With the notion of essential connectedness at hand we can easily conjecture several possible improvements of Theorem \ref{thm ccf2}. As it turns out, a fine analysis of the barycenter function for sets of finite perimeter with segments as sections is crucial in order to actually prove these results. Given a $v$-distributed set $E$, we define the barycenter function of $E$, $b_E:\R^{n-1}\to\R$, by setting, for every $z\in\R^{n-1}$,
\begin{equation}
  \label{barycenter definition}
b_E(z)=\frac1{v(z)}\int_{E_z}\,t\,d\H^1(t)\,,\qquad\mbox{if $v(z)>0$ and}\hspace{0.1cm} \int_{E_z}\,t\,d\H^1(t)\in\R\,,
\end{equation}
and $b_E(z)=0$ else. In general, $b_E$ may only be a Lebesgue measurable function. When $E$ has segments as sections and finite perimeter, the following theorem provides a degree of regularity for $b_E$ that turns out to be sharp; see Remark \ref{remark fifi}. Notice that the set where $v$ vanishes is critical for the regularity of the barycenter, as implicitly expressed by \eqref{barycenter sharp regularity}.

\begin{theorem}\label{thm tauM b delta}
  If $v\in BV(\R^{n-1})$ and $E$ is a $v$-distributed set of finite perimeter such that $E_z$ is $\H^1$-equivalent to a segment for $\H^{n-1}$-a.e. $z\in\R^{n-1}$, then
  \begin{equation}
    \label{barycenter sharp regularity}
      b_\de=1_{\{v>\de\}}\,b_E \in GBV(\R^{n-1})\,,
  \end{equation}
  for every $\de>0$ such that $\{v>\de\}$ is a set of finite perimeter. Moreover, $b_E$ is approximately differentiable $\H^{n-1}$-a.e. on $\R^{n-1}$, and for every Borel set $G\subset\{v^\wedge>0\}$ the following coarea formula holds,
  \begin{equation}
    \label{coarea-a-ah}
      \int_{\R}\H^{n-2}(G\cap\pae \{b_E>t\})\,dt=\int_G|\nabla b_E|\,d\H^{n-1}+\int_{G\cap S_{b_E}}[b_E]\,d\H^{n-2}+|D^cb_E|^+(G)\,,
  \end{equation}
  where $|D^cb_E|^+$ is the Borel measure on $\R^{n-1}$ defined by
  \begin{equation}
    \label{DcbE}
      |D^cb_E|^+(G)=\lim_{\de\to 0^+}|D^cb_\de|(G)=\sup_{\de>0}|D^cb_\de|(G)\,,\qquad\forall G\subset\R^{n-1}\,.
  \end{equation}
\end{theorem}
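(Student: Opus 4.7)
The plan is to parametrize $E$ by its slice length $v$ and its barycenter: since $E_z$ is $\H^1$-equivalent to a segment for $\H^{n-1}$-a.e.\ $z$, setting
$$u^-(z)=b_E(z)-\tfrac{v(z)}{2},\qquad u^+(z)=b_E(z)+\tfrac{v(z)}{2}$$
on $\{v>0\}$ and $u^\pm=0$ elsewhere, one has $E\sim\{(z,t):u^-(z)<t<u^+(z)\}$ modulo $\H^n$, so that $b_E=(u^++u^-)/2$ on $\{v>0\}$ and the claim \eqref{barycenter sharp regularity} reduces to the $GBV$ regularity of $1_{\{v>\delta\}}u^\pm$. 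The first main step is to extract this regularity from $P(E)<\infty$: fix $\delta>0$ with $\{v>\delta\}$ of finite perimeter, and for each $M>0$ consider the slab $\R^{n-1}\times(-M,M)$. Above $\{v>\delta\}^{(1)}$ the reduced boundary of $E$, stripped of its vertical part, is essentially the disjoint union of the two graphs of $u^\pm$; slicing in the vertical direction together with a Fleming--Rishel type argument bounds the total variation of the truncation $u^\pm_M\,1_{\{v>\delta\}}$ by the non-vertical contribution to $P(E;\{v>\delta\}^{(1)}\times(-M,M))$ plus a boundary term controlled by $P(\{v>\delta\})$. Since this holds for every $M$, $1_{\{v>\delta\}}u^\pm\in GBV(\R^{n-1})$, and by linearity $b_\delta=1_{\{v>\delta\}}b_E\in GBV(\R^{n-1})$.

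\textbf{Approximate differentiability.} Up to $\H^{n-1}$-null sets, $\R^{n-1}=\{v^\wedge=0\}\cup\bigcup_{\delta>0}\{v>\delta\}^{(1)}$. On the first set, $b_E=0$ holds on a set of density $1$ around $\H^{n-1}$-a.e.\ of its points, so $b_E$ is approximately differentiable there with zero gradient. On $\{v>\delta\}^{(1)}$, $b_E$ coincides with $b_\delta$ on a set of density $1$, and the claim is inherited from the approximate differentiability $\H^{n-1}$-a.e.\ of $b_\delta\in GBV(\R^{n-1})$.

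\textbf{Coarea identity.} Apply the standard $BV$ coarea decomposition to each truncation $(b_\delta)_M=\max(-M,\min(M,b_\delta))$:
$$\int_\R\H^{n-2}\bigl(G\cap\pae\{(b_\delta)_M>t\}\bigr)\,dt=\int_G|\nabla(b_\delta)_M|\,d\H^{n-1}+\int_{G\cap S_{(b_\delta)_M}}[(b_\delta)_M]\,d\H^{n-2}+|D^c(b_\delta)_M|(G).$$
Fix $G\subset\{v^\wedge>0\}$. As $\delta\downarrow 0$ and $M\uparrow\infty$, on $G$ one has $\nabla(b_\delta)_M\to\nabla b_E$ $\H^{n-1}$-a.e.\ and $S_{(b_\delta)_M}\cap G\nearrow S_{b_E}\cap G$, so the first two right-hand terms converge monotonically to the first two right-hand terms of \eqref{coarea-a-ah}. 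The Cantor term $|D^c(b_\delta)_M|(G)$ is monotone non-decreasing in $\delta^{-1}$ and $M$ (multiplying by $1_{\{v>\delta\}}$ or truncating can only create new jumps, which sit in the jump part and leave the Cantor part on the interior untouched), so both its supremum and its limit exist and coincide; this common value is, by definition, $|D^cb_E|^+(G)$, proving \eqref{DcbE}. The left-hand side also converges monotonically to $\int_\R\H^{n-2}(G\cap\pae\{b_E>t\})\,dt$ (since $\{(b_\delta)_M>t\}\nearrow\{b_E>t\}\cap\{v>0\}$ for all but countably many $t$), yielding \eqref{coarea-a-ah}.

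\textbf{Main obstacle.} The technically hardest step is the first: extracting $GBV$ regularity of $u^\pm$ from $P(E)<\infty$ while cleanly separating the horizontal contribution to $P(E;\cdot\times\R)$ that controls $|Du^\pm|$ from the vertical cylinders sitting over $S_v\cap\{v>\delta\}$, and simultaneously accounting for the extra jump produced by the characteristic $1_{\{v>\delta\}}$. Equally delicate is verifying that the Cantor mass does not escape or appear at the interface $\{v=\delta\}$ as $\delta\downarrow 0$; this is exactly the subtlety that forces the sup/limit \emph{definition} of $|D^cb_E|^+$ in \eqref{DcbE}, rather than treating it as an a priori single Borel measure on $\R^{n-1}$.
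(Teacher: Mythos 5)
Your outline matches the paper's at the level of skeleton: localize to $\{v>\delta\}$, truncate, establish $BV$ regularity from a perimeter bound, then obtain the coarea identity by monotone limits. The coarea portion of your proposal is essentially right (up to some bookkeeping: the level sets of $(b_\delta)_M$ do not increase to those of $b_E$ on $\{v\le\delta\}$, which is why the paper applies the $GBV$ coarea formula to $b_\delta$ on $G\cap\{v>\delta\}^{(1)}$ and then sends $\delta\to 0^+$, rather than working on all of $G$).

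The first step, however, is not actually carried out, and that is where the content of the theorem lies, as you flag yourself under ``Main obstacle.'' The sentence ``slicing in the vertical direction together with a Fleming--Rishel type argument bounds the total variation of $u^\pm_M\,1_{\{v>\delta\}}$'' is a gesture, not an argument; moreover, slicing in the $x_n$ direction is the wrong tool for bounding a total variation of a function on $\R^{n-1}$, where the natural slicing is along lines in $\R^{n-1}$. There is also a circularity in ``above $\{v>\delta\}^{(1)}$ the reduced boundary of $E$ ... is essentially the disjoint union of the two graphs of $u^\pm$'': that graph decomposition of $\partial^*E$ (the analogue of \eqref{Su pastar c}--\eqref{Su 1 jx}) is available only once $u^\pm\in BV_{\rm loc}$ is known, which is precisely what you are trying to prove. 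Finally, note that for $n\ge 3$ the barycenter $b_E$ can fail to be even locally integrable on $\{v>\delta\}$ (Remark \ref{remark fifi}), so a direct slab-truncation scheme in $\R^{n-1}\times(-M,M)$ has to contend with the region $\{|b_E|\ge M\}$ in an essential way that your sketch does not address.

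The paper's route through this step is genuinely different and worth noting. It reduces to $n=2$ by Vol'pert slicing of $\R^{n-1}$ along coordinate lines (Theorem \ref{thm volpert} plus \cite[Proposition 14.5]{maggiBOOK}). In the plane, one first shows that $P(E)<\infty$ forces $b_E\in L^\infty(\{v>\delta\})$, so $E\cap(\{v>\delta\}\times\R)$ is vertically bounded. That unlocks Lemma \ref{lemma Dv}: for a vertically bounded $v$-distributed set, the momentum $m_E(z)=\int_{E_z}t\,d\H^1(t)$ is $BV\cap L^\infty$ by a one-line integration by parts. Since $\{v>\delta\}$ has finite perimeter, $w=1_{\{v>\delta\}}/v\in BV\cap L^\infty$, and hence $b_\delta=w\,m_E\in BV\cap L^\infty$ on the line, with no need to parse $\partial^*E$ into graphs. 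The quantitative constant $C(M,\delta)$ then drops out of the one-dimensional perimeter formula \eqref{formula W}. Without some concrete argument playing the role of the momentum-function trick (or of the 1D reduction that makes it available), your Step one remains a gap.
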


\begin{remark}
  {\rm Let us recall that $u\in GBV(\R^{n-1})$ if and only if $\tau_M(u)\in BV_{loc}(\R^{n-1})$ for every $M>0$ (where $\tau_M(s)=\max\{-M,\min\{M,s\}\}$ for $s\in\R$), and that for every $u\in GBV(\R^{n-1})$, we can define a Borel measure $|D^cu|$ on $\R^{n-1}$ by setting
  \begin{equation}
    \label{cantor in gbv}
      |D^cu|(G)=\lim_{M\to\infty}|D^c(\tau_Mu)|(G)=\sup_{M>0}|D^c(\tau_Mu)|(G)\,,
  \end{equation}
  for every Borel set $G\subset\R^{n-1}$. (If $u\in BV(\R^{n-1})$, then the total variation of the Cantorian part of $Du$ agrees with the measure defined in \eqref{cantor in gbv} on every Borel set.) The measures $|D^cb_\de|$ appearing in \eqref{DcbE} are thus defined by means of \eqref{cantor in gbv}, and this makes sense by \eqref{barycenter sharp regularity}. Concerning $|D^cb_E|^+$, we just notice that in case that $b_E\in GBV(\R^{n-1})$, and thus $|D^cb_E|$ is well-defined, then we have
  \[
  |D^cb_E|^+=|D^cb_E|\llcorner\{v^\wedge>0\}\,,\qquad\mbox{on Borel sets of $\R^{n-1}$.}
  \]}
\end{remark}

Starting from Theorem \ref{thm tauM b delta}, we can prove a formula for the perimeter of $E$ in terms of $v$ and $b_E$ (see Corollary \ref{lemma W}), that in turn leads to the following characterization of equality cases in Steiner's inequality in terms of barycenter functions. (We recall that, here and in the following results, the assumption $v\in BV(\R^{n-1};[0,\infty))$ with $\H^{n-1}(\{v>0\})$ is equivalent to asking that $F[v]$ is of finite perimeter, and is thus necessary to make sense of the rigidity problem.)

\begin{theorem}\label{thm characterization with barycenter}
  Let $v\in BV(\R^{n-1};[0,\infty))$ with $\H^{n-1}(\{v>0\})<\infty$, and let $E$ be a $v$-distributed set of finite perimeter. Then, $E\in\M(v)$ if and only if
  \begin{eqnarray}
  \label{finale0}
  \mbox{$E_z$ is $\H^1$-equivalent to a segment}\,,&&\quad\mbox{for $\H^{n-1}$-a.e. $z\in\R^{n-1}$}\,,
  \\
  \label{finale1 E}
  \nabla b_E(z)=0\,,&&\quad\mbox{for $\H^{n-1}$-a.e. $z\in\R^{n-1}$}\,,
  \\\label{finale2 E}
  2[b_E]\le[v]\,,&&\quad\mbox{$\H^{n-2}$-a.e. on $\{v^\wedge>0\}$}\,,
  \\\label{finale3 e}
  D^c(\tau_Mb_\de)(G)=\int_{G\cap \{v>\de\}^{(1)}\cap\{|b_E|<M\}^{(1)}}\hspace{-1.8cm}f\,d\,(D^cv)\,,&&\hspace{0.2cm}
  \begin{array}
    {l l}
    \mbox{for every bounded Borel set $G\subset \R^{n-1}$}
    \\
    \mbox{and for $\H^1$-a.e. $\de>0$ and $M>0$\,,}
  \end{array}\hspace{0.8cm}
  \end{eqnarray}
  where $f:\R^{n-1}\to[-1/2,1/2]$ is a Borel function;
  \begin{figure}
    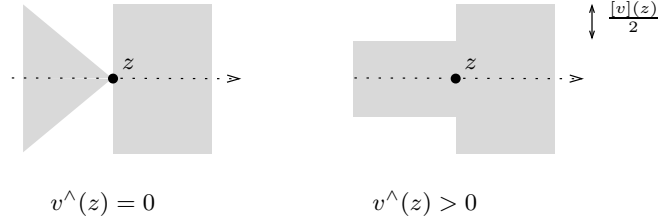\caption{{\small If $E\in\M(v)$, then the jump $[b_E]$ of the barycenter of $E$ can be arbitrarily large on $\{v^\wedge=0\}$, but is necessarily bounded by half the jump of $v$ on $\{v^\wedge>0\}$; see \eqref{finale2 E}. Moreover, the same rule applies to the Cantorian ``jumps'', see \eqref{finale3 e} and \eqref{2DcbE le Dcv}.
    }}\label{fig jumps}
  \end{figure}
  see Figure \ref{fig jumps}. In both cases,
  \begin{eqnarray}\label{2DcbE le Dcv}
    2\,|D^cb_E|^+(G)\le |D^cv|(G)\,,\qquad\mbox{for every Borel set $G\subset\R^{n-1}$}\,,
  \end{eqnarray}
  and, if $K$ is a concentration set for $D^cv$ and $G$ is a Borel subset of $\{v^\wedge>0\}$, then
  \begin{eqnarray}
      \label{coarea-a-ah ottimali}
    \int_{\R}\H^{n-2}(G\cap\pae \{b_E>t\})\,dt=\int_{G\cap S_{b_E}\cap S_v}[b_E]\,d\H^{n-2}+|D^cb_E|^+(G\cap K)\,.
  \end{eqnarray}
\end{theorem}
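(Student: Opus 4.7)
\noindent The plan is to compare $P(E;\R^n)$ with $P(F[v];\R^n)$ by means of the explicit perimeter representation of Corollary \ref{lemma W}. For a $v$-distributed set $E$ with segment sections, that formula expresses the perimeter as the sum of an absolutely continuous integral of $\sqrt{1+|\nabla(b_E\pm v/2)|^2}$, a jump-type integral over $S_v\cup S_{b_E}$ involving the jumps $[b_E\pm v/2]$, and a Cantorian contribution controlled by $D^c(b_E\pm v/2)$. Since $F[v]$ has zero barycenter, the same formula applied to $F[v]$ reduces to twice the terms built out of $v/2$ alone. Subtracting the two, Steiner's inequality $P(E;\R^n)\ge P(F[v];\R^n)$ becomes a sum of three non-negative defects, each descending from the convexity inequality $h(a+b)+h(a-b)\ge 2h(b)$ (and its one-dimensional analogues for the jump and Cantor parts) for the convex function $h(\xi)=\sqrt{1+|\xi|^2}$. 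The equality $P(E;\R^n)=P(F[v];\R^n)$ is thus equivalent to the simultaneous vanishing of these three defects.

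\medskip

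\noindent\textbf{Necessity.} Assume $E\in\M(v)$. Condition \eqref{finale0} is immediate from Theorem \ref{thm ccf1}, which in turn allows Theorem \ref{thm tauM b delta} to give $b_\de\in GBV(\R^{n-1})$ for a.e. $\de>0$. Pointwise vanishing of the absolutely continuous defect and strict convexity of $h$ at the origin then force $\nabla b_E=0$ a.e., which is \eqref{finale1 E}. On the rectifiable set $S_v\cup S_{b_E}$ intersected with $\{v^\wedge>0\}$, the corresponding convex inequality for jump lengths collapses only when $2\,|[b_E]|\le[v]$, giving \eqref{finale2 E}. For the Cantor part, the same convexity analysis applied to the measures $D^c(b_\de\pm v/2)\llcorner\{|b_E|<M\}^{(1)}$ forces $D^c(\tau_M b_\de)$ to be absolutely continuous with respect to $D^cv$ on $\{v>\de\}^{(1)}\cap\{|b_E|<M\}^{(1)}$, with Radon--Nikodym density of modulus at most $1/2$; this is \eqref{finale3 e}.

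\medskip

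\noindent\textbf{Sufficiency, and the estimates \eqref{2DcbE le Dcv}-\eqref{coarea-a-ah ottimali}.} Conversely, if \eqref{finale0}-\eqref{finale3 e} hold, retracing the convexity estimates shows that each of the three defects vanishes, whence $P(E;\R^n)=P(F[v];\R^n)$ and $E\in\M(v)$. The bound \eqref{2DcbE le Dcv} follows from $|f|\le 1/2$ in \eqref{finale3 e}: one first derives $2\,|D^c(\tau_M b_\de)|(G)\le|D^cv|(G)$, then sends $M\to\infty$ via \eqref{cantor in gbv} to obtain $2\,|D^cb_\de|(G)\le|D^cv|(G)$, and finally $\de\to 0^+$ through \eqref{DcbE}. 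Identity \eqref{coarea-a-ah ottimali} is then obtained by plugging into the coarea formula \eqref{coarea-a-ah} of Theorem \ref{thm tauM b delta} the information that $\nabla b_E=0$ (from \eqref{finale1 E}), that $S_{b_E}\cap\{v^\wedge>0\}\subset S_v$ modulo $\H^{n-2}$ (a consequence of \eqref{finale2 E}), and that $|D^cb_E|^+$ is concentrated on any concentration set $K$ of $D^cv$ (from \eqref{finale3 e}).

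\medskip

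\noindent\textbf{Expected main obstacle.} The delicate point is to handle the two truncations -- the threshold $\de$ on $v$ and the symmetric cut-off $\tau_M$ on $b_E$ -- consistently in the Cantor analysis. The Borel densities $f$ produced by \eqref{finale3 e} for different pairs $(\de,M)$ must arise from a single underlying density, so that the monotone limits \eqref{cantor in gbv} and \eqref{DcbE} commute with the convexity computation. Establishing this requires a careful use of the coarea formula \eqref{coarea-a-ah} for the truncated barycenters $b_\de$ together with the concentration properties of $D^cv$, in order to ensure that the Cantor defect is $\H^{n-2}$-additive across truncation levels.
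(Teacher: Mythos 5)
The necessity half of your proposal is on the right track and matches the paper's strategy: you reduce to the truncated barycenter, apply the perimeter formula of Corollary~\ref{lemma W}, and exploit strict convexity of $\xi\mapsto\sqrt{1+|\xi|^2}$ together with the equality case in the triangle inequality for measures (the paper isolates the latter as Lemma~\ref{lemma misure triangolare}). Your attention to the interaction between the $\delta$-truncation of $v$ and the $M$-truncation of $b_E$ is also well placed: the paper handles this with the triple truncation $\S_{\de,L,M}=\{\de<v<L\}\cap\{|b_E|<M\}$ so that $b_{\de,L,M}$ lands in $BV\cap L^\infty$ (not just $GBV$), and then shows via Lemma~\ref{lemma fgE} and Lemma~\ref{lemma Dcv} that the resulting densities fit together into a single Borel $f$.

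There is, however, a genuine gap in your sufficiency argument. You write that ``retracing the convexity estimates shows that each of the three defects vanishes, whence $P(E;\R^n)=P(F[v];\R^n)$.'' But the perimeter representation of Corollary~\ref{lemma W} can only be applied to the truncated set $W[v_{\de,S},b_{\de,S}]=E\cap(\{\de<v<S\}\times\R)$, because $b_E$ itself is in general only a $GBV$ function after restriction to $\{v>\de\}$ (Theorem~\ref{thm tauM b delta}, cf.\ Remark~\ref{remark fifi}). Letting $\de\to 0$ and $S\to\infty$ you recover at most
\begin{equation*}
P\big(E;\{v^\wedge>0\}\times\R\big)=P\big(F[v];\{v^\wedge>0\}\times\R\big)\,,
\end{equation*}
and the hypotheses \eqref{finale0}--\eqref{finale3 e} say nothing about $b_E$ above $\{v^\wedge=0\}$, so the convexity computation has no access to the perimeter there. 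To close the argument one needs the separate and nontrivial fact, Proposition~\ref{corollario v uguale 0}, that for \emph{every} $v$-distributed set of finite perimeter with segments as sections one has
\begin{equation*}
P\big(E;\{v^\wedge=0\}\times\R\big)=P\big(F[v];\{v^\wedge=0\}\times\R\big)=\int_{\{v^\wedge=0\}}v^\vee\,d\H^{n-2}\,.
\end{equation*}
This is proved by an approximation with $E_h=E\cap(\{\de_h<v<L_h\}\times\R)$ for carefully chosen $\de_h,L_h$ and a delicate estimate of the contribution of $\pae\{\de_h<v<L_h\}\times\R$; it is not a by-product of the convexity analysis. Without invoking it (or an equivalent), the implication from \eqref{finale0}--\eqref{finale3 e} to $E\in\M(v)$ is not established. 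A related but smaller omission is that passing to the limit $M\to\infty$ in the Cantor defect requires the paper's Lemma~\ref{voglioCinfinito}, since $|D^c(\cdot)|$ for $GBV$ functions is defined through a supremum over truncations and does not commute naively with sums and restrictions.
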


\begin{remark}
  {\rm By Theorem \ref{thm tauM b delta}, \eqref{finale0} allows us to make sense of $\nabla b_E$, $|D^cb_E|^+$, and $D^c(\tau_Mb_\de)$ (for a.e. $\de>0$), and thus to formulate \eqref{finale1 E}, \eqref{finale3 e}, \eqref{2DcbE le Dcv}, and \eqref{coarea-a-ah ottimali}. In particular, \eqref{coarea-a-ah ottimali} is an immediate consequence of \eqref{coarea-a-ah}, \eqref{finale1 E}, \eqref{finale2 E}, and \eqref{2DcbE le Dcv}.}
\end{remark}

Theorem \ref{thm characterization with barycenter} is a powerful tool in the study of rigidity of equality cases. Indeed, rigidity amounts in asking that $b_E$ is constant on $\{v>0\}$, a condition that, in turn, is equivalent to saying that there exists no subset $I\subset\R$ with $\H^1(I)>0$ such that $\{\{b_E>t\},\{b_E\le t\}\}$ is a non-trivial Borel partition of $\{v>0\}$ modulo $\H^{n-1}$ for every $t\in I$. By combining this information with the coarea formula \eqref{coarea-a-ah ottimali} and with the definition of essential connectedness, we quite easily deduce the following sufficient condition for rigidity.

\begin{theorem}\label{thm sufficient bv}
  If $v\in BV(\R^{n-1};[0,\infty))$, $\H^{n-1}(\{v>0\})<\infty$, and the Cantor part $D^cv$ of $Dv$ is concentrated on a Borel set $K$ such that
  \begin{equation}
    \label{general sufficient condition}
      \mbox{$\{v^\wedge=0\}\cup S_v\cup K$ does not essentially disconnect $\{v>0\}$}\,,
  \end{equation}
  then for every $E\in\M(v)$ there exists $t\in\R$ such that $\H^n(E\Delta(t\,e_n+F[v]))=0$.
\end{theorem}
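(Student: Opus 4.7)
The plan is to deduce from Theorem \ref{thm characterization with barycenter} that every $E \in \M(v)$ has barycenter $b_E$ that is $\H^{n-1}$-essentially constant on $\{v > 0\}$; once this is done, combining with the segment condition \eqref{finale0} forces $E_z = (c - v(z)/2,\, c + v(z)/2) = (c\,e_n + F[v])_z$ for $\H^{n-1}$-a.e.\ $z$, and Fubini gives $\H^n(E \Delta (c\,e_n + F[v])) = 0$.

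Fix $E \in \M(v)$ and a concentration set $K$ for $D^c v$ satisfying \eqref{general sufficient condition}. Define the Borel set
\[
H := \{v^\wedge > 0\} \setminus (S_v \cup K) \subset \{v^\wedge > 0\}.
\]
Applying the coarea formula \eqref{coarea-a-ah ottimali} with $G = H$, both terms on the right-hand side vanish: the first because $H \cap S_v = \emptyset$, and the second because $H \cap K = \emptyset$ (and in any case $|D^c b_E|^+$ is concentrated on $K$ by \eqref{2DcbE le Dcv}). Hence
\[
\int_\R \H^{n-2}\bigl(H \cap \pae\{b_E > t\}\bigr)\, dt = 0,
\]
so that $\H^{n-2}(H \cap \pae\{b_E > t\}) = 0$ for $\H^1$-a.e.\ $t \in \R$.

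Suppose, for contradiction, that $b_E$ is not $\H^{n-1}$-essentially constant on $\{v > 0\}$. Then there exist $c_1 < c_2$ such that both $\H^{n-1}(\{b_E \le c_1\} \cap \{v > 0\})$ and $\H^{n-1}(\{b_E > c_2\} \cap \{v > 0\})$ are positive, so that, by monotonicity, for every $t \in I := [c_1, c_2]$ the Borel sets $G_+^t := \{b_E > t\} \cap \{v > 0\}$ and $G_-^t := \{b_E \le t\} \cap \{v > 0\}$ form a non-trivial Borel partition of $\{v > 0\}$ modulo $\H^{n-1}$. By the hypothesis and \eqref{K does not disconnect G},
\[
\H^{n-2}\Bigl(\{v>0\}^{(1)} \cap \pae G_+^t \cap \pae G_-^t \setminus (\{v^\wedge=0\}\cup S_v\cup K)\Bigr) > 0 \quad \text{for every } t \in I.
\]
A standard density computation at points of $\{v > 0\}^{(1)}$ shows that there $\pae G_+^t \cap \pae G_-^t$ coincides with $\pae\{b_E > t\}$; together with the trivial inclusion $\{v > 0\}^{(1)} \setminus \{v^\wedge = 0\} \subset \{v^\wedge > 0\}$, this places the positive-measure set above inside $H \cap \pae\{b_E > t\}$, forcing $\H^{n-2}(H \cap \pae\{b_E > t\}) > 0$ throughout $I$ — contradicting the previous step, since $\H^1(I) > 0$. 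The main conceptual move is the choice $G = H$, which is engineered to isolate, on the right-hand side of \eqref{coarea-a-ah ottimali}, the exact obstruction to non-constancy of $b_E$ that the essential disconnection property rules out; the remaining work is routine measure-theoretic bookkeeping.
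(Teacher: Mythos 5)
Your proof is correct and follows essentially the same route as the paper: you apply \eqref{coarea-a-ah ottimali} with $G = \{v^\wedge>0\}\setminus(S_v\cup K)$, which coincides (by \eqref{dens3}) with the paper's choice $\{v>0\}^{(1)}\setminus(\{v^\wedge=0\}\cup S_v\cup K)$, note that both terms on the right vanish, and then contrast this with the positive $\H^{n-2}$-measure that the essential-connectedness hypothesis forces for a positive measure of levels $t$ when $b_E$ is non-constant. The parenthetical remark that $|D^c b_E|^+$ is concentrated on $K$ by \eqref{2DcbE le Dcv} is correct but redundant here, since $H\cap K=\emptyset$ already kills the Cantor term directly.
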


\begin{remark}
  {\rm The strength of Theorem \ref{thm sufficient bv} is that it provides a sufficient condition for rigidity without a priori structural assumption on $F[v]$. In particular, the theorem admits for non-trivial vertical boundaries and vanishing sections, that are excluded in Theorem \ref{thm ccf2} by \eqref{no vertical parts CCF} and \eqref{v>0 Hn-2}. (In fact, as shown in Appendix \ref{section fusco}, Theorem \ref{thm ccf2} can be deduced from Theorem \ref{thm sufficient bv}.) We also notice that condition \eqref{general sufficient condition} is clearly not necessary for rigidity as soon as vertical boundaries are present; see Figure \ref{fig casetta}.}
\end{remark}

A natural question about equality cases of Steiner's inequality that is left open by Theorem \ref{thm characterization with barycenter} is describing the situation when every $E\in\M(v)$ is obtained by at most countably many vertical translations of parts of $F[v]$. In other words, we want to understand when to expect every $E\in\M(v)$ to satisfy
  \begin{eqnarray}
    \label{E traslato a pezzi}
    &&E=_{\H^n}\bigcup_{h\in I}\Big(c_h\,e_n+(F[v]\cap(G_h\times\R))\Big)
    \\\nonumber
    &&\hspace{1cm}\mbox{where $I$ is at most countable, $\{c_h\}_{h\in I}\subset\R$, and}
    \\\nonumber
    &&\hspace{1cm}\mbox{$\{G_h\}_{h\in I}$ is a Borel partition modulo $\H^{n-1}$ of $\{v>0\}$}\,.
  \end{eqnarray}
The following theorem shows that this happens when $v$ is of special bounded variation with locally finite jump set. The notion of $v$-admissible partition of $\{v>0\}$ used in the theorem is introduced in Definition \ref{v-admissible}, see section \ref{section characterization}.

\begin{theorem}
  \label{thm mv per v sbv}
  Let $v\in SBV(\R^{n-1};[0,\infty))$ with $\H^{n-1}(\{v>0\})<\infty$, and
  \begin{equation}
    \label{Sv localmente finito}
    \mbox{$S_v\cap\{v^\wedge>0\}$ is locally $\H^{n-2}$-finite}\,,
  \end{equation}
  and let $E$ be a $v$-distributed set of finite perimeter. Then, $E\in\M(v)$ if and only if $E$ satisfies \eqref{E traslato a pezzi} for a $v$-admissible partition $\{G_h\}_{h\in I}$ of $\{ v > 0\}$, and $2[b_E]\le [v]$ $\H^{n-2}$-a.e. on $\{v^\wedge>0\}$. Moreover, in both cases, $|D^cb_E|^+=0$.
\end{theorem}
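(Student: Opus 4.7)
The strategy is to bootstrap the general characterization in Theorem \ref{thm characterization with barycenter} using the SBV hypothesis, which forces $D^cv\equiv 0$, together with the local $\H^{n-2}$-finiteness assumption \eqref{Sv localmente finito}. Note first that since $D^cv=0$, inequality \eqref{2DcbE le Dcv} already gives the last assertion $|D^cb_E|^+=0$ as soon as $E\in\M(v)$.

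For the necessity direction, suppose $E\in\M(v)$. Condition \eqref{finale0} is immediate from Theorem \ref{thm characterization with barycenter}. By \eqref{finale1 E} we have $\nabla b_E=0$ $\H^{n-1}$-a.e., and the right-hand side of \eqref{finale3 e} vanishes because $D^cv=0$, so $D^c(\tau_Mb_\de)\equiv 0$ for a.e. $\de,M>0$. Hence $\tau_Mb_\de\in SBV_{loc}(\R^{n-1})$ with vanishing approximate gradient, and its jump set is contained, modulo $\H^{n-2}$, in $\pae\{v>\de\}\cup(S_{b_E}\cap\{v>\de\}^{(1)})\subset\pae\{v>\de\}\cup(S_v\cap\{v^\wedge>0\})$ by \eqref{finale2 E}. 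For a.e.\ $\de>0$, $\{v>\de\}$ is of finite perimeter, so the first piece is locally $\H^{n-2}$-finite; the second is too, by \eqref{Sv localmente finito}. A classical structure theorem (piecewise-constant SBV functions with locally $\H^{n-2}$-finite jump set admit an at most countable constancy decomposition by indecomposable sets of finite perimeter) then exhibits $\tau_Mb_\de$ as locally constant on an at most countable Borel partition. Taking sequences $\de_k\downarrow 0$, $M_k\uparrow\infty$ and refining, we obtain an at most countable partition $\{G_h\}_{h\in I}$ of $\{v>0\}$ modulo $\H^{n-1}$ together with constants $c_h$ such that $b_E=c_h$ $\H^{n-1}$-a.e.\ on $G_h$; since $E_z$ is $\H^1$-equivalent to the segment of length $v(z)$ centered at $b_E(z)$ by \eqref{finale0}, this yields \eqref{E traslato a pezzi}. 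The $v$-admissibility of $\{G_h\}_{h\in I}$ (Definition \ref{v-admissible}) is the statement that the essential boundaries of the $G_h$ are contained modulo $\H^{n-2}$ in $S_v\cup\{v^\wedge=0\}$ (away from $\pae\{v>0\}$), which is exactly the containment proved above for the jump sets of the $\tau_Mb_\de$.

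For the sufficiency direction, suppose $E$ satisfies \eqref{E traslato a pezzi} for some $v$-admissible partition $\{G_h\}_{h\in I}$ and that $2[b_E]\le[v]$ $\H^{n-2}$-a.e.\ on $\{v^\wedge>0\}$. Then \eqref{finale0} and \eqref{finale2 E} hold by construction. From \eqref{E traslato a pezzi}, $b_E$ equals $c_h$ $\H^{n-1}$-a.e.\ on each $G_h$, so it is locally constant a.e.\ on $\{v>0\}$; hence $\nabla b_E=0$ $\H^{n-1}$-a.e., giving \eqref{finale1 E}. For \eqref{finale3 e}, the right-hand side is $0$ since $D^cv=0$; on the left, the piecewise-constant structure of $b_E$ combined with $v$-admissibility and \eqref{Sv localmente finito} shows that $\tau_Mb_\de$ is piecewise constant on a partition whose boundaries are locally $\H^{n-2}$-finite, hence $\tau_Mb_\de\in SBV_{loc}$ with $D^c(\tau_Mb_\de)=0$. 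Theorem \ref{thm characterization with barycenter} then yields $E\in\M(v)$.

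The main obstacle is the piecewise-constant decomposition step in the necessity direction: converting the measure-theoretic statement ``$\tau_Mb_\de\in SBV_{loc}$ with vanishing approximate gradient and locally $\H^{n-2}$-finite jump set'' into an honest countable Borel partition of $\{v>0\}$ on which $b_E$ is constant, and patching the partitions obtained at different $\de,M$ so that, in the limit, essential boundaries remain controlled by $S_v\cup\{v^\wedge=0\}$. Assumption \eqref{Sv localmente finito} is precisely what keeps the partition at most countable and keeps the constancy decomposition compatible with $v$-admissibility in the limit $\de\to 0$, $M\to\infty$.
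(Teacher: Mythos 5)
Your overall strategy matches the paper's: both proofs reduce the theorem to Theorem~\ref{thm characterization with barycenter}, exploit $D^cv=0$ to kill the Cantor terms, invoke the piecewise-constant $SBV$ structure theorem (stated in the paper as Theorem~\ref{thm afp cool}, which packages \cite[Definitions 4.16, 4.21, Theorem 4.23]{AFP}) on the truncated barycenters $b_\de=1_{\{v>\de\}}b_E$, and then diagonalize over $\de\downarrow 0$.

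There is, however, a concrete gap in your necessity argument where you establish $v$-admissibility. You assert that ``$v$-admissibility of $\{G_h\}_{h\in I}$ is the statement that the essential boundaries of the $G_h$ are contained modulo $\H^{n-2}$ in $S_v\cup\{v^\wedge=0\}$,'' and then deduce $v$-admissibility from the containment of $S_{b_\de}$ inside $\pae\{v>\de\}\cup(S_v\cap\{v^\wedge>0\})$. This misreads Definition~\ref{v-admissible}: $v$-admissibility is the quantitative Caccioppoli-partition requirement
\begin{equation*}
\sum_{h\in I}P\big(G_h\cap\{v>\de\}\cap B_R\big)<\infty\quad\text{for a.e.\ $\de>0$ and all $R>0$,}
\end{equation*}
not a containment of boundaries in $S_v\cup\{v^\wedge=0\}$. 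The distinction is essential and is precisely what the remark following Theorem~\ref{thm characterization sbv} illustrates: the middle-third Cantor partition of $(0,1)$ has all of its boundaries in $\{v^\wedge=0\}\cup S_v$ yet fails $v$-admissibility because $\sum_h P(G_h\cap\{v>\de\}\cap B_R)=\infty$. What actually yields $v$-admissibility in your argument is the Caccioppoli conclusion of the very structure theorem you invoke: applying it to $b_\de$ produces a partition $\{G_h^\de\}$ of $\{v>\de\}$ with $\sum_h P(G_h^\de;\R^{n-1})=2\H^{n-2}(S_{b_\de})<\infty$, and after the diagonal argument one applies the structure theorem once more to $b_\de=\sum_h c_h\,1_{G_h\cap\{v>\de\}}$ to read off $\sum_h P(G_h\cap\{v>\de\}\cap B_R)<\infty$. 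So the needed conclusion is within reach of the tools you already cite, but the justification you wrote is based on an incorrect paraphrase of the definition and does not verify the summability of perimeters. (A smaller imprecision: the structure theorem does not decompose into indecomposable pieces; the $G_h$ are the level sets $\{b_\de=c_h\}$ and need not be indecomposable, as the paper's $v$-admissibility definition does not require that.)
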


\begin{remark}\label{remark pino}
  {\rm Let us recall that, by definition, $v\in SBV(\R^{n-1})$ if $v\in BV(\R^{n-1})$ and $D^cv=0$. The approximate discontinuity set $S_v$ of a generic $v\in SBV(\R^{n-1})$ is always countably $\H^{n-2}$-rectifiable, but it may fail to be locally $\H^{n-2}$-finite. if $v\in SBV(\R^{n-1})$ but \eqref{Sv localmente finito} fails, then it may happen that \eqref{E traslato a pezzi} does not hold true for some $E\in \M(v)$; see Remark \ref{remark razionali} below.}
\end{remark}

\begin{remark}\label{remark checca}
  {\rm Condition \eqref{E traslato a pezzi} can be reformulated in terms of a property of the barycenter function. Indeed, \eqref{E traslato a pezzi} is equivalent to asking that
  \begin{equation}
  \label{bE traslato a pezzi}
  b_E=\sum_{h\in I}c_h\,1_{G_h}\,,\qquad\mbox{$\H^{n-1}$-a.e. on $\R^{n-1}$}\,,
  \end{equation}
  for $I$, $\{c_h\}_{h\in I}$ and $\{G_h\}_{h\in I}$ as in \eqref{E traslato a pezzi}. It should be noted that,
  {\it if no additional conditions are assumed on the partition $\{ G_h \}_{h \in I}$}, then
   \eqref{bE traslato a pezzi} is {\it not} equivalent to saying that $b_E$ has ``countable range''. An example is obtained as follows. Let $K$ be the middle-third Cantor set in $[0,1]$, let $\{G_h\}_{h\in\N}$ be the disjoint family of open intervals such that $K=[0,1]\setminus\bigcup_{h\in\N}G_h$, and let $\{c_h\}_{h\in\N}\subset\R$ be such that the Cantor function $u_K$ satisfies $u_K=c_h$ on $G_h$. In this way, $u_K=\sum_{h\in\N}c_h\,1_{G_h}$ on $[0,1]\setminus K$, thus, $\H^1$-a.e. on $[0,1]$. Of course, since $u_K$ is a non-constant, continuous, and increasing function, it does not have  ``countable range'' in any reasonable sense. At the same time, if we set $v(z)=1_{[0,1]}(z)\,\dist(z,K)$ for $z\in\R$, then $v$ is a Lipschitz function on $\R$ (thus it satisfies all the assumptions in Theorem \ref{thm mv per v sbv}) and the set
  \[
  E=\Big\{x\in \R^2:u_K(\p x)-\frac{v(\p x)}2<\q x<u_K(\p x)+\frac{v(\p x)}2\Big\}\,,
  \]
  is such that $E\in\M(v)$, as one can check by Corollary \ref{lemma W} and Corollary \ref{lemma W2} in section \ref{section barycenter}. We also notice that, in this example, $|D^cb_E|\llcorner\{v^\wedge=0\}\ne 0$, while $|D^cb_E|^+=0$.}
\end{remark}

\begin{remark}\label{remark razionali}
{\rm We now provide the example introduced in Remark \ref{remark pino}. Given $\{q_h\}_{h\in\N}=\Q\cap[0,1]$ and $\{\a_h\}_{h\in\N}\in(0,\infty)$ such that $\sum_{h\in\N}\a_h<\infty$, we can define $v\in SBV(\R)$ such that $\H^1(\{v>0\})=1$ and $Dv=D^sv=D^jv$, by setting
  \[
  v(t)=\sum_{\{h\in\N:q_h<t\le 1\}}\a_h=\sum_{h\in\N}\a_h\,1_{(q_h,1]}(t)\,,\qquad t\in\R\,.
  \]
  If we plug $v_1=0$, $v_2=v$, and, say, $\l=0$, in Proposition \ref{proposition dsv} below, then we obtain a set $E\in\M(v)$. At the same time, \eqref{bE traslato a pezzi}, thus \eqref{E traslato a pezzi}, cannot hold true, as $b_E=v/2$ $\H^1$-a.e. on $\R$ and $v$ is {\it strictly} increasing  on $[0,1]$. (The requirement that the sets $G_h$ in \eqref{bE traslato a pezzi} are mutually disjoint modulo $\H^{n-1}$ plays of course a crucial role in here.) Notice that, as expected, $S_v\cap\{v^\wedge>0\}=\Q\cap[0,1]$ is not locally $\H^0$-finite.}
\end{remark}

We close our analysis of equality cases with the following proposition, that shows a general way of producing equality cases in Steiner's inequality that (potentially) do not satisfy the basic structure condition \eqref{E traslato a pezzi}.

\begin{proposition}\label{proposition dsv}
  If $v=v_1+v_2$ where $v_1,v_2\in BV(\R^{n-1};[0,\infty))$, $Dv_1=D^av_1$, $v_2$ is not constant (modulo $\H^{n-1}$) on $\{v>0\}$, $Dv_2=D^sv_2$, and $0<\H^{n-1}(\{v>0\})<\infty$, then rigidity fails for $v$. Indeed, if we set
  \begin{equation}
    \label{malefico}
      E=\Big\{x\in\R^n: -\l\,v_2(\p x)-\frac{v_1(\p x)}2\le \q x\le \frac{v_1(\p x)}2+(1-\l)\,v_2(\p x)\Big\}\,,
  \end{equation}
  for $\l\in[0,1]\setminus\{1/2\}$, then $E\in\M(v)$ but $\H^n(E\Delta(t\,e_n+F[v]))>0$ for every $t\in\R$.
\end{proposition}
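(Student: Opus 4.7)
The plan is to apply Theorem \ref{thm characterization with barycenter} to the set $E$ defined in \eqref{malefico} in order to show that $E\in\M(v)$, and then to compute $b_E$ explicitly so as to rule out any vertical translate of $F[v]$ coinciding with $E$ modulo $\H^n$.

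First, I would observe that each vertical section $E_z$ is a segment of length $v_1(z)+v_2(z)=v(z)$, so $E$ is $v$-distributed and \eqref{finale0} holds. A direct computation of the midpoint of $E_z$ then gives
\[
b_E(z)=c\,v_2(z)\qquad\mbox{for $\H^{n-1}$-a.e. $z\in\R^{n-1}$},
\]
where $c:=1/2-\l\in[-1/2,1/2]\setminus\{0\}$ (using $v_2\le v$, hence $v_2=0$ on $\{v=0\}$). Since $Dv_1=D^av_1$, the three singular parts of $Dv$ coincide with those of $Dv_2$: in particular $[v_1]=0$ $\H^{n-2}$-a.e.\ and $D^cv_1=0$, so $[v]=[v_2]$ and $D^cv=D^cv_2$, while $Dv_2=D^sv_2$ forces $\nabla v_2=0$ $\H^{n-1}$-a.e.

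Using these identifications I would verify the remaining three conditions in Theorem \ref{thm characterization with barycenter}: \eqref{finale1 E} follows from $\nabla b_E=c\,\nabla v_2=0$ a.e.; \eqref{finale2 E} from $2[b_E]=2|c|\,[v_2]\le[v_2]=[v]$, since $|c|\le 1/2$; and \eqref{finale3 e} is established with the constant choice $f\equiv c$. For this last point, one observes that on the set $\{v>\de\}^{(1)}\cap\{|b_E|<M\}^{(1)}$ the function $\tau_M\,b_\de=\tau_M(1_{\{v>\de\}}\,b_E)$ coincides locally with the $BV$ function $c\,v_2$; by the chain rule and the locality of the Cantor part of $BV$ functions, one obtains
\[
D^c(\tau_Mb_\de)\llcorner\widetilde{G}=c\,D^cv\llcorner\widetilde{G},\qquad \widetilde{G}:=G\cap\{v>\de\}^{(1)}\cap\{|b_E|<M\}^{(1)},
\]
while the Cantor part of $\tau_M\,b_\de$ vanishes on the complement of $\widetilde{G}$ (it vanishes on $\{v>\de\}^{(0)}$ because $b_\de$ does, and on $\{|b_E|\ge M\}^{(1)}$ because $\tau_M\,b_\de$ is locally constant there).

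Finally, to show that rigidity fails, suppose by contradiction that $\H^n(E\Delta(t\,e_n+F[v]))=0$ for some $t\in\R$. Since $b_E$ is invariant under $\H^n$-null modifications of $E$, this would give $b_E=t$ for $\H^{n-1}$-a.e. $z\in\{v>0\}$; combined with $b_E=c\,v_2$ and $c\ne 0$, this would force $v_2$ to be constant modulo $\H^{n-1}$ on $\{v>0\}$, contradicting the hypothesis. The main technical subtlety is the verification of \eqref{finale3 e}, where one must track simultaneously the indicator $1_{\{v>\de\}}$, the truncation $\tau_M$, and the localization of the Cantor part of $Dv_2$; the other verifications are essentially algebraic.
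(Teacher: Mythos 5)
Your proposal is correct, but it takes a genuinely different route from the paper. The paper's own proof is a direct perimeter calculation: it writes $E=W[v,b]$ with $b=(1/2-\l)\,v_2$, applies Corollary~\ref{lemma W} to compute $P(E)$ and Corollary~\ref{lemma W2} to compute $P(F[v])$, and verifies the two coincide using the hypotheses $D^sv_1=0$, $D^av_2=0$, $|1/2-\l|\le 1/2$; the failure of rigidity is then read off from the explicit identity $\H^n(E\Delta(t\,e_n+F[v]))=2\int_{\{v>0\}}|t-(1/2-\l)v_2|\,d\H^{n-1}$. You instead invoke the full characterization Theorem~\ref{thm characterization with barycenter}, verifying \eqref{finale0}--\eqref{finale3 e}; this is logically sound and not circular (the proof of Theorem~\ref{thm characterization with barycenter} does not use Proposition~\ref{proposition dsv}), and your identification $f\equiv 1/2-\l$ in \eqref{finale3 e}, together with the localization of the Cantor part via Lemma~\ref{lemma Dcv} and \cite[Example 3.97]{AFP}, is correct. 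Note, however, that the paper deliberately places the proof of Proposition~\ref{proposition dsv} in section~\ref{section segments sets}, \emph{before} the characterization theorem is established, precisely so that it rests only on the elementary perimeter formulas; your proof forces a reordering of dependencies. One genuine (though small) gap in your argument: Theorem~\ref{thm characterization with barycenter} is stated for $v$-distributed sets \emph{of finite perimeter}, and having segments as sections with $v\in BV$ does not by itself guarantee this (cf.\ Remark~\ref{remark fifi}). You should note explicitly that $E=W[v,(1/2-\l)v_2]$ with both $v$ and $(1/2-\l)v_2$ in $BV(\R^{n-1})$ and $\H^{n-1}(\{v>0\})<\infty$, so that $E$ has finite perimeter by Theorem~\ref{lemma u1u2}, before applying the characterization theorem.
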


\subsection{Characterizations of rigidity}\label{section characterization} We now start to discuss the problem of characterizing rigidity of equality cases. We shall analyze this question under different geometric assumptions on the considered Steiner's symmetral, and see how different structural assumptions lead to formulate different characterizations.

We begin our analysis by working under the assumption that no vertical boundaries are present where the slice length function $v$ is essentially positive, that is, on $\{v^\wedge>0\}$. It turns out that, in this case, the sufficient condition \eqref{general sufficient condition} can be weakened to
\begin{equation}
  \label{sufficient condition v=0}
  \mbox{$\{v^\wedge=0\}$ does not essentially disconnect $\{v>0\}$}\,,
\end{equation}
and that, in turn, this same condition is also necessary to rigidity. Moreover, an alternative characterization can obtained by merely requiring that $F[v]$ is indecomposable.

\begin{theorem}\label{thm characterization no vertical SUFF}
  If $v\in BV(\R^{n-1};[0,\infty))$ with $\H^{n-1}(\{v>0\})<\infty$, and
  \begin{eqnarray}\label{no vertical boundary cool}
    D^sv\llcorner \{v^\wedge>0\}=0\,,
  \end{eqnarray}
  then the following three statements are equivalent:
  \begin{enumerate}
    \item[(i)] if $E\in\M(v)$ then $\H^n(E\Delta(t\,e_n+F[v]))=0$ for some $t\in\R$;
    \item[(ii)] $\{v^\wedge=0\}$ does not essentially disconnect $\{v>0\}$;
    \item[(iii)] $F[v]$ is indecomposable.
  \end{enumerate}
\end{theorem}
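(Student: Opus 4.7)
The plan is to establish the cycle $(ii) \Rightarrow (i) \Rightarrow (iii) \Rightarrow (ii)$, exploiting the general sufficient condition from Theorem \ref{thm sufficient bv}, a direct construction of a non-rigid equality case by vertically translating ``half'' of $F[v]$, and the standard perimeter decomposition identity for subsets of a set of finite perimeter.

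For $(ii) \Rightarrow (i)$, the key observation will be that hypothesis \eqref{no vertical boundary cool} forces the singular part $D^s v$ to be concentrated on $\{v^\wedge = 0\}$. In particular its jump component forces $S_v \cap \{v^\wedge > 0\}$ to be $\H^{n-2}$-null, and its Cantor component allows a concentration set $K$ for $D^c v$ to be chosen inside $\{v^\wedge = 0\}$. Hence $\{v^\wedge = 0\} \cup S_v \cup K$ coincides with $\{v^\wedge = 0\}$ up to an $\H^{n-2}$-null set, and by the invariance of essential disconnection under such null modifications (Remark \ref{remark essential connected}) hypothesis (ii) upgrades to the sufficient condition \eqref{general sufficient condition}, so Theorem \ref{thm sufficient bv} supplies (i).

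For $(i) \Rightarrow (iii)$, I would argue by contrapositive. Starting from a non-trivial perimeter-additive decomposition $F[v] = F_+ \cup F_-$, a one-dimensional slicing argument together with the indecomposability of an open segment as a 1D set of finite perimeter shows that for $\H^{n-1}$-a.e.\ $z \in \{v > 0\}$ the slice $(F_\pm)_z$ is $\H^1$-equivalent to either $\emptyset$ or the whole segment $(F[v])_z$. This produces a non-trivial Borel partition $\{G_+, G_-\}$ of $\{v > 0\}$ with $F_\pm =_{\H^n} F[v] \cap (G_\pm \times \R)$, and for any $t \neq 0$ the set $E = (F_+ + t\,e_n) \cup F_-$ is automatically $v$-distributed (since $G_+$ and $G_-$ are disjoint modulo $\H^{n-1}$) and satisfies
\[
P(E) \le P(F_+) + P(F_-) = P(F[v]),
\]
so Steiner's inequality gives $E \in \M(v)$; yet its slices are translated by $t$ above $G_+$ but not above $G_-$, so $E$ is not a vertical translate of $F[v]$ and rigidity fails.

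Finally, for $(iii) \Rightarrow (ii)$, I again argue by contrapositive. Given a partition $\{G_+, G_-\}$ witnessing that $\{v^\wedge = 0\}$ essentially disconnects $\{v > 0\}$, set $F_\pm = F[v] \cap (G_\pm \times \R)$; both have positive $\H^n$-measure, and via the standard identity
\[
P(F_+) + P(F_-) = P(F[v]) + 2\,\H^{n-1}\bigl(F[v]^{(1)} \cap \pae F_+ \cap \pae F_-\bigr)
\]
decomposability of $F[v]$ reduces to the vanishing of the interfacial term. Since $F[v]^{(1)} \subset \{v^\wedge > 0\} \times \R$ modulo $\H^n$-null sets and $\pae F_+ \cap \pae F_-$ sits essentially above $\pae G_+ \cap \pae G_-$, a Fubini-type computation identifies this interfacial perimeter with $\int_{\pae G_+ \cap \pae G_- \cap \{v^\wedge > 0\}} v^\wedge \, d\H^{n-2}$, and the essential disconnection hypothesis makes the integration domain $\H^{n-2}$-null, so the integral vanishes. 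The most delicate step will be this last identification of the interfacial perimeter with a codimension-one integral against $v^\wedge$, which requires careful slicing/projection arguments for the cylindrical cuts $F[v] \cap (G_\pm \times \R)$.
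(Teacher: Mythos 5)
Your cycle $(ii)\Rightarrow(i)\Rightarrow(iii)\Rightarrow(ii)$ is a valid re-arrangement of the same core ingredients the paper uses, but the last leg has a genuine gap.

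The paper treats $(ii)\Leftrightarrow(iii)$ as a standalone result (Theorem \ref{thm indecomponibili}) valid for any $v\in BV$ with $\H^{n-1}(\{v>0\})<\infty$, then proves $(ii)\Rightarrow(i)$ via Theorem \ref{thm sufficient bv} and $(i)\Rightarrow(ii)$ via Lemma \ref{lemma yayyy} (perimeter additivity of the cylindrical cuts) plus the competitor $E=(e_n+F_+)\cup F_-$. Your $(ii)\Rightarrow(i)$ matches the paper's, though your phrasing about ``allowing a concentration set $K$ inside $\{v^\wedge=0\}$'' obscures what is really happening: $D^s v\llcorner\{v^\wedge>0\}=0$ plus Lemma \ref{lemma Dcv} (which gives $|D^c v|(\{v^\wedge=0\})=0$) force $D^c v=0$ outright, so one takes $K=\emptyset$. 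Your $(i)\Rightarrow(iii)$ is a genuine shortcut the paper does not take explicitly: starting directly from a perimeter-additive decomposition $\{F_+,F_-\}$ of $F[v]$, you use the Vol'pert-type slicing argument (this is step one of the paper's proof of Theorem \ref{thm indecomponibili}) to extract a partition $\{G_+,G_-\}$ of $\{v>0\}$ with $F_\pm=_{\H^n}F[v]\cap(G_\pm\times\R)$, and then build the competitor $E=(F_++t\,e_n)\cup F_-$. This works and is clean, because the finite perimeter of $F_\pm$ is given to you for free as part of the decomposability hypothesis.

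The gap is in $(iii)\Rightarrow(ii)$. There you set $F_\pm=F[v]\cap(G_\pm\times\R)$ for an essentially-disconnecting partition $\{G_+,G_-\}$ and invoke the identity $P(F_+)+P(F_-)=P(F[v])+2\,\H^{n-1}(F[v]^{(1)}\cap\pae F_+\cap\pae F_-)$ and a Fubini computation. But this identity, and the slicing formula $\H^{n-1}(F[v]^{(1)}\cap(\pae G_+\cap\pae G_-\times\R))=\int_{\pae G_+\cap\pae G_-}v^\wedge\,d\H^{n-2}$, both presuppose that $F_+$ and $F_-$ are sets of finite perimeter (and, for the slicing formula, that $\pae G_+\cap\pae G_-$ is countably $\H^{n-2}$-rectifiable). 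Neither is automatic: $G_\pm$ is an arbitrary Borel subset of $\{v>0\}$, $G_\pm\times\R$ is never of finite perimeter, and one cannot intersect to conclude. Establishing $P(F_\pm)<\infty$ is precisely what step one of Lemma \ref{lemma yayyy} does, via Federer's criterion and a careful decomposition of $\pae F_+$ over $\{v=0\}^{(1)}$, $\{v^\vee=0\}$, $\{v^\wedge=0<v^\vee\}$, and $G_{+0}^{(1)}$; it is the technical heart of that direction and cannot be absorbed into a footnote about ``careful slicing/projection arguments.'' Without it, the interfacial identity you rely on is not even well-defined.
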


\begin{remark}
  {\rm Notice that condition \eqref{no vertical boundary cool} does not prevent $\pa^*F[v]$ to contain vertical parts, provided they are concentrated where the lower approximate limit of $v$ vanishes. Indeed, it implies that $D^cv=0$ (see step one in the proof of Theorem \ref{thm characterization no vertical SUFF} in section \ref{section proof of charact W11}), and that $S_v$ is contained into $\{v^\wedge=0\}$ modulo $\H^{n-2}$.}
\end{remark}

\begin{remark}
  {\rm We notice that the equivalence between conditions (ii) and (iii) is actually true whenever $v\in BV(\R^{n-1};[0,\infty))$ with $\H^{n-1}(\{v>0\})<\infty$; in other words, \eqref{no vertical boundary cool} plays no role in proving this equivalence. This is proved in Theorem \ref{thm indecomponibili}, section \ref{section F indecomposable}.}
\end{remark}

The situation becomes much more complex when we consider the possibility for $\pa^*F[v]$ to have vertical parts above $\{v^\wedge>0\}$. As already noticed, simple polyhedral examples, like the one depicted in Figure \ref{fig casetta}, show that condition \eqref{general sufficient condition} is not even a viable candidate as a characterization of rigidity in this case. We shall begin our discussion of this problem by completely solving it in the case of polyhedra and, in fact, in the much broader class of sets introduced in the next definition.

\begin{definition}
  {\rm Let $v:\R^{n-1}\to[0,\infty)$. We say that $F[v]$ is a {\it generalized polyhedron} if there exists a {\it finite} disjoint family of indecomposable sets of finite perimeter and volume $\{A_j\}_{j\in J}$ in $\R^{n-1}$, and a family of functions $\{v_j\}_{j\in J}\subset W^{1,1}(\R^{n-1})$, such that
  \begin{eqnarray}\label{gp1}
  &&\hspace{1.6cm}v=\sum_{j\in J}v_j\,1_{A_j}\,,
  \\\label{gp2}
  &&\Big(\{v^\wedge=0\}\setminus\{v=0\}^{(1)}\Big)\cup S_v\subset_{\H^{n-2}}\bigcup_{j\in J}\pae A_j\,.
  \end{eqnarray}
  (Here and in the following, $A\subset_{\H^k}B$ stands for $\H^k(A\setminus B)=0$.)}
\end{definition}

\begin{remark}
  {\rm Condition \eqref{gp2} amounts in asking that $v$ can jump or essentially vanish on $\{v>0\}$ only inside the essential boundaries of the sets $A_j$. For example, if $\{A_j\}_{j\in J}$ is a finite disjoint family of bounded open sets with Lipschitz boundary in $\R^{n-1}$, $\{v_j\}_{j\in J}\subset C^1(\R^{n-1})$, and $v_j>0$ on $A_j$ for every $j\in J$, then $v=\sum_{j\in J}v_j\,1_{A_j}$ defines a generalized polyhedron $F[v]$. Notice that \eqref{gp2} holds true since $v$ can jump only over the boundaries of the $A_j$, so that $S_v\subset\bigcup_{j\in J}\pa A_j$, while $\{v_j=0\}\cap\ov{A_j}\subset \pa A_j$ for every $j\in J$.}
\end{remark}

\begin{remark}\label{remark poliedri}
  {\rm Clearly, if $F[v]$ is a generalized polyhedron, then $v\in SBV(\R^{n-1})$ with $\H^{n-1}(\{v>0\})<\infty$, so that $F[v]$ has necessarily finite perimeter and volume.}
\end{remark}

\begin{theorem}\label{thm characterization poliedri SUPERIOR}
  If $v:\R^{n-1}\to[0,\infty)$ is such that $F[v]$ is a generalized polyhedron, then the following two statements are equivalent:
  \begin{enumerate}
    \item[(i)] if $E\in\M(v)$ then $\H^n(E\Delta(t\,e_n+F[v]))=0$ for some $t\in\R$;
    \item[(ii)] for every $\e>0$ the set $\{v^\wedge=0\}\cup\{[v]>\e\}$ does not essentially disconnect $\{v>0\}$.
  \end{enumerate}
\end{theorem}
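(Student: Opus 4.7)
I would prove the two implications separately.

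For $(\mathrm{i})\Rightarrow(\mathrm{ii})$ I would argue the contrapositive. Assuming $(\mathrm{ii})$ fails, fix $\e_0>0$ and a non-trivial Borel partition $\{G_+,G_-\}$ of $\{v>0\}$ modulo $\H^{n-1}$ such that the interface $\{v>0\}^{(1)}\cap\pae G_+\cap\pae G_-$ is $\H^{n-2}$-contained in $\{v^\wedge=0\}\cup\{[v]>\e_0\}$. I would then define
\[
E:=\Big(\tfrac{\e_0}{4}\,e_n+F[v]\cap(G_+\times\R)\Big)\cup\Big(-\tfrac{\e_0}{4}\,e_n+F[v]\cap(G_-\times\R)\Big),
\]
so that $b_E=\tfrac{\e_0}{4}\,1_{G_+}-\tfrac{\e_0}{4}\,1_{G_-}$ on $\{v>0\}$, and verify $E\in\M(v)$ via Theorem \ref{thm characterization with barycenter}. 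Indeed, \eqref{finale0} and \eqref{finale1 E} are immediate, $2[b_E]=\e_0$ on $S_{b_E}\cap\{v>0\}^{(1)}$, which on $\{v^\wedge>0\}$ is contained in $\{[v]>\e_0\}$, giving \eqref{finale2 E}, while $D^cv=0$ (since $v\in SBV$ by Remark \ref{remark poliedri}) and $D^cb_E=0$ (piecewise constant) make \eqref{finale3 e} trivial. Since $b_E$ takes two distinct values on sets of positive measure, $E$ is not $\H^n$-equivalent to any vertical translate of $F[v]$, contradicting $(\mathrm{i})$.

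For $(\mathrm{ii})\Rightarrow(\mathrm{i})$, fix $E\in\M(v)$. The generalized polyhedron structure implies $v\in SBV(\R^{n-1})$ and, by \eqref{gp2} together with the finite perimeter of each $A_j$, $S_v\cap\{v^\wedge>0\}\subset_{\H^{n-2}}\bigcup_{j\in J}\pae A_j$ has finite $\H^{n-2}$-measure. Thus Theorem \ref{thm mv per v sbv} applies and yields a $v$-admissible partition $\{G_h\}_{h\in I}$ of $\{v>0\}$ together with $\{c_h\}_{h\in I}\subset\R$ such that $b_E=\sum_{h\in I}c_h\,1_{G_h}$ modulo $\H^{n-1}$. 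The decisive structural step is then to prove that each $G_h$ is $\H^{n-1}$-equivalent to a union of some of the $A_j$'s; in particular, $|I|\le 2^{|J|}$ is finite. This step extracts from $v$-admissibility the property $\pae G_h\cap\{v^\wedge>0\}\subset_{\H^{n-2}}S_v$, and couples it with the observation that, by \eqref{gp2} and the mutual disjointness of the $A_j$'s, $S_v\cap A_j^{(1)}=_{\H^{n-2}}\emptyset$. The resulting identity $\pae G_h\cap A_j^{(1)}=_{\H^{n-2}}\emptyset$, combined with the indecomposability of $A_j$ (Remark \ref{remark iiii}), forces $G_h\cap A_j$ to be $\H^{n-1}$-equivalent to either $\emptyset$ or $A_j$.

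If all $c_h$ coincide, $E$ is a vertical translate of $F[v]$ modulo $\H^n$ and we are done. Otherwise, choose $t\in\R$ so that both $I_+:=\{h:c_h>t\}$ and $I_-:=\{h:c_h\le t\}$ are non-empty, set $G_\pm:=\bigcup_{h\in I_\pm}G_h$, and note that $\de:=\min\{c_h-c_{h'}:h\in I_+,\,h'\in I_-\}>0$ by finiteness of $I$. At any $z\in\{v>0\}^{(1)}\cap\pae G_+\cap\pae G_-\cap\{v^\wedge>0\}$ there must exist $h_1\in I_+$ and $h_2\in I_-$ such that $z$ has positive density in both $G_{h_1}$ and $G_{h_2}$, which implies $b_E^\vee(z)\ge c_{h_1}$ and $b_E^\wedge(z)\le c_{h_2}$, hence $[b_E](z)\ge\de$. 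Then \eqref{finale2 E} forces $[v](z)\ge 2\de>\de$ there, so the interface is $\H^{n-2}$-contained in $\{v^\wedge=0\}\cup\{[v]>\de\}$, contradicting $(\mathrm{ii})$ applied with $\e=\de$. The main obstacle I expect is the structural claim $G_h\cap A_j\in\{\emptyset,A_j\}$, which rests on a careful reading of $v$-admissibility and on using the measure-theoretic notion of indecomposability rather than any topological form of connectedness; once finiteness of $I$ is secured, the remainder is a clean gap argument.
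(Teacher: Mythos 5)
Your overall strategy mirrors the paper's: both implications are argued via the contrapositive, the counterexample is built by vertical shifting over a disconnecting partition, and the converse reduces to showing $b_E$ takes only finitely many values. There are, however, two real gaps and one genuine (and valid) divergence.

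\textbf{Gap in $(\mathrm{i})\Rightarrow(\mathrm{ii})$.} Before invoking Theorem~\ref{thm characterization with barycenter}, you must check that the candidate $E$ is a set of finite perimeter, and this is not automatic: $G_+$ is \emph{a priori} only a Borel set. The paper addresses this in Lemma~\ref{lemma remark noioso} by noting that $G_+\subset\{v>0\}$ gives $\pae G_+\subset (\pae G_+\cap\{v>0\}^{(1)})\cup\pae\{v>0\}$, that $\{v>0\}$ has finite perimeter (step one of the paper's proof), and that by the disconnection hypothesis $\pae G_+\cap\{v>0\}^{(1)}\subset_{\H^{n-2}}(\{v^\vee=0\}\setminus\{v=0\}^{(1)})\cup S_v$, which is $\H^{n-2}$-finite for a generalized polyhedron; Federer's criterion then yields $P(G_+)<\infty$, hence $b_E\in BV$ and $E$ of finite perimeter. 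Without this step your verification of $E\in\M(v)$ does not get off the ground. (Your symmetric shift $\pm\e_0/4$ is a harmless cosmetic variant of the paper's one-sided shift.)

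\textbf{Misattribution in $(\mathrm{ii})\Rightarrow(\mathrm{i})$.} You write that the inclusion $\pae G_h\cap\{v^\wedge>0\}\subset_{\H^{n-2}}S_v$ is ``extracted from $v$-admissibility.'' It is not: $v$-admissibility only controls $\sum_h P(G_h\cap\{v>\de\}\cap B_R)$, and says nothing about \emph{where} the interfaces sit. The inclusion instead requires three ingredients used together: (a) the Caccioppoli-partition structure (from $v$-admissibility), giving $\pae G_h\cap\{v^\wedge>0\}\subset_{\H^{n-2}}\bigcup_{k\ne h}\pae G_h\cap\pae G_k$; (b) the fact that after merging indices with equal $c_h$ one may assume the $c_h$ pairwise distinct, so that $[b_E]=|c_h-c_k|>0$ on $\pae G_h\cap\pae G_k$ (Theorem~\ref{thm afp cool}); and (c) the equality-case constraint $2[b_E]\le[v]$ on $\{v^\wedge>0\}$ from Theorem~\ref{thm characterization with barycenter}. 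Only the conjunction forces $[v]>0$, i.e.\ $S_v$, on $\pae G_h\cap\{v^\wedge>0\}$. You also need to observe that $\pae G_h\cap\{v=0\}^{(1)}=\emptyset$ (since $G_h\subset\{v>0\}$) to conclude $\pae G_h\cap A_j^{(1)}=_{\H^{n-2}}\emptyset$, and the correct form of the structural conclusion is $G_h\cap A_j\in\{\emptyset,\,A_j\cap\{v>0\}\}$ modulo $\H^{n-1}$, not $\{\emptyset,A_j\}$, since $A_j$ need not lie in $\{v>0\}$.

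\textbf{Genuine divergence.} Your route to finiteness of $I$ is different from, and a valid alternative to, the paper's. The paper applies the coarea formula \eqref{coarea-a-ah ottimali} directly on each $A_j^{(1)}$, uses $S_{b_E}\subset_{\H^{n-2}}\bigcup\pae A_j$ to make the right-hand side vanish, and invokes indecomposability of $A_j$ (as in Theorem~\ref{thm sufficient bv}) to conclude $b_E$ is constant on $A_j$. You instead go through the $v$-admissible partition guaranteed by Theorem~\ref{thm mv per v sbv} and show each $G_h$ is a union of $A_j\cap\{v>0\}$'s. Both land on the finite list $\{c_j\}_{j\in J}$ and the same gap argument, which you carry out correctly, noting that $[v]\ge 2\de>\de$ on the interface. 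Your approach is marginally more work (it re-derives the same structural information via a different theorem) but conceptually clean. As a small remark, the sharp bound on $\#\,I$ is $\#\,J$, not $2^{\#\,J}$, since the $G_h$ are disjoint; but finiteness is all that matters.

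Once you patch the finite-perimeter check for $G_+$ and correctly source the inclusion $\pae G_h\cap\{v^\wedge>0\}\subset_{\H^{n-2}}S_v$, the argument is complete and correct.
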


\begin{remark}
  {\rm In the example depicted in Figure \ref{fig casetta} the set $\{v^\wedge=0\}\cap\{v>0\}^{(1)}$ is empty, the set $\{[v]>0\}$ essentially disconnects $\{v>0\}$, but there is no $\e>0$ such that $\{[v]>\e\}$ essentially disconnects $\{v>0\}$. Indeed, in this case, rigidity holds true.}
\end{remark}

As noticed in Remark \ref{remark poliedri}, if $F[v]$ is a generalized polyhedron, then $v\in SBV(\R^{n-1})$ with $S_v$ locally $\H^{n-2}$-rectifiable, so that  $v$ satisfies the assumptions of Theorem \ref{thm mv per v sbv}. We now discuss the rigidity problem in this more general situation.

We start by noticing that, as shown by Example \ref{remark maria} below, condition (ii) in Theorem \ref{thm characterization poliedri SUPERIOR} is not even a sufficient condition to rigidity under the assumptions on $v$ considered in Theorem \ref{thm mv per v sbv}. A key remark here is that, in the situations considered in Theorem \ref{thm characterization no vertical SUFF} and Theorem \ref{thm characterization poliedri SUPERIOR}, when rigidity fails we can make this happen by performing a vertical translation of $F[v]$ above a single part of $\{v>0\}$. For example, when condition (ii) in Theorem \ref{thm characterization poliedri SUPERIOR} fails, there exist $\e>0$ and a non-trivial Borel partition $\{G_+,G_-\}$ of $\{v>0\}$ modulo $\H^{n-1}$ such that
\[
\{v>0\}^{(1)}\cap\pae G_+\cap\pae G_-\subset_{\H^{n-2}}\{v^\wedge=0\}\cup\{[v]>\e\}\,.
\]
Correspondingly, as we shall prove later on, the $v$-distributed set $E(t)$ defined as
\[
E(t)=\Big((t\,e_n+F[v])\cap(G_+\times\R)\Big)\cup\Big(F[v]\cap(G_-\times\R)\Big)\,,\qquad t\in\R\,,
\]
and obtained by a single vertical translation of $F[v]$ above $G_+$, satisfies $P(E(t))=P(F[v])$ whenever $t\in(0,\e/2)$. (Moreover, when condition \eqref{sufficient condition v=0} fails, we have $E(t)\in\M(v)$  for every $t\in\R$.) However, there may be situations in which violating rigidity by a single vertical translation of $F[v]$ is impossible, but where this task can be accomplished by simultaneously performing countably many independent vertical translations of $F[v]$. An example is obtained as follows.

\begin{example}\label{remark maria}
  {\rm We construct a function $v:\R^2\to[0,\infty)$ in such a way that $v\in SBV(\R^2)$, $S_v$ is locally $\H^1$-rectifiable, the set $\{v^\wedge=0\}\cup\{[v]>\e\}$ does not essentially disconnect $\{v>0\}$ for any $\e>0$, but, nevertheless, rigidity fails. Given $t\in\R$ and $\ell>0$, denote by $Q(t,\ell)$ the open square in $\R^2$ with center at $(t,0)$, sides parallel to the direction $(1,1)$ and $(1,-1)$, and diagonal of length  $2\,\ell$.
  \begin{figure}
    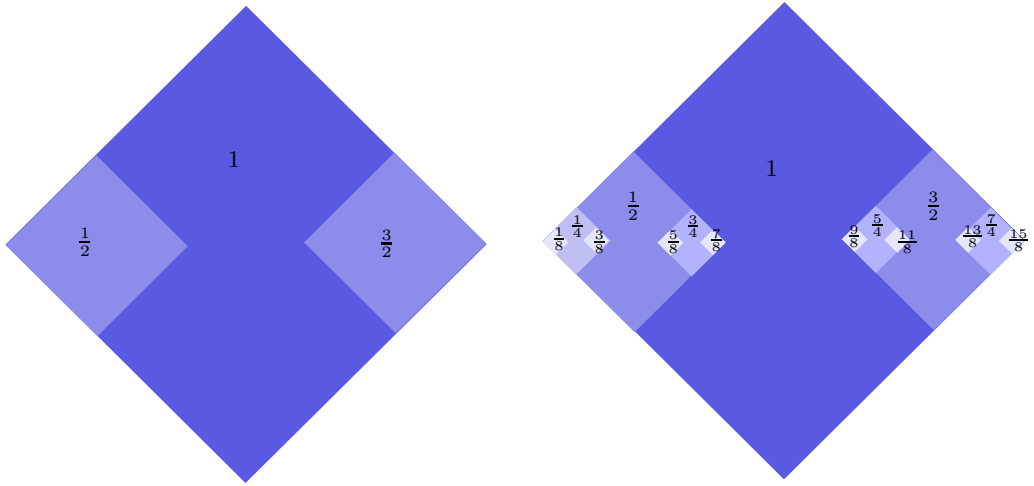\caption{{\small The functions $u_2$ and $u_4$ in the construction of Example \ref{remark maria}.}}\label{fig mariasuper}
  \end{figure}
  Then we set $u_1=1_{Q(0,1)}$, and define a sequence $\{u_j\}_{j\in\N}$ of piecewise constant functions,
  \begin{eqnarray*}
    u_2&=&u_1-\frac{1_{Q(-3/4,1/4)}}2+\frac{1_{Q(3/4,1/4)}}2\,,
    \\
    u_3&=&u_2-\frac{1_{Q(-15/16,1/16)}}4+\frac{1_{Q(-9/16,1/16)}}4-\frac{1_{Q(9/16,1/16)}}4+\frac{1_{Q(15/16,1/16)}}4\,,
  \end{eqnarray*}
  etc.; see Figure~\ref{fig mariasuper}. This sequence has pointwise limit $v\in SBV(\R^2;[0,\infty))$ such that $\{v>0\}=Q(0,1)$ and $Dv=D^sv$. In particular, if we define $E$ as in \eqref{malefico} with $\l=0$, $v_1=0$, and $v_2=v$, then, by Proposition \ref{proposition dsv}, $E\in\M(v)$. Since $b_E=v/2$, we easily see that \eqref{bE traslato a pezzi}, thus \eqref{E traslato a pezzi}, holds true: in other words, $E$ is obtained by countably many vertical translations of $F[v]$ over suitable disjoint Borel sets $G_h$, $h\in\N$. At the same time, any set $E_0$ obtained by a vertical translation of $F[v]$ over one (or over finitely many) of the $G_h$'s is bound to violate the necessary condition for equality $2[b_{E_0}]\le [v]$ $\H^{n-2}$-a.e. on $S_v\cap\{v^\wedge>0\}$, as the infimum of $[v]$ on $\pae G_h\cap S_v\cap\{v^\wedge>0\}$ is zero for every $h\in\N$. We also notice that,  as a simple computation shows, $S_v\cap\{v^\wedge>0\}$ is not only countably $\H^1$-rectifiable in $\R^2$, but actually $\H^1$-finite (thus, it is locally $\H^1$-rectifiable).}
\end{example}

All the above considerations finally suggest to introduce the following condition, that, in turn, characterizes rigidity under the assumptions on $v$ considered in Theorem \ref{thm mv per v sbv}. We begin by recalling the definition of Caccioppoli partition.

\begin{definition}\label{definition caccioppoli}
{\rm Let $G\subset \R^{n-1}$ be a set of finite perimeter, and let $\{ G_h \}_{h \in I}$ be an at most countable Borel partition of $G$ modulo $\H^{n-1}$. (That is, $I$ is a finite or countable set with $\#\,I\ge 2$, $G=_{\H^{n-1}}\bigcup_{h\in I}G_h$, $\H^{n-1}(G_h)>0$ for every $h\in I$ and $\H^{n-1}(G_h\cap G_k)=0$ for every $h,k\in I$, $h\ne k$.) We say that $\{ G_h \}_{h \in I}$ is a {\it Caccioppoli partition of $G$}, if $\sum_{h \in I} P (G_h) < \infty$.}
\end{definition}

\begin{remark}
{\rm When $G$ is an open set and $\{ G_h \}_{h \in I}$ is an at most countable Borel partition of $G$ modulo $\H^{n-1}$, then, according to \cite[Definition 4.16]{AFP}, $\{G_h\}_{h\in I}$ is a Caccioppoli partition of $G$ if $\sum_{h\in I}P(G_h;G)<\infty$. Of course, if we assume in addition that $G$ is of finite perimeter, then $\sum_{h\in I}P(G_h;G)<\infty$ is equivalent to $\sum_{h\in I}P(G_h)<\infty$. Thus Definition \ref{definition caccioppoli} and \cite[Definition 4.16]{AFP} agree in their common domain of applicability (that is, on open sets of finite perimeter).}
\end{remark}

\begin{definition} \label{v-admissible}
{\rm Let $v\in BV(\R^{n-1};[0,\infty))$, and let $\{ G_h \}_{h \in I}$ be an at most countable Borel partition of $\{ v > 0 \}$.
We say that $\{ G_h \}_{h \in I}$ is a {\it $v$-admissible partition} of $\{ v > 0 \}$, if $\{ G_h \cap B_R\cap \{ v > \delta \} \}_{h \in I}$ is a Caccioppoli partition of $\{ v > \delta \}\cap B_R$, for every $\delta > 0$ such that $\{ v > \delta \}$ is of finite perimeter and for every $R>0$.}
\end{definition}

\begin{definition}\label{definition ctrp}
  {\rm One says that $v\in BV(\R^{n-1};[0,\infty))$ satisfies the {\it mismatched stairway property} if the following holds:
  If $\{G_h\}_{h\in I}$ is a $v$-admissible partition of $\{ v > 0 \}$ and if $\{c_h\}_{h\in I}\subset\R$ is a sequence with $c_h\ne c_k$ whenever $h\ne k$,  then there exist $h_0,k_0\in I$ with $h_0\ne k_0$, and a Borel set $\S$ with
  \begin{equation}\label{ctrp sigma}
  \S\subset\pae G_{h_0}\cap\pae G_{k_0}\cap\{v^\wedge>0\}\,,\qquad \H^{n-2}(\S)>0\,,
  \end{equation}
  such that
  \begin{equation}
  \label{ctrp v}
  [v](z)<2|c_{h_0}-c_{k_0}|\,,\qquad\forall z\in\S\,.
  \end{equation}}
\end{definition}

\begin{remark}
  {\rm The terminology adopted here would like to suggest the following idea. One considers a $v$-admissible partition $\{G_h\}_{h\in I}$ of $\{v>0\}$ such that $\{v>0\}^{(1)}\cap\bigcup_{h\in I}\pae G_h$ is contained into $\{v^\wedge=0\}\cup S_v$. Next, one modifies $F[v]$ by performing vertical translations $c_h$ above each $G_h$, thus constructing a new  set $E$ having a ``stairway-like'' barycenter. This new set will have the same perimeter of $F[v]$, and thus will violate rigidity if $\#\,I\ge 2$, provided {\it all} the steps of the stairway match the jumps of $v$, in the sense that $2[b_E]=2|c_h-c_k|\le[v]$ on each $\pae G_h\cap\pae G_k\cap\{v^\wedge>0\}$. Thus, when all equality cases are stairway-like, we expect rigidity to be equivalent to asking that every such stairway has {\it at least  one} step that is {\it  mismatched} with respect to $[v]$.}
\end{remark}

\begin{remark}\label{remark epsilon}
  {\rm If $v\in BV(\R^{n-1};[0,\infty))$ has the mismatched stairway property, then, for every $\e>0$, $\{v^\wedge=0\}\cup \{[v]>\e\}$ does not essentially disconnect $\{v>0\}$. In particular, $\{v^\wedge=0\}$ does not essentially disconnect $\{v>0\}$, $\{v>0\}$ is essentially connected, and although it may still happen that $\{v^\wedge=0\}\cup S_v$ essentially disconnects $\{v>0\}$, in this case one has
  \[
  \essinf{\H^{n-2}}_{S_v\cap\{v^\wedge>0\}}\,[v]=0\,.
  \]
  We prove the claim arguing by contradiction. If $\{v^\wedge=0\}\cup \{[v]>\e\}$ essentially disconnects $\{v>0\}$, then there exist $\e>0$ and a non-trivial Borel partition $\{G_+,G_-\}$ of $\{v>0\}$ modulo $\H^{n-1}$ such that $\{v>0\}^{(1)}\cap\pae G_+\cap\pae G_-\subset_{\H^{n-2}}\{v^\wedge=0\}\cup\{[v]>\e\}$. Since \eqref{dens3} below implies $\{v^\wedge>0\}\subset\{v>0\}^{(1)}$, then we have
  \begin{equation}\label{neil young1}
      \{v^\wedge>0\}\cap\pae G_+\cap\pae G_-\subset_{\H^{n-2}}\{[v]>\e\}\,,
  \end{equation}
  so that, for every $\de>0$,
  \begin{eqnarray}
  \{ v > \delta \}^{(1)} \cap \pae G_{+}= \{ v > \delta \}^{(1)} \cap \pae G_{+} \cap \pae G_{-}
  \label{agosto}
  \subset_{\H^{n-2}}\{[v]>\e\}\,.
  \end{eqnarray}
  If we set $G_{\pm\, \delta} = G_\pm \cap \{ v > \delta\}$, then $\pae G_{\pm\, \delta} \subset \pae \{ v > \delta \} \cup (\{ v > \delta \}^{(1)} \cap \pae G_{\pm})$, and, by \eqref{agosto}, $\pae G_{\pm\,\de}\subset_{\H^{n-2}}\pae\{v>\de\}\cup\{[v]>\e\}$. Since $[v]\in L^1(\H^{n-2}\llcorner S_v)$, we find $\H^{n-2}(\{[v]>t\})<\infty$ for every $t>0$, and, in particular
  \[
  P(G_{+\,\de})+P(G_{-\,\de})\le 2\,P(\{v>\de\})+2\,\H^{n-2}(\{[v]>\e\})<\infty\,,
  \]
  whenever $\{v>\de\}$ is of finite perimeter. This shows that $\{G_+,G_-\}$ is a $v$-admissible partition. If we now set $I=\{+,-\}$, $c_+=\e/2$, and $c_-=0$, then $I$, $\{G_h\}_{h\in I}$, and $\{c_h\}_{h\in I}$ are admissible in the mismatched stairway property. By the mismatched stairway property, there exists a Borel set $\S\subset\{v^\wedge>0\}\cap\pae G_+\cap\pae G_-$ such that $[v]< 2|c_+-c_-|=\e$ on $\S$ and $\H^{n-2}(\S)>0$, a contradiction to \eqref{neil young1}.}
\end{remark}

It turns out that if $v$ is a $SBV$-function with locally finite jump set, then rigidity is characterized by the mismatched stairway property.

\begin{theorem}
\label{thm characterization sbv}
  If $v\in SBV(\R^{n-1};[0,\infty))$, $\H^{n-1}(\{v>0\})<\infty$, and $S_v\cap\{v^\wedge>0\}$ is locally $\H^{n-2}$-finite,
  then the following two statements are equivalent:
    \begin{enumerate}
    \item[(i)] if $E\in\M(v)$, then $\H^n(E\Delta(t\,e_n+F[v]))=0$ for some $t\in\R$;
    \item[(ii)] $v$ has the mismatched stairway property.
      \end{enumerate}
  \end{theorem}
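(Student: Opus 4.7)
The plan is to use Theorem \ref{thm mv per v sbv} to reduce the entire rigidity question to analyzing ``stairway'' configurations---countable families of vertical translations of $F[v]$ indexed by a $v$-admissible partition of $\{v>0\}$---and then to recognize the mismatched stairway property as the precise measure-theoretic obstruction preventing a nontrivial such stairway from lying in $\M(v)$.

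For the implication (ii)$\Rightarrow$(i) I would fix $E\in\M(v)$ and invoke Theorem \ref{thm mv per v sbv} to obtain a $v$-admissible partition $\{G_h\}_{h\in I}$ of $\{v>0\}$ and constants $\{c_h\}_{h\in I}\subset\R$ realizing \eqref{E traslato a pezzi}, together with the necessary condition $2[b_E]\le[v]$ $\H^{n-2}$-a.e.\ on $\{v^\wedge>0\}$. Merging the indices that share a common constant preserves $v$-admissibility by countable subadditivity of the perimeter for disjoint sets of finite perimeter, so one may assume the $c_h$'s are pairwise distinct. If only one value survives, $b_E$ is $\H^{n-1}$-a.e.\ constant on $\{v>0\}$ and rigidity is proved. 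Otherwise the mismatched stairway property applied to $\{G_h\}_{h\in I}$ and $\{c_h\}_{h\in I}$ furnishes $h_0\neq k_0$ and a Borel set $\S\subset\pae G_{h_0}\cap\pae G_{k_0}\cap\{v^\wedge>0\}$ with $\H^{n-2}(\S)>0$ and $[v]<2|c_{h_0}-c_{k_0}|$ on $\S$; at each $z\in\S$ both $G_{h_0}$ and $G_{k_0}$ have positive upper density, and since $\{v^\wedge>0\}\subset\{v>0\}^{(1)}$ the set $\{v=0\}$ has density zero at $z$, so \eqref{bE traslato a pezzi} forces $b_E^\vee(z)\ge\max\{c_{h_0},c_{k_0}\}$ and $b_E^\wedge(z)\le\min\{c_{h_0},c_{k_0}\}$, hence $[b_E](z)\ge|c_{h_0}-c_{k_0}|$, contradicting $2[b_E]\le[v]$ on $\S$.

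For the contrapositive of (i)$\Rightarrow$(ii), a failure of the mismatched stairway property produces a $v$-admissible partition $\{G_h\}_{h\in I}$ with pairwise distinct constants $\{c_h\}_{h\in I}$ such that $[v]\ge 2|c_{h_0}-c_{k_0}|$ $\H^{n-2}$-a.e.\ on $\pae G_{h_0}\cap\pae G_{k_0}\cap\{v^\wedge>0\}$ for every $h_0\neq k_0$. The natural candidate equality case to build from this data is
\[
E=\bigcup_{h\in I}\Big(c_h\,e_n+F[v]\cap(G_h\times\R)\Big),
\]
whose barycenter satisfies $b_E=\sum_{h\in I}c_h\,1_{G_h}$ $\H^{n-1}$-a.e. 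Since $\#I\ge 2$ and the $c_h$'s are distinct, $b_E$ is not constant on $\{v>0\}$, so $E$ is not a vertical translate of $F[v]$. To conclude I would verify $E\in\M(v)$ through Theorem \ref{thm characterization with barycenter}: \eqref{finale0} and \eqref{finale1 E} are immediate from the construction; \eqref{finale3 e} is vacuous because $v\in SBV$ gives $D^cv=0$, and $b_E$ is piecewise constant on a Caccioppoli partition at each threshold $\de$ with $\{v>\de\}$ of finite perimeter (by $v$-admissibility), which yields $|D^cb_E|^+=0$; finally \eqref{finale2 E} follows from the identity
\[
[b_E](z)=\max_{h,k\in H(z)}|c_h-c_k|,\qquad H(z)=\{h\in I:z\in\pae G_h\},
\]
valid $\H^{n-2}$-a.e.\ on $\{v^\wedge>0\}$, which assembles the pairwise hypotheses into $2[b_E]\le[v]$ on $\{v^\wedge>0\}$.

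I expect the main obstacle to be the rigorous verification that the stairway $E$ actually realizes equality in Steiner's inequality. Two points are delicate: first, ensuring that no extra perimeter is generated at the junctions $\pae G_{h_0}\cap\pae G_{k_0}$, which is where the $SBV$ hypothesis and the local $\H^{n-2}$-finiteness of $S_v\cap\{v^\wedge>0\}$ enter via the $v$-admissibility estimate $\sum_{h\in I}P(G_h\cap B_R\cap\{v>\de\})<\infty$, and where one must also rule out any contribution to $|D^cb_E|^+$; second, the density-based identification of $[b_E](z)$ in terms of the pieces $G_h$ touching $z$, which requires some care at points where infinitely many pieces simultaneously accumulate, in order to remain consistent with the measure-theoretic framework of Theorem \ref{thm tauM b delta}.
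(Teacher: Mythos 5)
Your first implication (ii)$\Rightarrow$(i) is essentially the paper's step one: invoke Theorem \ref{thm mv per v sbv} to put $b_E$ in the stairway form $\sum_h c_h\,1_{G_h}$ with $\{G_h\}$ $v$-admissible, merge pieces with equal constants (your subadditivity argument is fine), and then contradict $2[b_E]\le[v]$ on the $\Sigma$ furnished by the mismatched stairway property via the elementary inequalities $b_E^\vee\ge\max\{c_{h_0},c_{k_0}\}$, $b_E^\wedge\le\min\{c_{h_0},c_{k_0}\}$ on $\pae G_{h_0}\cap\pae G_{k_0}$. This matches the paper.

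The contrapositive direction has a genuine gap: you propose to certify $E\in\M(v)$ by invoking Theorem \ref{thm characterization with barycenter}, but that theorem has \emph{``let $E$ be a $v$-distributed set of finite perimeter''} as a standing hypothesis, and nothing in your construction establishes $P(E)<\infty$ a priori. The same issue blocks a direct appeal to Theorem \ref{thm mv per v sbv}. This is not a minor technicality: once you allow countably many translations with no amplitude control beyond the local $[v]$-constraint, there is no soft reason for $E$ to have finite perimeter, and the hypotheses you are trying to verify are only meaningful once it does. The paper's proof avoids the issue by \emph{not} routing through the abstract characterization: it uses Theorem \ref{thm afp cool} to record that $b_\de=1_{\{v>\de\}}b_E$ is a piecewise-constant $GBV$ function with $\nabla b_\de=0$, $|D^c b_\de|=0$, and $2[b_\de]\le[v]$ on $\{v>\de\}^{(1)}$ (the structure theorem also supplies the precise $\H^{n-2}$-a.e.\ identification $[b_\de]=|c_h-c_k|$ on $\pae G_{h,\de}\cap\pae G_{k,\de}$, which is what your $\max_{h,k\in H(z)}|c_h-c_k|$ formula is reaching for but should be stated through this two-piece structure rather than an unrestricted max); then it truncates to $E_h=E\cap(\{\de_h<v<L_h\}\times\R)$, computes $P(E_h;\S_h^{(1)}\times\R)=P(F[v];\S_h^{(1)}\times\R)$ via Corollary \ref{lemma W}, controls $P(E_h;\pae\S_h\times\R)$ by exactly the argument of Proposition \ref{corollario v uguale 0}, and deduces $P(E)\le\liminf_h P(E_h)=P(F[v])$ by lower semicontinuity. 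This single computation simultaneously proves $P(E)<\infty$ and the equality $P(E)=P(F[v])$. To repair your proposal you would need to replace the appeal to Theorem \ref{thm characterization with barycenter} with this truncation-and-limit perimeter estimate (equivalently, verify $\sup_h P(E_h)<\infty$ in the sense of Remark \ref{remark filippo omega}) before any of the barycenter conditions can even be invoked.
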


  \begin{remark}
  {\rm Is it important to observe that, in order to characterize rigidity, only $v$-admissible partitions of $\{ v > 0\}$ have to be considered in the definition of mismatched stairway property. Indeed, let $n=2$ and set $v=1_{(0,1)}\in SBV(\R;[0,\infty))$, so that rigidity holds true for $v$. Let now $\{ G_h \}_{h \in\N}$ be the family of open intervals used to define the middle-third Cantor set $K$, so that $K = [0,1] \setminus\bigcup_{h \in\N} G_h$. Notice that $\{ G_h \}_{h \in\N}$ is a non-trivial countable Borel partition of $\{v>0\}=(0,1)$ modulo $\H^1$. However, since $\pae G_h \cap \pae G_k = \emptyset$ whenever $h \neq k$, it is not possible to find a set $\Sigma$ satisfying \eqref{ctrp sigma} whatever the choice of $\{c_h\}_{h\in\N}$ we make. In particular, if we would not restrict the partitions in Definition \ref{definition ctrp} to $v$-admissible partitions, then this particular $v$ (satisfying rigidity) would not have the mismatched stairway property. Notice of course that, in this example, $\sum_{h\in\N}P(G_h\cap\{v>\de\}\cap B_R)=\infty$ for every $\de,R>0$.}
\end{remark}

The question for a geometric characterization of rigidity when $v\in BV$ is thus left open. The considerable complexity of the mismatched stairway property may be seen as a negative indication about the tractability of this problem. In the planar case, due to the trivial topology of the real line, these difficulties can be overcome, and we obtain the following complete result.

\begin{theorem}\label{thm characterization R2}
  If $v\in BV(\R;[0,\infty))$ and $\H^1(\{v>0\})<\infty$, then, equivalently,
  \begin{enumerate}
    \item[(i)] if $E\in\M(v)$, then $\H^2(E\Delta(t\,e_2+F[v]))=0$ for some $t\in\R$;
    \item[(ii)] $\{v>0\}$ is $\H^1$-equivalent to a bounded open interval $(a,b)$, $v\in W^{1,1}(a,b)$, and $v^\wedge>0$ on $(a,b)$;
    \item[(iii)] $F[v]$ is an indecomposable set that has no vertical boundary above $\{v^\wedge>0\}$, i.e.
    \begin{equation}
      \label{paura}
          \H^1\Big(\Big\{x\in\pa^*F[v]:\q\nu_{F[v]}(x)=0\,,v^\wedge(\p x)>0\Big\}\Big)=0\,.
    \end{equation}
\end{enumerate}
\end{theorem}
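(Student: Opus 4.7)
The plan is to prove the three-way equivalence by establishing (ii)$\Rightarrow$(i), (ii)$\Leftrightarrow$(iii), and the main direction (i)$\Rightarrow$(ii). The first two reduce to Theorem \ref{thm ccf2} and Theorem \ref{thm characterization no vertical SUFF} respectively, while (i)$\Rightarrow$(ii) is obtained by constructing explicit non-trivial members of $\M(v)$ via Theorem \ref{thm characterization with barycenter} whenever any piece of (ii) fails.

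For (ii)$\Rightarrow$(i), I apply Theorem \ref{thm ccf2} with $\Om=(a,b)$: the assumption $v\in W^{1,1}(a,b)$ yields \eqref{no vertical parts CCF}, while $v^\wedge>0$ on $(a,b)$ gives \eqref{v>0 Hn-2}; rigidity on $(a,b)\times\R$ extends to rigidity on $\R^2$ because any $v$-distributed set lives $\H^2$-essentially over $(a,b)$. For (ii)$\Leftrightarrow$(iii), the key observation is that \eqref{paura} is equivalent to $D^sv\llcorner\{v^\wedge>0\}=0$ by the same argument as in the remark following Theorem \ref{thm ccf2}, so Theorem \ref{thm characterization no vertical SUFF} identifies indecomposability of $F[v]$ with $\{v^\wedge=0\}$ not essentially disconnecting $\{v>0\}$. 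In the planar setting this last condition reduces, by a direct one-dimensional density argument, to $\{v>0\}$ being $\H^1$-equivalent to a bounded open interval $(a,b)$ with $v^\wedge>0$ on $(a,b)$; combined with $D^sv\llcorner(a,b)=0$, this gives $v\in W^{1,1}(a,b)$, namely (ii).

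For (i)$\Rightarrow$(ii) I assume rigidity and argue by contradiction in three steps. First, if $\{v>0\}$ is not $\H^1$-equivalent to an interval, or if there exists $x_0\in\{v>0\}^{(1)}$ with $v^\wedge(x_0)=0$, then there is a non-trivial Borel partition $\{G_-,G_+\}$ of $\{v>0\}$ whose common essential boundary in $\{v>0\}^{(1)}$ is contained in $\{v^\wedge=0\}$; for $t\ne 0$, I set
\[
E(t)=\Big(F[v]\cap(G_-\times\R)\Big)\cup\Big((t\,e_2+F[v])\cap(G_+\times\R)\Big)\,.
\]
The barycenter $b_{E(t)}$ takes only the values $0$ and $t$, has $\nabla b_{E(t)}=0$, no Cantorian part, and jumps only in $\{v^\wedge=0\}$, so Theorem \ref{thm characterization with barycenter} (applied with $f=0$) gives $E(t)\in\M(v)$ while $E(t)$ is not a vertical translate of $F[v]$, contradicting rigidity. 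Hence $\{v>0\}=_{\H^1}(a,b)$ for a bounded open interval $(a,b)$, and $v^\wedge>0$ on $(a,b)$. Second, if $x_0\in S_v\cap(a,b)$, then $v^\wedge(x_0)>0$ and $[v](x_0)>0$; applying the same construction with $G_-=(a,x_0)$, $G_+=(x_0,b)$ and $t\in(0,[v](x_0)/2)$ yields $2[b_{E(t)}](x_0)=2t<[v](x_0)$, so \eqref{finale2 E} is still satisfied, $E(t)\in\M(v)$ again contradicts rigidity, and thus $S_v\cap(a,b)=\emptyset$.

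The final step is the main technical obstacle: ruling out $D^cv\llcorner(a,b)\ne 0$. Supposing the contrary, I define the continuous BV function $b_E(z)=\tfrac12\,D^cv((-\infty,z])$ (extended by $0$ on $\{v=0\}$) and set
\[
E=\Big\{x\in\R^2:\big|\q x-b_E(\p x)\big|<v(\p x)/2\Big\}\,.
\]
Then $\nabla b_E=0$ a.e., $b_E$ has no jumps in $\{v^\wedge>0\}$, and $D^cb_E=\tfrac12\,D^cv$; Corollary \ref{lemma W} ensures that $E$ has finite perimeter, and Theorem \ref{thm characterization with barycenter} applied with $f\equiv 1/2$ gives $E\in\M(v)$. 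Since $b_E$ is non-constant on $(a,b)$, $E$ is not a vertical translate of $F[v]$, so rigidity fails. The delicate point is the verification of the Cantor matching identity \eqref{finale3 e}: this requires using the decomposition provided by Theorem \ref{thm tauM b delta} to identify $D^c(\tau_Mb_\de)$ as the restriction of $D^cb_E=\tfrac12D^cv$ to $\{v>\de\}^{(1)}\cap\{|b_E|<M\}^{(1)}$ for $\H^1$-a.e. $\de,M>0$.
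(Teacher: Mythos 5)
Your proof is correct, and the global structure (prove (ii)$\Rightarrow$(i), (ii)$\Leftrightarrow$(iii), and (i)$\Rightarrow$(ii)) matches the paper's; however, the internal arguments differ enough to be worth comparing. For (ii)$\Rightarrow$(i) you invoke Theorem~\ref{thm ccf2} over $\Om=(a,b)$ and then observe that the localized rigidity over $(a,b)\times\R$ globalizes because any $v$-distributed set lives $\H^2$-essentially over $(a,b)$; the paper instead applies Theorem~\ref{thm sufficient bv} directly to the set $\{v^\wedge=0\}\cup S_v\subset\R\setminus(a,b)$, which is slightly cleaner since Theorem~\ref{thm ccf2} is itself derived from Theorem~\ref{thm sufficient bv} in Appendix~\ref{section fusco}, but both routes are valid. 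For (ii)$\Leftrightarrow$(iii) you and the paper both ultimately rely on Theorem~\ref{thm indecomponibili} together with \cite[Proposition~1.2]{ChlebikCianchiFuscoAnnals05} (through the one-dimensional observation that a $v$ with $v^\wedge>0$ on $(a,b)$ and $\{v>0\}=_{\H^1}(a,b)$ has $\{v^\wedge>0\}=(a,b)$, so \eqref{paura} becomes \eqref{pauraaaa}); phrasing it through Theorem~\ref{thm characterization no vertical SUFF} is a bit roundabout but harmless.

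The genuine divergence is in (i)$\Rightarrow$(ii). The paper's step two rules out a vanishing section $v^\wedge(c)=0$, $c\in(a,b)$, by the explicit perimeter computation $P(F_+)+P(F_-)=P(F[v])$ via \eqref{perimetro di F}, \eqref{+}, \eqref{-}, and then dispatches the entire singular part $D^sv\llcorner(a,b)$ (jump and Cantor at once) by invoking Proposition~\ref{proposition dsv}. You instead construct three explicit members of $\M(v)$ — one for the non-interval/vanishing-section case, one for a jump point $x_0\in S_v\cap(a,b)$ with a small translation $t<[v](x_0)/2$, and one for the Cantor case with $b_E(z)=\tfrac12 D^cv((-\infty,z])$ on $(a,b)$ — and check each against Theorem~\ref{thm characterization with barycenter}. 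In effect, you re-derive by hand the 1D special cases of Proposition~\ref{proposition dsv}; this is longer but more transparent, since it shows concretely how each type of singularity produces a counterexample. Your verification of the Cantor matching condition \eqref{finale3 e} with $f\equiv 1/2$ is correct: by Lemma~\ref{lemma fgE}, $D^c b_\de=D^cb_E\llcorner\{v>\de\}^{(1)}$, and then \eqref{donna}-type reasoning gives $D^c(\tau_M b_\de)=D^cb_E\llcorner(\{v>\de\}^{(1)}\cap\{|b_E|<M\}^{(1)})=\tfrac12 D^cv\llcorner(\{v>\de\}^{(1)}\cap\{|b_E|<M\}^{(1)})$, since $D^cb_E=\tfrac12 D^cv$ (note $D^cv$ is automatically concentrated on $(a,b)$ by Lemma~\ref{lemma Dcv}, since $v^\wedge=0$ outside $[a,b]$ and $\{a,b\}$ is $\H^0$-finite). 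Two small points you should make explicit: (1) before invoking Theorem~\ref{thm characterization with barycenter} you must verify that each constructed set has finite perimeter — for the translation constructions this follows from Lemma~\ref{lemma yayyy}, and for the Cantor construction from Corollary~\ref{lemma W} together with the 1D fact that $v\in BV(\R)$ implies $v\in L^\infty(\R)$ and the jump and Cantor sums in \eqref{formula W} are finite; (2) the boundedness of $(a,b)$ is an immediate consequence of $\{v>0\}=_{\H^1}(a,b)$ and $\H^1(\{v>0\})<\infty$, which you assert without spelling out.
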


We close this introduction by mentioning that the extension of our results to the case of the localized Steiner's inequality is discussed in appendix \ref{section fusco}. In particular, we shall explain how to derive Theorem \ref{thm ccf2} from Theorem \ref{thm sufficient bv} via an approximation argument.

\medskip

\noindent {\bf Acknowledgement}\,: This work was carried out while FC, MC, and GDP were visiting the University of Texas at Austin. The work of FC was partially supported by the UT Austin-Portugal partnership through the FCT post-doctoral fellowship SFRH/BPD/51349/2011. The work of GDP was partially supported by ERC under FP7, Advanced Grant n. 246923. The work of FM was partially supported by ERC under FP7, Starting Grant n. 258685 and Advanced Grant n. 226234, by the Institute for Computational Engineering and Sciences and by the Mathematics Department of the University of Texas at Austin during his visit to these institutions, and by NSF Grant DMS-1265910.

\section{Notions from Geometric Measure Theory}\label{section preliminaries} We gather here some notions from Geometric Measure Theory needed in the sequel, referring to \cite{AFP,maggiBOOK} for further details. We start by reviewing our general notation in $\R^n$. We denote by $B(x,r)$ the open Euclidean ball of radius $r>0$ and center $x\in\R^n$. Given $x\in\R^n$ and $\nu\in S^{n-1}$ we denote by $H_{x,\nu}^+$ and $H_{x,\nu}^-$ the complementary half-spaces
\begin{eqnarray}\label{Hxnu+}
  H_{x,\nu}^+&=&\Big\{y\in\R^n:(y-x)\cdot\nu\ge 0\Big\}\,,
  \\\nonumber
  H_{x,\nu}^-&=&\Big\{y\in\R^n:(y-x)\cdot\nu\le 0\Big\}\,.
\end{eqnarray}
Finally, we decompose $\R^n$ as the product $\R^{n-1}\times\R$, and denote by $\p:\R^n\to\R^{n-1}$ and $\q:\R^n\to\R$ the corresponding horizontal and vertical projections, so that
\[
x=(\p x,\q x)=(x',x_n)\,,\qquad x'=(x_1,\dots,x_{n-1})\,,\qquad\forall x\in\R^n\,,
\]
and define the vertical cylinder of center $x\in\R^n$ and radius $r>0$, and the $(n-1)$-dimensional ball in $\R^{n-1}$ of center $z\in\R^{n-1}$ and radius $r>0$, by setting, respectively,
\begin{eqnarray*}
\C_{x,r}&=&\Big\{y\in\R^n:|\p x- \p y|<r\,,|\q x- \q y|<r\Big\}\,,
\\
\D_{z,r}&=&\Big\{w\in\R^{n-1}:|w-z|<r\Big\}\,.
\end{eqnarray*}
In this way, $\C_{x,r}=\D_{\p x,r}\times(\q x-r,\q x+r)$. We shall use the following two notions of convergence for Lebesgue measurable subsets of $\R^n$. Given Lebesgue measurable sets $\{E_h\}_{h\in\N}$ and $E$ in $\R^n$, we shall say that $E_h$ locally converge to $E$, and write
\[
E_h\toloc E\,,\qquad\mbox{as $h\to\infty$}\,,
\]
provided $\H^n((E_h\Delta E)\cap K)\to 0$ as $h\to\infty$ for every compact set $K\subset\R^n$; we say that $E_h$ converge to $E$ as $h\to\infty$, and write $E_h\to E$, provided $\H^n(E_h\Delta E)\to 0$ as $h\to\infty$.

\subsection{Density points and approximate limits}\label{section approximate limits} If $E$ is a Lebesgue measurable set in $\R^n$ and $x\in\R^n$, then we define the {\it upper and lower $n$-dimensional densities} of $E$ at $x$ as
\begin{eqnarray*}
  \theta^*(E,x)=\limsup_{r\to 0^+}\frac{\H^n(E\cap B(x,r))}{\om_n\,r^n}\,,
  \qquad
  \theta_*(E,x)=\liminf_{r\to 0^+}\frac{\H^n(E\cap B(x,r))}{\om_n\,r^n}\,,
\end{eqnarray*}
respectively. In this way we define two Borel functions on $\R^n$, that agree a.e. on $\R^n$. In particular, the {\it $n$-dimensional density} of $E$ at $x$
\[
\theta(E,x)=\lim_{r\to 0^+}\frac{\H^n(E\cap B(x,r))}{\om_n\,r^n}\,,
\]
is defined for a.e. $x\in\R^n$, and $\theta(E,\cdot)$ is a Borel function on $\R^n$ (up to extending it by a constant value on the $\H^n$-negligible set $\{\theta^*(E,\cdot)>\theta_*(E,\cdot)\}$). Correspondingly, for $t\in[0,1]$, we define
\begin{eqnarray}\label{E(t)}
  E^{(t)}=\Big\{x\in\R^n:\theta(E,x)=t\Big\}\,.
\end{eqnarray}
By the Lebesgue differentiation theorem, $\{E^{(0)},E^{(1)}\}$ is a partition of $\R^n$ up to a $\H^n$-negligible set. It is useful to keep in mind that
\begin{eqnarray*}
  &&x\in E^{(1)}\qquad\mbox{if and only if}\qquad E_{x,r}\toloc\R^n\quad\mbox{as $r\to 0^+$}\,,
  \\
  &&x\in E^{(0)}\qquad\mbox{if and only if}\qquad E_{x,r}\toloc\emptyset\quad\mbox{as $r\to 0^+$}\,,
\end{eqnarray*}
where $E_{x,r}$ denotes the blow-up of $E$ at $x$ at scale $r$, defined as
\begin{eqnarray*}
  E_{x,r}=\frac{E-x}{r}=\Big\{\frac{y-x}r:y\in E\Big\}\,,\qquad x\in\R^n\,,r>0\,.
\end{eqnarray*}
The set $\pae E=\R^n\setminus(E^{(0)}\cup E^{(1)})$ is called the {\it essential boundary} of $E$. Thus, in general, we only have $\H^n(\pae E)=0$, but we do not know $\pae E$ to be ``$(n-1)$-dimensional'' in any sense. Strictly related to the notion of density is that of approximate upper and lower limits of a measurable function. Given a Lebesgue measurable function $f:\R^n\to\R$ we define the (weak) {\it approximate upper and lower limits} of $f$ at $x\in\R^n$ as
  \begin{eqnarray*}
  f^\vee(x)&=&\inf\Big\{t\in\R:\theta(\{f>t\},x)=0\Big\}=\inf\Big\{t\in\R:\theta(\{f<t\},x)=1\Big\}\,,
  \\
  f^\wedge(x)&=&\sup\Big\{t\in\R:\theta(\{f<t\},x)=0\Big\}=\sup\Big\{t\in\R:\theta(\{f>t\},x)=1\Big\}\,.
  \end{eqnarray*}
  As it turns out, $f^\vee$ and $f^\wedge$ are Borel functions with values on $\R\cup\{\pm\infty\}$ defined \textit{at every point $x$} of $\R^n$, and they do not depend on the Lebesgue representative chosen for the function $f$. Moreover, for $\H^n$-a.e. $x\in \R^n$, we have that $f^\vee(x)=f^\wedge(x)\in\R\cup\{\pm\infty\}$, so that the {\it approximate discontinuity} set of $f$, $S_f=\{f^\wedge<f^\vee\}$,
  satisfies $\H^n(S_f)=0$. On noticing that, even if $f^\wedge$ and $f^\vee$ may take infinite values on $S_f$, the difference $f^\vee(x)-f^\wedge(x)$ is always well defined in $\R\cup\{\pm\infty\}$ for $x\in S_f$, we define the {\it approximate jump} of $f$ as the Borel function $[f]:\R^n\to[0,\infty]$ defined by
  \begin{eqnarray*}
    [f](x)=\left\{\begin{array}{l l}
      f^\vee(x)-f^\wedge(x)\,,&\mbox{if $x\in S_f$}\,,
      \\
      0\,,&\mbox{if $x\in \R^n\setminus S_f$}\,.
    \end{array}
    \right .
  \end{eqnarray*}
  so that $S_f=\{[f]>0\}$. Finally, the {\it approximate average} of $f$ is the Borel function $\widetilde{f}:\R^n\to\R\cup\{\pm\infty\}$ defined as
  \begin{eqnarray}\label{f average}
    \widetilde{f}(x)=\left\{\begin{array}{l l}
      \frac{f^\vee(x)+f^\wedge(x)}2\,,&\mbox{if $x\in \R^n\setminus \{f^\wedge=-\infty\,, f^\vee=+\infty\}$}\,,
      \\
      0\,,&\mbox{if $x\in\{f^\wedge=-\infty\,, f^\vee=+\infty\}$}\,.
    \end{array}
    \right .
  \end{eqnarray}
  The motivation behind definition \eqref{f average} is that (in step two of the proof of Theorem \ref{lemma u1u2}) we want the limit relation
  \begin{equation}
    \label{limite mistico}
      \widetilde{f}(x)=\lim_{M\to\infty}\widetilde{\tau_M(f)}(x)=\lim_{M\to\infty}\frac{\tau_M(f^\vee)+\tau_M(f^\wedge)}2\,,\qquad\forall x\in\R^n\,,
  \end{equation}
  to hold true for every Lebesgue measurable function $f:\R^n\to\R$, where here and in the rest of the paper we set
  \begin{equation}\label{def tauM}
    \tau_M(s)=\max\{-M,\min\{M,s\}\}\,,\qquad s\in\R\cup\{\pm\infty\}\,.
  \end{equation}
  The validity of \eqref{limite mistico} is easily checked by noticing that
  \begin{equation}
    \label{tauM wedge}
    \tau_M(f)^\wedge=\tau_M(f^\wedge)\,,\qquad
        \tau_M(f)^\vee=\tau_M(f^\vee)\,,\qquad
        \widetilde{\tau_M(f)}(x)=\frac{\tau_M(f^\vee)+\tau_M(f^\wedge)}2\,.
  \end{equation}
 With these definitions at hand, we notice the validity of the following properties, which follow easily from the above definitions, and hold true for every Lebesgue measurable $f:\R^n\to\R$ and for every $t \in \mathbb{R}$:
\begin{align}
\{ |f|^{\vee}  < t \} &=\{ - t < f^{\wedge} \} \cap \{ f^{\vee} < t \}\,, \label{dens1}
\\
\{ f^{\vee} < t \} &\subset \{ f < t \}^{(1)}\subset\{f^\vee\le t\}\,, \label{dens2}
\\
\{ f^{\wedge} > t \} &\subset \{ f > t \}^{(1)}\subset\{f^\wedge \ge t\}\,. \label{dens3}
\end{align}
(Note that all the inclusions may be strict, that we also have $\{ f < t \}^{(1)}=\{ f^\vee < t \}^{(1)}$, and that all the other analogous relations hold true.) Moreover, if $f,g:\R^n\to\R$ are Lebesgue measurable functions and $f=g$ $\H^n$-a.e. on a Borel set $E$, then
\begin{equation}\label{gino1}
f^{\vee} (x) = g^{\vee} (x) \,,\qquad f^{\wedge} (x) = g^{\wedge} (x)\,,\qquad [f](x)=[g](x)\,,\qquad\forall x\in E^{(1)}\,.
\end{equation}
If $f:\R^n\to\R$ and $A\subset\R^n$ are Lebesgue measurable, and $x\in\R^n$ is such that $\theta^*(A,x)>0$, then we say that $t\in\R\cup\{\pm\infty\}$ is the {\it approximate limit of $f$ at $x$ with respect to $A$}, and write $t=\aplim (f,A,x)$, if
\begin{eqnarray*}
  &&\theta\Big(\{|f-t|>\e\}\cap A;x\Big)=0\,,\qquad\forall\e>0\,,\hspace{0.3cm}\qquad (t\in\R)\,,
  \\
  &&\theta\Big(\{f<M\}\cap A;x\Big)=0\,,\qquad\hspace{0.6cm}\forall M>0\,,\qquad (t=+\infty)\,,
  \\
  &&\theta\Big(\{f>-M\}\cap A;x\Big)=0\,,\qquad\hspace{0.3cm}\forall M>0\,,\qquad (t=-\infty)\,.
\end{eqnarray*}
We say that $x\in S_f$ is a jump point of $f$ if there exists $\nu\in S^{n-1}$ such that
\[
f^\vee(x)=\aplim(f,H_{x,\nu}^+,x)\,,\qquad f^\wedge(x)=\aplim(f,H_{x,\nu}^-,x)\,.
\]
If this is the case we set $\nu=\nu_f(x)$, the approximate jump direction of $f$ at $x$. We denote by $J_f$ the set of approximate jump points of $f$, so that $J_f\subset S_f$; moreover, $\nu_f:J_f\to S^{n-1}$ is a Borel function. It will be particularly useful to keep in mind the following proposition; see \cite[Proposition 2.2]{ccdpmGAUSS} for a proof.

\begin{proposition}\label{lemma nonciserve}
  We have that $x\in J_f$ if and only if,  for every $\tau\in(f^\wedge(x),f^\vee(x))$,
  \begin{eqnarray}\label{jump point level sets}
    \{f>\tau\}_{x,r}\toloc H_{0,\nu}^+\,,\qquad\{f<\tau\}_{x,r}\toloc H_{0,\nu}^-\,,\qquad\mbox{as $r\to 0^+$}\,.
  \end{eqnarray}
\end{proposition}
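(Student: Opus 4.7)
The plan is to translate both the definition of $x \in J_f$ and the blow-up condition into equivalent density statements for level sets of $f$ intersected with the half-spaces $H_{x,\nu}^{\pm}$, and then to observe that the two collections of density statements coincide. The only substantive work is bookkeeping between strict and non-strict inequalities and handling the possibly infinite values of $f^\vee(x)$ and $f^\wedge(x)$.

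For the forward direction, I would fix $x \in J_f$, set $\nu = \nu_f(x)$, and let $\tau \in (f^\wedge(x), f^\vee(x))$. From $\aplim(f, H_{x,\nu}^+, x) = f^\vee(x) > \tau$, applying the definition of approximate limit with any $\e$ strictly less than $f^\vee(x) - \tau$ (or, if $f^\vee(x) = +\infty$, with any $M > \tau$) yields $\theta(\{f \le \tau\} \cap H_{x,\nu}^+; x) = 0$. Symmetrically, $\aplim(f, H_{x,\nu}^-, x) = f^\wedge(x) < \tau$ yields $\theta(\{f \ge \tau\} \cap H_{x,\nu}^-; x) = 0$. Using the scaling identity $\H^n(E_{x,r} \cap B_R) = r^{-n}\,\H^n(E \cap B(x,rR))$, these two vanishing densities translate directly into $\H^n(\{f>\tau\}_{x,r}\cap H_{0,\nu}^-\cap B_R) \to 0$ and $\H^n(\{f\le\tau\}_{x,r}\cap H_{0,\nu}^+\cap B_R) \to 0$, whose sum controls $\H^n((\{f>\tau\}_{x,r}\,\Delta\,H_{0,\nu}^+) \cap B_R)$ modulo the $\H^n$-negligible hyperplane $\{y\cdot\nu=0\}$. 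Thus $\{f>\tau\}_{x,r}\toloc H_{0,\nu}^+$, and the convergence of $\{f<\tau\}_{x,r}$ to $H_{0,\nu}^-$ is the mirror argument.

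For the converse, I assume the blow-up convergence holds for some $\nu \in S^{n-1}$ and every $\tau \in (f^\wedge(x), f^\vee(x))$, and I aim at $\aplim(f, H_{x,\nu}^+, x) = f^\vee(x)$, the other half-limit being symmetric. Reading the blow-up convergence against $H_{0,\nu}^+$ and $H_{0,\nu}^-$ gives, for every such $\tau$, $\theta(\{f \le \tau\} \cap H_{x,\nu}^+; x) = 0$ and $\theta(\{f > \tau\} \cap H_{x,\nu}^-; x) = 0$. When $f^\vee(x) \in \R$, for each $\e > 0$ I split $\{|f - f^\vee(x)| > \e\} = \{f > f^\vee(x) + \e\} \cup \{f < f^\vee(x) - \e\}$; the first set has density $0$ at $x$ by the very definition of $f^\vee(x)$, while the second set, once intersected with $H_{x,\nu}^+$, is contained in $\{f < \tau\}\cap H_{x,\nu}^+$ for a suitable $\tau \in (f^\wedge(x), f^\vee(x))$ close enough to $f^\vee(x)$, and so again has vanishing density. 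Summing these two bounds verifies the $\aplim$ definition.

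The main technical point to handle with care is the infinite-limit case: if $f^\vee(x) = +\infty$, one must verify $\theta(\{f < M\} \cap H_{x,\nu}^+; x) = 0$ for every $M > 0$, which is done by choosing $\tau \in (f^\wedge(x), +\infty)$ with $\tau > M$ and invoking the inclusion $\{f < M\} \subset \{f < \tau\}$ together with the corresponding blow-up statement. Apart from this, the only mild obstacle is remembering that, in both directions, one implicitly works under the assumption $x \in S_f$, so that $f^\wedge(x) < f^\vee(x)$ and the interval $(f^\wedge(x), f^\vee(x))$ is non-empty; this is automatic in the forward direction since $J_f \subset S_f$, and it must be tacitly assumed in the converse, since otherwise the blow-up hypothesis would be vacuous. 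No ingredient beyond the definitions recalled in the preceding pages of the excerpt is needed.
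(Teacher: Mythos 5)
Your proof is correct. Note that the paper does not prove this proposition itself but merely cites Proposition~2.2 of \cite{ccdpmGAUSS}, so there is no internal argument to compare against; what you give is the natural one of unwinding the definitions of $J_f$, $\aplim$, and $\toloc$ into density statements for $\{f>\tau\}$, $\{f\le\tau\}$ against the two half-spaces, and the two subtleties you flag (the possibly infinite values of $f^\vee(x)$, $f^\wedge(x)$, and the implicit restriction to $x\in S_f$ so that $(f^\wedge(x),f^\vee(x))\neq\emptyset$ in the converse) are exactly the points requiring care. The only thing worth adding is the trivial observation that $\theta^*(H^\pm_{x,\nu},x)=\tfrac12>0$, which is what makes the approximate limits relative to $H^\pm_{x,\nu}$ well defined in the sense recalled just before the statement.
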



Finally, if $f:\R^n\to\R$ is Lebesgue measurable, then we say $f$ is {\it approximately differentiable} at $x\in S_f^c$ provided $f^\wedge(x)=f^\vee(x)\in\R$ and there exists $\xi\in\R^n$ such that
\[
\aplim(g,\R^n,x)=0\,,
\]
where $g(y)=(f(y)-\widetilde{f}(x)-\xi\cdot(y-x))/|y-x|$ for $y\in\R^n\setminus\{x\}$. If this is the case, then $\xi$ is uniquely determined, we set $\xi=\nabla f(x)$, and call $\nabla f(x)$ the {\it approximate differential} of $f$ at $x$. The localization property \eqref{gino1} holds true also for approximate differentials: precisely, if $f,g:\R^n\to\R$ are Lebesgue measurable functions, $f=g$ $\H^n$-a.e. on a Borel set $E$, and $f$ is approximately differentiable $\H^n$-a.e. on $E$, then $g$ is approximately differentiable $\H^n$-a.e. on $E$ too, with
\begin{equation}
  \label{gino2}
  \nabla f(x)=\nabla g(x)\,,\qquad\mbox{for $\H^n$-a.e. $x\in E$}\,.
\end{equation}

\subsection{Rectifiable sets and functions of bounded variation}\label{section sofp} Let $1\le k\le n$, $k\in\N$. A Borel set $M\subset\R^n$ is {\it countably $\H^k$-rectifiable} if there exist  Lipschitz functions $f_h:\R^k\to\R^n$ ($h\in\N$) such that $M\subset_{\H^k}\bigcup_{h\in\N}f_h(\R^k)$.
We further say that $M$ is {\it locally $\H^k$-rectifiable} if $\H^k(M\cap K)<\infty$ for every compact set $K\subset\R^n$, or, equivalently, if $\H^k\llcorner M$ is a Radon measure on $\R^n$. Hence, for a locally $\H^k$-rectifiable set $M$ in $\R^n$ the following definition is well-posed: we say that $M$ has a $k$-dimensional subspace $L$ of $\R^n$ as its {\it approximate tangent plane} at $x\in\R^n$, $L=T_xM$, if $\H^k\llcorner (M-x)/r\weak \H^k\llcorner L$ as $r\to 0^+$ weakly-star in the sense of Radon measures. It turns out that $T_xM$ exists and is uniquely defined at $\H^k$-a.e. $x\in M$. Moreover, given two locally $\H^k$-rectifiable sets $M_1$ and $M_2$ in $\R^n$, we have $T_xM_1=T_xM_2$ for $\H^k$-a.e. $x\in M_1\cap M_2$.

A Lebesgue measurable set $E\subset\R^n$ is said of {\it locally finite perimeter} in $\R^n$ if there exists a $\R^n$-valued Radon measure $\mu_E$, called the {\it Gauss--Green measure} of $E$, such that
\[
\int_E\nabla\vphi(x)\,dx=\int_{\R^n}\vphi(x)\,d\mu_E(x)\,,\qquad\forall \vphi\in C^1_c(\R^n)\,.
\]
The relative perimeter of $E$ in $A\subset\R^n$ is then defined by setting $P(E;A)=|\mu_E|(A)$, while $P(E)=P(E;\R^n)$ is the perimeter of $E$. The {\it reduced boundary} of $E$ is the set $\pa^*E$ of those $x\in\R^n$ such that
\[
\nu_E(x)=\lim_{r\to 0^+}\,\frac{\mu_E(B(x,r))}{|\mu_E|(B(x,r))}\qquad\mbox{exists and belongs to $S^{n-1}$}\,.
\]
The Borel function $\nu_E:\pa^*E\to S^{n-1}$ is called the {\it measure-theoretic outer unit normal} to $E$. It turns out that $\pa^*E$ is a locally $\H^{n-1}$-rectifiable set in $\R^n$ \cite[Corollary 16.1]{maggiBOOK}, that $\mu_E=\nu_E\,\H^{n-1}\llcorner\pa^*E$, and that
\[
\int_E\nabla\vphi(x)\,dx=\int_{\pa^*E}\vphi(x)\,\nu_E(x)\,d\H^{n-1}(x)\,,\qquad\forall \vphi\in C^1_c(\R^n)\,.
\]
In particular, $P(E;A)=\H^{n-1}(A\cap\pa^*E)$ for every Borel set $A\subset\R^n$. We say that $x\in\R^n$ is a jump point of $E$, if  there exists $\nu\in S^{n-1}$ such that
\begin{equation}
  \label{jump point of E}
  E_{x,r}\toloc H_{0,\nu}^+\,,\qquad\mbox{as $r\to 0^+$}\,,
\end{equation}
and we denote by $\pa^JE$ the set of {\it jump points} of $E$. Notice that we always have $\pa^JE\subset E^{(1/2)}\subset\pae E$. In fact, if $E$ is a set of locally finite perimeter and $x\in\pa^*E$, then \eqref{jump point of E} holds true with $\nu=-\nu_E(x)$, so that $\pa^*E\subset\pa^JE$. Summarizing, if $E$ is a set of locally finite perimeter, we have
\begin{equation}
  \label{inclusioni frontiere}
  \pa^*E\subset\pa^J E\subset E^{1/2}\subset \pae E\,,
\end{equation}
and, moreover, by {\it Federer's theorem} \cite[Theorem 3.61]{AFP}, \cite[Theorem 16.2]{maggiBOOK},
\[
\H^{n-1}(\pae E\setminus\pa^*E)=0\,,
\]
so that $\pae E$ is locally $\H^{n-1}$-rectifiable in $\R^n$. We shall need at several occasions to use the following very fine criterion for finite perimeter, known as {\it Federer's criterion} \cite[4.5.11]{FedererBOOK} (see also \cite[Theorem 1, section 5.11]{EvansGariepyBOOK}): if $E$ is a Lebesgue measurable set in $\R^n$ such that $\pae E$ is locally $\H^{n-1}$-finite, then $E$ is a set of locally finite perimeter.

Given a Lebesgue measurable function $f:\R^n\to\R$ and an open set $\Om\subset\R^n$ we define the {\it total variation of $f$ in $\Om$} as
\[
|Df|(\Om)=\sup\Big\{\int_\Om\,f(x)\,\Div\,T(x)\,dx:T\in C^1_c(\Om;\R^n)\,,|T|\le 1\Big\}\,.
\]
We say that $f\in BV(\Om)$ if $|Df|(\Om)<\infty$ and $f\in L^1(\Om)$, and that $f\in BV_{loc}(\Om)$ if $f\in BV(\Om')$ for every open set $\Om'$ compactly contained in $\Om$. If $f\in BV_{loc}(\R^n)$ then the distributional derivative $Df$ of $f$ is an $\R^n$-valued Radon measure. Notice in particular that $E$ is a set of locally finite perimeter if and only if $1_E\in BV_{loc}(\R^{n})$, and that in this case $\mu_E=-D1_E$. Sets of finite perimeter and functions of bounded variation are related by the fact that, if $f\in BV_{loc}(\R^n)$, then, for a.e. $t\in\R$, $\{f>t\}$ is a set of finite perimeter, and the {\it coarea formula},
\begin{equation}
  \label{coarea bv}
  \int_\R\,P(\{f>t\};G)\,dt=|Df|(G)\,,
\end{equation}
holds true (as an identity in $[0,\infty]$) for every Borel set $G\subset\R^n$. If $f\in BV_{loc}(\R^n)$, then the Radon--Nykodim decomposition of $Df$ with respect to $\H^n$ is denoted by $Df=D^af+D^sf$, where $D^sf$ and $\H^n$ are mutually singular, and where $D^af\ll\H^n$. The density of $D^af$ with respect to $\H^n$ is by convention denoted as $\nabla f$, so that $\nabla\,f\in L^1(\Om;\R^n)$ with $D^af=\nabla f\,d\H^n$. Moreover, for a.e. $x\in\R^n$, $\nabla f(x)$ is the approximate differential of $f$ at $x$. If $f\in BV_{loc}(\R^n)$, then $S_f$ is countably $\H^{n-1}$-rectifiable, with $\H^{n-1}(S_f\setminus J_f)=0$, $[f]\in L^1_{loc}(\H^{n-1}\llcorner J_f)$, and the $\R^n$-valued Radon measure $D^jf$ defined as
\[
D^jf=[f]\,\nu_f\,d\H^{n-1}\llcorner J_f\,,
\]
is called the {\it jump part of $Df$}. Since $D^af$ and $D^jf$ are mutually singular, by setting $D^cf=D^sf-D^jf$ we come to the canonical decomposition of $Df$ into the sum $D^af+D^jf+D^cf$. The $\R^n$-valued Radon measure $D^cf$ is called the {\it Cantorian part} of $Df$. It has the distinctive property that $|D^cf|(M)=0$ if $M$ is $\s$-finite with respect to $\H^{n-1}$. We shall often need to use (in combination with \eqref{gino1} and \eqref{gino2}) the following localization property of Cantorian derivatives.

\begin{lemma}\label{lemma Dcv} If $v\in BV(\R^n)$, then $|D^cv|(\{v^\wedge=0\})=0$. In particular, if $f,g\in BV(\R^n)$ and $f=g$ $\H^n$-a.e. on a Borel set $E$, then $D^cf\llcorner E^{(1)}=D^cg\llcorner E^{(1)}$.
\end{lemma}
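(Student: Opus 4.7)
The plan is to handle the two statements separately, with the second one following quickly from the first. For the first statement $|D^cv|(\{v^\wedge=0\})=0$, I would decompose
\[
\{v^\wedge=0\}\;=\;\bigl(\{v^\wedge=0\}\cap S_v\bigr)\;\cup\;\{v^\wedge=v^\vee=0\}\,,
\]
using that $v^\wedge\le v^\vee$ forces $v^\wedge(x)=0$ to produce either $v^\vee(x)>0$ (so $x\in S_v$) or $v^\vee(x)=0$ (in which case $\widetilde{v}(x)=0$ by \eqref{f average}). The first piece has $|D^cv|$-measure zero because $S_v$ is countably $\H^{n-1}$-rectifiable, hence $\sigma$-finite with respect to $\H^{n-1}$, and $D^cv$ vanishes on any such set by the defining property of the Cantorian part recalled in section \ref{section sofp}.

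It remains to prove $|D^cv|(\{\widetilde{v}=0\})=0$, and for this my plan is to invoke the Vol'pert--Ambrosio chain rule for BV functions composed with Lipschitz maps. Writing $v=v^+-v^-$ with $v^\pm\in BV(\R^n)$, and applying the chain rule to the Lipschitz functions $\phi_+(t)=t^+$ and $\phi_-(t)=(-t)^+$ (with the Borel representatives $\phi'_+=1_{(0,\infty)}$ and $\phi'_-=-\,1_{(-\infty,0)}$), I obtain
\[
D^cv^+\;=\;1_{\{\widetilde{v}>0\}}\,D^cv\,,\qquad D^cv^-\;=\;-\,1_{\{\widetilde{v}<0\}}\,D^cv\,.
\]
Subtracting and using $Dv=Dv^+-Dv^-$ gives $D^cv=1_{\{\widetilde{v}\ne 0\}}\,D^cv$, i.e.\ $D^cv\llcorner\{\widetilde{v}=0\}=0$, closing the first statement.

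For the second statement, I would set $h=f-g\in BV(\R^n)$, so that $D^ch=D^cf-D^cg$. Since $f=g$ holds $\H^n$-a.e.\ on $E$, we have $\H^n(E\setminus\{h=0\})=0$, and therefore $E^{(1)}\subset\{h=0\}^{(1)}$ directly from the definition of density-one points. At any $x\in\{h=0\}^{(1)}$ the set $\{h\ne 0\}$ has vanishing density, so $\theta(\{h>t\},x)=\theta(\{h<-t\},x)=0$ for every $t>0$, forcing $h^\vee(x)\le 0\le h^\wedge(x)$; combined with the general inequality $h^\wedge\le h^\vee$, this gives $h^\wedge(x)=0$. Consequently $E^{(1)}\subset\{h^\wedge=0\}$, and the first statement applied to $h$ yields $|D^ch|(E^{(1)})=0$, that is $D^cf\llcorner E^{(1)}=D^cg\llcorner E^{(1)}$.

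The only delicate step is the use of the Vol'pert--Ambrosio chain rule in a form suitable for Lipschitz outer functions, so that $D^c(\phi\circ v)=\phi'(\widetilde{v})\,D^cv$ holds as measures (and so that, as a consequence, any choice of Borel representative of $\phi'$ at the non-differentiability point $t=0$ is compatible with the uniqueness of $D^c(\phi\circ v)$); once this is in place, the rest of the argument is a routine manipulation of approximate limits and densities, and requires no additional structural hypothesis on $v$.
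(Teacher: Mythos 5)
Your decomposition of $\{v^\wedge=0\}$ into $(\{v^\wedge=0\}\cap S_v)\cup\{v^\wedge=v^\vee=0\}$ and the disposal of the $S_v$-piece by rectifiability are correct. The second assertion of the lemma is also handled correctly, in fact a touch more cleanly than in the paper, since you verify $E^{(1)}\subset\{h^\wedge=0\}$ directly and so the appeal to part one (as stated, about $\{v^\wedge=0\}$) is immediate.

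The chain-rule step, however, is where you diverge from the paper and where the argument as written carries a circularity concern that you acknowledge but underestimate. The Vol'pert--Ambrosio chain rule for a Lipschitz outer function with a corner at $t=0$ is well-posed (i.e., insensitive to the Borel representative chosen for $\phi'$ at $0$) precisely because $|D^cv|(\{\widetilde v=0\})=0$, which is what you are trying to prove. This is not a harmless footnote: your representatives $\phi'_+=1_{(0,\infty)}$ and $\phi'_-=-1_{(-\infty,0)}$ give $\phi'_+-\phi'_-=1_{\R\setminus\{0\}}$, hence $1_{\{\widetilde v=0\}}D^cv=0$; but the equally legitimate mixed representatives $1_{(0,\infty)}$ and $-1_{(-\infty,0]}$ give $\phi'_+-\phi'_-\equiv 1$ and therefore a tautology with no content. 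The two outcomes are consistent only when $|D^cv|(\{\widetilde v=0\})=0$, so either you must cite a formulation of the Lipschitz chain rule whose proof demonstrably does not pass through this fact (for instance a two-sided smooth approximation of $t^+$ in which uniqueness of the limit of $D(\phi_\e\circ v)$ forces $1_{\{\widetilde v=0\}}D^cv=0$), or you must prove the fact directly. The paper does the latter, and very economically: fixing a $\H^n$-null Borel concentration set $K\subset S_v^c$ of $D^cv$, the coarea formula gives
\[
|D^cv|(\{v^\wedge=0\})=|Dv|(K\cap\{v^\wedge=0\})=\int_\R\H^{n-1}\big(K\cap\{\widetilde v=0\}\cap\pa^*\{v>t\}\big)\,dt\,,
\]
and the integrand vanishes for every $t\ne 0$ because $\pa^*\{v>t\}\cap S_v^c\subset\{\widetilde v=t\}$. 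This direct three-line argument is preferable to the chain-rule detour: it is self-contained, it requires no structural hypotheses on $v$, and it sidesteps the representative issue entirely.
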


\begin{proof}
  {\it Step one:} Let $v\in BV(\R^n)$, and let $K\subset S_v^c$ be a concentration set for $D^cv$ that is $\H^n$-negligible. By the coarea formula,
  \begin{eqnarray*}
  |D^cv|(\{v^\wedge=0\})&=&|D^cv|(K\cap\{v^\wedge=0\})=|Dv|(K\cap\{v^\wedge=0\})
  \\
  &=&\int_\R\H^{n-2}(K\cap\{v^\wedge=0\}\cap\pa^*\{v>t\})\,dt
  \\\mbox{{\small (by $v^\wedge=v^\vee$ on $S_v^c$)}}\qquad
  &=&
  \int_\R\H^{n-2}(K\cap\{\widetilde{v}=0\}\cap\pa^*\{v>t\})\,dt=0\,.
  \end{eqnarray*}
  where in the last identity we have noticed that $\{\widetilde{v}=0\}\cap\pa^*\{v>t\}\cap S_v^c=\emptyset$ if $t\ne 0$.

  \medskip

  \noindent {\it Step two:} Let $f,g\in BV(\R^n)$ with $f=g$ $\H^n$-a.e. on a Borel set $E$. Let $v=f-g$ so that $v\in BV(\R^n)$. Since $v=0$ on $E$ we easily see that $E^{(1)}\subset\{\widetilde{v}=0\}$. Thus $|D^cv|(E^{(1)})=0$ by step one.
\end{proof}

\begin{lemma}
  \label{lemma fgE} If $f,g\in BV(\R^n)$, $E$ is a set of finite perimeter, and $f=1_E\,g$, then
  \begin{eqnarray}\label{sorella1}
    \nabla f=1_E\,\nabla g\,,&&\qquad\mbox{$\H^n$-a.e. on $\R^n$}\,,
    \\\label{sorella2}
    D^cf=D^cg\llcorner E^{(1)}\,,&&
    \\\label{sorella3}
    S_f\cap E^{(1)}=S_g\cap E^{(1)}\,.&&
  \end{eqnarray}
\end{lemma}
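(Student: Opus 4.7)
The plan is to handle the three identities in turn, leveraging the previous Lemma \ref{lemma Dcv} together with the localization properties \eqref{gino1} and \eqref{gino2} of approximate limits and approximate differentials. The unifying observation is that $\R^n$ decomposes, up to an $\H^n$-negligible set, as the disjoint union $E^{(1)} \cup E^{(0)}$, and that $\pae E$ is countably $\H^{n-1}$-rectifiable (hence $\sigma$-finite with respect to $\H^{n-1}$) because $E$ has finite perimeter.

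For \eqref{sorella1}, I would split into points of density $1$ and of density $0$. Since $f = 1_E g = g$ $\H^n$-a.e. on $E$, and $\H^n(E \Delta E^{(1)}) = 0$, property \eqref{gino2} gives that $g$ approximately differentiable at $\H^n$-a.e.\ point of $E^{(1)}$ implies $\nabla f = \nabla g$ $\H^n$-a.e.\ on $E^{(1)}$; analogously, $f = 0$ $\H^n$-a.e.\ on $E^{(0)}$, whence $\nabla f = 0$ $\H^n$-a.e.\ on $E^{(0)}$. Since $1_E = 1$ on $E^{(1)}$ and $1_E = 0$ on $E^{(0)}$ up to $\H^n$-negligible sets, the two pieces combine into \eqref{sorella1}.

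For \eqref{sorella3}, the identity is immediate from \eqref{gino1}: since $f=g$ $\H^n$-a.e.\ on $E$, we have $f^\vee = g^\vee$ and $f^\wedge = g^\wedge$ at every point of $E^{(1)}$, hence $S_f \cap E^{(1)} = \{f^\wedge<f^\vee\} \cap E^{(1)} = \{g^\wedge<g^\vee\} \cap E^{(1)} = S_g \cap E^{(1)}$.

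The main point is \eqref{sorella2}, and this is where I would be most careful. By Lemma \ref{lemma Dcv} applied to the pairs $(f,g)$ on $E$ and $(f,0)$ on $\R^n \setminus E$, I obtain
\[
D^c f \llcorner E^{(1)} = D^c g \llcorner E^{(1)}\,,\qquad D^c f \llcorner (\R^n\setminus E)^{(1)} = D^c 0 \llcorner (\R^n \setminus E)^{(1)} = 0\,,
\]
where in the second identity I use that $(\R^n \setminus E)^{(1)} = E^{(0)}$. To conclude, I need to rule out mass of $D^c f$ and of $D^c g \llcorner E^{(1)}$ on the remaining piece $\pae E = \R^n \setminus (E^{(0)} \cup E^{(1)})$. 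This follows from the distinctive property of the Cantorian part: since $E$ is of finite perimeter, $\pae E$ is countably $\H^{n-1}$-rectifiable, hence $\sigma$-finite with respect to $\H^{n-1}$, so $|D^c f|(\pae E) = 0$ and likewise $|D^c g|(\pae E) = 0$. Adding $D^c f \llcorner E^{(1)}$ and $D^c f \llcorner E^{(0)}$ then yields $D^c f = D^c g \llcorner E^{(1)}$, as required. The only potential obstacle is making sure that the relation $f = 1_E g$, which involves the non-$BV$ factor $1_E$, is handled via the three disjoint regions rather than by attempting any product rule directly; the decomposition strategy above avoids this issue entirely.
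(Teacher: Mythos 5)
Your proof is correct and takes essentially the same route as the paper: split into the three disjoint regions $E^{(1)}$, $E^{(0)}$, $\pae E$; apply \eqref{gino2} (resp.\ \eqref{gino1}) on the first two to get \eqref{sorella1} (resp.\ \eqref{sorella3}); and for \eqref{sorella2} use Lemma \ref{lemma Dcv} on $E^{(1)}$ and $E^{(0)}$ together with the fact that $|D^c f|$ and $|D^c g|$ vanish on the $\H^{n-1}$-rectifiable set $\pae E$. (Incidentally, you have the dimension right --- the paper's proof says ``locally $\H^{n-2}$-rectifiable,'' which is a slip for $\H^{n-1}$ when $E\subset\R^n$.)
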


\begin{proof}
  Since $f=g$ on $E$ by \eqref{gino2} we find that $\nabla f=\nabla g$ $\H^n$-a.e. on $E$; since $f=0$ on $\R^n\setminus E$, again by \eqref{gino2} we find that $\nabla f=0$ $\H^n$-a.e. on $\R^n\setminus E$; this proves \eqref{sorella1}. For the same reasons, but this time exploiting Lemma \ref{lemma Dcv} in place of \eqref{gino2}, we see that $D^cf\llcorner E^{(1)}=D^cg\llcorner E^{(1)}$ and that $D^cf\llcorner (\R^n\setminus E)^{(1)}=D^cf\llcorner E^{(0)}=0$; since $\pae E$ is locally $\H^{n-2}$-rectifiable, and thus $|D^cf|$-negligible, we come to prove \eqref{sorella2}. Finally, \eqref{sorella3} is an immediate consequence of \eqref{gino1}.
\end{proof}

Given a Lebesgue measurable function $f:\R^n\to\R$ we say that $f$ is a function of {\it generalized bounded variation} on $\R^n$, $f\in GBV(\R^n)$, if $\psi\circ f\in BV_{loc}(\R^n)$ for every $\psi\in C^1(\R)$ with $\psi'\in C^0_c(\R)$, or, equivalently, if $\tau_M(f)\in BV_{loc}(\R^{n})$ for every $M>0$, where $\tau_M$ was defined in \eqref{def tauM}. Notice that, if $f\in GBV(\R^{n})$, then we do not even ask that $f\in L^1_{loc}(\R^n)$, so that the distributional derivative $Df$ of $f$ may even fail to be defined. Nevertheless, the structure theory of $BV$-functions holds true for $GBV$-functions too. Indeed, if $f\in GBV(\R^{n})$, then, see \cite[Theorem 4.34]{AFP}, $\{f>t\}$ is a set of finite perimeter for a.e. $t\in\R$, $f$ is approximately differentiable $\H^n$-a.e. on $\R^n$, $S_f$ is countably $\H^{n-1}$-rectifiable and $\H^{n-1}$-equivalent to $J_f$, and the coarea formula \eqref{coarea bv} takes the form
\begin{equation}
  \label{coarea gbv}
  \int_\R\,P(\{f>t\};G)\,dt=\int_G|\nabla f|\,d\H^n+\int_{G\cap S_f}[f]\,d\H^{n-1}+|D^cf|(G)\,,
\end{equation}
for every Borel set $G\subset\R^n$, where $|D^cf|$ denotes the Borel measure on $\R^n$ defined as the least upper bound of the Radon measures $|D^c(\tau_M(f))|$; and, in fact,
\begin{equation}
  \label{Dc GBV}
  |D^cf|(G)=\lim_{M\to\infty}|D^c(\tau_M(f))|(G)=\sup_{M>0}|D^c(\tau_M(f))|(G)\,,
\end{equation}
whenever $G$ is a Borel set in $\R^n$; see \cite[Definition 4.33]{AFP}.

\section{Characterization of equality cases and barycenter functions}\label{section equality cases steiner} We now prove the results presented in section \ref{section intro baricentro}. In section \ref{section segments sets}, Theorem \ref{lemma u1u2}, we obtain a formula for the perimeter of a set whose sections are segments, which is then applied in section \ref{section barycenter} to study barycenter functions of such sets, and prove Theorem \ref{thm tauM b delta}. Sections \ref{section v in bv} and \ref{section necessary cond barycenter} contain the proof of Theorem \ref{thm characterization with barycenter} concerning the characterization of equality cases in terms of barycenter functions, while Theorem \ref{thm mv per v sbv} is proved in section \ref{section proof charact Mv per v sbv}.

\subsection{Sets with segments as sections}\label{section segments sets} Given $u:\R^{n-1}\to\R\cup\{\pm\infty\}$, let us denote by $\S_u=\{x\in\R^n:\q x>u(\p x)\}$ and $\S^{u}=\{x\in\R^n:\q x<u(\p x)\}$, respectively, the epigraph and the subgraph of $u$. As proved in \cite[Proposition 3.1]{ccdpmGAUSS}, $\S_u$ is a set of locally finite perimeter if and only if $\tau_M(u)\in BV_{loc}(\R^{n-1})$ for every $M>0$. (Note that this does not mean that $u\in GBV(\R^{n-1})$, as here $u$ takes values in $\R\cup\{\pm\infty\}$.) Moreover, it is well known that if $u\in BV_{loc}(\R^{n-1})$, then, for every Borel set $G\subset\R^{n-1}$, the identity
\begin{equation}
  \label{goffmanserrin}
  P(\S_u;G\times\R)=\int_G\sqrt{1+|\nabla u|^2}\,d\H^{n-1}+\int_{G\cap S_u}[u]\,d\H^{n-2}+|D^cu|(G)\,,
\end{equation}
holds true in $[0,\infty]$; see \cite[Chapter 4, Section 1.5 and 2.4]{GMSbook2}. In the study of equality cases for Steiner's inequality, thanks to Theorem \ref{thm ccf1}, we are concerned with sets $E$ of the form $E=\S_{u_1}\cap \S^{u_2}$ corresponding to Lebesgue measurable functions $u_1$ and $u_2$ such that $u_1\le u_2$ on $\R^{n-1}$. A characterization of those pairs of functions $u_1, u_2$ corresponding to sets $E$ of finite perimeter and volume is presented in Proposition \ref{proposition insieme u1u2}. In Theorem \ref{lemma u1u2}, we provide instead a formula for the perimeter of $E$ in terms of $u_1$ and $u_2$ in the case that $u_1,u_2\in GBV(\R^{n-1})$, that is analogous to \eqref{goffmanserrin}.

\begin{theorem}\label{lemma u1u2}
  If $u_1\,,u_2\in GBV(\R^{n-1})$ with $u_1\le u_2$, and $E=\S_{u_1}\cap\S^{u_2}$ has finite volume, then $E$ is a set of locally finite perimeter and, for every Borel set $G\subset\R^{n-1}$,
  \begin{eqnarray}\nonumber
  P(E;G\times\R)
  &=&\int_{G\cap\{u_1<u_2\}}\sqrt{1+|\nabla u_1|^2}\,d\H^{n-1}+\int_{G\cap\{u_1<u_2\}}\sqrt{1+|\nabla u_2|^2}\,d\H^{n-1}
  \\\nonumber
  &&+|D^cu_1|\Big(G\cap\{\widetilde{u}_1<\widetilde{u}_2\}\Big)+|D^cu_2|\Big(G\cap\{\widetilde{u}_1<\widetilde{u}_2\}\Big)
  \\\label{dead0}
  &&+\int_{G\cap(S_{u_1}\cup S_{u_2})}\,\min\Big\{ 2 (\widetilde{u}_2 - \widetilde{u}_1) , [u_1] + [u_2] \Big\} \,d\H^{n-2}\,,
  \end{eqnarray}
  where this identity holds true in $[0,\infty]$, and with the convention that $\widetilde{u}_2 - \widetilde{u}_1=0$ when $\widetilde{u}_2= \widetilde{u}_1=+\infty$.
\end{theorem}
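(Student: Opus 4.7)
The plan is to reduce to bounded $BV$ data via truncation, then apply the Goffman--Serrin formula \eqref{goffmanserrin} separately to the epigraph $\S_{u_1}$ and the subgraph $\S^{u_2}$ and combine the two boundaries. For $M>0$ set $u_i^M=\tau_M(u_i)$: by definition of $GBV$, $u_i^M\in BV_{\mathrm{loc}}(\R^{n-1})$, and \eqref{tauM wedge} gives $(u_i^M)^\wedge=\tau_M(u_i^\wedge)$, $(u_i^M)^\vee=\tau_M(u_i^\vee)$. Setting $E^M=\S_{u_1^M}\cap \S^{u_2^M}$ one checks that $E\cap(\R^{n-1}\times(-M,M))=E^M\cap(\R^{n-1}\times(-M,M))$ modulo $\H^n$, so $P(E;G\times(-M,M))=P(E^M;G\times(-M,M))$, and once \eqref{dead0} is established for $(u_1^M,u_2^M)$ the local finite perimeter of $E$ and the validity of \eqref{dead0} for $(u_1,u_2)$ both follow by letting $M\to\infty$. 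The finiteness of $\H^n(E)$ forces $\H^{n-1}(\{u_1=-\infty\}\cap\{u_2>-\infty\})=\H^{n-1}(\{u_2=+\infty\}\cap\{u_1<+\infty\})=0$, so the ``top and bottom cap'' contribution $P(E^M;G\times\R)-P(E^M;G\times(-M,M))$ vanishes as $M\to\infty$, while \eqref{Dc GBV} and monotone convergence handle the other integrands, with the convention $\widetilde u_2-\widetilde u_1=0$ on $\{\widetilde u_1=\widetilde u_2=+\infty\}$ absorbing the degeneracy in the jump term.

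In the bounded $BV$ case I start from the standard boundary decomposition
\[
\pa^*E =_{\H^{n-1}} \bigl(\pa^*\S_{u_1}\cap (\S^{u_2})^{(1)}\bigr)\cup\bigl(\pa^*\S^{u_2}\cap \S_{u_1}^{(1)}\bigr)\cup\bigl(\pa^*\S_{u_1}\cap \pa^*\S^{u_2}\cap\{\nu_{\S_{u_1}}=\nu_{\S^{u_2}}\}\bigr).
\]
On non-vertical points the vertical components of $\nu_{\S_{u_1}}$ and $\nu_{\S^{u_2}}$ have strictly opposite signs; on common vertical parts the blow-up characterization of Proposition \ref{lemma nonciserve} applied to $u_1$ and $u_2$, combined with $u_1\le u_2$, forces the two normals to be antipodal. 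Thus the third piece is $\H^{n-1}$-negligible. By \eqref{dens2}--\eqref{dens3} the densities are identified in cylinder coordinates as $\S_{u_1}^{(1)}=_{\H^n}\{(z,t):t>u_1^\vee(z)\}$ and $(\S^{u_2})^{(1)}=_{\H^n}\{(z,t):t<u_2^\wedge(z)\}$. Feeding these into \eqref{goffmanserrin} applied to $u_1$ and $u_2$ yields the first four terms of \eqref{dead0}: the AC parts of $\pa^*\S_{u_i}$ live over $\{u_i^\wedge=u_i^\vee\}$ and produce $\int\sqrt{1+|\nabla u_i|^2}$ on $\{u_1<u_2\}$, while the Cantor parts live over $\{u_i^\wedge=u_i^\vee\in\R\}$, where $\widetilde u_i=u_i^\wedge=u_i^\vee$, and produce $|D^c u_i|$ on $\{\widetilde u_1<\widetilde u_2\}$.

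The core step is the jump contribution. From $u_1\le u_2$ one deduces $u_1^\wedge\le u_2^\wedge$ and $u_1^\vee\le u_2^\vee$. By Proposition \ref{lemma nonciserve}, the vertical slice of $\pa^*\S_{u_1}$ above $z\in J_{u_1}$ is $\{z\}\times(u_1^\wedge(z),u_1^\vee(z))$ up to $\H^{n-1}$-null pieces, and its contribution to $\pa^*E$ is the sub-interval lying inside $(\S^{u_2})^{(1)}$, which by the density identification equals the portion strictly below $u_2^\wedge(z)$, of $\H^1$-length $\min\{u_1^\vee,u_2^\wedge\}-u_1^\wedge$. Symmetrically, $u_2$ contributes $u_2^\vee-\max\{u_1^\vee,u_2^\wedge\}$. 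Summing gives $[u_1]+[u_2]$ when $u_1^\vee\le u_2^\wedge$ and $(u_2^\wedge-u_1^\wedge)+(u_2^\vee-u_1^\vee)=2(\widetilde u_2-\widetilde u_1)$ otherwise; the elementary identity $[u_1]+[u_2]-2(\widetilde u_2-\widetilde u_1)=2(u_1^\vee-u_2^\wedge)$ assembles the two cases into $\min\{2(\widetilde u_2-\widetilde u_1),[u_1]+[u_2]\}$. The main obstacle is handling $z\in J_{u_1}\cap J_{u_2}$ where $\nu_{u_1}(z)$ and $\nu_{u_2}(z)$ might a priori differ: here the joint $\H^{n-2}$-rectifiability of $S_{u_1}\cup S_{u_2}$ forces the two directions to be parallel at $\H^{n-2}$-a.e.\ common jump point, reducing to the one-direction blow-up above and yielding the final integrand of \eqref{dead0} after integration against $\H^{n-2}\llcorner(S_{u_1}\cup S_{u_2})$ and using $\H^{n-2}(S_{u_i}\setminus J_{u_i})=0$.
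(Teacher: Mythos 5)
Your plan follows the same two-step reduction as the paper: (a) prove the $BV_{\rm loc}$ case via the Goffman--Serrin identity and the decomposition of $\pa^*(\S_{u_1}\cap\S^{u_2})$ into the two pieces $\pa^*\S_{u_1}\cap(\S^{u_2})^{(1)}$ and $\pa^*\S^{u_2}\cap\S_{u_1}^{(1)}$; (b) pass to $GBV$ by truncation. Step (a) is essentially the paper's argument (the density identifications should be stated modulo $\H^{n-1}$ rather than $\H^n$, and the closing worry about alignment of $\nu_{u_1}$, $\nu_{u_2}$ at common jump points is a red herring, since the identification of $(\S^{u_2})^{(1)}$ over $S_{u_2}$ as $\{\q x<u_2^\wedge(\p x)\}$ is independent of $\nu_{u_2}$ and the slice-by-slice count never uses it). Step (b), however, has a genuine gap.

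You assert that the cap contribution $P(E^M;G\times\R)-P(E^M;G\times(-M,M))$ vanishes as $M\to\infty$, deducing this from the finiteness of $\H^n(E)$ and the emptiness of $\{u_1=-\infty\}$, $\{u_2=+\infty\}$. Those level sets are vacuously empty because $GBV$ functions are $\R$-valued, so the cited condition carries no information, and finite volume only gives the \emph{integrability} $\int_\R\H^{n-1}(\{u_1<t<u_2\})\,dt<\infty$, not a pointwise limit. The cap need not vanish along the full family $M\to\infty$: take disjoint unit-$\H^{n-1}$-measure balls $A_k\subset\R^{n-1}$ and set $u_1=\sum_k(-k)1_{A_k}$, $u_2=\sum_k(-k+2^{-k})1_{A_k}$; then $u_1,u_2\in GBV(\R^{n-1})$, $\H^n(E)=\sum_k 2^{-k}<\infty$, but for $M_k=k-2^{-k-1}$ one has $A_k\subset\{u_1<-M_k<u_2\}$, so the cap at $\{x_n=-M_k\}$ has measure at least $1$ for arbitrarily large $M_k$. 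The correct argument is the one the paper makes: first observe that the right-hand side of \eqref{dead0} for $(\tau_M u_1,\tau_M u_2)$ is monotone increasing in $M$ (your monotone-convergence remarks for the AC, Cantor and jump terms), so $\lim_{M\to\infty}P(E_M;G\times\R)$ exists in $[0,\infty]$; then choose a subsequence $\{M_h\}$ with $\H^{n-1}(E^{(1)}\cap\{|x_n|=M_h\})\to 0$ (possible by integrability) and $\H^{n-1}(\pae E\cap\{|x_n|=M_h\})=0$ (possible because $\H^{n-1}\llcorner\pae E$ is Radon), along which the cap vanishes and $P(E_{M_h};G\times\R)\to P(E;G\times\R)$. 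Without the subsequence selection your limiting step does not go through.
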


If $E=\S_{u_1}\cap\S^{u_2}$ is of locally finite perimeter, then it is not necessarily true that $u_1\,,u_2\in GBV(\R^{n-1})$. The regularity of $u_1$ and $u_2$ is, in fact, quite minimal, and completely degenerates as we approach the set where $u_1$ and $u_2$ coincide.

\begin{proposition}\label{proposition insieme u1u2}
  Let $u_1,u_2:\R^{n-1}\to\R$ be Lebesgue measurable functions with $u_1\le u_2$ on $\R^{n-1}$. Then $E=\S_{u_1}\cap\S^{u_2}$ is of finite perimeter with $0<|E|<\infty$ if and only if $v=u_2-u_1\in BV(\R^{n-1})$, $v\ne 0$, $\H^{n-1}(\{v>0\})<\infty$, $\{u_2>t>u_1\}$ is of finite perimeter for a.e. $t\in\R$, and $f\in L^1(\R)$ for $f(t)=P(\{u_2>t>u_1\})$, $t\in\R$. In both cases,
  \begin{eqnarray*}
    \int_{\R}P(\{u_2>t>u_1\})\,dt\le P(E)\,,
    \\
    |Dv|(\R^{n-1})\le P(F[v])\,,
    \\
    \H^{n-1}(\{v>0\})\le \frac{P(F[v])}2\,.
  \end{eqnarray*}
  Moreover,
  \begin{figure}
    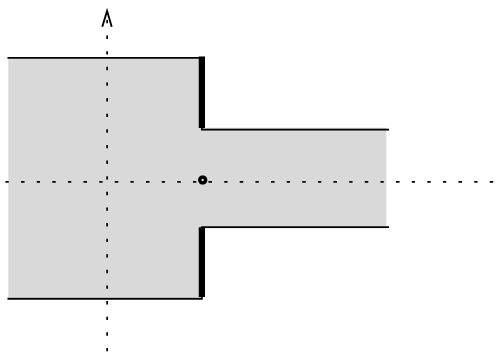\caption{{\small Inclusion \eqref{uuuuu}.}}\label{fig u1u2}
  \end{figure}
  see Figure \ref{fig u1u2},
  \begin{equation}
    \label{uuuuu}
      (\pae E)_z\subset[u_1^\wedge(z),u_1^\vee(z)]\cup[u_2^\wedge(z),u_2^\vee(z)]\,,\qquad\forall z\in\R^{n-1}\,,
  \end{equation}
  and
  \begin{equation}
    \label{casa}
      \Big(S_{u_1}\cup S_{u_2}\Big)\setminus\Big(\{u_2^\vee=u_1^\vee\}\cap\{u_2^\wedge=u_1^\wedge\}\Big)
  \end{equation}
  is countably $\H^{n-2}$-rectifiable, with $\{v^\vee=0\} \subseteq \{u_2^\vee=u_1^\vee\}\cap\{u_2^\wedge=u_1^\wedge\}$.
\end{proposition}

\begin{proof} We first notice that, if we set $E(t)=\{z\in\R^{n-1}:(z,t)\in E\}$, then we have $E(t)=\{u_1<t<u_2\}$ for every $t\in\R$, and that, by Fubini's theorem, $E$ has finite volume if and only if $v\in L^1(\R^{n-1})$; in both cases $|E|=\int_{\R^{n-1}}v$.

\medskip

\noindent   {\it Step one:} Let us assume that $E$ has finite perimeter with $0<|E|<\infty$.  As noticed, we have $v\in L^1(\R^{n-1})$. By Steiner's inequality, $F[v]$ has finite perimeter. By \cite[Proposition 19.22]{maggiBOOK}, since $|F[v]\cap\{x_n>0\}|=\int_{\R^{n-1}}v/2=|E|/2>0$, we have that
\[
\frac{P(F[v])}2\ge P(F[v];\{x_n>0\})\ge \H^{n-1}(F[v]^{(1)}\cap\{x_n=0\})=\H^{n-1}(\{v>0\})\,.
\]
If $T\in C^1_c(\R^{n-1};\R^{n-1})$ with $\sup_{\R^{n-1}}|T|\le 1$, and we set $S\in C^1_c(\R^n;\R^n)$ as $S(x)=(T(\p x),0)$, then by Fubini's theorem and Steiner's inequality we find that
  \begin{eqnarray*}
    \int_{\R^{n-1}} v(z)\,\Div'\,T(z)\,dz=\int_{F[v]}\,\Div\,S\le P(F[v])\le P(E)\,.
  \end{eqnarray*}
  Hence, $v\in BV(\R^{n-1})$, with $|Dv|(\R^{n-1})\le P(F[v])$. If $w_h\in C^1_c(\R^n)$ with $w_h\to 1_E$ in $L^1(\R^n)$ and $|Dw_h|(\R^n)\to P(E)$ as $h\to\infty$, then $w_h(\cdot,t)\to 1_{E(t)}$ in $L^1(\R^{n-1})$ for a.e. $t\in\R$, and, correspondingly
  \[
  \int_{E(t)}\,\Div'T=\lim_{h\to\infty}\int_{\R^{n-1}}w_h\,\Div'T
  =-\lim_{h\to\infty}\int_{\R^{n-1}}T\cdot\nabla w_h\le\lim_{h\to\infty}\int_{\R^{n-1}}|\nabla w_h(z,t)|\,dz\,.
  \]
  Hence, by Fatou's lemma,
  \begin{eqnarray*}
  \int_\R\,\sup\Big\{\Big|\int_{E(t)}\,\Div'T\Big|:T\in C^1_c(\R^{n-1};\R^{n-1})\,,\sup_{\R^{n-1}}|T|\le 1\Big\}\,dt
  &\le&\liminf_{h\to\infty}\int_{\R^n}|\nabla w_h|
  \\
  &=&P(E)\,,
  \end{eqnarray*}
  so that $E(t)$ is of finite perimeter for a.e. $t\in\R$, and $\int_\R P(E(t))\,dt\le P(E)$, as required.

  \medskip

  \noindent {\it Step two:} We have already noticed that $|E|=\int_{\R^{n-1}}v\in(0,\infty)$. If $\vphi\in C^1_c(\R^n)$, then
  \[
  \int_E\,\pa_n\vphi=\int_{\R^{n-1}}\vphi(z,u_2(z))-\vphi(z,u_1(z))\,dz\le 2\,\sup_{\R^n}|\vphi|\,\H^{n-1}(\{v>0\})\,.
  \]
  while
  \begin{eqnarray*}
    \int_E\,\nabla'\vphi&=&\int_{\R}dt\int_{E(t)}\nabla'\vphi(z,t)\,dz
    =\int_{\R}dt\int_{\pa^*E(t)}\vphi(z,t)\,\nu_{E(t)}(z)\,d\H^{n-2}(z)
    \\
    &\le&\sup_{\R^n}\,|\vphi|\int_{\q(\spt\,\vphi)}\,P(E(t))\,dt\,.
  \end{eqnarray*}
  If we set $f(t)=P(E(t))$, then we have just proved
  \[
  \Big|\int_E\nabla\vphi\Big|\le\sup_{\R^n}|\vphi|\Big(2\,\H^{n-1}(\{v>0\})+\|f\|_{L^1(\R)}\Big)\,,
  \]
  so that $E$ has finite perimeter.

\medskip

\noindent {\it Step three:} For every $x\in\R^n$ and $r>0$ we have
  \[
  \H^n(E\cap\C_{x,r})=\int_{\q x-r}^{\q x+r}\,\H^{n-1}(\D_{\p x,r}\cap\{u_1<s\}\cap\{u_2>s\})\,ds\,.
  \]
  If $\q x>u_2^\vee(\p x)$, then given $t\in(u_2^\vee(\p x),\q x)$ and $r<\q x -t$ we find that
  \[
  \H^n(E\cap\C_{x,r})\le 2\,r\,\H^{n-1}(\D_{\p x,r}\cap\{u_2>t\})=o(r^n)\,,
  \]
  so that $x\in E^{(0)}$. By a similar argument, we show that
  \begin{eqnarray*}
  \Big\{x\in\R^n:\q x>u_2^\vee(\p x)\Big\}\cup  \Big\{x\in\R^n:\q x<u_1^\wedge(\p x)\Big\}\subset E^{(0)}\,,
  \\
  \Big\{x\in\R^n:u_1^\vee(\p x)<\q x<u_2^\wedge(\p x)\Big\}\subset E^{(1)}\,.
  \end{eqnarray*}
  We thus conclude that, if $x\in\pae E$, then $u_1^\wedge(\p x)\le \q x\le u_2^\vee(\p x)$ and either $\q x\le u_1^\vee(\p x)$ or $\q x\ge u_2^\wedge(\p x)$.

  \medskip

  \noindent  {\it Step four:} Let $I$ be a countable dense subset of $\R$ such that $\{u_1<t<u_2\}$ is of finite perimeter for every $t\in I$. We claim that
  \begin{equation}
    \label{heyhey}
      \{u_2^\wedge>u_1^{\wedge}\}\cap S_{u_1}\subset\bigcup_{t\in I}\pae \{u_2>t>u_1\}\,.
  \end{equation}
  Indeed, if $\min\{u_2^\wedge(z),u_1^\vee(z)\}>t>u_1^{\wedge}(z)$, then
  \[
  \theta(\{u_2>t\},z)=1\,,\qquad \theta^*(\{u_1<t\},z)>0\,,\qquad \theta_*(\{u_1<t\},z)<1\,,
  \]
  which implies $\theta^*(\{u_1<t<u_2\},z)>0$ and that $\theta_*(\{u_1<t<u_2\},z)<1$, and thus \eqref{heyhey}. In particular, $\{u_2^\wedge>u_1^{\wedge}\}\cap S_{u_1}$ is countably $\H^{n-2}$-rectifiable. By entirely similar arguments, one checks that the sets $\{u_2^\vee>u_1^{\vee}\}\cap S_{u_2}$, $S_{u_1}^c\cap S_{u_2}$ and $S_{u_1}\cap S_{u_2}^c$ are included in the set on the right-hand side of \eqref{heyhey}, and thus complete the proof of \eqref{casa}.

  \medskip

  \noindent {\it Step five:} We prove that $\{v^\vee=0\} \subseteq \{u_2^\vee=u_1^\vee\}\cap\{u_2^\wedge=u_1^\wedge\}$.
Indeed from the general fact that $(f+g)^\vee \leq f^\vee +g^\vee$, we obtain that $ 0\le u_2^\vee - u_1^\vee  \leq (u_2-u_1)^\vee = v^\vee$.
At the same time, $0\le u_2^\wedge-u_1^\wedge=(-u_1)^\vee - (-u_2)^\vee  \leq (-u_1+u_2)^\vee = v^\vee$.
\end{proof}

\begin{proof}
  [Proof of Theorem \ref{lemma u1u2}]  {\it Step one:} We first consider the case that $u_1,u_2\in BV_{loc}(\R^{n-1})$. By \cite[Section 4.1.5]{GMSbook1}, $\S_{u_1}$ and $\S^{u_2}$ are of locally finite perimeter, with
  \begin{eqnarray}\label{Su pastar c}
  \pa^*\S_{u_1}\cap\Big(S_{u_1}^c\times\R\Big)&=_{\H^{n-1}}&\Big\{x\in\R^n:\widetilde{u}_1(\p x)=\q x\Big\}\,,
  \\\label{Su pastar j}
  \pa^*\S_{u_1}\cap\Big(S_{u_1}\times\R\Big)&=_{\H^{n-1}}&\Big\{x\in\R^n:u_1^\wedge(\p x)<\q x<u_1^\vee(\p x)\Big\}\,,
  \end{eqnarray}
  and, by similar arguments, with
  \begin{eqnarray}
  \label{Su 1 c}
  \S_{u_1}^{(1)}\cap\Big(S_{u_1}^c\times\R\Big)&=_{\H^{n-1}}&\Big\{x\in\R^n:\widetilde{u}_1(\p x)<\q x\Big\}\,,
  \\\label{Su 1 j}
  \S_{u_1}^{(1)}\cap\Big(S_{u_1}\times\R\Big)&=_{\H^{n-1}}&\Big\{x\in\R^n:u_1^\vee(\p x)<\q x\Big\}\,,
  \\
  \label{Su 1 cx}
  (\S^{u_2})^{(1)}\cap\Big(S_{u_2}^c\times\R\Big)&=_{\H^{n-1}}&\Big\{x\in\R^n:\widetilde{u}_2(\p x)>\q x\Big\}\,,
  \\
  \label{Su 1 jx}
  (\S^{u_2})^{(1)}\cap\Big(S_{u_2}\times\R\Big)&=_{\H^{n-1}}&\Big\{x\in\R^n:u_2^\wedge(\p x)>\q x\Big\}\,.
  \end{eqnarray}
  Let us now recall that, by \cite[Theorem 16.3]{maggiBOOK}, if $F_1$, $F_2$ are sets of locally finite perimeter, then
  \begin{equation}
    \label{yeeah}
      \pa^*(F_1\cap F_2)=_{\H^{n-1}}\Big(F_1^{(1)}\cap\pa^*F_2\Big)\cup \Big(F_2^{(1)}\cap\pa^*F_1\Big)\cup\Big(\pa^*F_1\cap \pa^*F_2\cap
  \{\nu_{F_1}=\nu_{F_2}\}\Big)\,;
  \end{equation}
  moreover, if $F_1\subset F_2$, then $\nu_{F_1}=\nu_{F_2}$ $\H^{n-1}$-a.e. on $\pa^*F_1\cap\pa^*F_2$. Since $u_1\le u_2$ implies $\S_{u_2}\subset\S_{u_1}$ and $\S^{u_2}=\R^n\setminus \S_{u_2}$, so that $\mu_{\S_{u_2}}=-\mu_{\S^{u_2}}$, we thus find
  \begin{equation}
    \label{wd furbo}
      \nu_{\S_{u_1}}=-\nu_{\S^{u_2}}\,,\qquad\mbox{$\H^{n-1}$-a.e. on $\pa^*\S_{u_1}\cap\pa^*\S^{u_2}$}\,.
  \end{equation}
  By \eqref{yeeah} and \eqref{wd furbo}, since $E=\S_{u_1}\cap\S^{u_2}$ we find
  \[
  \pa^*E=_{\H^{n-1}}\Big(\pa^*\S_{u_1}\cap(\S^{u_2})^{(1)}\Big)\cup \Big(\pa^*\S^{u_2}\cap(\S_{u_1})^{(1)}\Big)\,.
  \]
  We now apply \eqref{Su pastar c} to $u_1$ and \eqref{Su 1 cx} to $u_2$ to find
  \begin{eqnarray}\nonumber
  &&\Big(\pa^*\S_{u_1}\cap(\S^{u_2})^{(1)}\Big)\cap\Big((S_{u_1}^c\cap S_{u_2}^c)\times\R\Big)
  \\\label{wd1}
  &&=_{\H^{n-1}}\Big\{(z,\widetilde{u}_1(z)):z\in (S_{u_1}^c\cap S_{u_2}^c)\,,\widetilde{u}_1(z)<\widetilde{u}_2(z)\Big\}\,.
  \end{eqnarray}
  We combine \eqref{Su pastar j} applied to $u_1$ and \eqref{Su 1 cx} applied to $u_2$ to find
  \begin{eqnarray}\nonumber
  &&\Big(\pa^*\S_{u_1}\cap(\S^{u_2})^{(1)}\Big)\cap\Big((S_{u_1}\cap S_{u_2}^c)\times\R\Big)
  \\\label{wd2}
  &&=_{\H^{n-1}}\Big\{(z,t):z\in (S_{u_1}\cap S_{u_2}^c)\,,u_1^\wedge(z)<t<\min\{u_1^\vee(z),\widetilde{u}_2(z)\}\Big\}\,.
  \end{eqnarray}
  We combine \eqref{Su pastar j} applied to $u_1$ and \eqref{Su 1 jx} applied to $u_2$ to find
  \begin{eqnarray}\nonumber
  &&\Big(\pa^*\S_{u_1}\cap(\S^{u_2})^{(1)}\Big)\cap\Big((S_{u_1}\cap S_{u_2})\times\R\Big)
  \\\label{wd3}
  &&=_{\H^{n-1}}\Big\{(z,t):z\in (S_{u_1}\cap S_{u_2})\,,u_1^\wedge(z)<t<\min\{u_1^\vee(z),u_2^\wedge(z)\}\Big\}\,.
  \end{eqnarray}
  We finally apply \eqref{Su pastar c} to $u_1$ and \eqref{Su 1 jx} to $u_2$ to find
  \begin{eqnarray}\nonumber
  &&\Big(\pa^*\S_{u_1}\cap(\S^{u_2})^{(1)}\Big)\cap\Big((S_{u_1}^c\cap S_{u_2})\times\R\Big)
  \\\label{wd4}
  &&=_{\H^{n-1}}\Big\{(z,\widetilde{u}_1(z)):z\in (S_{u_1}^c\cap S_{u_2})\,,\widetilde{u}_1(z)<u_2^\wedge(z)\Big\}\,.
  \end{eqnarray}
  This gives, by \eqref{goffmanserrin},
  \begin{eqnarray*}
  &&\H^{n-1}\Big(\pa^*\S_{u_1}\cap(\S^{u_2})^{(1)}\cap(G\times\R)\Big)
  \\
  \mbox{{\small by \eqref{wd1}}}\qquad&=&\int_{G\cap\{u_1<u_2\}}\sqrt{1+|\nabla u_1|^2}\,d\H^{n-1}
  +|D^cu_1|\Big(G\cap\{\widetilde{u}_1<\widetilde{u}_2\}\Big)
  \\
  \mbox{{\small by \eqref{wd2} and \eqref{wd3}}}\qquad&&+\int_{G\cap S_{u_1}}\,
  \Big(\min\{u_1^\vee,u_2^\wedge\}-u_1^\wedge\Big)_+\,d\H^{n-2}\,,
  \end{eqnarray*}
  where we have also taken into account that, as a consequence of \eqref{wd4}, we simply have
  \[
  \H^{n-1}\Big(\Big(\pa^*\S_{u_1}\cap(\S^{u_2})^{(1)}\Big)\cap\Big((S_{u_1}^c\cap S_{u_2})\times\R\Big)\Big)=0\,,
  \]
  by \cite[3.2.23]{FedererBOOK}. Also, by symmetry,
  \begin{eqnarray*}
  &&\H^{n-1}\Big(\pa^*\S^{u_2}\cap(\S_{u_1})^{(1)}\cap(G\times\R)\Big)
  \\
  &=&\int_{G\cap\{u_1<u_2\}}\sqrt{1+|\nabla u_2|^2}\,d\H^{n-1}
  +|D^cu_2|\Big(G\cap\{\widetilde{u}_1<\widetilde{u}_2\}\Big)
  \\
  &&+\int_{G\cap S_{u_2}}\,\Big(u_2^\vee-\max\{u_2^\wedge,u_1^\vee\}\Big)_+\,d\H^{n-2}\,.
  \end{eqnarray*}
  In conclusion we have proved
  \begin{eqnarray}\nonumber
  P(E;G\times\R)
  &=&\int_{G\cap\{u_1<u_2\}}\Big(\sqrt{1+|\nabla u_1|^2}\,+\sqrt{1+|\nabla u_2|^2}\Big)\,d\H^{n-1}
  \\\label{dead3}
  &&+|D^cu_1|\Big(G\cap\{\widetilde{u}_1<\widetilde{u}_2\}\Big)+|D^cu_2|\Big(G\cap\{\widetilde{u}_1<\widetilde{u}_2\}\Big)
  \\\nonumber
  &&+\int_{G\cap(S_{u_1}\cup S_{u_2})}\,\Big(\min\{u_1^\vee,u_2^\wedge\}-u_1^\wedge\Big)_+
  +\Big(u_2^\vee-\max\{u_2^\wedge,u_1^\vee\}\Big)_+\,d\H^{n-2}\,.
  \end{eqnarray}
  We thus deduce \eqref{dead0} by means of \eqref{dead3} and thanks to the identity,
  \begin{eqnarray*}
  \min\Big\{ 2 (\widetilde{u}_2 - \widetilde{u}_1) , [u_1] + [u_2]\Big\}&=&
  \min\Big\{ u_2^{\vee} + u_2^{\wedge} - ( u_1^{\vee} + u_1^{\wedge} ),
  u_1^{\vee} - u_1^{\wedge} + u_2^{\vee} - u_2^\wedge\Big\}
  \\
  &=& u_2^{\vee} - u_1^{\wedge} + \min\Big\{u_2^{\wedge} - u_1^{\vee} , u_1^{\vee} - u_2^{\wedge}\Big\}
  \\
  &=& u_2^{\vee} - u_1^{\wedge} + \min\{ u_2^{\wedge} , u_1^{\vee}\} - \max\{ u_2^{\wedge} , u_1^{\vee}\}
  \\
  &=&\Big(\min\{u_1^\vee,u_2^\wedge\}-u_1^\wedge\Big)_+
  +\Big(u_2^\vee-\max\{u_2^\wedge,u_1^\vee\}\Big)_+\,.
  \end{eqnarray*}
  This completes the proof of the theorem in the case that $u_1,u_2\in BV_{loc}(\R^{n-1})$.

  \medskip

  \noindent {\it Step two:} We now address the general case. If $u_1,u_2\in GBV(\R^{n-1})$, then $\S_{u_1}$ and $\S^{u_2}$ are sets of locally finite perimeter by \cite[Proposition 3.1]{ccdpmGAUSS}, and thus $E$ is of locally finite perimeter. We now prove \eqref{dead0}. To this end, since \eqref{dead0} is an identity between Borel measures on $\R^{n-1}$, it suffices to consider the case that $G$ is {\it bounded}. Given $M>0$, let $E_M=\S_{\tau_M(u_1)}\cap \S^{\tau_M(u_2)}$. Since $\tau_Mu_i\in BV_{loc}(\R^{n-1})$ for every $M>0$, $i=1,2$, by step one we find that $E_M$ is a set of locally finite perimeter, and that \eqref{dead0} holds true on $E_M$ with $\tau_M(u_1)$ and $\tau_M(u_2)$ in place of $u_1$ and $u_2$. We are thus going to complete the proof of the theorem by showing that,
  \begin{eqnarray}\label{dead11}
    P(E;G\times\R)&=&\lim_{M\to\infty}P(E_M;G\times\R)\,,
    \\
    \int_{G\cap\{u_1<u_2\}}\sqrt{1+|\nabla u_i|^2}\,d\H^{n-1}&=&\lim_{M\to\infty}\int_{G\cap\{\tau_M(u_1)<\tau_M(u_2)\}}
    \hspace{-1.5cm}\sqrt{1+|\nabla \tau_M(u_i)|^2}\,d\H^{n-1}\,,
    \label{dead12}
    \\\label{dead13}
    |D^cu_i|\Big(G\cap\{\widetilde{u}_1<\widetilde{u}_2\}\Big)&=&\lim_{M\to\infty}|D^c\tau_M(u_i)|\Big(G\cap\{\widetilde{\tau_M(u_1)}<
    \widetilde{\tau_M(u_2)}\}\Big)\,,\hspace{0.5cm}
  \end{eqnarray}
  and that
  \begin{eqnarray}
    \label{dead14}
    &&\int_{G\cap(S_{u_1}\cup S_{u_2})}\,\min\Big\{ 2 (\widetilde{u}_2 - \widetilde{u}_1) , [u_1] + [u_2] \Big\} \,d\H^{n-2}
    \\\nonumber
    &=&\lim_{M\to\infty}
    \int_{G\cap(S_{\tau_M(u_1)}\cup S_{\tau_M(u_2)})}\,\min\Big\{ 2 (\widetilde{\tau_M(u_2)} - \widetilde{\tau_M(u_1)}) , [\tau_M(u_1)] + [\tau_M(u_2)] \Big\} \,d\H^{n-2}\,.
  \end{eqnarray}
  Let us set $f_M(a,b)=\tau_M(b)-\tau_M(a)$ for $a,b\in\R\cup\{\pm\infty\}$. By \eqref{tauM wedge}, we can write the right-hand side of \eqref{dead14} as $\int_G\,h_M\,d\H^{n-2}$, where
  \[
  h_M=1_{S_{\tau_M(u_1)}\cup S_{\tau_M(u_2)}}\,\,
  \g\Big(f_M(u_1^\vee,u_2^\vee),f_M(u_1^\wedge,u_2^\wedge),f_M(u_1^\wedge,u_1^\vee),f_M(u_2^\wedge,u_2^\vee)\Big)\,,
  \]
  for a function $\g:\R\times\R\times\R\times\R\to[0,\infty)$ that is increasing in each of its arguments. Since, for every $a,b\in\R\cup\{\pm\infty\}$ with $a\le b$, the quantity $f_M(a,b)$ is increasing in $M$, with
  \[
  \lim_{M\to\infty}f_M(a,b)=\left\{\begin{array}
    {l l}
    0\,,&\mbox{if $a=b=+\infty$ or $a=b=-\infty$}\,,
    \\
    b-a\,,&\mbox{if else}\,,
  \end{array}\right .
  \]
  we see that $\{S_{\tau_M(u_i)}\}_{M>0}$ is a monotone increasing family of sets whose union is $S_{u_i}$, $\{h_M\}_{M>0}$ is an increasing family of functions on $\R^{n-1}$, and that
  \[
  \lim_{M\to\infty} h_M=1_{S_{u_1}\cup S_{u_2}}\,\min\Big\{ 2 (\widetilde{u}_2 - \widetilde{u}_1) , [u_1] + [u_2] \Big\}\,,
  \]
  where the convention that $\widetilde{u_2}-\widetilde{u_1}=0$ if $\widetilde{u_2}=\widetilde{u_1}=+\infty$ was also taken into account; we have thus completed the proof of \eqref{dead14}. Similarly, on noticing that
  \begin{eqnarray*}
  \{\widetilde{\tau_M(u_1)}<\widetilde{\tau_M(u_2)}\}&=&\{f_M(u_1^\vee,u_2^\vee)+f_M(u_1^\wedge,u_2^\wedge)>0\}
  \\
  &=&\{f_M(u_1^\vee,u_2^\vee)>0\}\cup\{f_M(u_1^\wedge,u_2^\wedge)>0\}\,,
  \end{eqnarray*}
  we see that $\{\{\widetilde{\tau_M(u_1)}<\widetilde{\tau_M(u_2)}\}\}_{M>0}$ is a monotone increasing family of sets whose union is $\{u_2^\vee>u_1^\vee\}\cup\{u_2^\wedge>u_1^\wedge\}$. Therefore, by definition of $|D^cu_i|$, we find, for $i=1,2$,
  \begin{eqnarray*}
    \lim_{M\to\infty}|D^c\tau_Mu_i|\Big(G\cap\{\widetilde{\tau_M(u_1)}<\widetilde{\tau_M(u_2)}\}\Big)
    &=&|D^cu_i|\Big(G\cap(\{u_2^\vee>u_1^\vee\}\cup\{u_2^\wedge>u_1^\wedge\})\Big)
    \\
    &=&|D^cu_i|\Big(G\cap\{\widetilde{u}_1<\widetilde{u}_2\}\Big)\,,
  \end{eqnarray*}
  where in the last identity we have taken into account that $S_{u_1}\cup S_{u_2}$ is countably $\H^{n-2}$-rectifiable, and thus $|D^cu_i|$-negligible for $i=1,2$. This proves \eqref{dead13}. Next, we notice that
  \[
  |\nabla \tau_M(u_i)|=1_{\{|u_i|<M\}}\,|\nabla u_i|\,,\qquad\mbox{$\H^{n-1}$-a.e. on $\R^{n-1}$}\,,
  \]
  so that \eqref{dead12} follows again by monotone convergence. By \eqref{dead0} applied to $E_M$ this shows in particular that the limit as $M\to\infty$ of $P(E_M;G\times\R)$ exists in $[0,\infty]$. Thus, in order to prove \eqref{dead11} it suffices to show that $P(E;G\times\R)$ is the limit of $P(E_{M_h};G\times\R)$ as $h\to\infty$, where $\{M_h\}_{h\in\N}$ has been chosen in such a way that
  \begin{equation}
    \label{dead15}
    \lim_{h\to\infty}\H^{n-1}\Big(E^{(1)}\cap\{|x_n|=M_h\}\Big)=0\,,\qquad \H^{n-1}\Big(\pae  E\cap\{|x_n|=M_h\}\Big)=0\,,\qquad\forall h\in\N\,.
  \end{equation}
  (Notice that the choice of  $\{M_h\}_{h\in\N}$ is made possible by the fact that $|E|<\infty$, and since $\H^{n-1}\llcorner\pae E$ is a Radon measure.) Indeed, by  $E_M=E\cap\{|x_n|<M\}$, by \eqref{dead15}, and by \cite[Theorem 16.3]{maggiBOOK}, we have that
  \[
  \pae  E_{M_h}=\Big(\{|x_n|<M_h\}\cap\pae  E\Big)\cup\Big(\{|x_n|=M_h\}\cap E^{(1)}\Big)\,,\qquad\forall h\in\N\,,
  \]
  so that, by the first identity in \eqref{dead15} we find $P(E;G\times\R)=\lim_{h\to\infty}P(E_{M_h};G\times\R)$, as required. This completes the proof of the theorem.
\end{proof}

In practice, we shall always apply Theorem \ref{lemma u1u2} in situations where the sets under consideration are described in terms of their barycenter and slice length functions.

\begin{corollary}\label{lemma W}
  If $v\in (BV\cap L^\infty)(\R^{n-1};[0,\infty))$, $b\in GBV(\R^{n-1})$, and
  \begin{equation}
    \label{definizione W}
      W=W[v,b]=\Big\{x\in\R^n:|\q x- b(\p x)|<\frac{v(\p x)}2\Big\}\,,
  \end{equation}
  then $u_1=b-(v/2)\in GBV(\R^{n-1})$, $u_2=b+(v/2)\in GBV(\R^{n-1})$, $W$ is a set of locally finite perimeter with finite volume, and for every Borel set $G\subset\R^{n-1}$ we have
  \begin{eqnarray}\label{formula W}
    P(W;G\times\R)&=&\int_{G\cap\{v>0\}}\sqrt{1+\Big|\nabla\Big(b+\frac{v}2\Big)\Big|^2}+\sqrt{1+\Big|\nabla\Big(b-\frac{v}2\Big)\Big|^2}\,d\H^{n-1}
    \\\nonumber
    &&+\int_{G\cap(S_v\cup S_b)}\,\min\Big\{v^\vee+v^\wedge,\max\Big\{[v],2\,[b]\Big\}\Big\}\,d\H^{n-2}
    \\\nonumber
    &&+\Big|D^c\Big(b+\frac{v}2\Big)\Big|\Big(G\cap\{\widetilde{v}>0\}\Big)+\Big|D^c\Big(b-\frac{v}2\Big)\Big|\Big(G\cap\{\widetilde{v}>0\}\Big)\,,
  \end{eqnarray}
  where this identity holds true in $[0,\infty]$.
\end{corollary}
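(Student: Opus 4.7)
We apply Theorem~\ref{lemma u1u2} with $u_1=b-v/2$ and $u_2=b+v/2$, so that $W=\S_{u_1}\cap\S^{u_2}$ and $u_1\le u_2$. To verify the hypotheses, set $V=\|v\|_{L^\infty(\R^{n-1})}$ and observe the identity
\begin{equation*}
\tau_M(b+v/2)=\tau_M\bigl(\tau_{M+V/2}(b)+v/2\bigr),\qquad M>0,
\end{equation*}
which realizes $\tau_M(u_2)$ as a Lipschitz composition of $\tau_{M+V/2}(b)\in BV_{\loc}(\R^{n-1})$ and $v/2\in BV(\R^{n-1})\cap L^\infty(\R^{n-1})$; hence $\tau_M(u_2)\in BV_{\loc}(\R^{n-1})$ for every $M>0$, i.e.\ $u_2\in GBV(\R^{n-1})$, and the same reasoning handles $u_1$. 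Moreover $|W|=\int_{\R^{n-1}}v\,d\H^{n-1}\le\|v\|_{L^1(\R^{n-1})}<\infty$ by Fubini, since $v\in BV\subset L^1$.

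Theorem~\ref{lemma u1u2} is therefore applicable. The identifications $\{u_1<u_2\}=\{v>0\}$, $\nabla u_i=\nabla b\pm\tfrac12\nabla v$ almost everywhere, and $D^cu_i=D^c(b\pm v/2)$ directly furnish the absolutely continuous and Cantorian contributions in \eqref{formula W}, once one notes that $\{\widetilde{u}_1<\widetilde{u}_2\}$ coincides with $\{\widetilde{v}>0\}$ up to a $|D^cu_i|$-null set (their symmetric difference sits inside $S_b\cup S_v$, which is countably $\H^{n-2}$-rectifiable and therefore negligible for $|D^cu_i|$).

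The genuine computational step is the jump term. I first observe that $S_{u_1}\cup S_{u_2}=S_v\cup S_b$ modulo $\H^{n-2}$, which is immediate from the linear relations $u_i=b\pm v/2$, $v=u_2-u_1$, $b=(u_1+u_2)/2$ and basic properties of approximate limits. Next, at each $z\in J_{u_1}\cup J_{u_2}$ at which the approximate limits of $b$ and $v$ on both sides of the jump are finite, a short case analysis, comparing the side along which $b$ attains $b^\vee$ with the side along which $v$ attains $v^\vee$, yields in both of the two possible cases the pointwise identities
\begin{equation*}
2\bigl(\widetilde{u}_2-\widetilde{u}_1\bigr)=v^\vee+v^\wedge,\qquad [u_1]+[u_2]=[b]+\tfrac{[v]}{2}+\Bigl|[b]-\tfrac{[v]}{2}\Bigr|=\max\{[v],2[b]\}.
\end{equation*}
Substituting these identities into \eqref{dead0} produces exactly \eqref{formula W}. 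The main obstacle I expect is the bookkeeping at points where $b^\vee=+\infty$ or $b^\wedge=-\infty$, where the conventions of Theorem~\ref{lemma u1u2} on infinite values of $\widetilde{u}_i$ come into play; since $v$ is bounded, however, every such point lies in a set negligible for both $|D^cu_i|$ and $\H^{n-2}\llcorner J_{u_i}$ by the structure theory of $GBV$ functions recalled in Section~\ref{section sofp}, and hence these exceptional points make no contribution to the integrals.
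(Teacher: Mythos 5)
Your proof follows the paper's strategy (apply Theorem~\ref{lemma u1u2} with $u_1=b-v/2$, $u_2=b+v/2$, then match integrands), and the main computational step is handled a little more cleanly: you observe that $2(\widetilde{u}_2-\widetilde{u}_1)=v^\vee+v^\wedge$ always at jump points, and that $[u_1]+[u_2]=\max\{[v],2[b]\}$ follows from a two-case check according to the relative orientation of the jumps of $b$ and $v$; the paper instead runs a four-case analysis ($J_{u_1}\cap J_{u_2}$ with $\nu_{u_1}=\pm\nu_{u_2}$, $S_{u_1}\cap S_{u_2}^c$, $S_{u_1}^c\cap S_{u_2}$) expressing $v^\vee,v^\wedge,b^\vee,b^\wedge$ through the approximate limits of $u_1,u_2$ in each case. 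Your explicit truncation identity also substantiates the paper's unproved remark that $(BV\cap L^\infty)+GBV\subset GBV$.

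One caution on the last paragraph: the assertion that $\{b^\vee=+\infty\}\cup\{b^\wedge=-\infty\}$ is automatically $\H^{n-2}\llcorner J_{u_i}$-negligible is \emph{not} part of the structure theory of general $GBV$ functions, and it can fail: e.g.\ $b(y)=1_{(0,\infty)}(y)/y\in GBV(\R)$ has $b^\vee(0)=+\infty$, $b^\wedge(0)=0$, with $\{0\}$ of positive $\H^0$-measure inside $J_b$. (The $|D^cu_i|$-negligibility part of your remark, by contrast, is fine, since $\{|u_i|^\vee=+\infty\}$ is disjoint from every $\{|u_i|<M\}^{(1)}$, and $|D^c\tau_M u_i|$ is carried by the latter.) This is, however, a lacuna you share with the paper: the paper's own case formulas, such as $v^\vee=\max\{u_2^\vee-u_1^\vee,\,u_2^\wedge-u_1^\wedge\}$, involve $\infty-\infty$ at exactly these points, so both arguments implicitly assume finite one-sided limits for $b$; in the applications in the paper (where $b$ is a truncated barycenter) this assumption is in force anyway.
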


\begin{proof} It is easily seen that $(BV\cap L^\infty)+GBV\subset GBV$. By Theorem \ref{lemma u1u2}, $W=\S_{u_1}\cap\S^{u_2}$ is of locally finite perimeter, and $P(W;G\times\R)$ can be computed by means of \eqref{dead0} for every Borel set $G\subset\R^{n-1}$. We are thus left to prove that, $\H^{n-2}$-a.e. on $S_{u_1}\cup S_{u_2}$,
\begin{equation}
  \label{furbini}
\min\Big\{ 2 (\widetilde{u}_2 - \widetilde{u}_1) , [u_1] + [u_2] \Big\}=
\min\Big\{v^\vee+v^\wedge,\max\Big\{[v],2\,[b]\Big\}\Big\}\,.
\end{equation}
On $J_{u_1}\cap J_{u_2}\cap\{\nu_{u_1}=\nu_{u_2}\}$ we have that
\begin{eqnarray*}
  &&b^\vee=\frac{u_1^\vee+u_2^\vee}2\,,\qquad v^\vee=\max\Big\{u_2^\vee-u_1^\vee,u_2^\wedge-u_1^\wedge\Big\}\,,
  \\
  &&b^\wedge=\frac{u_1^\wedge+u_2^\wedge}2\,,\qquad v^\wedge=\min\Big\{u_2^\vee-u_1^\vee,u_2^\wedge-u_1^\wedge\Big\}\,,
\end{eqnarray*}
while on $J_{u_1}\cap J_{u_2}\cap\{\nu_{u_1}=-\nu_{u_2}\}$ we find
\begin{eqnarray*}
  &&b^\vee=\max\Big\{\frac{u_2^\vee+u_1^\wedge}2,\frac{u_2^\wedge+u_1^\vee}2\Big\}\,,\qquad v^\vee=u_2^\vee-u_1^\wedge\,,
  \\
  &&b^\wedge=\min\Big\{\frac{u_2^\vee+u_1^\wedge}2,\frac{u_2^\wedge+u_1^\vee}2\Big\}\,,\qquad  v^\wedge=u_2^\wedge-u_1^\vee\,,
\end{eqnarray*}
so that \eqref{furbini} is proved through an elementary case by case argument on $J_{u_1}\cap J_{u_2}$, and thus, $\H^{n-2}$-a.e. on $S_{u_1}\cap S_{u_2}$. At the same time, on $S_{u_1}\cap S_{u_2}^c$ we have
\begin{eqnarray*}
  &&b^\vee=\frac{\widetilde{u}_2+u_1^\vee}2\,,\qquad v^\vee=\widetilde{u}_2-u_1^\wedge\,,
  \\
  &&b^\wedge=\frac{\widetilde{u}_2+u_1^\wedge}2\,,\qquad v^\wedge=\widetilde{u}_2-u_1^\vee\,,
\end{eqnarray*}
from which we easily deduce \eqref{furbini} on $S_{u_1}\cap S_{u_2}^c$; by symmetry, we notice the validity of \eqref{furbini} on $S_{u_1}^c\cap S_{u_2}$, and thus conclude the proof of the corollary.
\end{proof}

\begin{corollary}\label{lemma W2}
  Let $v:\R^{n-1}\to[0,\infty)$ be Lebesgue measurable. Then, $F[v]$ is of finite perimeter and volume if and only if $v\in BV(\R^{n-1};[0,\infty))$ and $\H^{n-1}(\{v>0\})<\infty$. In this case, if $F=F[v]$, then for every $z\in\R^{n-1}$ we have
  \begin{eqnarray}
   \label{F[v] densita 1}
   \Big(-\frac{v^\wedge(z)}2,\frac{v^\wedge(z)}2\Big)&\subset (F^{(1)})_z&\subset \Big[-\frac{v^\wedge(z)}2,\frac{v^\wedge(z)}2\Big]\,,
   \\
   \label{F[v] paM}
   \Big\{t\in\R: \frac{v^\wedge(z)}2< |t|< \frac{v^\vee(z)}2\Big\}
   &\subset (\pae F)_z&
   \subset \Big\{t\in\R: \frac{v^\wedge(z)}2\le |t|\le \frac{v^\vee(z)}2\Big\}\,,\hspace{0.4cm}
  \end{eqnarray}
  while, for every Borel set $G\subset\R^{n-1}$,
  \begin{equation}
    \label{perimetro di F}
    P(F;G\times\R)=2\int_{G\cap\{v>0\}}\sqrt{1+\Big|\frac{\nabla v}2\Big|^2}\,d\H^{n-1}
    +\int_{G\cap S_v}\,[v]\,d\H^{n-2}+|D^cv|(G)\,.
  \end{equation}
\end{corollary}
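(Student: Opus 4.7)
The plan is to view $F=F[v]$ as $\S_{u_1}\cap\S^{u_2}$ with $u_1=-v/2$ and $u_2=v/2$, so that $v=u_2-u_1$, and then invoke Proposition \ref{proposition insieme u1u2} together with Corollary \ref{lemma W} applied with $b\equiv 0$. The key elementary identities to record first are $u_1^\wedge=-v^\vee/2$, $u_1^\vee=-v^\wedge/2$, $u_2^\wedge=v^\wedge/2$, $u_2^\vee=v^\vee/2$, all immediate from $v\ge 0$, together with the analogous statements for approximate averages, gradients, jumps and Cantor parts.

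For the iff characterization I would dispose of the trivial case $v=0$ a.e.\ separately (then $F=\emptyset$ and both sides hold). In the remaining case $|F|>0$, Proposition \ref{proposition insieme u1u2} gives equivalence with the conjunction: $v\in BV(\R^{n-1})$, $\H^{n-1}(\{v>0\})<\infty$, $\{v>2|t|\}$ of finite perimeter for a.e.\ $t$, and $t\mapsto P(\{v>2|t|\})\in L^1(\R)$. The last two conditions are automatically implied by $v\in BV$ via the coarea formula $\int_\R P(\{v>s\})\,ds=|Dv|(\R^{n-1})$, which reduces the statement to the cleaner form asserted in the corollary.

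For the density inclusions, step three in the proof of Proposition \ref{proposition insieme u1u2} applied to our $u_1,u_2$ yields the two inclusions $\{|\q x|<v^\wedge(\p x)/2\}\subset F^{(1)}$ and $\{|\q x|>v^\vee(\p x)/2\}\subset F^{(0)}$; complementation then gives the left inclusion in \eqref{F[v] densita 1} and the right inclusion in \eqref{F[v] paM}. To close the argument I would prove the left inclusion in \eqref{F[v] paM} directly by a blow-up: if $v^\wedge(z)/2<t<v^\vee(z)/2$, choose $s_-\in(v^\wedge(z),2t)$ and $s_+\in(2t,v^\vee(z))$, so that by definition of $v^\wedge$ and $v^\vee$ both $\{v\le s_-\}$ and $\{v>s_+\}$ have strictly positive upper density at $z$. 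A short Fubini estimate inside the cylinder $\C_{(z,t),r}$, which bounds $|F\cap\C_{(z,t),r}|$ and $|\C_{(z,t),r}\setminus F|$ from below by $2r\,\H^{n-1}(\{v>s_+\}\cap\D_{z,r})$ and $2r\,\H^{n-1}(\{v\le s_-\}\cap\D_{z,r})$ respectively (for $r$ small), shows that both $F$ and $F^c$ have positive upper density at $(z,t)$, hence $(z,t)\in\pae F$. This density statement also rules out $F^{(1)}$ in the strip $\{v^\wedge/2<|\q x|<v^\vee/2\}$ and so upgrades the left inclusion for $F^{(1)}$ to the closed-interval right inclusion in \eqref{F[v] densita 1}. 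This two-sided density computation is the only mildly technical step.

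Finally, \eqref{perimetro di F} is obtained by evaluating Corollary \ref{lemma W} at $b\equiv 0$. The absolutely continuous piece doubles to $2\int_{G\cap\{v>0\}}\sqrt{1+|\nabla v/2|^2}\,d\H^{n-1}$; since $S_b=\emptyset$ and $[b]=0$, the jump integrand reduces to $\min\{v^\vee+v^\wedge,\max\{[v],0\}\}=[v]$ because $v\ge 0$ forces $v^\vee+v^\wedge\ge v^\vee-v^\wedge=[v]$; and the Cantor contribution collapses to $|D^cv|(G\cap\{\widetilde v>0\})=|D^cv|(G)$, using $\{\widetilde v=0\}\subset\{v^\wedge=0\}$ (from $v\ge 0$) together with $|D^cv|(\{v^\wedge=0\})=0$ from Lemma \ref{lemma Dcv}. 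Assembling these three contributions yields \eqref{perimetro di F} and completes the proof.
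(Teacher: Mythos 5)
Your proof follows essentially the same route as the paper: identify $F[v]=\S_{u_1}\cap\S^{u_2}$ with $u_1=-v/2$, $u_2=v/2$; deduce the characterization of finiteness from Proposition \ref{proposition insieme u1u2} together with the coarea formula; reproduce the step-three density arguments for \eqref{F[v] densita 1} and \eqref{F[v] paM}; and then read off \eqref{perimetro di F} from a perimeter formula for sets with segments as sections, finishing with Lemma \ref{lemma Dcv} to remove the restriction to $\{\widetilde v>0\}$. All of this is correct in substance, and your treatment of the jump integrand ($v^\vee+v^\wedge\ge[v]$) and the Cantor localization ($\{\widetilde v=0\}\subset\{v^\wedge=0\}$) is exactly right.

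The one point to flag: you invoke Corollary \ref{lemma W} with $b\equiv 0$, but Corollary \ref{lemma W} assumes $v\in(BV\cap L^\infty)(\R^{n-1};[0,\infty))$, whereas the statement you are proving allows any $v\in BV(\R^{n-1};[0,\infty))$. For $n\ge 3$ a nonnegative $BV$ function need not be essentially bounded, so as literally stated Corollary \ref{lemma W} does not apply. The paper sidesteps this by appealing directly to Theorem \ref{lemma u1u2}, which only needs $u_1,u_2\in GBV(\R^{n-1})$, and $\pm v/2\in BV\subset GBV$. Your argument is easily repaired by either citing Theorem \ref{lemma u1u2} with $u_1=-v/2$, $u_2=v/2$ (and computing the integrands directly as you already do), or by noting that the $L^\infty$ hypothesis in Corollary \ref{lemma W} is used only to guarantee $b\pm v/2\in GBV$, which is automatic when $b\equiv 0$ and $v\in BV$. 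A second, truly minor, remark: your blow-up discussion addresses the open strip $\{v^\wedge(z)/2<|t|<v^\vee(z)/2\}$, but to obtain the closed-interval right inclusion in \eqref{F[v] densita 1} you must also exclude $F^{(1)}$ at the endpoint $|t|=v^\vee(z)/2$ when $v^\vee(z)>v^\wedge(z)$; the one-sided estimate with $s_-\in(v^\wedge(z),2t)$ alone already does this, so no new idea is needed, only an explicit remark.
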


\begin{proof} By Proposition \ref{proposition insieme u1u2} and the coarea formula \eqref{coarea bv}, we see that $F[v]$ is of finite perimeter if and only if $v\in BV(\R^{n-1};[0,\infty))$ and $\H^{n-1}(\{v>0\})<\infty$. By arguing as in step three of the proof of Proposition \ref{proposition insieme u1u2}, we easily prove \eqref{F[v] densita 1} and \eqref{F[v] paM}. Finally, by applying Theorem \ref{lemma u1u2} to $u_2=v/2$ and $u_1=-v/2$, we prove \eqref{perimetro di F} with $|D^cv|(G\cap\{\widetilde{v}>0\})$ in place of $|D^cv|(G)$. We conclude the proof of the corollary by Lemma \ref{lemma Dcv}.
\end{proof}

We close this section with the proof of Proposition \ref{proposition dsv}.

\begin{proof}[Proof of Proposition \ref{proposition dsv}]  We want to prove that, if $\l\in[0,1]\setminus\{1/2\}$ and
  \begin{equation}
    \label{malefico*}
      E=\Big\{x\in\R^n: -\l\,v_2(\p x)-\frac{v_1(\p x)}2\le \q x\le \frac{v_1(\p x)}2+(1-\l)\,v_2(\p x)\Big\}\,,
  \end{equation}
  then $E\in\M(v)$ and $\H^n(E\Delta(t\,e_n+F[v]))>0$ for every $t\in\R$. By Corollary \ref{lemma W2},
  \begin{equation}
    \label{perimetro simmetrizzato steiner}
  P(F[v])=2\,\int_{\R^{n-1}}\sqrt{1+\Big|\nabla\Big(\frac{v_1}2\Big)\Big|^2}+|D^sv_2|(\R^{n-1})\,.
  \end{equation}
  At the same time, $E=W[v,b]$, where $b=((1/2)-\l)\,v_2$. Since $D^sv_1=0$, $D^{a}v_2=0$, and $v^\vee+v^\wedge\ge [v]=[v_2]\ge 2[b]$ $\H^{n-2}$-a.e. on $\R^{n-1}$, we easily find
  \begin{eqnarray*}
    &&\nabla\Big(b\pm\frac{v}2\Big)=\pm\nabla\Big(\frac{v_1}2\Big)\,,\qquad\mbox{$\H^{n-1}$-a.e. on $\R^{n-1}$}\,,
    \\
    &&\min\Big\{v^\vee+v^\wedge,\max\Big\{[v],2\,[b]\Big\}\Big\}=[v_2]\,,\qquad\mbox{$\H^{n-2}$-a.e. on $\R^{n-1}$}\,,
    \\
    &&D^c\Big(b+\frac{v}2\Big)=(1-\l)\,D^cv_2\,,\qquad D^c\Big(b-\frac{v}2\Big)=-\l\,D^cv_2\,.
  \end{eqnarray*}
  Since $S_b\cup S_v=_{\H^{n-2}}S_{v_2}$, we find $P(E)=P(F[v])$ by \eqref{perimetro simmetrizzato steiner} and \eqref{formula W}. At the same time,
  \[
  \H^n(E\Delta(t\,e_n+F[v]))=2\,\int_{\{v>0\}}\Big|t-\Big(\frac12-\l\Big)v_2\Big|\,d\H^{n-1}\,,\qquad\forall t\in\R\,,
  \]
  so that $\H^n(E\Delta(t\,e_n+F[v]))>0$ as $\l\ne 1/2$ and $v_2$ is non-constant on $\{v>0\}$.
\end{proof}

\subsection{A fine analysis of the barycenter function}\label{section barycenter} We now prove Theorem \ref{thm tauM b delta}, stating in particular that $b_E\,1_{\{v>\de\}}\in GBV(\R^{n-1})$ whenever $E$ is a $v$-distributed set of finite perimeter and $\{v>\de\}$ is of finite perimeter. We first discuss some examples showing that this is the optimal degree of regularity we can expect for the barycenter. (Let us also recall that the regularity of barycenter functions in arbitrary codimension, but under ``no vertical boundaries'' and ``no vanishing sections'' assumptions, was addressed in \cite[Theorem 4.3]{barchiesicagnettifusco}.)

\begin{remark}\label{remark fifi}
  {\rm In the case $n=2$, as it will be clear from the proof of Theorem \ref{thm tauM b delta}, conclusion \eqref{barycenter sharp regularity} can be strengthened to $1_{\{v>\de\}}\,b_E \in (BV\cap L^\infty)(\R^{n-1})$. The localization on $\{v>\de\}$ is necessary. Indeed, let us define $E\subset\R^2$ as
  \[
  E = \bigcup_{h \in \mathbb{N}}\Big\{x\in\R^2:\frac1{h+1}<\p x<\frac1h\,,\Big|\q x-(-1)^h\Big|<\frac1{h^2}\Big\}\,,
  \]
  so that $E$ has finite perimeter and volume, and has segments as sections. However,
  \[
  b_E(z)=\sum_{h\in\N}(-1)^h\,1_{((h+1)^{-1},h^{-1})}(z)\,,\qquad z\in\R\,,
  \]
  so that $b_E\in L^\infty(\R)\setminus BV(\R)$. We also notice that, in the case $n\ge 3$, the use of generalized functions of bounded variation is necessary. For example, let $E_\a\subset\R^3$ be such that
  \[
  E_{\alpha} = \bigcup_{h \in \mathbb{N}}\Big\{x\in\R^3: \frac1{(h+1)^2}<|\p x|<\frac1{h^2}\,,\Big|\q x-h^\a\Big|<\frac12\Big\}\,,\qquad \a>0\,.
  \]
  In this way, $E_\a$ has always finite perimeter and volume, with $v(z)=1$ if $|z|<1$ and
  \[
  1_{\{v>\de\}}(z)\,b_{E_\a}(z)=b_{E_\a}(z)=\sum_{h\in\N}1_{((h+1)^{-2},h^{-2})}(|z|)\,h^\a\,,\qquad \forall z\in\R^2\,,\,0<\de<1\,.
  \]
  In particular, $1_{\{v>\de\}}\,b_{E_2}\in L^1(\R^2)\setminus BV(\R^2)$ and $1_{\{v>\de\}}\,b_{E_4}\not\in L^1_{loc}(\R^2)$. Hence, without truncation, $1_{\{v>\de\}}\,b_{E}$ may either fail to be of bounded variation (even if it is locally summable), or it may just fail to be locally summable.}
\end{remark}

Before entering into the proof of Theorem \ref{thm tauM b delta}, we shall need to prove that the momentum function $m_E$ of a vertically bounded set $E$ is of bounded variation; see Lemma \ref{lemma Dv} below. Given $E\subset\R^n$, we say that $E$ is vertically bounded (by $M>0$) if
\[
E\subset_{\H^n}\Big\{x\in\R^n:|\q x|<M\Big\}\,.
\]

\begin{lemma}\label{lemma Dv} If $v\in BV(\R^{n-1};[0,\infty))$ and $E$ is a vertically bounded, $v$-distributed set of finite perimeter, then $m_E\in (BV\cap L^\infty)(\R^{n-1})$, where
\[
m_E(z)=\int_{E_z}t\,d\H^1(t)\,,\qquad\forall z\in\R^{n-1}\,.
\]
\end{lemma}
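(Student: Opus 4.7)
The boundedness of $m_E$ is immediate from the vertical bound: if $E_z\subset(-M,M)$ for $\H^{n-1}$-a.e. $z$, then $|m_E(z)|\le M\,v(z)\le 2M^2$. So the real content is that $m_E\in BV(\R^{n-1})$. The strategy I would follow is to represent $m_E$ as a Lebesgue integral of slice-length functions of super/sublevel sets of $E$ cut by horizontal hyperplanes, and then bound the distributional derivative of this integral against a test vector field by integrating the $BV$ bound for each slice.

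Concretely, first I would use Fubini on the target variable to write, for $\H^{n-1}$-a.e. $z$,
\[
m_E(z)=\int_0^M v_s^+(z)\,ds-\int_0^M v_s^-(z)\,ds\,,
\]
where $v_s^+(z)=\H^1(E_z\cap(s,M))$ is the slice length of $E_s^+:=E\cap\{s<\q x<M\}$ and $v_s^-(z)=\H^1(E_z\cap(-M,-s))$ is the slice length of $E_s^-:=E\cap\{-M<\q x<-s\}$. Next I would observe that, by the coarea formula for $1_E\in BV(\R^n)$, for a.e. $s\in(0,M)$ both hyperplane sections $\{\q x=\pm s\}$ meet $E^{(1)}$ in an $\H^{n-1}$-finite set, hence by \cite[Theorem 16.3]{maggiBOOK} the sets $E_s^\pm$ are of finite perimeter with
\[
P(E_s^\pm)\le P(E)+\H^{n-1}\bigl(E^{(1)}\cap\{\q x=\pm s\}\bigr)+\H^{n-1}\bigl(E^{(1)}\cap\{\q x=\mp M\}\bigr)\,.
\]

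By step one of the proof of Proposition \ref{proposition insieme u1u2} (applied to the $v_s^\pm$-distributed set $E_s^\pm$), we have $v_s^\pm\in BV(\R^{n-1})$ with $|Dv_s^\pm|(\R^{n-1})\le P(F[v_s^\pm])\le P(E_s^\pm)$, the second inequality being Steiner's inequality. Now for any $T\in C^1_c(\R^{n-1};\R^{n-1})$ with $\|T\|_\infty\le 1$, Fubini gives
\[
\Bigl|\int_{\R^{n-1}}m_E\,\Div T\,dz\Bigr|\le \int_0^M|Dv_s^+|(\R^{n-1})\,ds+\int_0^M|Dv_s^-|(\R^{n-1})\,ds\,,
\]
and the right-hand side is bounded by $\int_0^M[P(E_s^+)+P(E_s^-)]\,ds\le 2M\,P(E)+4|E|$, where the volume term arises by integrating the hyperplane slice contributions via Cavalieri. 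This gives $m_E\in BV(\R^{n-1})$ and concludes the proof.

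The only delicate point, and the step I would expect to require most care, is the verification that $E_s^\pm$ is of finite perimeter for a.e. $s$ together with the integrability over $s\in(0,M)$ of $P(E_s^\pm)$; both follow cleanly from applying the coarea formula to $1_E$ with the Lipschitz slicing function $f(x)=\q x$, and from the fact that $\int_{-M}^{M}\H^{n-1}(E^{(1)}\cap\{\q x=s\})\,ds=|E|<\infty$.
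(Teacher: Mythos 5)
Your proof is correct, but it takes a markedly different and considerably heavier route than the paper's. The paper's proof is a one-step integration by parts: for $\varphi\in C^1_c(\R^{n-1})$, Fubini gives $\int_{\R^{n-1}}m_E\,\nabla'\varphi = \int_E \q x\,\nabla'\varphi(\p x)\,dx = \int_E\nabla'\big(\varphi(\p x)\,\q x\big)\,dx$, and the Gauss--Green theorem for the finite-perimeter set $E$ (applied to the horizontal components of the compactly supported vector field $x\mapsto\varphi(\p x)\,\q x\,e_i$, which is admissible because $E$ is vertically bounded) immediately yields $\big|\int m_E\,\nabla'\varphi\big|\le M\,\sup|\varphi|\,P(E)$. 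No slicing, no Steiner's inequality, no Proposition \ref{proposition insieme u1u2}.

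Your Cavalieri decomposition $m_E = \int_0^M v_s^+\,ds - \int_0^M v_s^-\,ds$ is correct, and so are the subsequent steps: for a.e.\ $s$ the slab-cut $E_s^\pm$ is of finite perimeter by Federer-type bookkeeping (Fubini for the slice measures together with \cite[Theorem 16.3]{maggiBOOK}, and the observation that $E^{(1)}\cap\{\q x=\pm M\}=\emptyset$ by the vertical bound), and step one of the proof of Proposition \ref{proposition insieme u1u2} applies verbatim to \emph{any} finite-perimeter $v$-distributed set, not just one with segments as sections, so it does give $|Dv_s^\pm|(\R^{n-1})\le P(F[v_s^\pm])\le P(E_s^\pm)$. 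The final Fubini interchange and the $s$-integrability of $P(E_s^\pm)$ go through as you describe. So the argument is sound; it is just a long detour through horizontal slicing and Steiner's inequality where a single application of the divergence theorem to $\varphi(\p x)\,\q x$ does the whole job. The paper's route is the one to internalize here, since it shows directly that $D'm_E$ is the projected Radon measure $\q x\,\p\nu_E\,\H^{n-1}\llcorner\pa^*E$, an identity your argument does not exhibit.
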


\begin{proof}
If $E$ is vertically bounded by $M>0$, then $v\in L^\infty(\R^{n-1})$, $|m_E|\le M\,v$, and $m_E\in L^\infty(\R^{n-1})$. Moreover, $m_E\in BV(\R^{n-1})$ as, for every $\vphi\in C^1_c(\R^{n-1})$,
\begin{eqnarray*}
  \int_{\R^{n-1}}m_E\,\nabla'\vphi \,d\H^{n-1}&=&\int_{E}\nabla'(\vphi(\p x)\,\q x)\,d\H^n(x)
  =\int_{\pa^*E}\vphi(\p x)\,\q x\,\p\nu_E(x)\,d\H^{n-1}(x)
  \\
  &\le& M\,\sup_{\R^{n-1}}|\vphi|\,P(E)\,.\hspace{6.8cm}\qedhere
\end{eqnarray*}
\end{proof}

\begin{proof}[Proof of Theorem \ref{thm tauM b delta}] {\it Step one:} Let us decompose $z\in\R^{n-1}$ as $z=(z_1,z')\in\R\times\R^{n-2}$. For every fixed $z'\in\R^{n-2}$,  $f:\R^{n-1}\to\R$, $G\subset\R^{n-1}$, and $E\subset\R^n$, we define
\begin{eqnarray*}
f^{z'}:\R\to\R\,,\qquad f^{z'}(z_1)=f(z_1,z')\,,
\\
G^{z'}=\{z_1\in\R:(z_1,z)\in G\}\,,
\\
E^{z'}=\{(z_1,t)\in\R^2:(z_1,z',t)\in E\}\,.
\end{eqnarray*}
We now consider $v$ and $E$ as in the statement, and identify a set $I\subset(0,1)$ such that $\H^1((0,1)\setminus I)=0$ and if $\de\in I$, then $\{v>\de\}$ is a set of finite perimeter. We now fix $\de\in I$, and consider a set $J\subset\R^{n-2}$ (depending on $\de$, and whose existence is a consequence of Theorem \ref{thm volpert} in section \ref{section F indecomposable}) such that $\H^{n-2}(\R^{n-2}\setminus J)=0$ and, for every $z'\in J$, $E^{z'}$ is a set of finite perimeter in $\R^2$ (hence, $v^{z'}\in BV(\R)$) and
\[
\{v>\de\}^{z'}=\{v^{z'}>\de\}
\]
is a set of finite perimeter in $\R$. As we shall see in step three, for every $z'\in J$,
\[
\Big|D\,\Big(\tau_M\Big(1_{\{v^{z'}>\de\}}\,b_{E^{z'}}\Big)\Big)\Big|(\R)\le C(M,\de)\Big\{P\Big(\Big\{v^{z'}>\de\Big\}\Big)+P(E^{z'})\Big\}\,.
\]
If we thus take into account that
\[
\Big(\tau_M\Big(1_{\{v>\de\}}\,b_E\Big)\Big)^{z'}=\tau_M\Big(1_{\{v^{z'}>\de\}}\,b_{E^{z'}}\Big)\,,
\]
we thus conclude that
\begin{eqnarray*}
  &&\int_{\R^{n-2}}\bigg|D\bigg(\Big(\tau_M\Big(1_{\{v>\de\}}\,b_E\Big)\Big)^{z'}\bigg)\bigg|(\R)\,d\H^{n-2}(z')
  \\
  &\le&C(M,\de)\,\int_{\R^{n-2}}\,\Big\{P(\{v^{z'}>\de\})+P(E^{z'})\Big\}\,d\H^{n-2}(z')
  \\
  &\le&C(M,\de)\,\Big\{P(\{v>\de\})+P(E)\Big\}\,,
\end{eqnarray*}
where in the last step we have used \cite[Proposition 14.5]{maggiBOOK}. We can repeat this argument along each coordinate direction in $\R^{n-1}$ and combine it with \cite[Remark 3.104]{AFP} to conclude that $\tau_M(1_{\{v>\de\}}\,b_E)\in (BV\cap L^\infty)(\R^{n-1})$, with
\[
\Big|D\Big(\tau_M\Big(1_{\{v>\de\}}\,b_E\Big)\Big)\Big|(\R^{n-1})\le\,C(M,\de)\,\Big\{P(\{v>\de\})+P(E)\Big\}\,.
\]
The proof of \eqref{barycenter sharp regularity} will then be completed in the following two steps.

\medskip

\noindent {\it Step two:} Let $n=2$. We claim that $P(E^s)<\infty$ implies $v\in L^\infty(\R)$, while $P(E)<\infty$ implies $b_E\in L^\infty(\{v>\s\})$ for every $\s>0$. The first claim follows by Corollary \ref{lemma W2}: indeed, $P(E^s)<\infty$ implies $v\in BV(\R)$, and thus, trivially, $v\in L^\infty(\R)$. To prove the second claim, let us recall from step two in the proof of \cite[Theorem 19.15]{maggiBOOK} that if $a,b\in\R$ are such that $a\ne b$ and
\begin{eqnarray*}
  \H^1(E^{(1)}_a)+\H^1(E^{(1)}_b)<\infty\,,\quad \H^1(E^{(1)}_a\cap E^{(1)}_b)=0\,,
\quad
  \H^1(\pa^*E^{(1)}_a)=\H^1(\pa^*E^{(1)}_b)=0\,,
\end{eqnarray*}
then one has
\begin{equation}
  \label{finedellavoro}
  \H^1(E^{(1)}_a)+\H^1(E^{(1)}_b)\le P(E;\{a<x_1<b\})\,.
\end{equation}
Should $b_E$ fail to be essentially bounded on $\{v>\s\}$ for some $\s>0$, then we may construct a strictly increasing sequence $\{a_h\}_{h\in\N}\subset\R$ with $\s\le \H^1(E^{(1)}_{a_h})<\infty$, $\H^1(\pa^*E^{(1)}_{a_h})=0$, and $\H^1(E^{(1)}_{a_h}\cap E^{(1)}_{a_k})=0$ if $h\ne k$. Correspondingly, by \eqref{finedellavoro}, we would get
\[
2\s\le P(E;\{a_h<x_1<a_{h+1}\})\,,\qquad\forall h\in\N\,,
\]
and thus conclude that $P(E)=+\infty$.

\medskip

\noindent {\it Step three:} Let $v\in BV(\R)$, $E$ be a $v$-distributed set of finite perimeter in $\R^2$ such that $E_z$ is a segment for $\H^1$-a.e. $z\in\R$, and let $\de>0$ be such that $\{v>\de\}$ is a set of finite perimeter in $\R$. According to step one, in order to complete the proof of \eqref{barycenter sharp regularity} we are left to show that, if $M>0$, then
\begin{eqnarray}
  \label{stimetta M delta}
\Big|D\,\Big(\tau_M\Big(1_{\{v>\de\}}\,b_E\Big)\Big)\Big|(\R)\le C(M,\de)\Big\{P(\{v>\de\})+P(E)\Big\}\,.
\end{eqnarray}
By step two, $v\in L^\infty(\R)$ and $b_E\in L^\infty(\{v>\de\})$. In particular, $E$ is vertically bounded above $\{v>\de\}$, that is, there exists $L(\de)>0$ such that
\begin{equation}
  \label{vertically bounded sigma}
  E(\de)=E\cap\Big(\{v>\de\}\times\R\Big)\subset_{\H^2}\Big\{x\in\R^2:v(\p x)>\de\,,|\q x|<L(\de)\Big\}\,.
\end{equation}
Let us now set $v_\de=1_{\{v>\de\}}\,v$. Since $\{v>\de\}$ is of finite perimeter, we have
\[
v_\de\in (BV\cap L^\infty)(\R)\,,\qquad \{v_\de>0\}=\{v>\de\}\,.
\]
Concerning $E(\de)$, we notice that, since $\{v>\de\}\times\R$ is of locally finite perimeter, then $E(\de)$ is, at least, a $v_\de$-distributed set of locally finite perimeter such that $E(\de)_z$ is a segment for $\H^1$-a.e. $z\in\R$. But in fact, \eqref{vertically bounded sigma} implies $\{|x_n|>L(\de)\}\subset E(\de)^{(0)}$, while at the same time we have the inclusion
\[
\pae E(\de)\subset \Big[\pae E\cap\Big(\{v>\de\}^{(1)}\times\R\Big)\Big]\cup\Big[\Big(\pae \{v>\de\}\times\R\Big)\cap\Big(E^{(1)}\cup\pae E\Big)\Big]\,;
\]
in particular, $E(\de)$ is of finite perimeter by Federer's criterion, as
\[
\H^{n-1}(\pae E(\de))\le P(E;\{v>\de\}^{(1)}\times\R)+2\,L(\de)\,P(\{v>\de\})\,.
\]
We now notice that $b_{E(\de)}=1_{\{v>\de\}}\,b_E\in L^\infty(\R)$, with $P(E(\de);\{v>\de\}^{(1)}\times\R)\le P(E)$; hence, \eqref{stimetta M delta} follows by showing
\[
\Big|D\,\Big(\tau_M\,(b_{E(\de)})\Big)\Big|(\R)\le C(M,\de)\Big\{P(\{v_\de>0\})+P\Big(E(\de);\{v_\de>0\}^{(1)}\times\R\Big)\Big\}\,,
\]
for every $M>0$. It is now convenient to reset notation.

\medskip

\noindent {\it Step four:} As shown in step three, the proof of \eqref{barycenter sharp regularity} is completed by showing that if $v\in (BV\cap L^\infty)(\R)$ is such that, for some $\de>0$, $\{v>0\}=\{v>\de\}$ is a set of finite perimeter in $\R$, and $E$ is a vertically bounded, $v$-distributed set of finite perimeter in $\R^2$ with $b_E\in L^\infty(\R)$, then, for every $M>0$,
\begin{eqnarray}
  \label{stimetta M delta3}
|D\,(\tau_M\,(b_E))|(\R)\le C(M,\de)\Big\{P(\{v>0\})+P\Big(E;\{v>0\}^{(1)}\times\R\Big)\Big\}\,.
\end{eqnarray}
We start by noticing that, since $E$ is vertically bounded, then by Lemma \ref{lemma Dv} we have $m_E\in (BV\cap L^\infty)(\R)$. Moreover, if we set
\[
w=\frac{1_{\{v>0\}}}v=\frac{1_{\{v>\de\}}}v\,,
\]
then we have $w \in (BV\cap L^\infty)(\R)$, and thus $b_E=w\,m_E\in (BV\cap L^\infty)(\R)$. We now notice that, since $\{v=0\}\subset\{\tau_M(b_E)=0\}$, we have $\{v=0\}^{(1)}\subset\{\tau_M(b_E)=0\}^{(1)}$; at the same time, a simple application of the co-area formula shows that
\begin{eqnarray}
  \nonumber
  0&=&|D(\tau_M(b_E))|\Big(\{\tau_M(b_E)=0\}^{(1)}\Big)\ge|D(\tau_M(b_E))|\Big(\{v=0\}^{(1)}\Big)
  \\\label{clarinetto1}
  &=&|D(\tau_M(b_E))|\Big(\{v>0\}^{(0)}\Big)\,.
\end{eqnarray}
Moreover, since $\{v>0\}$ is a set of finite perimeter, we know that $\pae \{v>0\}$ is a finite set, so that
\begin{eqnarray}
  |D(\tau_M(b_E))|\Big(\pae \{v>0\}\Big)=\int_{S_{\tau_M(b_E)}\cap\pae \{v>0\}}[\tau_M(b_E)]\,d\H^0
  \label{clarinetto2}
  \le2M\,P(\{v>0\})\,,
\end{eqnarray}
where we have noticed that $[\tau_M(b_E)]\le 2M$ since $|\tau_M(b_E)|\le M$ on $\R^{n-1}$. By \eqref{clarinetto1} and \eqref{clarinetto2}, in order to achieve \eqref{stimetta M delta3}, we are left to prove that
\begin{eqnarray}
  \label{stimetta M delta3k}
|D\,(\tau_M\,(b_E))|\Big(\{v>0\}^{(1)}\Big)\le C(M,\de)\,P\Big(E;\{v>0\}^{(1)}\times\R\Big)\,.
\end{eqnarray}
By \eqref{dens3} and by $\{v>\de\}=\{v>0\}$ we have
\[
\{v^\wedge>0\}\subset\{v>0\}^{(1)}=\{v>\de\}^{(1)}\subset\{v^\wedge\ge\de\}\subset \{v^\wedge>0\}\,,
\]
that is $\{v>0\}^{(1)}=\{v^\wedge>0\}$. By applying Corollary \ref{lemma W} to $G=\{v>0\}^{(1)}=\{v^\wedge>0\}$,
\begin{eqnarray}\label{clarinetto5}
    &&P(E;\{v>0\}^{(1)}\times\R)
    \\\nonumber
    &=&\int_{\{v>0\}}\sqrt{1+\Big|\Big(b_E+\frac{v}2\Big)'\Big|^2}+\sqrt{1+\Big|\Big(b_E-\frac{v}2\Big)'\Big|^2}\,d\H^1
    \\\nonumber
    &&+\int_{\{v>0\}^{(1)}\cap(S_v\cup S_{b_E})}\,\min\Big\{v^\vee+v^\wedge,\max\Big\{[v],2\,[b_E]\Big\}\Big\}\,d\H^0
    \\\nonumber
    &&+\Big|D^c\Big(b_E+\frac{v}2\Big)\Big|\Big(\{v^\wedge>0\}\cap\{\widetilde{v}>0\}\Big)+\Big|D^c\Big(b_E-\frac{v}2\Big)\Big|\Big(\{v^\wedge>0\}\cap
    \{\widetilde{v}>0\}\Big)\,.
\end{eqnarray}
Since $\{v^\wedge=0\}=\{\widetilde{v}=0\}\cup\{v^\vee>0=v^\wedge\}$ where $\{v^\vee>0=v^\wedge\}\subset_{\H^0}J_v$, we find that $\{v^\wedge=0\}$ is $|D^cf|$-equivalent to $\{\widetilde{v}=0\}$ for every $f\in BV_{loc}(\R^{n-1})$: hence,
\begin{equation}
  \label{clarinetto4}
  \Big|D^c\Big(b_E\pm\frac{v}2\Big)\Big|\Big(\{v^\wedge>0\}\cap\{\widetilde{v}>0\}\Big)
=\Big|D^c\Big(b_E\pm\frac{v}2\Big)\Big|\Big(\{v^\wedge>0\}\Big)\,.
\end{equation}
By \eqref{clarinetto5}, \eqref{clarinetto4}, the triangular inequality, and as $v^\wedge\ge\de$ on $\{v>0\}^{(1)}=\{v>\de\}^{(1)}$,
\begin{eqnarray}\nonumber
    P(E;\{v>0\}^{(1)}\times\R)&\ge&2\int_{\{v>0\}}|b_E'|\,d\H^1
    +2\,\int_{\{v>0\}^{(1)}\cap S_{b_E}}\,\min\{\de,[b_E]\}\,d\H^0
    \\\label{bri1}
    &&+2\,|D^cb_E|(\{v^\wedge>0\})\,.
  \end{eqnarray}
At the same time, by \cite[Theorem 3.99]{AFP}, for every $M>0$ we have
\begin{eqnarray}\label{bbq}
&&|D(\tau_M(b_E))|(\{v>0\}^{(1)})
\\\nonumber
&=&\int_{\{|b_E|<M\}\cap\{v>0\}}\,|b_E'|\,d\H^1+|D^cb_E|\Big(\Big\{|\widetilde{b}_E|<M\Big\}\cap\{v>0\}^{(1)}\Big)
\\\nonumber
&&+ \int_{S_{b_E}\cap\{b_E^\wedge<M\}\cap\{b_E^\vee>-M\}\cap \{v>0\}^{(1)}}\,\min\{M,b_E^\vee\}-\max\{-M,b_E^\wedge\}\,d\H^0\,.
\end{eqnarray}
As it is easily seen by arguing on a case by case basis,
\begin{equation}
  \label{bri2}
  \min\{M,b_E^\vee\}-\max\{-M,b_E^\wedge\}\le \max\Big\{1,\frac{2M}\de\Big\}\,\min\{\de,[b_E]\}\,,\qquad\mbox{on $S_{b_E}$}\,.
\end{equation}
By combining \eqref{bri1}, \eqref{bbq}, and \eqref{bri2} we conclude the proof of \eqref{stimetta M delta3k}, thus of step four. The proof of \eqref{barycenter sharp regularity} is now complete.

\medskip

\noindent {\it Step five:} Since $\{v>\de\}$ is of finite perimeter for a.e. $\de>0$, we find that, correspondingly, $b_\de=1_{\{v>\de\}}\,b_E\in GBV(\R^{n-1})$ for a.e. $\de>0$. In particular, $b_\de$ is approximately differentiable at $\H^{n-1}$-a.e. $x\in\R^{n-1}$. Since $b_\de=b_E$ on $\{v>\de\}$, by \eqref{gino2} it follows that
\begin{equation}\label{gradienti bdelta be}
  \nabla b_E(x)=\nabla b_\de(x)\,,\qquad \mbox{$\H^{n-1}$-a.e. $x\in\{v>\de\}$}\,.
\end{equation}
By considering $\de_h\to 0$ as $h\to\infty$ with $\{v>\de_h\}$ of finite perimeter for every $h\in\N$, we find that $b_E$ is approximately differentiable at $\H^{n-1}$-a.e. $x\in \{v>0\}$. Since, trivially, $b_E$ is approximately differentiable at {\it every} $x\in\{v=0\}^{(1)}$ with $\nabla b_E(x)=0$, we conclude that $b_E$ is approximately differentiable at $\H^{n-1}$-a.e. $x\in\R^{n-1}$. By \cite[Theorem 4.34]{AFP}, for every Borel $G\subset\R^{n-1}$ we have
\begin{equation}
    \label{coarea-a-ah delta}
      \int_{\R}\H^{n-2}(G\cap\pae \{b_\de>t\})\,dt=\int_G|\nabla b_\de|\,d\H^{n-1}+\int_{G\cap S_{b_\de}}[b_\de]\,d\H^{n-2}+|D^cb_\de|(G)\,.
\end{equation}
Let us notice that, by \eqref{gino1}, $[b_\de]=[b_E]$ on $\{v>\de\}^{(1)}$, and thus $S_{b_\de}\cap\{v>\de\}^{(1)}=S_{b_E}\cap\{v>\de\}^{(1)}$. By \eqref{gradienti bdelta be} and by applying \eqref{coarea-a-ah delta} to $G\cap\{v>\de\}^{(1)}$ where $G\subset\R^{n-1}$ is a Borel set, we find
\begin{eqnarray}
    \label{coarea-a-ah delta 1}
      &&\int_{\R}\H^{n-2}(G\cap\{v>\de\}^{(1)}\cap\pae \{b_\de>t\})\,dt
      \\\nonumber
      &=&\int_{G\cap\{v>\de\}}|\nabla b_E|\,d\H^{n-1}
      +\int_{G\cap S_{b_E}\cap\{v>\de\}^{(1)}}[b_E]\,d\H^{n-2}
      +|D^cb_\de|(G\cap\{v>\de\}^{(1)})\,.
\end{eqnarray}
Since $\tau_M b_\de=1_{\{v>\de\}}\,\tau_M b_\de$, by applying Lemma \ref{lemma fgE} we find that, for every $G\subset\R^{n-1}$,
\begin{equation}
  \label{lalalla}
  |D^cb_\de|(G\cap\{v>\de\}^{(1)})= \lim_{M\to \infty} |D^c \tau_Mb_\de|(G\cap\{v>\de\}^{(1)})= \lim_{M\to \infty} |D^c\tau_Mb_\de|(G)=
  |D^cb_\de|(G)\,.
\end{equation}
At the same time, since $\{v>\de\}\cap\{b_\de>t\}=\{v>\de\}\cap\{b_E>t\}$ for every $t\in\R$, we have
\[
\{v>\de\}^{(1)}\cap\pae \{b_\de>t\}=\{v>\de\}^{(1)}\cap\pae  \{b_E>t\}\,,\qquad\forall t\in\R\,,
\]
and thus
\[
\int_\R\H^{n-2}(G\cap\{v>\de\}^{(1)}\cap\pae \{b_\de>t\})\,dt=\int_\R\,\H^{n-2}(G\cap\{v>\de\}^{(1)}\cap\pae \{b_E>t\})\,dt\,.
\]
If we now set $\de=\de_h$ into \eqref{coarea-a-ah delta 1} and then let $h\to\infty$, then by
\begin{equation}
  \label{herbert0}\{v^\wedge>0\}=\bigcup_{h\in\N}\{v>\de_h\}^{(1)}\,,
\end{equation}
(which follows by \eqref{dens3}), by \eqref{lalalla}, and thanks to the definition \eqref{DcbE} of $|D^cb_E|^+$, we find that \eqref{coarea-a-ah} holds true for every Borel set $G\subset\{v^\wedge>0\}$, as required. We have thus completed the proof of Theorem \ref{thm tauM b delta}.
\end{proof}

\subsection{Characterization of equality cases, part one}\label{section v in bv} In this section we prove the necessary conditions for equality cases in Steiner's inequality stated in Theorem \ref{thm characterization with barycenter}. We premise the following simple lemma to the proof.

\begin{lemma}\label{lemma misure triangolare}
  If $\mu$ and $\nu$ are $\R^{n-1}$-valued Radon measures on $\R^{n-1}$, then
  \begin{equation}
    \label{triangular measures}
      2\,|\mu|(G)\le|\nu+\mu|(G)+|\nu-\mu|(G)\,,
  \end{equation}
  for every Borel set $G\subset\R^{n-1}$. Moreover, equality holds in \eqref{triangular measures} for every bounded Borel set $G\subset\R^{n-1}$ if and only if there exists a Borel function $f:\R^{n-1}\to[-1,1]$ with
  \[
  \nu(G)=\int_G\,f\,d\mu\,,\qquad\mbox{for every bounded Borel set $G\subset\R^{n-1}$.}
  \]
\end{lemma}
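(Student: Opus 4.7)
The plan has two parts: first the inequality, then the equality characterization.

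For the inequality, my starting observation is the algebraic identity $2\mu = (\nu+\mu) - (\nu-\mu)$, so that by the subadditivity of total variation applied to vector-valued Radon measures,
\[
2|\mu|(G) = |2\mu|(G) = \bigl|(\nu+\mu) - (\nu-\mu)\bigr|(G) \le |\nu+\mu|(G) + |\nu-\mu|(G),
\]
which is \eqref{triangular measures}. This step is routine.

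For the ``if'' direction in the equality statement, suppose $\nu = f\mu$ with $|f|\le 1$. Writing $\mu = \sigma_\mu\,|\mu|$ where $\sigma_\mu$ is the Radon--Nikodym polar function ($|\sigma_\mu|=1$ $|\mu|$-a.e.), we compute $\nu\pm\mu = (f\pm 1)\sigma_\mu\,|\mu|$ and hence $|\nu\pm\mu| = |f\pm 1|\,|\mu|$. Since $|f|\le 1$ gives $f+1\ge 0$ and $f-1\le 0$, their sum equals $(f+1)+(1-f) = 2$, producing equality in \eqref{triangular measures} on every Borel set.

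The ``only if'' direction is the main content, and requires two steps. First, I will argue absolute continuity $\nu \ll |\mu|$. If $G$ is bounded Borel with $|\mu|(G)=0$, then $\mu\llcorner G \equiv 0$, so $|\nu\pm\mu|(G) = |\nu|(G)$, and equality forces $2|\nu|(G) = 2|\mu|(G) = 0$. Hence $|\nu|$ vanishes on every bounded $|\mu|$-null set, and by inner regularity $\nu \ll |\mu|$. By Radon--Nikodym, write $\nu = g\,|\mu|$ for some Borel $g:\R^{n-1}\to\R^{n-1}$. Then the equality in \eqref{triangular measures} localizes to the pointwise identity
\[
|g(x)+\sigma_\mu(x)| + |g(x)-\sigma_\mu(x)| = 2, \qquad \text{for $|\mu|$-a.e.\ $x$.}
\]

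The second step is the pointwise rigidity: if $a,b\in\R^{n-1}$ with $|b|=1$ satisfy $|a+b|+|a-b|=2$, then $a = f\,b$ for some $f\in[-1,1]$. I will prove this by squaring and using the parallelogram law $|a+b|^2+|a-b|^2 = 2(|a|^2+1)$, which reduces the identity to $|a+b|\,|a-b| = 1-|a|^2$ (in particular forcing $|a|\le 1$); squaring again and expanding gives $(|a|^2+1)^2 - 4(a\cdot b)^2 = (1-|a|^2)^2$, hence $(a\cdot b)^2 = |a|^2$, which is the equality case of Cauchy--Schwarz and thus forces $a$ parallel to $b$. Applying this with $a=g(x)$, $b=\sigma_\mu(x)$ yields a Borel $f:\R^{n-1}\to[-1,1]$ with $g = f\,\sigma_\mu$ $|\mu|$-a.e., so that $\nu = g\,|\mu| = f\,\sigma_\mu\,|\mu| = f\,\mu$, as desired.

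The only technical point to watch is the measurability of $f$; this is handled by defining $f(x) := g(x)\cdot\sigma_\mu(x)$ on $\{\sigma_\mu\ne 0\}$, which is a Borel function valued in $[-1,1]$ after the pointwise rigidity above, and arbitrarily (say $0$) elsewhere (a $|\mu|$-null set). The main obstacle is precisely the pointwise rigidity lemma, but its proof is elementary once one exploits the parallelogram identity.
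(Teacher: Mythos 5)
Your proof is correct and follows essentially the same route as the paper's: deduce $|\nu|\ll|\mu|$ from equality on $|\mu|$-null sets, apply Radon--Nikodym to write $\nu=g\,d|\mu|$ and $\mu=h\,d|\mu|$ with $|h|=1$ a.e., localize the equality to the pointwise identity $|g+h|+|g-h|=2$, and extract a scalar factor $f$. The only (minor) variation is in the pointwise rigidity step: the paper applies the equality case of the triangle inequality to the vectors $g+h$ and $h-g$, whose sum is $2h$, to get $h-g=\lambda(g+h)$ and hence $g=\tfrac{1-\lambda}{1+\lambda}h$, whereas you reach the same conclusion via the parallelogram law reduced to the Cauchy--Schwarz equality case; both are elementary and sound, and your explicit verification of the ``if'' direction (which the paper leaves implicit) is a welcome completeness check.
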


\begin{proof}
  The validity of \eqref{triangular measures} follows immediately from the fact that if $G$ is a Borel set in $\R^{n-1}$, then $|\mu|(G)$ is the supremum of the sums $\sum_{h\in\N}|\mu(G_h)|$ along partitions $\{G_h\}_{h\in\N}$ of $G$ into bounded Borel sets. From the same fact, we immediately deduce that $|\nu+\mu|(G)=|\nu-\mu|(G)=|\nu|(G)$ whenever $|\mu|(G)=0$: therefore, if $G$ is such that $|\mu|(G)=0$ and \eqref{triangular measures} holds as an equality, then $|\nu|(G)=0$. In particular, if equality holds in \eqref{triangular measures} for every bounded Borel set $G\subset\R^{n-1}$, then $|\nu|$ is absolutely continuous with respect to $|\mu|$. By the Radon-Nykod\'im theorem we have that $\nu=g\,d|\mu|$ for a $|\mu|$-measurable function $g:\R^{n-1}\to\R^{n-1}$, as well as $\mu=h\,d|\mu|$, for a $|\mu|$-measurable function $h:\R^{n-1}\to S^{n-2}$.
  In particular $\nu\pm\mu=(g\pm h)\,d|\mu|$, and thus, since equality holds in \eqref{triangular measures},
  \begin{eqnarray*}
    2|\mu|(G)=|\nu+\mu|(G)+|\nu-\mu|(G)
    =\int_G\,|g+h|\,d|\mu|+\int_G\,|g-h|\,d|\mu|\,,
  \end{eqnarray*}
  for every Borel set $G\subset\R^{n-1}$,  which gives
  \[
  |g+h|+|h-g|=2=2|h|\,,\qquad\mbox{$|\mu|$-a.e. on $\R^{n-1}$}\,.
  \]
  Thus, there exists $\l:\R^{n-1}\to[0,\infty)$ such that $(h-g)=\l\,(g+h)$ $|\mu|$-a.e. on $\R^{n-1}$, i.e.
  \[
  g=\frac{1-\l}{1+\l}\,h\,,\qquad\mbox{$|\mu|$-a.e. on $\R^{n-1}$}\,.
  \]
  This proves that $\nu=f\,d\mu$ where $f=(1-\l)/(1+\l)$. By Borel regularity of $|\mu|$, we can assume without loss of generality that $f$ is Borel measurable. The proof is complete.
\end{proof}

\begin{proof}
  [Proof of Theorem \ref{thm characterization with barycenter}, necessary conditions]  Let  $E\in\M(v)$. By Theorem \ref{thm ccf1}, we have that $E_z$ is $\H^1$-equivalent to a segment for $\H^{n-1}$-a.e. $z\in\R^{n-1}$, that is \eqref{finale0}. As a consequence, by Theorem \ref{thm tauM b delta}, we have $b_\de=1_{\{v>\de\}}\,b_E\in GBV(\R^{n-1})$ whenever $\{v>\de\}$ is of finite perimeter. Let us set
  \begin{eqnarray}
  \label{defn:I}
    I&=&\Big\{\de>0:\mbox{$\{v>\de\}$ and $\{v<\de\}$ are sets of finite perimeter}\Big\}\,,
    \\
    \label{defn:J}
    J_\de&=&\Big\{M>0:\mbox{$\{b_\de< M\}$ and $\{b_\de> -M\}$ are sets of finite perimeter}\Big\}\,,
  \end{eqnarray}
  and notice that $\H^1((0,\infty)\setminus I)=0$ since $v\in BV(\R^{n-1})$, and that $\H^1((0,\infty)\setminus J_\de)=0$ for every $\de\in I$, as  $b_\de\in GBV(\R^{n-1})$ whenever $\de\in I$. By taking total variations in \eqref{finale3 e}, we find $2\,|D^c(\tau_Mb_\de)|(G)\le|D^cv|(G)$ for every bounded Borel set $G\subset\R^{n-1}$. By letting first $M\to\infty$ (in $J_\de$) and then $\de\to 0$ (in $I$) we prove \eqref{2DcbE le Dcv}. Let us also notice that \eqref{coarea-a-ah ottimali} is an immediate corollary of \eqref{coarea-a-ah} and \eqref{2DcbE le Dcv}, once \eqref{finale1 E}, \eqref{finale2 E}  have been proved. Summarizing, these remarks show that we only need to prove the validity of \eqref{finale1 E}, \eqref{finale2 E}, and \eqref{finale3 e} (for $\de\in I$ and $M\in J_\de$) in order to complete the proof of the necessary conditions for equality cases. This is accomplished in various steps.

  \medskip

  \noindent {\it Step one:} Let us fix $\de,L\in I$ and $M\in J_\de$, and set
  \[
  \S_{\de,L,M}=\{\de<v<L\}\cap\{|b_E|<M\}=\{|b_\de|<M\}\cap\{ \de < v<L\}\,,
  \]
  so that $\S_{\de,L,M}$  is a set of finite perimeter. Since $\tau_Mb_\de\in (BV\cap L^\infty)(\R^{n-1})$ (see the end of step one in the proof of Theorem \ref{thm tauM b delta}), $1_{\S_{\de,L,M}}\in (BV\cap L^\infty)(\R^{n-1})$, and $\tau_Mb_\de=b_\de=b_E$ on $\S_{\de,L,M}$, we have
  \[
  b_{\de,L,M}=1_{\S_{\de,L,M}}\,b_E\in (BV\cap L^\infty)(\R^{n-1})\,.
  \]
  We now claim that there exists a Borel function $f_{\de,L,M}:\R^{n-1}\to[-1/2,1/2]$ such that
  \begin{eqnarray}
  \label{bdelta ac xx}
  \nabla b_{\de,L,M}(z)=0\,,&&\quad\mbox{for $\H^{n-1}$-a.e. $z\in\S_{\de,L,M}$}\,,
  \\
  \label{bdelta cantor xx}
  D^cb_{\de,L,M}(G)=\int_G f_{\de,L,M}\,d\,(D^cv)\,,&&\quad\mbox{for every bounded Borel set $G\subset\S_{\de,L,M}^{(1)}$}\,.\hspace{0.5cm}
 \end{eqnarray}
 Indeed, let us set $v_{\de,L,M}=1_{\S_{\de,L,M}}\,v$. Since $v_{\de,L,M}\,,b_{\de,L,M}\in (BV\cap L^\infty)(\R^{n-1})$, we can apply Corollary \ref{lemma W} to $W=W[v_{\de,L,M},b_{\de,L,M}]$. Since $W[v_{\de,L,M},b_{\de,L,M}]=E\cap(\S_{\de,L,M}\times\R)$, and thus
\[
\pae E\cap(\S_{\de,L,M}^{(1)}\times\R)=\pae W[v_{\de,L,M},b_{\de,L,M}]\cap(\S_{\de,L,M}^{(1)}\times\R)\,,
\]
we find that, for every Borel set $G\subset\S_{\de,L,M}^{(1)}\setminus(S_{v_{\de,L,M}}\cup S_{b_{\de,L,M}})$,
\begin{eqnarray}\nonumber
    P(E;G\times\R)&=&P(W[v_{\de,L,M},b_{\de,L,M}];G\times\R)
    \\\nonumber
    &=&\int_{G}\sqrt{1+\Big|\nabla\Big(b_{\de,L,M}+\frac{v_{\de,L,M}}2\Big)\Big|^2}
    +\sqrt{1+\Big|\nabla\Big(b_{\de,L,M}-\frac{v_{\de,L,M}}2\Big)\Big|^2}\,d\H^{n-1}
    \\\label{sveglia2*}
    &&+\Big|D^c\Big(b_{\de,L,M}+\frac{v_{\de,L,M}}2\Big)\Big|(G)+\Big|D^c\Big(b_{\de,L,M}-\frac{v_{\de,L,M}}2\Big)\Big|(G)\,.
  \end{eqnarray}
  By Lemma \ref{lemma fgE} applied with $v_{\de,L,M}=1_{\S_{\de,L,M}}v$, we find that
  \begin{eqnarray*}
  \nabla v_{\de,L,M}&=&1_{\S_{\de,L,M}}\,\nabla v\,,\qquad\mbox{$\H^{n-1}$-a.e. on $\R^{n-1}$}\,,
  \\
  D^cv_{\de,L,M}&=&D^cv\llcorner\S_{\de,L,M}^{(1)}\,,
  \\
  S_{v_{\de,L,M}}\cap\S_{\de,L,M}^{(1)}&=&S_v\cap\S_{\de,L,M}^{(1)}\,.
  \end{eqnarray*}
  By \eqref{sveglia2*}, we thus find that
  \begin{eqnarray}\nonumber
    P(E;G\times\R)&=&\int_{G}\sqrt{1+\Big|\nabla\Big(b_{\de,L,M}+\frac{v}2\Big)\Big|^2}
    +\sqrt{1+\Big|\nabla\Big(b_{\de,L,M}-\frac{v}2\Big)\Big|^2}\,d\H^{n-1}
    \\\label{sveglia2**}
    &&+\Big|D^c\Big(b_{\de,L,M}+\frac{v}2\Big)\Big|(G)+\Big|D^c\Big(b_{\de,L,M}-\frac{v}2\Big)\Big|(G)\,,
\end{eqnarray}
for every  Borel set $G\subset \S_{\de,L,M}^{(1)}\setminus(S_v\cup S_{b_{\de,L,M}})$. By Corollary \ref{lemma W2}, for every Borel set $G\subset\R^{n-1}$,
\begin{eqnarray}\label{sveglia3*}
    P(F[v];G\times\R)=2\,\int_{G}\sqrt{1+\Big|\frac{\nabla v}2\Big|^2}\,d\H^{n-1}
    +\int_{G\cap S_v}\,[v]\,d\H^{n-2}
    +|D^cv|(G)\,.
\end{eqnarray}
Taking into account that $P(E;G\times\R)=P(F[v];G\times\R)$ for every Borel set $G\subset\R^{n-1}$, we combine \eqref{sveglia2**} and \eqref{sveglia3*}, together with the convexity of $\xi\in\R^{n-1}\mapsto\sqrt{1+|\xi|^2}$ and \eqref{triangular measures}, to find that, if $G\subset \S_{\de,L,M}^{(1)}\setminus(S_v\cup S_{b_{\de,L,M}})$, then
\begin{equation}\label{G ac}
  0=\int_{G}\sqrt{1+\Big|\nabla\Big(b_{\de,L,M}+\frac{v}2\Big)\Big|^2}
  +\sqrt{1+\Big|\nabla\Big(b_{\de,L,M}-\frac{v}2\Big)\Big|^2}-2\,\sqrt{1+\Big|\frac{\nabla v}2\Big|^2}\,d\H^{n-1}\,,
\end{equation}
\begin{equation}\label{G cantor}
  0=\Big|D^c\Big(b_{\de,L,M}+\frac{v}2\Big)\Big|(G)+\Big|D^c\Big(b_{\de,L,M}-\frac{v}2\Big)\Big|(G)-|D^cv|(G)\,.
\end{equation}
Since $\S_{\de,L,M}^{(1)}\setminus(S_v\cup S_{b_{\de,L,M}})$ is $\H^{n-1}$-equivalent to $\S_{\de,L,M}$, by \eqref{G ac} and by the strict convexity of $\xi\in\R^{n-1}\mapsto\sqrt{1+|\xi|^2}$, we come to prove \eqref{bdelta ac xx}. By applying Lemma \ref{lemma misure triangolare} to
\[
\mu=\frac{D^cv}2\,,\qquad \nu=D^cb_{\de,L,M}\llcorner \Big(\S_{\de,L,M}^{(1)}\setminus(S_v\cup S_{b_{\de,L,M}})\Big)
=D^cb_{\de,L,M}\llcorner \S_{\de,L,M}^{(1)}\,,
\]
we prove \eqref{bdelta cantor xx}. This completes the proof of \eqref{bdelta ac xx} and \eqref{bdelta cantor xx}.

\medskip

\noindent {\it Step two:} We prove \eqref{finale3 e}. Let $\de,L\in I$ and $M\in J_\de$. Since $b_{\de,L,M}=1_{\S_{\de,L,M}}\tau_Mb_\de$, by Lemma \ref{lemma fgE} we have
\[
D^cb_{\de,L,M}=D^c(\tau_Mb_\de)\llcorner \S_{\de,L,M}^{(1)}\,.
\]
We combine this fact with \eqref{bdelta cantor xx} to find a Borel function $f_{\de,M}:\R^{n-1}\to[-1/2,1/2]$ with
\[
D^c\tau_Mb_\de(G)=\int_G f_{\de,M}\,d\,(D^cv)\,,\qquad\mbox{for every bounded Borel set $G\subset\S_{\de,L,M}^{(1)}$}\,.
\]
As a consequence, the Radon measures $D^c\tau_Mb_\de$ and $f_{\de,M}\,D^cv$ coincide on every bounded Borel set contained into
\begin{eqnarray*}
\bigcup_{L\in I}\,\S_{\de,L,M}^{(1)}&=&\bigcup_{L\in I}\,\{v>\de\}^{(1)}\cap\{|b_E|<M\}^{(1)}\cap\{v<L\}^{(1)}
\\
&=&\Big(\{v>\de\}^{(1)}\cap\{|b_E|<M\}^{(1)}\Big)\cap \bigcup_{L\in I}\,\{v<L\}^{(1)}
\\
&=&\{v>\de\}^{(1)}\cap\{|b_E|<M\}^{(1)}\cap \{v^\vee<\infty\}\,,
\end{eqnarray*}
where in the last identity we have used \eqref{dens2}. Since $\H^{n-2}(\{v^\vee=\infty\})=0$ by \cite[4.5.9(3)]{FedererBOOK}, the set $\{v^\vee=\infty\}$ is negligible with respect to both $|D^c\tau_Mb_\de|$ and $|D^cv|$. We have thus proved that, for every bounded Borel set $G \subset \{v>\de\}^{(1)}\cap\{|b_E|<M\}^{(1)}$,
\begin{equation}
\label{rachel}
 D^c(\tau_Mb_\de)(G)=\int_G f_{\de,M}\,d\,(D^cv)\,.
\end{equation}
Since for every $M'>M$ and $\de'<\de$ we have that $\tau_Mb_\de = \tau_{M'}b_{\de'}$ on  $\{v>\de\}\cap\{|b_E|<M\}$, by Lemma \ref{lemma Dcv} we obtain that
$$
D^c(\tau_Mb_\de)\llcorner \{v>\de\}^{(1)}\cap\{|b_E|<M\}^{(1)} = D^c(\tau_{M'} b_{\de'})\llcorner \{v>\de\}^{(1)}\cap\{|b_E|<M\}^{(1)}\,,
$$
and therefore \eqref{rachel} can be rewritten with a function $f$ independent of $M$ and $\de$; thus,
\begin{equation}
\label{rachelnew}
 D^c(\tau_Mb_\de)(G)=\int_G  f\,d\,(D^cv)\,,
\end{equation}
for every bounded Borel set $G \subset \{v>\de\}^{(1)}\cap\{|b_E|<M\}^{(1)}$. We next notice that, if $\de\in I$ and $M\in J_\de$, then
  \[
  \tau_M\,b_\de=M\,1_{\{b_\de\ge M\}}-M\,1_{\{b_\de\le -M\}}+1_{\{|b_\de|<M\}\cap\{v>\de\}}\tau_Mb_\de\,,\qquad\mbox{on $\R^{n-1}$}\,,
  \]
  is an identity between $BV$ functions. By \cite[Example 3.97]{AFP} we thus find
  \begin{eqnarray}
  \nonumber
  D^c\tau_Mb_\de&=&D^c\Big(1_{\{|b_\de|<M\}\cap\{v>\de\}}\tau_Mb_\de\Big)=1_{(\{|b_\de|<M\}\cap\{v>\de\})^{(1)}}D^c(\tau_Mb_\de)
  \\\label{donna}
  &=&D^c(\tau_Mb_\de)\llcorner\Big(\{|b_\de|<M\}^{(1)}\cap\{v>\de\}^{(1)}\Big)\,.
  \end{eqnarray}
Since by \eqref{donna} the measure $D^c(\tau_Mb_\de)$ is concentrated on $ \{v>\de\}^{(1)}\cap\{|b_E|<M\}^{(1)}$,
we deduce from \eqref{rachelnew} that for every bounded Borel set $G \subset \R^{n-1}$
\[
 D^c(\tau_Mb_\de)(G) =  D^c(\tau_Mb_\de)\Big(G \cap \{v>\de\}^{(1)}\cap\{|b_E|<M\}^{(1)}\Big) =\int_{G\cap\{v>\de\}^{(1)}\cap\{|b_E|<M\}^{(1)}} \hspace{-2cm}f\,d\,(D^cv)\,,
\]
which proves \eqref{finale3 e}.

\medskip

\noindent {\it Step three:} We prove \eqref{finale1 E}. Let $\de,L\in I$ and $M\in J_\de$. Since $b_{\de,L,M}=b_E$ on $\S_{\de,L,M}$, by \eqref{bdelta ac xx} and by \eqref{gino2} we find that $\nabla b_E=0$ $\H^{n-1}$-a.e. on $\S_{\de,L,M}$. By taking a union first on $M\in J_\de$, and then on $\de,L\in I$, we find that $\nabla b_E=0$ $\H^{n-1}$-a.e. on $\{v>0\}$. At the same time $b_E=0$ on $\{v=0\}$ by definition, and thus, again by \eqref{gino2}, we have $\nabla b_E=0$ $\H^{n-1}$-a.e. on $\{v=0\}$. This completes the proof of \eqref{finale1 E}.

\medskip

\noindent {\it Step four:} We prove \eqref{finale2 E}. We fix $\de,L\in I$ and define $\S_{\de,L}=\{\de<v<L\}$, $b_{\de,L}=1_{\S_{\de,L}}\,b_E$, and $v_{\de,L}=1_{\S_{\de,L}}\,v$. Since $\S_{\de,L}$ is a set of finite perimeter, it turns out that $b_{\de,L}\in GBV(\R^{n-1})$, while, by construction, $v_{\de,L}\in (BV\cap L^\infty)(\R^{n-1})$. We are in the position to apply Corollary \ref{lemma W} to obtain a formula for the perimeter of $W[v_{\de,L},b_{\de,L}]$ relative to cylinders $G\times\R$ for Borel sets $G\subset\R^{n-1}$. In particular, if $G\subset \S_{\de,L}^{(1)}\cap( S_{v_{\de,L}}\cup S_{b_{\de,L}})$, then
\begin{eqnarray*}
  P(E;G\times\R)&=&P(W[v_{\de,L},b_{\de,L}];G\times\R)
  \\
  &=&\int_{G}\,\min\Big\{v_{\de,L}^\vee+v_{\de,L}^\wedge,\max\Big\{[v_{\de,L}],2\,[b_{\de,L}]\Big\}\Big\}\,d\H^{n-2}\,.
\end{eqnarray*}
Since, by \eqref{gino1}, $\S_{\de,L}^{(1)}\cap S_{v_{\de,L}}=\S_{\de,L}^{(1)}\cap S_v$ with
$v_{\de,L}^\vee=v^\vee$, $v_{\de,L}^\wedge=v^\wedge$, and $[v_{\de,L}]=[v]$ on $\S_{\de,L}^{(1)}$, we have
\begin{eqnarray*}
  P(E;G\times\R)=\int_{G}\,\min\Big\{v^\vee+v^\wedge,\max\Big\{[v],2\,[b_{\de,L}]\Big\}\Big\}\,d\H^{n-2}\,,
\end{eqnarray*}
whenever $G\subset \S_{\de,L}^{(1)}\cap (S_v\cup S_{b_{\de,L}})$. Since $P(E;G\times\R)=P(F[v];G\times\R)$, by \eqref{sveglia3*},
\[
\min\Big\{v^\vee+v^\wedge,\max\Big\{[v],2\,[b_{\de,L}]\Big\}\Big\}=[v]\,,\qquad
\mbox{$\H^{n-2}$-a.e. on $(S_{b_{\de,L}}\cup S_v)\cap\S_{\de,L}^{(1)}$}\,.
\]
Since $v^\wedge\ge\de$ on $\S_{\de,L}^{(1)}$, we deduce that $v^\vee+v^\wedge>[v]$ on $\S_{\de,L}^{(1)}$, and thus the above condition immediately implies that
\[
2\,[b_{\de,L}]\le [v]\,,\qquad\mbox{$\H^{n-2}$-a.e. on $(S_{b_{\de,L}}\cup S_v)\cap\S_{\de,L}^{(1)}$}\,.
\]
In particular, $S_{b_{\de,L}}\cap\S_{\de,L}^{(1)}\subset_{\H^{n-2}} S_v$, and we have proved
\[
2\,[b_{\de,L}]\le[v]\,,\qquad\mbox{$\H^{n-2}$-a.e. on $\S_{\de,L}^{(1)}$}\,.
\]
By \eqref{gino1}, $[b_{\de,L}]=[b_E]$ on $\S_{\de,L}^{(1)}$. By taking the union of $\S_{\de,L}^{(1)}$ on $\de, L\in I$, and by taking \eqref{dens2} and \eqref{dens3} into account, we thus find that
\[
2\,[b_E]\le[v]\,,\qquad\mbox{$\H^{n-2}$-a.e. on $\{v^\wedge>0\}\cup\{v^\vee<\infty\}$}\,.
\]
Since, as noticed above, $\{v^\vee=\infty\}$ is $\H^{n-2}$-negligible, we have proved \eqref{finale2 E}.
\end{proof}

\subsection{Characterization of equality cases, part two}\label{section necessary cond barycenter} We now complete the proof of Theorem \ref{thm characterization with barycenter}, by showing that if a $v$-distributed set of finite perimeter $E$ satisfies \eqref{finale0}, \eqref{finale1 E}, \eqref{finale2 E}, and \eqref{finale3 e}, then $E\in\M(v)$. The following proposition will play a crucial role.

\begin{proposition}\label{corollario v uguale 0}
  If $v\in BV(\R^{n-1};[0,\infty))$, $\H^{n-1}(\{v>0\})<\infty$, and $E$ is a $v$-distributed set of finite perimeter with section as segments, then
  \begin{equation}
    \label{superpippo}
      P(E;\{v^\wedge=0\}\times\R)=P(F[v];\{v^\wedge=0\}\times\R)=\int_{\{v^\wedge=0\}}v^\vee\,d\H^{n-2}\,.
  \end{equation}
\end{proposition}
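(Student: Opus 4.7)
My plan is to treat the two equalities in \eqref{superpippo} separately, matching each side to the integral. For the identity $P(F[v];\{v^\wedge=0\}\times\R)=\int_{\{v^\wedge=0\}}v^\vee\,d\H^{n-2}$ I would simply apply Corollary \ref{lemma W2} with $G=\{v^\wedge=0\}$ and analyze the three terms in \eqref{perimetro di F}. The absolutely continuous term $\int_{G\cap\{v>0\}}\sqrt{1+|\nabla v/2|^2}\,d\H^{n-1}$ vanishes because $\H^{n-1}$-a.e.\ point is a Lebesgue point of $v$, which combined with $v^\wedge(z)=0$ forces $v(z)=0$, so $\H^{n-1}(G\cap\{v>0\})=0$; the Cantorian term $|D^cv|(G)$ vanishes by Lemma \ref{lemma Dcv}; and the jump term reduces to $\int_{G\cap S_v}v^\vee\,d\H^{n-2}$ because $[v]=v^\vee-v^\wedge=v^\vee$ on $G\cap S_v$. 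Finally, $\{v^\wedge=0\}\setminus S_v\subset\{v^\vee=0\}$, so the integrals over $G$ and $G\cap S_v$ coincide.

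For the identity $P(E;\{v^\wedge=0\}\times\R)=P(F[v];\{v^\wedge=0\}\times\R)$, I would split $\{v^\wedge=0\}$ into $\{v^\wedge=0\}\setminus S_v$ and $\{v^\wedge=0\}\cap S_v$. On the first piece one has $v^\vee(z)=0$, so $\{v>0\}$ has $(n-1)$-dimensional density zero at $z$; then a volume estimate parallel to the one in step three of the proof of Proposition \ref{proposition insieme u1u2} gives $(z,t)\in E^{(0)}$ for every $t\in\R$, so $\pae E$ is empty above this piece, and its contribution to $P(E;\cdot)$ is zero. The problem therefore reduces to proving
\[
P\bigl(E;(\{v^\wedge=0\}\cap S_v)\times\R\bigr)\le\int_{\{v^\wedge=0\}\cap S_v}v^\vee\,d\H^{n-2},
\]
since Steiner's inequality together with the previous paragraph already supplies the matching lower bound.

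To this end I plan to invoke Corollary \ref{lemma W} after truncating both $v$ and $b_E$ to bounded functions. Fixing $\de\in I$, $M\in I$ with $\de<M$, and $L\in J_\de$ (with $I$ and $J_\de$ as in \eqref{defn:I} and \eqref{defn:J}), the sets $\{v>\de\}$, $\{v<M\}$ and $\{|b_\de|<L\}$ are all of finite perimeter. Set $A=\{\de<v<M\}\cap\{|b_E|<L\}$, so that $E_A:=E\cap(A\times\R)=W[v\,1_A,b_E\,1_A]$ satisfies the hypotheses of Corollary \ref{lemma W}, since both $v\,1_A$ and $b_E\,1_A$ belong to $(BV\cap L^\infty)(\R^{n-1})$. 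On $G=\{v^\wedge=0\}\cap S_v$ the absolutely continuous and Cantorian terms in \eqref{formula W} vanish exactly as in the computation for $F[v]$, while the jump integrand collapses to $(v\,1_A)^\vee$ on $G$ by the pointwise identity $\min\{a,\max\{a,c\}\}=a$: indeed $(v\,1_A)^\wedge=0$ on $G$ forces $(v\,1_A)^\vee+(v\,1_A)^\wedge=[v\,1_A]=(v\,1_A)^\vee$, so one may take $a=(v\,1_A)^\vee$ and $c=2[b_E\,1_A]$.

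The final step is to let $\de\to 0^+$ and $L,M\to\infty$ along admissible sequences, so that $E_A\to E$ in $L^1_{\loc}$; lower semicontinuity of the perimeter applied to an open neighborhood of $G$ (squeezed to $G$ using outer regularity of the Radon measure $\H^{n-1}\llcorner\pae E$) then yields the desired upper bound. The main obstacle is precisely this limit passage: formula \eqref{formula W} applied to $E_A$ generates extra contributions coming from the ``cutoff boundary'' $\pae A\cap G$, associated with the three truncations defining $A$, and these must be shown to vanish along a suitable subsequence. This should follow from a selection argument analogous to the choice of $\{M_h\}$ in step two of the proof of Theorem \ref{lemma u1u2}, exploiting the $\sigma$-finiteness of the perimeters of $\{v>\de\}$, $\{v<M\}$ and $\{|b_\de|<L\}$ in the parameters $\de$, $M$, $L$ respectively.
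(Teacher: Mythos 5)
Your first two paragraphs are correct and match the paper's strategy: compute $P(F[v];\{v^\wedge=0\}\times\R)$ from Corollary \ref{lemma W2}, and observe that the slice $(\{v^\vee=0\}\times\R)\cap\pae E$ is empty so everything reduces to a one--sided inequality on $\{v^\wedge=0\}\cap S_v$, with Steiner's inequality supplying the converse. The observation that $(v\,1_A)^\wedge=0$ on $\{v^\wedge=0\}$ forces the jump integrand in \eqref{formula W} to collapse to $(v\,1_A)^\vee$ is also the right key pointwise computation. However, the limit passage in your last two paragraphs has two genuine gaps, which together cover the technical heart of the proof.

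First, the ``lower semicontinuity applied to an open neighborhood $U$ of $G\times\R$, squeezed using outer regularity'' does not close. Lower semicontinuity on $U$ gives $P(E;U)\le\liminf_A P(E_A;U)$, but $P(E_A;U)$ carries a contribution from $U\setminus(G\times\R)$ that does not become small as $A$ exhausts $\{v>0\}$; it in fact converges to $P(E;U\cap(\{v^\wedge>0\}\times\R))$, which is nonzero for every fixed open $U$. You cannot then shrink $U$ to $G\times\R$ inside a $\liminf$ over $A$. The mechanism that works (and the one the paper uses) is global lower semicontinuity $P(E)\le\liminf_h P(E_h)$ combined with the decomposition
\[
P(E_h)=P\bigl(E;\S_h^{(1)}\times\R\bigr)+P\bigl(E_h;\pae\S_h\times\R\bigr)\,,
\]
where $\S_h=\{\de_h<v<L_h\}$ and $E_h=E\cap(\S_h\times\R)$, together with dominated convergence $P(E;\S_h^{(1)}\times\R)\to P(E;\{v^\wedge>0\}\times\R)$. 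Subtracting then yields precisely the desired $P(E;\{v^\wedge=0\}\times\R)\le\liminf_h P(E_h;\pae\S_h\times\R)$ with no open-set intermediary.

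Second, the ``selection argument analogous to step two of Theorem~\ref{lemma u1u2}'' that you defer is not analogous and is in fact the most delicate part of the proof. Two issues: (i) The truncation by $\{|b_E|<L\}$ is unnecessary and counterproductive. Corollary~\ref{lemma W} already allows $b\in GBV(\R^{n-1})$; one applies it with $v_h=1_{\S_h}v\in(BV\cap L^\infty)$ and $a_h=1_{\S_h}b_E\in GBV$, so there is no need to make $b$ bounded. Adding the extra cutoff $\{|b_\de|<L\}$ creates an additional boundary $\pae\{|b_\de|<L\}$ whose jump contribution would need $L\,P(\{|b_\de|>L\})\to 0$ along a subsequence, and it is not clear this is available since $b_\de$ is only $GBV$ (the relevant coarea integral may be infinite over $\R^{n-1}$). (ii) Even with the correct cutoffs, controlling $\limsup_h P(E_h;\pae\S_h\times\R)$ requires not the generic choice in Theorem~\ref{lemma u1u2} but the specific selection $\de_h\,P(\{v>\de_h\})\to 0$, $L_h\,P(\{v<L_h\})\to 0$, and then one must prove
\[
\lim_{h\to\infty}\int_{\pae\{v>\de_h\}}v^\vee\,d\H^{n-2}=\int_{\{v^\wedge=0\}}v^\vee\,d\H^{n-2}\,,
\]
which is not a soft fact. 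It requires splitting $\pae\{v>\de_h\}$ into the pieces lying over $\{v^\wedge=0\}$ (which give the limit by monotone convergence, since $\{v^\wedge=0\}\cap\pae\{v>\de_h\}=\{v^\wedge=0\}\cap\{[v]\ge\de_h\}$) and over $\{v^\wedge>0\}$, and then showing the latter contribution vanishes by estimating $v^\vee\le[v]+\de_h$ there and using dominated convergence on $[v]\in L^1(\H^{n-2}\llcorner S_v)$. Without this careful analysis the $\limsup$ of the cutoff term can overshoot $\int_{\{v^\wedge=0\}}v^\vee$, so the argument cannot be dismissed as a routine selection.
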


\begin{remark}
  {\rm With Proposition \ref{corollario v uguale 0} at hand, one can actually go back to Corollary \ref{lemma W} and obtain a formula for $P(E;G\times\R)$ in terms of $v$ and $b_E$ whenever $E$ is a $v$-distributed set of finite perimeter with section as segments. Since such a formula may be of independent interest, we have included its proof in Appendix \ref{section perimeter formula}.}
\end{remark}

\begin{proof}[Proof of Proposition \ref{corollario v uguale 0}]
  Let $I=\{t>0:\mbox{$\{v>t\}$ and $\{v<t\}$ is of finite perimeter}\}$, so that we have as usual $\H^1((0,\infty)\setminus I)=0$. Since
  \[
  \int_0^\infty P(\{v>t\})\,dt=\int_0^\infty P(\{v< t\})\,dt=|Dv|(\R^{n-1})<\infty\,,
  \]
  we can find two sequences $\{\de_h\}_{h\in\N},\{L_h\}_{h\in\N}\subset I$ such that
  \begin{eqnarray}
    \label{superpippo0}
    \lim_{h\to\infty}\de_h=0\,,&&\qquad \lim_{h\to\infty}\de_h\,P(\{v>\de_h\})=0\,,
    \\\label{superpippo0x}
    \lim_{h\to\infty}L_h=\infty\,,&&\qquad    \lim_{h\to\infty}L_h\,P(\{v<L_h\})=0\,.
  \end{eqnarray}
  Let us set $\S_h=\{L_h>v>\de_h\}$ and $E_h=E\cap(\S_h\times\R)$. Notice that $E_h$ is, trivially, a set of locally finite perimeter. Since $E_h$ locally converges to $E$ as $h\to\infty$, since $P(E_h;\S_h^{(0)}\times\R)=0$, and since $\pae E_h\cap(\S_h^{(1)}\times\R)=\pae E\cap(\S_h^{(1)}\times\R)$ we have
  \begin{equation}
    \label{superpippo1}
    P(E)\le \liminf_{h\to\infty}P(E_h)=\liminf_{h\to\infty}P(E;\S_h^{(1)}\times\R)+P(E_h;\pae\S_h\times\R)\,.
  \end{equation}
  By \eqref{dens2} and \eqref{dens3},
  \[
  \lim_{h\to\infty}1_{\S_h^{(1)}}(z)=1_{\{v^\wedge>0\}\cap\{v^\vee<\infty\}}(z)\,,\qquad\forall z\in\R^{n-1}\,,
  \]
  so that by dominated convergence and thanks to the fact that $E$ has finite perimeter,
  \[
  \lim_{h\to\infty}P(E;\S_h^{(1)}\times\R)= P\Big(E;\Big(\{v^\wedge>0\}\cap\{v^\vee<\infty\}\Big)\times\R\Big)=P(E;\{v^\wedge>0\}\times\R)\,.
  \]
  (In the last identity we have first used \cite[4.5.9(3)]{FedererBOOK} to infer that $\H^{n-2}(\{v^\vee=\infty\})=0$, and then \cite[2.10.45]{FedererBOOK} to conclude that $\H^{n-1}(\{v^\vee=\infty\}\times\R)=0$.) Hence, by \eqref{superpippo1},
  \begin{equation}
    \label{superpippo2}
    P(E;\{v^\wedge=0\}\times\R)\le\liminf_{h\to\infty}P(E_h;\pae\S_h\times\R)\,.
  \end{equation}
  Since $\de_h,L_h\in I$, we have $v_h=1_{\S_h}\,v\in (BV\cap L^\infty)(\R^{n-1})$ and $a_h=1_{\S_h}\,b_E\in GBV(\R^{n-1})$ (indeed, $a_h=1_{\{v<L_h\}}\,b_{\de_h}$ where $b_{\de_h}=1_{\{v>\de_h\}}\,b_E\in GBV(\R^{n-1})$ thanks to Theorem \ref{thm tauM b delta}). Since $E_h=W[v_h,a_h]$ according to \eqref{definizione W}, we can apply \eqref{formula W} in Corollary \ref{lemma W} to $G=\pae\S_h$ to find that
  \begin{equation}
    \label{superpippo4}
      P(E_h;\pae\S_h\times\R)=\int_{\pae\S_h\cap(S_{v_h}\cup S_{a_h})}\min\Big\{v_h^\vee+v_h^\wedge,\max\{[v_h],2[a_h]\}\Big\}\,d\H^{n-2}\,.
  \end{equation}
  (Notice that, since $\pae\S_h$ is countably $\H^{n-2}$-rectifiable, we are only interested in the ``jump'' contribution in \eqref{formula W}.) Let us now set
  \[
  K_h^1=\pae\S_h\cap\pae\{v>\de_h\}\,,\qquad K_h^2=\pae\S_h\setminus\pae\{v>\de_h\}\subset \pae\{v<L_h\}\,.
  \]
  The key remark to exploit \eqref{superpippo4} is that, as one can check with standard arguments,
  \begin{eqnarray}\label{superpippo3}
  v_h^\vee=v^\vee\ge\de_h\ge v^\wedge\,,\qquad v_h^\wedge=0\,,\qquad\mbox{$\H^{n-2}$-a.e. on $K_h^1$}\,,
  \\\label{superpippo3x}
  v^\vee\ge L_h \ge v^\wedge=v_h^\vee\,,\qquad v_h^\wedge=0\,,\qquad\mbox{$\H^{n-2}$-a.e. on $K_h^2$}\,.
  \end{eqnarray}
  (For example, in order to prove \eqref{superpippo3x}, we argue as follows. First, we notice that we always have $v^\vee\ge L_h\ge v^\wedge$ and $v_h^\wedge=0$ on $\pae\{v<L_h\}$. In particular, $\widetilde{v}=L_h$ on $S_v^c\cap \pae\{v<L_h\}$, and this immediately implies $v_h^\vee=L_h$ on $S_v^c\cap\pae\{v< L_h\}$. Finally, by taking into account that $v_h=1_{\S_h}v$ with $\S_h\subset\{v<L_h\}$, one checks that $v^\wedge=v_h^\vee$ $\H^{n-2}$-a.e. on $J_v\cap\pa^*\{v<L_h\}$.) By \eqref{superpippo3} and \eqref{superpippo3x} we have
  \begin{eqnarray}
    \min\{v_h^\vee+v_h^\wedge,\max\{[v_h],2[a_h]\}\}=v^\vee\,,&&\qquad \mbox{$\H^{n-2}$-a.e. on $K_h^1$}\,,
    \\
    \min\{v_h^\vee+v_h^\wedge,\max\{[v_h],2[a_h]\}\}=v^\wedge\,,&&\qquad\mbox{$\H^{n-2}$-a.e. on $K_h^2$}\,,
  \end{eqnarray}
  so that, by \eqref{superpippo4}, and since, again by \eqref{superpippo3}, $K_h^1\subset_{\H^{n-2}}S_{v_h}$, we find
  \begin{equation}
    \label{superpippo4x}
      P(E_h;\pae\S_h\times\R)\le\int_{K_h^1}v^\vee\,d\H^{n-2}+\int_{K_h^2}v^\wedge\,d\H^{n-2}\,.
  \end{equation}
  By \eqref{superpippo3x} and \eqref{superpippo0x}, we have
  \begin{equation}
    \label{superpippoX1}
      \limsup_{h\to\infty}\int_{K_h^2}v^\wedge\,d\H^{n-2}\le  \limsup_{h\to\infty} L_h\,\H^{n-2}(K_h^2)
  \le  \limsup_{h\to\infty} L_h\,P(\{v<L_h\})=0\,.
  \end{equation}
  We are now going to prove that
  \begin{equation}
        \label{superpippoX2}
        \lim_{h\to\infty}\int_{\pae\{v>\de_h\}}v^\vee\,d\H^{n-2}=\int_{\{v^\wedge=0\}}v^\vee\,d\H^{n-2}\,.
  \end{equation}
  (This will be useful in the estimate of the right-hand side of \eqref{superpippo4} as $K_h^1\subset \pae\{v>\de_h\}$.) Since $\{v^\wedge=0\}\cap\pae\{v>\de_h\}=\{v^\wedge=0\}\cap S_v\cap\pae\{v>\de_h\}=\{v^\wedge=0\}\cap\{[v]\ge\de_h\}$, we have that, monotonically as $h\to\infty$,
  \[
  v^\vee\,1_{\{v^\wedge=0\}\cap\pae\{v>\de_h\}}\to v^\vee\,1_{\{v^\wedge=0\}\cap S_v}\,,\qquad\mbox{pointwise on $\R^{n-1}$.}
  \]
  Hence,
  \begin{equation}
    \label{superpippo6}
      \lim_{h\to\infty}\int_{\{v^\wedge=0\}\cap\pae\{v>\de_h\}}v^\vee\,d\H^{n-2}=\int_{\{v^\wedge=0\}\cap S_v}v^\vee\,d\H^{n-2}=\int_{\{v^\wedge=0\}}v^\vee\,d\H^{n-2}\,.
  \end{equation}
  We now claim that
  \begin{equation}
    \label{superpippo7}
          \lim_{h\to\infty} \int_{\{v^\wedge>0\}\cap\pae\{v>\de_h\}}v^\vee\,d\H^{n-2}=0\,.
  \end{equation}
  Indeed, since $v^\vee=v^\wedge=\de_h$ on $S_v^c\cap\pae\{v>\de_h\}$, we find that
  \[
    \int_{S_v^c\cap \{v^\wedge>0\}\cap\pae\{v>\de_h\}}v^\vee\,d\H^{n-2}\le \de_h\,\H^{n-2}(\pae\{v>\de_h\})=\de_h\,P(\{v>\de_h\})\,,
  \]
  so that, by \eqref{superpippo0},
  \begin{eqnarray}\nonumber
    &&\limsup_{h\to\infty} \int_{\{v^\wedge>0\}\cap \pae\{v>\de_h\}}v^\vee\,d\H^{n-2}
    \\\nonumber
    &=&\limsup_{h\to\infty} \int_{S_v\cap\{v^\wedge>0\}\cap\pae\{v>\de_h\}}v^\vee\,d\H^{n-2}
    \\\nonumber
    &=&\limsup_{h\to\infty} \int_{S_v\cap\{v^\wedge>0\}\cap\pae\{v>\de_h\}}[v]+v^\wedge\,d\H^{n-2}
    \\\nonumber
    {\small \mbox{(by \eqref{superpippo3})}}\qquad
    &\le&\limsup_{h\to\infty} \int_{S_v\cap\{v^\wedge>0\}\cap\pae\{v>\de_h\}}[v]\,d\H^{n-2}+ \de_h\,\H^{n-2}(\pae\{v>\de_h\})
    \\\label{superpippo8}
    {\small \mbox{(by \eqref{superpippo0})}}\qquad&=&\limsup_{h\to\infty} \int_{S_v\cap\{v^\wedge>0\}\cap\pae\{v>\de_h\}}[v]\,d\H^{n-2}\,.
  \end{eqnarray}
  Now, if $z\in\{v^\wedge>0\}$, then $z\in\{v>\de\}^{(1)}$ for every $\de<v^\wedge(z)$, so that,
  \[
  1_{S_v\cap\{v^\wedge>0\}\cap\pae\{v>\de_h\}}\to 0\,,\qquad\mbox{pointwise on $\R^{n-1}$}\,,
  \]
  as $h\to\infty$. Since $[v] \in L^1(\H^{n-2}\llcorner S_v)$, by dominated convergence we find
  \begin{equation}
    \label{superpippo9}
    \lim_{h\to\infty}\int_{S_v\cap\{v^\wedge>0\}\cap\pae\{v>\de_h\}}[v]\,d\H^{n-2}=0\,.
  \end{equation}
  By combining \eqref{superpippo8} and \eqref{superpippo9}, we prove \eqref{superpippo7}. By \eqref{superpippo6} and \eqref{superpippo7}, we deduce \eqref{superpippoX2}. By $K_h^1\subset \pae\{v>\de_h\}$, \eqref{superpippo4x}, \eqref{superpippoX1}, and \eqref{superpippoX2}, we deduce that
  \[
  \limsup_{h\to\infty}P(E_h;\pae\S_h\times\R)\le \int_{\{v^\wedge=0\}}v^\vee\,d\H^{n-2}\,.
  \]
  By combining this last inequality with \eqref{superpippo2} we find
  \begin{eqnarray*}
    P(E;\{v^\wedge=0\}\times\R)&\le&\int_{\{v^\wedge=0\}}v^\vee\,d\H^{n-2}
    \\
    &=&P(F[v];\{v^\wedge=0\}\times\R)\le P(E;\{v^\wedge=0\}\times\R)\,,
  \end{eqnarray*}
  where the identity follows by \eqref{perimetro di F}, and the final inequality is, of course, \eqref{steiner inequality}. This completes the proof of \eqref{superpippo}.
\end{proof}

\begin{remark}\label{remark filippo omega}
  {\rm Let $v\in BV(\R^{n-1};[0,\infty))$ with $\H^{n-1}(\{v>0\})<\infty$, and let $E$ be a $v$-distributed set with segments as section. Then, $E$ is of finite perimeter if and only if $\sup_{h\in\N}P(E_h)<\infty$, where
  \[
  E_h=E\cap(\S_h\times\R)\,,\qquad \S_h=\{L_h>v>\de_h\}\,,
  \]
  and $\{\de_h\}_{h\in\N},\{L_h\}_{h\in\N}\subset(0,\infty)$ are such that
  \begin{eqnarray*}
    \lim_{h\to\infty}\de_h=0\,,&&\qquad \lim_{h\to\infty}\de_h\,P(\{v>\de_h\})=0\,,
    \\
    \lim_{h\to\infty}L_h=\infty\,,&&\qquad    \lim_{h\to\infty}L_h\,P(\{v<L_h\})=0\,.
  \end{eqnarray*}
  The fact that $P(E)<\infty$ implies $\sup_{h\in\N}P(E_h)<\infty$ is implicit in the proof of Proposition \ref{corollario v uguale 0}. Conversely, if $\{E_h\}_{h\in\N}$ is defined as above, then $E_h\to E$ as $h\to\infty$, and thus $\sup_{h\in\N}P(E_h)<\infty$ implies $P(E)<\infty$ by lower semicontinuity of perimeter.}
\end{remark}

\begin{lemma}\label{voglioCinfinito}
If $v\in (BV \cap L^\infty) (\R^{n-1})$, $b:\R^{n-1}\to\R$ is such that $\tau_M b \in (BV\cap L^\infty)(\R^{n-1})$ for a.e. $M>0$, and $\mu$ is a   $\R^{n-1}$-valued Radon measure such that
\begin{equation}
\label{eqn:totalvar}
\lim_{M\to \infty}  |\mu - D^c \tau_M b|(G)=0\,, \qquad \mbox{for every bounded Borel set $G \subseteq \R^{n-1}$\,,}
\end{equation}
then,
\begin{equation}\label{vitalitheo}
|D^c (b+v)|(G) \leq |\mu+D^cv|(G)\,, \qquad \mbox{for every Borel set $G \subseteq \R^{n-1}$.}
\end{equation}
\end{lemma}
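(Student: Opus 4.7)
The strategy is to reduce the Cantor-part inequality to a BV statement by truncating $b$ at a slightly higher level than $b+v$, so that the truncation identity decouples the two functions. Writing $L=\|v\|_\infty$, the key observation is the pointwise identity
\begin{equation}\label{plan:key}
\tau_M(b+v)=\tau_M(\tau_{M+L}b+v)\qquad\mbox{on }\R^{n-1},
\end{equation}
valid for every $M>0$. This is checked by splitting according to whether $b\ge M+L$, $|b|\le M+L$, or $b\le-(M+L)$: in the first case both sides equal $M$ because $\tau_{M+L}b+v\ge(M+L)-L=M$ and $b+v\ge M$; the middle case is trivial since $\tau_{M+L}b=b$; the last case is symmetric. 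Since $v\in BV(\R^{n-1})$ and $\tau_{M+L}b\in BV_{\rm loc}(\R^{n-1})$ for a.e.\ $M>0$ by hypothesis, \eqref{plan:key} gives $\tau_M(b+v)\in BV_{\rm loc}(\R^{n-1})$ for a.e.\ $M>0$; by further truncation this yields $\tau_M(b+v)\in BV_{\rm loc}(\R^{n-1})$ for \emph{every} $M>0$, so that $b+v\in GBV(\R^{n-1})$ and $|D^c(b+v)|$ is meaningfully defined via \eqref{Dc GBV}.

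Next I would apply the Vol'pert-type chain rule $D^c(\tau_M u)=D^cu\llcorner\{|\widetilde u|<M\}$ (\cite[Theorem 3.99]{AFP}, already invoked in the proof of Theorem \ref{thm tauM b delta}) to $u=\tau_{M+L}b+v\in BV_{\rm loc}(\R^{n-1})$, and combine it with \eqref{plan:key}. This gives, for a.e.\ $M>0$ and every bounded Borel set $G\subset\R^{n-1}$,
\begin{equation}\label{plan:chain}
|D^c\tau_M(b+v)|(G)=\bigl|(D^c\tau_{M+L}b+D^cv)\llcorner A_M\bigr|(G)\le|D^c\tau_{M+L}b+D^cv|(G),
\end{equation}
where $A_M=\{|\widetilde{\tau_{M+L}b+v}|<M\}$.

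The final step is to let $M\to\infty$. By hypothesis \eqref{eqn:totalvar} and the triangle inequality for total variations of Radon measures,
\[
\Bigl||D^c\tau_{M+L}b+D^cv|(G)-|\mu+D^cv|(G)\Bigr|\le |\mu-D^c\tau_{M+L}b|(G)\longrightarrow 0
\]
as $M\to\infty$, for every bounded Borel $G\subset\R^{n-1}$. Combining this with \eqref{plan:chain} and using the monotone identity $|D^c(b+v)|(G)=\lim_{M\to\infty}|D^c\tau_M(b+v)|(G)$ (see \eqref{Dc GBV}), one deduces $|D^c(b+v)|(G)\le|\mu+D^cv|(G)$ on every bounded Borel set; the extension to arbitrary Borel sets follows from the fact that both sides are Borel measures on $\R^{n-1}$, using the exhaustion $G\cap B(0,h)\uparrow G$.

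The only non-routine point is the choice of the truncation shift in \eqref{plan:key}: truncating $b$ at level $M+L$ (rather than $M$) is what makes the identity hold everywhere and lets us transfer the hypothesis on $D^c\tau_M b$ (asymptotic to $\mu$) into an estimate for $D^c\tau_M(b+v)$. Once this is in place the chain rule and the total-variation convergence assumption combine straightforwardly, with no further analytic difficulty.
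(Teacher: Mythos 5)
Your proof is correct and follows essentially the same route as the paper: the key identity $\tau_M(b+v)=\tau_M(\tau_{M+L}b+v)$ is identical, and the localization of the Cantor part under truncation (whether expressed as $|D^c\tau_M f|\le|D^c f|$ as in the paper, or via the Vol'pert chain rule $D^c(\tau_M u)=D^cu\llcorner\{|\widetilde u|<M\}$ plus linearity of $D^c$ as you do) amounts to the same estimate. Your additional remark justifying $b+v\in GBV(\R^{n-1})$, and the exhaustion argument to pass from bounded to arbitrary Borel sets, are both sound and slightly more explicit than the paper's phrasing, but the argument is not a new one.
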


\begin{proof}
Let us assume that $|v| \leq L$ $\H^{n-1}$-a.e. on $\R^{n-1}$. If $f\in BV (\R^{n-1})$, then
\[
\tau_M\,f=M\,1_{\{f>M\}}-M\,1_{\{f<-M\}}+1_{\{|f|<M\}}\tau_Mf\in (BV\cap L^\infty)(\R^{n-1})\,,
\]
for every $M$ such that $\{f>M\}$ and $\{f<-M\}$ are of finite perimeter, and thus, by \cite[Example 3.97]{AFP},
\[
D^c\tau_Mf=D^c\Big(1_{\{|f|<M\}}\tau_Mf\Big)=1_{\{|f|<M\}^{(1)}}D^c(\tau_Mf)=D^c(\tau_Mf)\llcorner\{|f|<M\}^{(1)}\,;
\]
in particular,
\begin{equation} \label{eqn:tauf}
|D^c\tau_Mf| = |D^cf | \llcorner \{|f|<M\}^{(1)} \leq |D^cf |\, .
\end{equation}
From the equality $\tau_M(\tau_{M+L}(b)+v) = \tau_M(b+v)$ and from \eqref{eqn:tauf} applied with $f=\tau_{M+L}(b)+v$ it follows that, for every Borel set $G \subseteq \R^{n-1}$,
\begin{equation}\label{seminario}
 |D^c(\tau_M(b+v))| (G) = |D^c(\tau_M(\tau_{M+L}(b)+v))|(G) \leq
|D^c( \tau_{M+L}(b)+v)|(G)\, .
\end{equation}
By \eqref{eqn:totalvar},
$$
\lim_{M\to \infty}  |D^c( \tau_{M+L}(b)+v)|(G) = |\mu+D^cv|(G)\,.
$$
We let $M\to \infty$ in \eqref{seminario}, and by definition of $|D^c(b+v)|$ we obtain \eqref{vitalitheo}.
\end{proof}

\begin{proof}
  [Proof of Theorem \ref{thm characterization with barycenter}, sufficient conditions] Let $E$ be a $v$-distributed set of finite perimeter satisfying \eqref{finale0}, \eqref{finale1 E}, \eqref{finale2 E}, and \eqref{finale3 e}. Let $I$ and $J_\de$ be defined as in \eqref{defn:I} and \eqref{defn:J}. If $\de,S \in I$ and we set $b_{\de,S}=1_{\{\de<v<S\}}\,b_E =1_{\{\de<v<S\}}\,b_\de$, then, for every $M\in J_\de$, we have $\tau_Mb_\de\in (BV\cap L^\infty)(\R^{n-1})$ (see the end of step one in the proof of Theorem \ref{thm tauM b delta}), and so we obtain that $\tau_M b_{\de,S} \in (BV\cap L^\infty)(\R^{n-1})$. Let us consider the $\R^{n-1}$-valued Radon measure $\mu_{\de,S}$ on $\R^{n-1}$ defined as
  \[
  \mu_{\de,S}(G) =  \int_{G\cap\{\de<v<S\}^{(1)} \cap \{|b_E|^\vee <\infty\}} f \, dD^cv\,,
  \]
  for every bounded Borel set $G \subset \R^{n-1}$. Since $\tau_Mb_{\de,S}= 1_{\{v<S\}}\tau_Mb_\de$, by Lemma \ref{lemma fgE} we have $D^c [\tau_Mb_{\de,S}]= 1_{\{v<S\}^{(1)}}D^c [\tau_Mb_\de]$, and thus, for every Borel set $G \subset \R^{n-1}$,
\begin{eqnarray*}
 \lim_{M\to \infty}   |\mu_{\de,S} - D^c [\tau_M b_{\de,S}]|(G)
 &=& \lim_{M\to \infty}  |\mu_{\de,S} - D^c [\tau_M b_\de]| (G \cap\{v<S\}^{(1)})
 \\
 &\le&  \lim_{M\to \infty} \int_{G\cap\{\de<v<S\}^{(1)}\cap[\{|b_E|^\vee <\infty\}\setminus \{|b_E|<M\}^{(1)}]}\hspace{-1cm} |f| \, d|D^cv| = 0 \,,
 \end{eqnarray*}
where the inequality follows by \eqref{finale3 e}, and the last equality follows from the fact that $\{\{|b_E|<M\}^{(1)}\}_{M\in I}$ is an increasing family of sets whose union is $\{|b_E|^\vee <\infty\}$. By applying Lemma~\ref{voglioCinfinito} to $b_{\de,S}$ and $\pm v_{\de,S}/2$ (with $v_{\de,S}=1_{\{\de<v<S\}}\,v$), and Lemma \ref{lemma misure triangolare} to $\mu_{\de,S}$ and $\pm D^c v_{\de,S}/2$ and having \eqref{finale3 e} in mind, we find that, for every bounded Borel set $G \subset \R^{n-1}$,
\begin{eqnarray}\nonumber
 \Big|D^c\Big(b_{\de,S}+\frac{v_{\de,S}}2\Big)\Big|(G)+\Big|D^c\Big(b_{\de,S}-\frac{v_{\de,S}}2\Big)\Big|(G)
&\le& \Big|\mu_{\de,S}+\frac{D^cv_{\de,S}}2\Big|(G) + \Big|\mu_{\de,S}-\frac{D^cv_{\de,S}}2\Big|(G)\\
\label{stef3}
&=&|D^cv_{\de,S}|(G)\,.
\end{eqnarray}
Since $b_{\de,S}\in GBV(\R^{n-1})$ and $v_{\de,S}\in (BV\cap L^\infty)(\R^{n-1})$, if $W=W[v_{\de,S},b_{\de,S}]$, then we can compute $P(W;G\times\R)$ for every Borel set $G\subset\R^{n-1}$ by Corollary \ref{lemma W}. In particular, if $G \subset \{\de<v<S\}^{(1)}$, then by $E\cap(\{\de<v<S\}\times\R)=W\cap(\{\de<v<S\}\times\R)$, we find that
  \begin{eqnarray}\nonumber
    \hspace{-1cm}P(E;G\times\R)&=&   P(W;G\times\R)\\
    \label{stef}
    &=&\int_{G}\sqrt{1+\Big|\nabla\Big(b_{\de,S}+\frac{v_{\de,S}}2\Big)\Big|^2}
    +\sqrt{1+\Big|\nabla\Big(b_{\de,S}-\frac{v_{\de,S}}2\Big)\Big|^2}\,d\H^{n-1}
    \\\label{stef6}
    &&+\int_{G\cap(S_{v_{\de,S}}\cup S_{b_{\de,S}})}\,\min\Big\{v^\vee_{\de,S}+v^\wedge_{\de,S},\max\Big\{[v_{\de,S}],2\,[b_{\de,S}]\Big\}\Big\}\,d\H^{n-2}
    \\\label{stef99}
    &&+\Big|D^c\Big(b_{\de,S}+\frac{v_{\de,S}}2\Big)\Big|(G)+\Big|D^c\Big(b_{\de,S}-\frac{v_{\de,S}}2\Big)\Big|(G)\,.
\end{eqnarray}
We can also compute $P(F[v_{\de,S}];G\times\R)$ thorough Corollary \ref{lemma W2}. Taking also into account that $F[v]\cap(\{\de<v<S\}\times\R)=F[v_{\de,S}]\cap(\{\de<v<S\}\times\R)$, we thus conclude that
\begin{eqnarray*}
P(F;G\times\R) &=& P(F[v_{\de,S}];G\times\R)\\
   &=&2\,\int_{G}\sqrt{1+\Big|\frac{\nabla v_{\de,S}}2\Big|^2}\,d\H^{n-1}
    +\int_{G\cap S_{v_{\de,S}}}\,[v_{\de,S}]\,d\H^{n-2}
    +|D^cv_{\de,S}|(G)\,. \hspace{0.8cm}
\end{eqnarray*}
From \eqref{finale1 E} and \eqref{finale2 E} we deduce that (applying \eqref{gino1} and \eqref{gino2} to $b_E$ and $v$)
\begin{eqnarray}\label{stef2}
  \nabla b_{\de,S}(z)= \nabla b_E=0\,,&&\qquad\mbox{for $\H^{n-1}$-a.e. $z\in  \{\de<v<S\}$}\,,
\\
\label{stef5}
2\,[b_{\de,S}] =2\, [b_E] \le [v]=[v_{\de,S}]\,,&&\qquad\mbox{$\H^{n-2}$-a.e. on $ \{\de<v<S\}^{(1)}$}\,.
\end{eqnarray}
Substituting \eqref{stef2} into \eqref{stef}, \eqref{stef5} into \eqref{stef6},  and \eqref{stef3} into \eqref{stef99}, we find that
\begin{equation}\label{stef7}
  P(E;\{\de<v<S\}^{(1)}\times\R)\le P(F;\{\de<v<S\}^{(1)}\times\R)\,,
\end{equation}
where, in fact, equality holds thanks to \eqref{steiner inequality}. By \eqref{dens3} it follows that
  \begin{equation}\label{herbert}
   \bigcup_{M\in I}\,\{v<M\}^{(1)} = \{v^\vee<\infty\}=_{\H^{n-2}}\R^{n-1}\,,
  \end{equation}
  as $\H^{n-2}(\{v^\vee=\infty\})=0$ by \cite[4.5.9(3)]{FedererBOOK}. By taking a union over $\de_h\in I$ and $S_h\in I$ such that $\de_h\to 0$ and $S_h\to\infty$ as $h\to\infty$, we deduce from \eqref{stef7}, \eqref{herbert0} and \eqref{herbert} that
  \[
  P(E;\{v^\wedge>0\}\times\R)=P(F;\{v^\wedge>0\}\times\R)\,.
  \]
  By Proposition \ref{corollario v uguale 0}, $P(E;\{v^\wedge=0\}\times\R)=P(F;\{v^\wedge=0\}\times\R)$, and thus $P(E)=P(F)$, as required.
\end{proof}

\subsection{Equality cases by countably many vertical translations}\label{section proof charact Mv per v sbv} We finally address the problem of characterizing the situation when equality cases are necessarily obtained by countably many vertical translations of parts of $F[v]$, see \eqref{E traslato a pezzi}. In particular, we want to show this situation is characterized by the assumptions that $v\in SBV(\R^{n-1};[0,\infty))$ with $\H^{n-1}(\{v>0\})<\infty$ and $S_v$ locally $\H^{n-2}$-rectifiable. We shall need the following theorem:

\begin{theorem}\label{thm afp cool}
  Let $u:\R^{n-1}\to\R$ be Lebesgue measurable. Equivalently,
  \begin{enumerate}
    \item[(i)] $u\in GBV(\R^{n-1})$ with $|D^cu|=0$, $\nabla u=0$ $\H^{n-1}$-a.e. on $\R^{n-1}$, and $S_u$ locally $\H^{n-2}$-finite;
    \item[(ii)] there exist an at most countable set $I$, $\{c_h\}_{h\in I}\subset\R$, and a partition $\{G_h\}_{h\in I}$ of $\R^{n-1}$ into Borel sets, such that
  \begin{eqnarray}\label{uGh}
          u=\sum_{h\in I}c_h\,1_{G_h}\,,\qquad\mbox{$\H^{n-1}$-a.e. on $\R^{n-1}$}\,,
  \end{eqnarray}
  and $\sum_{h\in I}P(G_h\cap B_R)<\infty$ for every $R>0$.
  \end{enumerate}
  Moreover, if we assume that $c_h\ne c_k$ for $h\ne k\in I$, then in both cases
  \begin{eqnarray}\label{gomez1}
    S_u\subset_{\H^{n-2}}\bigcup_{h\ne k\in I}\pae G_h\cap\pae G_k\,,
  \end{eqnarray}
  with $[u]=|c_h-c_k|$ $\H^{n-2}$-a.e. on $\pae G_h\cap\pae G_k$. In particular,
  \[
      \sum_{h\in I}P(G_h;B_R)=2\H^{n-2}(S_u\cap B_R)\,,\qquad\forall R>0\,.
  \]
\end{theorem}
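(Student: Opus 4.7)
The plan is to reduce both implications to the $BV$ case via truncation, and then to invoke the structure theorem for Caccioppoli partitions \cite[Theorems 4.17 and 4.23]{AFP}: if $w \in BV_{loc}(\R^{n-1})$ satisfies $\nabla w = 0$ $\H^{n-1}$-a.e., $|D^c w| = 0$, and $S_w$ locally $\H^{n-2}$-finite, then $w$ is piecewise constant on a Caccioppoli partition; moreover, for any Caccioppoli partition $\{G_h\}$, the interface $J = \bigcup_{h \ne k} \pae G_h \cap \pae G_k$ is locally $\H^{n-2}$-finite with $\sum_h P(G_h; A) = 2\,\H^{n-2}(J \cap A)$ for every Borel $A \subset \R^{n-1}$.

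For the implication $(ii) \Rightarrow (i)$, starting from $u = \sum_h c_h\,1_{G_h}$, I would write $\tau_M u = \sum_h \tau_M(c_h)\,1_{G_h}$ for every $M > 0$ and test against $\varphi \in C^1_c(\R^{n-1};\R^{n-1})$: applying the Gauss--Green formula on each $G_h$ yields
\[
\Big|\int \tau_M u\,\Div\,\varphi\,d\H^{n-1}\Big| \le M\,\|\varphi\|_\infty \sum_h P(G_h;\spt\,\varphi) < \infty\,,
\]
so $\tau_M u \in BV_{loc}(\R^{n-1})$ and $u \in GBV(\R^{n-1})$. Since $\tau_M u$ is piecewise constant, $\nabla \tau_M u = 0$ $\H^{n-1}$-a.e. and $D^c \tau_M u = 0$; letting $M \to \infty$ and using \eqref{cantor in gbv} gives $\nabla u = 0$ $\H^{n-1}$-a.e. and $|D^c u| = 0$. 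A direct density argument shows that at any $x \in G_{h_0}^{(1)}$, or at any $x \in \pae G_{h_0}$ with $\theta(G_k,x) = 0$ for all $k \ne h_0$, one has $u^\vee(x) = u^\wedge(x) = c_{h_0}$ and so $x \notin S_u$; this yields $S_u \subset_{\H^{n-2}} \bigcup_{h \ne k} \pae G_h \cap \pae G_k$, and then the local $\H^{n-2}$-finiteness of $S_u$ together with $[u] = |c_h - c_k|$ $\H^{n-2}$-a.e. on $\pae G_h \cap \pae G_k$ follows from the interface-measure identity.

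For the converse $(i) \Rightarrow (ii)$, for each $M > 0$ the truncation $\tau_M u$ belongs to $BV_{loc}(\R^{n-1})$ by the very definition of $GBV$, with $\nabla \tau_M u = 0$ $\H^{n-1}$-a.e., $|D^c \tau_M u| = 0$ (by \eqref{cantor in gbv} and $|D^c u| = 0$), and $S_{\tau_M u} \subset S_u$ locally $\H^{n-2}$-finite. The structure theorem thus produces a Caccioppoli partition $\{G_h^M\}_h$ and distinct values $\{c_h^M\}_h$ such that $\tau_M u = \sum_h c_h^M\,1_{G_h^M}$. Choosing $M_j = j$, the identity $\tau_j = \tau_j \circ \tau_{j+1}$ forces the partition at level $j+1$ to refine that at level $j$, and I would let $\{G_h\}_{h \in I}$ be the common refinement, whose atoms are of the form $\bigcap_j G_{h_j}^j$ for coherent index sequences $(h_j)$. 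Countability of $I$ follows from countability of each $\{G_h^j\}$, and $u$ is constantly equal to $c_h = \lim_j c_{h_j}^j \in \R$ on every atom of positive $\H^{n-1}$-measure (the limit is finite since $u$ is $\R$-valued $\H^{n-1}$-a.e.); merging atoms carrying the same value then makes the $c_h$'s pairwise distinct.

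The main obstacle is the local perimeter bound $\sum_h P(G_h \cap B_R) < \infty$, since the bounds at each level $j$ need not be uniform in $j$ and passage to the common refinement could a priori create new essential boundaries. The key step is to establish $J \subset_{\H^{n-2}} S_u$ for the interface $J$ of $\{G_h\}$: if $x \in \pae G_h \cap \pae G_k$ with $c_h \ne c_k$, the two-sided density computation gives $u^\wedge(x) \le \min\{c_h,c_k\} < \max\{c_h,c_k\} \le u^\vee(x)$, hence $x \in S_u$. Local $\H^{n-2}$-finiteness of $J$ follows from that of $S_u$, and the interface-measure identity yields $\sum_h P(G_h; B_R) = 2\,\H^{n-2}(J \cap B_R) \le 2\,\H^{n-2}(S_u \cap B_R) < \infty$; pairwise disjointness of the $G_h$'s also gives $\sum_h \H^{n-2}(G_h^{(1)} \cap \pa B_R) \le \H^{n-2}(\pa B_R) < \infty$, so that $\sum_h P(G_h \cap B_R) < \infty$, as required. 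The reverse inclusion $S_u \subset_{\H^{n-2}} J$, proved by the same density argument used in the $(ii) \Rightarrow (i)$ direction, finally delivers the concluding identity $\sum_h P(G_h; B_R) = 2\,\H^{n-2}(S_u \cap B_R)$.
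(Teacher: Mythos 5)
Your $(ii)\Rightarrow(i)$ direction is sound and corresponds to the paper's step three, although you argue via a direct Gauss--Green estimate while the paper simply cites the ``(j) implies (jj)'' direction of the characterization in \cite[Theorem 4.23]{AFP} applied to the truncations. Your common-refinement construction for $(i)\Rightarrow(ii)$, including the handling of atoms where the limit of the $c^j_{h_j}$ diverges, also matches the ``simple monotonicity argument'' the paper invokes.

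The gap is in the last step of $(i)\Rightarrow(ii)$, exactly where you flag the main obstacle. Having produced the Borel partition $\{G_h\}_{h\in I}$ with distinct values, you wish to conclude $\sum_h P(G_h;B_R)<\infty$ by showing that the interface $J=\bigcup_{h\ne k}\pae G_h\cap\pae G_k$ is contained in $S_u$ (correct) and then invoking the identity $\sum_h P(G_h;B_R)=2\H^{n-2}(J\cap B_R)$. But that identity is a \emph{consequence} of $\{G_h\}$ being a Caccioppoli partition --- it encodes precisely the structural fact that $\R^{n-1}=_{\H^{n-2}}\bigcup_h G_h^{(1)}\cup J$, which fails for a general countable Borel partition. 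At this point of the argument you know neither that each $G_h$ has locally finite perimeter nor that $\pae G_h\setminus J$ is $\H^{n-2}$-negligible, so the identity cannot be applied; the reasoning is circular. The paper closes this gap by a different route: for each $M$ and each $R$, the function $\tau_M u$ satisfies the $BV$-hypotheses (jj) of the characterization on $B_R$ and hence admits a \emph{canonical} Caccioppoli decomposition with distinct values, which (by uniqueness of level sets) must agree modulo $\H^{n-1}$ with $\{G_h\cap B_R\}_{|c_h|<M}$ together with $\{u>M\}\cap B_R$ and $\{u<-M\}\cap B_R$. The \cite[Theorem 4.23]{AFP} equality $\sum_i P(H_i;B_R)=2\H^{n-2}(S_{\tau_M u}\cap B_R)$ then delivers $\sum_{|c_h|<M}P(G_h;B_R)\le 2\H^{n-2}(S_u\cap B_R)$ uniformly in $M$, and letting $M\to\infty$ gives the required bound. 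Your argument would go through if you replaced the appeal to the interface-measure identity on $\{G_h\}$ by this truncation-level comparison.
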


\begin{proof}[Proof of Theorem \ref{thm afp cool}] {\it Step one:} We recall that, by \cite[Definitions 4.16 and 4.21, Theorem 4.23]{AFP}, for every
open set $\Omega$ and $u\in L^\infty(\Omega)$, the following two conditions are equivalent:
  \begin{enumerate}
    \item[(j)] there exist an at most countable set $I$, $\{c_h\}_{h\in I}\subset\R$,
    a partition $\{G_h\}_{h\in I}$ of $\Omega$ such that $\sum_{h\in I}P(G_h; \Omega)<\infty$, and
        \begin{eqnarray}\label{uGh2}
          u=\sum_{h\in I}c_h\,1_{G_h}\,,\qquad\mbox{$\H^{n-1}$-a.e. on $\Omega$}\,.
        \end{eqnarray}

    \item[(jj)] $u\in BV_{loc}(\Omega)$, $Du=Du\llcorner S_u$, and $\H^{n-2}(S_u \cap \Omega)<\infty$
  \end{enumerate}
  In both cases, we have $2\H^{n-2}(S_u \cap \Omega)=\sum_{h\in I}P(G_h ; \Omega)$.

  \medskip

  \noindent {\it Step two:} Let us prove that (i) implies (ii).
  Let $u\in GBV(\R^{n-1})$ with $|D^cu|=0$, $\nabla u=0$ $\H^{n-1}$-a.e. on $\R^{n-1}$, and $S_u$ locally $\H^{n-2}$-finite. For every $R,M>0$, we have,
  by the definition of $GBV$, that
 $\tau_Mu\in BV(B_R)$.
 Moreover, $|D^c \tau_Mu|=0$, $\nabla \tau_Mu=0$, and $S_{\tau_Mu} \cap B_R \subset B_R\cap S_u$ is $\H^{n-2}$-finite.
 By step one, there exist an at most countable set $I_{R,M}$, $\{c_{R,M,h}\}_{h\in I_{R,M}}\subset\R$, and a partition $\{G_{R,M,h}\}_{h\in I_{R,M}}$ of $B_R$ into sets of finite perimeter such that $\sum_{h\in I_{R,M}}P(G_{R,M,h}; B_R)<\infty$,
 and
  \[
 \tau_Mu=\sum_{h\in I_{R,M}}c_{R,M,h}\,1_{G_{R,M,h}}\,,\qquad\mbox{$\H^{n-1}$-a.e. on $B_R$}\,.
  \]
  By a simple monotonicity argument we find \eqref{uGh}. By \eqref{uGh}, if we set $J_M=\{h\in\N:|c_h|\le M\}$, then, $\H^{n-1}$-a.e. on $\R^{n-1}$,
  \begin{eqnarray}\label{zanzare}
 \tau_M\,u=M\,1_{\{u> M\}\cap B_R}-M\,1_{\{u<-M\}\cap B_R}+\sum_{h\in J_M}c_h\,1_{G_h\cap B_R}\,,\quad\mbox{$\H^{n-1}$-a.e. on $B_R$}\,.
   \end{eqnarray}
  By step one,
  \[
  P(\{u> M\}; B_R)+P(\{u<- M\}; B_R)+\sum_{h\in J_M}P(G_h; B_R)=2\,\H^{n-2}(S_{\tau_Mu} \cap B_R)\,.
  \]
  Thus,
  \[
  \sum_{h\in J_M}P(G_h; B_R) \leq 2\,\H^{n-2}(S_{\tau_Mu} \cap B_R)
  \leq 2\,\H^{n-2}(S_{u} \cap B_R)\,.
  \]
Since $\bigcup_{M>0}J_M=I$, letting $M$ go to $\infty$, we find that $\sum_{h\in I}P(G_h;B_R)<\infty$,
which clearly implies $\sum_{h\in I}P(G_h \cap B_R)<\infty$.

  \medskip

  \noindent {\it Step three:} We prove that (ii) implies (i). We easily see that, for every $R,M > 0$,
  $\tau_Mu$ satisfies the assumptions (jj) in step one in $B_R$. Thus, $\tau_Mu \in BV (B_R)$
  with $D\tau_Mu=D\tau_Mu\llcorner S_{\tau_Mu}$ in $B_R$, and
  \[
  2\H^{n-2}(S_{\tau_Mu} \cap B_R)=\sum_{h\in J_{M}}P(G_h;B_R)\le \sum_{h\in I}P(G_h\cap B_R)<\infty \, ,
  \]
  where, as before,  $J_M=\{h\in\N:|c_h|\le M\}$.
 This shows that $u\in GBV(\R^{n-1})$ with $|D^cu|=0$ and $\nabla u=0$ $\H^{n-1}$ a.e. on $\R^{n-1}$.
 Since $\cup_{M > 0} S_{\tau_M u} = S_u$, this immediately implies that $S_u$ is locally $\H^{n-2}$-finite.

  \medskip

  \noindent {\it Step four:} We now complete the proof of the theorem. Since $\{G_h\}_{h\in I}$ is an at most countable Borel partition of $\R^{n-1}$ with $\sum_{h\in\N}P(G_h\cap B_R)<\infty$, we have that
  \[
  \R^{n-1}=_{\H^{n-2}}\bigcup_{h\in I}G_h^{(1)}\cup \bigcup_{h\ne k\in I}\pae G_h\cap \pae G_k\,,
  \]
  compare with \cite[Theorem 4.17]{AFP}. Since $S_u\cap G_h^{(1)}=\emptyset$ for every $h\in I$, this proves \eqref{gomez1}. If we now exploit the fact that, for every $h\ne k\in I$, $c_h\ne c_k$, $G_h$ and $G_k$ are disjoint sets of locally finite perimeter, then, by a blow-up argument we easily see that $[u]=|c_h-c_k|$ $\H^{n-2}$-a.e. on $\pae G_h\cap\pae G_k$ as required. This completes the proof of theorem.
\end{proof}

\begin{proof}
  [Proof of Theorem \ref{thm mv per v sbv}]  {\it Step one:} We prove that if $E\in\M(v)$, then there exist a finite or countable set $I$, $\{c_h\}_{h\in I}\subset\R$, and $\{G_h\}_{h\in I}$ a $v$-admissible partition of $\{v>0\}$, such that $b_E=\sum_{h\in I}c_h\,1_{G_h}$ $\H^{n-1}$-a.e. on $\R^{n-1}$ (so that $E$ satisfies \eqref{E traslato a pezzi}, see Remark \ref{remark checca}), $|D^cb_E|^+=0$, and $2[b_E]\le [v]$ $\H^{n-2}$-a.e. on $\{v^\wedge>0\}$. The last two properties of $b_E$ follow immediately by Theorem \ref{thm characterization with barycenter} since $D^cv=0$. We now prove that $b_E=\sum_{h\in I}c_h\,1_{G_h}$ $\H^{n-1}$-a.e. on $\R^{n-1}$. Let $\de>0$ be such that $\{v>\de\}$ is a set of finite perimeter, and let $b_\de=1_{\{v>\de\}}\,b_E$. By Theorem \ref{thm tauM b delta} and by \eqref{finale1 E}, \eqref{finale2 E}, and \eqref{2DcbE le Dcv}, taking into account \eqref{gino1}, \eqref{gino2} and the definition of $|D^cb_E|^+$ we have that $b_\de\in GBV(\R^{n-1})$ with
  \begin{eqnarray}
  \label{finale1v}
  \nabla b_\de(z)=0\,,&&\qquad\mbox{for $\H^{n-1}$-a.e. $z\in\{v>\de\}$}\,,
  \\
  \label{finale2v}
  2[b_\de]\le[v]\,,&&\qquad\mbox{$\H^{n-2}$-a.e. on $\{v>\de\}^{(1)}$}\,,
  \\\label{finale3v}
  2|D^cb_\de|(G)\le|D^cv|(G)\,,&&\qquad\mbox{for every Borel set $G\subset \R^{n-1}$.}
  \end{eqnarray}
  Since $D^cv=0$ we have that $|D^cb_\de|=0$ on Borel sets by \eqref{finale3v}. Since, trivially, $\nabla b_\de=0$ $\H^{n-1}$-a.e. on $\{v\le\de\}$, by \eqref{finale1v} we have that  $\nabla b_\de=0$ $\H^{n-1}$-a.e. on $\R^{n-1}$. Finally, by \eqref{finale2v} we have that
  \begin{equation}
    \label{pizzaefichi}
      S_{b_{\de}}\subset_{\H^{n-2}}\Big(S_v\cap\{v>\de\}^{(1)}\Big)\cup\pae \{v>\de\}\subset
  \Big(S_v\cap\{v^\wedge>0\}\Big)\cup\pae \{v>\de\}\,,
  \end{equation}
  so that $S_{b_\de}$ is locally $\H^{n-2}$-finite. We can thus apply Theorem \ref{thm afp cool} to $b_\de$, to find a finite or countable set $I_\de$, $\{c^\de_h\}_{h\in I_\de}\subset\R$, and a Borel partition $\{G_h^\de\}_{h\in I_\de}$ of $\{v>\de\}$ with
  \[
  b_\de=\sum_{h\in I_\de}c_h^\de\,1_{G_h^\de}\,,\qquad\mbox{$\H^{n-1}$-a.e. on $\{v>\de\}$}\,.
  \]
  By a diagonal argument over a sequence $\de_h\to 0$ as $h\to\infty$ with $\{v>\de_h\}$ of finite perimeter for every $h\in\N$, we prove the existence of $I$, $\{c_h\}_{h\in I}$ and $\{G_h\}_{h\in I}$ as in \eqref{E traslato a pezzi} such that $b_E=\sum_{h\in I}c_h 1_{G_h}$ $\H^{n-1}$-a.e. on $\{v>0\}$ (and thus, $\H^{n-1}$-a.e. on $\R^{n-1}$). This means that
  \[
  b_\de=\sum_{h\in I_\de}c_h\,1_{G_h\cap\{v>\de\}}\,,\qquad\mbox{$\H^{n-1}$-a.e. on $\R^{n-1}$}\,,
  \]
  and thus, again by Theorem \ref{thm afp cool}, $\sum_{h\in I}P(G_h\cap\{v>\de\}\cap B_R)<\infty$. This shows that $\{G_h\}_{h\in\N}$ is $v$-admissible and completes the proof.

  \medskip

  \noindent {\it Step two:} We now assume that  $E$ is a $v$-distributed set of finite perimeter such that \eqref{E traslato a pezzi} holds true,
  with $\{ G_h \}_{h \in I}$ $v$-admissible, and $2[b_E]\le[v]$ $\H^{n-2}$-a.e. on $\{v^\wedge>0\}$, and aim to prove that $E\in\M(v)$. Since $E$ is $v$-distributed with segments as sections and $\{ G_h \}_{h \in I}$ is  $v$-admissible, we see that $b_\de$ satisfies the assumption (ii) of Theorem \ref{thm afp cool} for a.e. $\de>0$. By applying that theorem, and then by letting $\de\to0^+$, we deduce that $\nabla b_E=0$ $\H^{n-1}$-a.e. on $\R^{n-1}$ and that $|D^cb_E|^+=0$. Hence, by applying Theorem \ref{thm characterization with barycenter}, we deduce that $E\in\M(v)$.
\end{proof}

\section{Rigidity in Steiner's inequality}\label{section rigidity in steiner} In this section we discuss the rigidity problem for Steiner's inequality. We begin in section \ref{section pizzauk} by proving the general sufficient condition for rigidity stated in Theorem \ref{thm sufficient bv}. We then present our characterizations of rigidity: in section \ref{section slice} we prove Theorem \ref{thm characterization sbv} (characterization of rigidity for $v\in SBV(\R^{n-1};[0,\infty))$ with $S_v$ locally $\H^{n-2}$-finite), while section \ref{section positive jump} and \ref{section proof of charact W11} deal with the cases of generalized polyhedra and with the ``no vertical boundaries'' case. (Note that the equivalence between the indecomposability of $F[v]$ and the condition that $\{v^\wedge>0\}$ does not essentially disconnect $\{v>0\}$ is proved in section \ref{section F indecomposable}). Finally, in section \ref{section planar sets} we address the proof of Theorem \ref{thm characterization R2} about the characterization of equality cases for planar sets.

\subsection{A general sufficient condition for rigidity}\label{section pizzauk} The general sufficient condition of Theorem \ref{thm sufficient bv} follows quite easily from Theorem \ref{thm characterization with barycenter}.

\begin{proof}
  [Proof of Theorem \ref{thm sufficient bv}] Let $E\in\M(v)$, so that, by Theorem \ref{thm characterization with barycenter}, we know that
  \begin{equation}
    \label{coarea-a-ah ottimali xx}
      \int_{\R}\H^{n-2}(G\cap\pae \{b_E>t\})\,dt=\int_{G\cap S_{b_E}\cap S_v}[b_E]\,d\H^{n-2}+|D^cb_E|^+(G\cap K)\,,
  \end{equation}
  whenever $G$ is a Borel subset of $\{v^\wedge>0\}$ and $K$ is a Borel set of concentration for $|D^cb_E|^+$. If $b_E$ is not constant on $\{v>0\}$, then there exists a Lebesgue measurable set $I\subset\R$ such that $\H^1(I)>0$ and for every $t\in I$ the Borel sets $G_+=\{b_E>t\}\cap\{v>0\}$ and $G_-=\{b_E\le t\}\cap\{v>0\}$ define a non-trivial Borel partition $\{G_+,G_-\}$ of $\{v>0\}$. Since
  \[
  \{v>0\}^{(1)}\cap\pae G_+\cap\pae G_-=\{v>0\}^{(1)}\cap \pae \{b_E>t\}\,,
  \]
  by  \eqref{general sufficient condition}, we deduce that
  \begin{equation}
    \label{pizzeria}
      \H^{n-2}\Big(\Big(\{v>0\}^{(1)}\cap\pae \{b_E>t\}\Big)\setminus\Big(\{v^\wedge=0\}\cup S_v\cup K\Big)\Big)>0\,,\qquad\forall t\in I\,.
  \end{equation}
  At the same time, by plugging $G=\{v>0\}^{(1)}\setminus(\{v^\wedge=0\}\cup S_v\cup K)\subset\{v^\wedge>0\}$ into \eqref{coarea-a-ah ottimali xx}, we find
  \[
  \int_\R\H^{n-2}\Big(\Big(\{v>0\}^{(1)}\cap\pae \{b_E>t\}\Big)\setminus\Big(\{v^\wedge=0\}\cup S_v\cup K\Big)\Big)\,dt=0\,.
  \]
  This is of course in contradiction with \eqref{pizzeria} and $\H^1(I)>0$.
\end{proof}

\begin{remark}
  {\rm By the same argument used in the proof of Theorem \ref{thm sufficient bv} one easily sees that if a Borel set $G\subset\R^m$ is essentially connected and $f\in BV(\R^m)$ is such that $|Df|(G^{(1)})=0$, then there exists $c\in\R$ such that $f=c$ $\H^m$-a.e. on $G$. In the case $G$ is an indecomposable set, this property was proved in \cite[Proposition 2.12]{dolzmannmu}.}
\end{remark}

\subsection{Characterization of rigidity for $v$ in $SBV$ with locally finite jump}\label{section slice} This section contains the proof of Theorem \ref{thm characterization sbv}.

\begin{proof}
  [Proof of Theorem \ref{thm characterization sbv}] {\it Step one:} We first prove that the mismatched stairway property is sufficient to rigidity. We argue by contradiction, and assume the existence of $E\in\M(v)$ such that $\H^n(E\Delta(t\,e_n+F[v]))>0$ for every $t\in\R$. By Theorem \ref{thm mv per v sbv}, there exists a finite or countable set $I$, $\{c_h\}_{h\in I}\subset\R$, $\{G_h\}_{h\in I}$ a $v$-admissible partition of $\{v>0\}$
such that $b_E=\sum_{h\in I}c_h\,1_{G_h}$ $\H^{n-1}$-a.e. on $\R^{n-1}$, $E=_{\H^n}W[v,b_E]$, and
  \begin{eqnarray}
  \label{grand canyon}
  2[b_E]\le [v]\,,&&\qquad\mbox{$\H^{n-2}$-a.e. on $\{v^\wedge>0\}$}\,.
  \end{eqnarray}
Of course, we may assume without loss of generality that $\H^{n-1}(G_h)>0$ for every $h\in I$ and that $c_h\ne c_k$ for every $h,k\in I$, $h\ne k$ (if any). In fact, $\#\,I\ge 2$, because if $\#\,I=1$, then we would have $\H^n(E\Delta(c\,e_n+F[v]))=0$ for some $c\in\R$.
  We can apply the mismatched stairway property to $I$, $\{G_h\}_{h\in I}$ and $\{c_h\}_{h\in I}$, to find $h_0,k_0\in I$, $h_0\ne k_0$, and a Borel set $\S$ with $\H^{n-2}(\S)>0$, such that
  \begin{eqnarray}  \label{grand canyon2}
    \S\subset\pae G_{h_0}\cap\pae G_{k_0}\cap\{v^\wedge>0\}\,,
  \qquad [v](z)< 2|c_{h_0}-c_{k_0}|\,,\qquad\forall z\in\S\,.
  \end{eqnarray}
  Since $b_E^\vee\ge\max\{c_{h_0},c_{k_0}\}$ and $b_E^\wedge\le\min\{c_{h_0},c_{k_0}\}$ on $\pae G_{h_0}\cap\pae G_{k_0}$, \eqref{grand canyon} implies
  \[
  2\,|c_{h_0}-c_{k_0}|\le [v]\,,\qquad\mbox{$\H^{n-2}$-a.e. on $\pae G_{h_0}\cap\pae G_{k_0}\cap\{v^\wedge>0\}$}\,.
  \]
  a contradiction to \eqref{grand canyon2} and $\H^{n-2}(\S)>0$.

 \medskip

 \noindent {\it Step two:} We show that the failure of the mismatched stairway property implies the failure of rigidity. Indeed, let us assume the existence of a $v$-admissible partition $\{G_h\}_{h\in I}$ of $\{v>0\}$ and of $\{c_h\}_{h\in I}\subset\R$ with $c_h\ne c_k$ for every $h,k\in I$, $h\ne k$, such that
  \begin{equation}\label{t0}
  2|c_h-c_k|\le[v]\,,\qquad\mbox{$\H^{n-2}$-a.e. on $\pae G_h\cap\pae G_k\cap\{v^\wedge>0\}$}\,,
  \end{equation}
  whenever $h,k\in I$ with $h\ne k$. We now claim that $E\in\M(v)$, where
  $$
  E= \bigcup_{h\in I}\Big(c_h\,e_n+(F[v]\cap(G_h\times\R))\Big).
  $$
  To this end, let $\de >0$ be such that $\{ v > \de \}$ is a set of finite perimeter. By Theorem \ref{thm afp cool}, $b_{\de}= b_E 1_{\{v > \delta\}} \in GBV (\R^{n-1})$ with $\nabla b_\de = 0$ $\H^{n-1}$-a.e. on $\R^{n-1}$, $|D^c b_\de|= 0$, $S_{b_\de}$ is locally $\H^{n-2}$-finite, and
  \begin{eqnarray}\label{gomez33}
    &&\{v>\de\}^{(1)}\cap S_{b_\de}\subset_{\H^{n-2}}\bigcup_{h\ne k\in I}\pae G_{h,\de}\cap\pae G_{k,\de}\,,
    \\\label{gomez34}
    &&[b_\de]=|c_h-c_k|\,,\qquad\mbox{$\H^{n-2}$-a.e. on $\pae G_{h,\de}\cap\pae G_{k,\de}\cap \{v>\de\}^{(1)}$, $h\ne k\in I$\,,}
  \end{eqnarray}
  where $G_{h,\de}=G_h\cap\{v>\de\}$ for every $h\in I$. By \eqref{t0}, \eqref{gomez33}, and \eqref{gomez34}, we find
  \begin{equation}
    \label{gomez35}
    2[b_\de]\le[v]\,,\qquad\mbox{$\H^{n-2}$-a.e. on $S_{b_\de}\cap\{v>\de\}^{(1)}$\,.}
  \end{equation}
  Let now $\{ \delta_h \}_{h \in \mathbb{N}}$ and $\{L_h \}_{h \in \mathbb{N}}$ be two sequences satisfying \eqref{superpippo0} and \eqref{superpippo0x}, and set $E_h = E \cap (\{ \delta_h < v < L_h \} \times \R)$, $\S_h=\{ \delta_h < v < L_h \}$, $b_h=1_{\S_h}b_E=1_{\{v<L_h\}}b_{\de_h}$ and $v_h=1_{\S_h}\,v$. Since $v_h\in (BV\cap L^\infty)(\R^{n-1})$ and $b_h\in GBV(\R^{n-1})$, we can apply Corollary \ref{lemma W} to compute $P(E_h;\S_h^{(1)}\times\R)$, to get (by taking into account that $\nabla b_\de = 0$ $\H^{n-1}$-a.e. on $\R^{n-1}$, $|D^c b_\de|= 0$, and \eqref{gomez35}), that
  \[
  P (E_h ; \S_h^{(1)} \times \mathbb{R}) = P (F[v] ; \S_h^{(1)} \times \mathbb{R}), \qquad \forall h \in \mathbb{N}\,;
  \]
  in particular,
  \[
  \lim_{h\to\infty}  P (E_h ; \S_h^{(1)} \times \mathbb{R})=P(F[v];\{v^\wedge>0\}\times\R)\,.
  \]
   Moreover, by repeating the argument used
 in the proof of Proposition \ref{corollario v uguale 0} we have
 $$
 \lim_{h \to \infty} P (E_h ; \pa^e \S_h \times \mathbb{R}) = P (F[v] ; \{ v^{\wedge} = 0 \} \times \mathbb{R}).
 $$
 We thus conclude that
 \[
 P(E)\le\liminf_{h\to\infty}P(E_h)=P(F[v])\,,
 \]
 that is, $E$ is of finite perimeter with $E\in\M(v)$.
\end{proof}

\subsection{Characterization of rigidity on generalized polyhedra}\label{section positive jump} We now prove Theorem \ref{thm characterization poliedri SUPERIOR}. We premise the following lemma to the proof of the theorem.

\begin{lemma}\label{lemma remark noioso}
    If $v\in BV(\R^{n-1};[0,\infty))$ with $\H^{n-1}(\{v>0\})<\infty$ is such that
    \begin{eqnarray}
    \label{geom hp 1}
    &&\mbox{$\{v>0\}$ is of finite perimeter}\,,
    \\
    \label{geom hp 2}
    &&\mbox{$\{v^\vee=0\}\cap\{v>0\}^{(1)}$ and $S_v$ are $\H^{n-2}$-finite}\,,
    \end{eqnarray}
    and if there exists $\e>0$ such that $\{v^\wedge=0\}\cup\{[v]>\e\}$ essentially disconnects $\{v>0\}$, then there exists $E\in\M(v)$ such that $\H^n(E\Delta(t\,e_n+F[v]))>0$ for every $t\in\R$.
\end{lemma}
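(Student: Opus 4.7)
The plan is to produce a single equality case $E\in\M(v)$ by performing one vertical translation of $F[v]$ above one piece of the partition provided by the failure of essential connectedness, and then to verify that $E$ is a non-trivial equality case via the characterization in Theorem \ref{thm characterization with barycenter}.

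First, I would extract from the essential disconnection hypothesis a non-trivial Borel partition $\{G_+,G_-\}$ of $\{v>0\}$ modulo $\H^{n-1}$ with
\[
\{v>0\}^{(1)}\cap\pae G_+\cap\pae G_-\subset_{\H^{n-2}}\{v^\wedge=0\}\cup\{[v]>\e\}\,,
\]
and show that $G_\pm$ are sets of finite perimeter. Using \eqref{geom hp 1} and \eqref{geom hp 2}, together with the decomposition $\{v^\wedge=0\}\cap\{v>0\}^{(1)}\subset (\{v^\vee=0\}\cap\{v>0\}^{(1)})\cup S_v$ and $\{[v]>\e\}\subset S_v$, the set $\pae G_\pm\cap\{v>0\}^{(1)}$ is $\H^{n-2}$-finite; outside $\{v>0\}^{(1)}$ we simply have $\pae G_\pm\subset_{\H^{n-2}}\pae\{v>0\}$, which is $\H^{n-2}$-finite by \eqref{geom hp 1}. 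Federer's criterion then gives $P(G_\pm)<\infty$.

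Next, I would fix $t\in(0,\e/2)$ and define
\[
E=\Big(t\,e_n+(F[v]\cap(G_+\times\R))\Big)\cup\Big(F[v]\cap(G_-\times\R)\Big)\,,
\]
so that $E=_{\H^n}W[v,b_E]$ with $b_E=t\,1_{G_+}\in BV(\R^{n-1})$. To show $P(E)<\infty$, I would follow step two in the proof of Theorem \ref{thm characterization sbv}: pick sequences $\{\de_h\},\{L_h\}$ as in Remark \ref{remark filippo omega}, set $\S_h=\{\de_h<v<L_h\}$ and $E_h=E\cap(\S_h\times\R)$, apply Corollary \ref{lemma W} to compute $P(E_h;\S_h^{(1)}\times\R)$, and control the boundary contribution $P(E_h;\pae\S_h\times\R)$ in the same way as in Proposition \ref{corollario v uguale 0}. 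The main quantitative input is that on $\S_h^{(1)}\subset\{v^\wedge>0\}$ the quantities $\nabla b_E$, $D^cb_E$ vanish and $[b_E]=t$ is dominated by $[v]/2$, so $P(E_h;\S_h^{(1)}\times\R)=P(F[v];\S_h^{(1)}\times\R)$; in the limit $P(E)=P(F[v])$.

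Finally, I would verify the conditions of Theorem \ref{thm characterization with barycenter} to conclude $E\in\M(v)$: \eqref{finale0} holds by construction; \eqref{finale1 E} holds since $b_E$ is piecewise constant; \eqref{finale2 E} holds because $S_{b_E}\cap\{v^\wedge>0\}\subset\pae G_+\cap\pae G_-\cap\{v^\wedge>0\}\subset_{\H^{n-2}}\{[v]>\e\}$, on which $2[b_E]=2t<\e<[v]$; and \eqref{finale3 e} holds trivially with $f\equiv 0$, since $D^cb_E=0$. The non-triviality $\H^n(E\Delta(s\,e_n+F[v]))>0$ for every $s\in\R$ follows at once from the fact that $b_E$ takes the two distinct values $0$ and $t$ on the sets $G_-$ and $G_+$, both of positive $\H^{n-1}$-measure. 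The main obstacle I expect is the bookkeeping in the finite perimeter step: controlling the jump contribution along $\pae\S_h$ as $\de_h\to 0^+$ and $L_h\to\infty$ requires the same delicate argument based on \eqref{superpippo0}--\eqref{superpippo0x} that appears in Proposition \ref{corollario v uguale 0}.
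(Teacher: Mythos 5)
Your proposal is correct and follows the same strategy as the paper: extract the partition $\{G_+,G_-\}$ from the failure of essential disconnectedness, use Federer's criterion together with \eqref{geom hp 1} and \eqref{geom hp 2} to show $G_+$ (hence $G_-$) has finite perimeter, translate $F[v]$ above $G_+$ by an amount bounded by $\e/2$, and conclude via Theorem \ref{thm characterization with barycenter}. The small deviation is in how you establish that $E$ is a $v$-distributed set of finite perimeter: the paper observes that since $b_E\in BV(\R^{n-1})$ (and $v\in BV(\R^{n-1})$), both $u_1=b_E-v/2$ and $u_2=b_E+v/2$ lie in $BV(\R^{n-1})$, whence $E=\Sigma_{u_1}\cap\Sigma^{u_2}$ has finite perimeter directly (e.g. from the coarea estimate $\int_\R P(\{u_1<t<u_2\})\,dt\le|Du_1|(\R^{n-1})+|Du_2|(\R^{n-1})<\infty$ fed into Proposition \ref{proposition insieme u1u2}), and then applies Theorem \ref{thm characterization with barycenter} once. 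You instead re-run the approximation machinery of Remark \ref{remark filippo omega} / Proposition \ref{corollario v uguale 0} (as in step two of Theorem \ref{thm characterization sbv}), which already yields $P(E)=P(F[v])$ on its own; the subsequent verification of \eqref{finale0}--\eqref{finale3 e} is therefore redundant. Both routes work; the paper's is simply shorter because it uses $b_E\in BV$ to avoid re-deriving the sufficiency half of Theorem \ref{thm characterization with barycenter}.
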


\begin{proof}
  If $\e>0$ is such that $\{v^\wedge=0\}\cup \{[v]>\e\}$ essentially disconnects $\{v>0\}$, then there exist a non-trivial Borel partition $\{G_+,G_-\}$ of $\{v>0\}$ modulo $\H^{n-1}$ such that
  \begin{equation}
    \label{no1}
      \{v>0\}^{(1)}\cap\pae G_+\cap\pae G_-\subset_{\H^{n-2}}\{v^\wedge=0\}\cup\{[v]>\e\}\,.
  \end{equation}
  We are now going to show that the set $E$ defined by
  \[
  E=\Big(\Big(\frac\e2\,e_n+F[v]\Big)\cap(G_+\times\R)\Big)\cup\Big(F[v]\cap(G_-\times\R)\Big)\,,
  \]
  satisfies $E\in\M(v)$: this will prove the lemma. To this end we first prove that $G_+$ is a set of finite perimeter. Indeed, by $G_+\subset\{v>0\}$, we have
  \begin{equation}\label{no2}
    \pae G_+\subset(\pae G_+\cap\{v>0\}^{(1)})\cup\pae\{v>0\}\,,
  \end{equation}
  where $\pae G_+\cap\{v>0\}^{(1)}=\pae G_+\cap\pae G_-\cap\{v>0\}^{(1)}$, and thus, by \eqref{no1},
  \begin{eqnarray}\nonumber
    \pae G_+\cap\{v>0\}^{(1)}&\subset_{\H^{n-2}}&\pae G_+\cap \{v>0\}^{(1)}\cap \Big(\{v^\wedge=0\}\cup\{[v]>\e\}\Big)
    \\\label{no3}
    &\subset&    \Big(\pae G_+\cap\{v^\vee=0\}\cap\{v>0\}^{(1)}\Big)\cup S_v\,.
  \end{eqnarray}
  By combining \eqref{geom hp 1}, \eqref{geom hp 2} \eqref{no2}, and \eqref{no3} we conclude that $\H^{n-2}(\pae G_+)<\infty$, and thus, by Federer's criterion, that $G_+$ is a set of finite perimeter. Since $b_E=(\e/2)\,1_{G_+}$, we thus have $b_E\in BV(\R^{n-1})$, and thus $E=W[v,b_E]$ is of finite perimeter with segments as sections. Since $\nabla b_E=0$ $\H^{n-1}$-a.e. on $\R^{n-1}$ and $D^cb_E=0$, we are only left to check that $2[b_E]\le[v]$ $\H^{n-2}$-a.e. on $\{v^\wedge>0\}$ in order to conclude that $E\in\M(v)$ by means of Theorem \ref{thm characterization with barycenter}. Indeed, since $b_E=(\e/2)1_{G_+}$, we have $S_{b_E}=\pae G_+$ with $[b_E]=\e/2$ $\H^{n-2}$-a.e. on $\pae G_+$. By \eqref{dens3} and by \eqref{no1},
  \begin{eqnarray*}
  S_{b_E}\cap\{v^\wedge>0\}&=&\pae G_+\cap\{v^\wedge>0\}
  \\
  &=&\pae G_+\cap\pae G_-\cap\{v>0\}^{(1)}\cap\{v^\wedge>0\}
  \subset_{\H^{n-2}}\{[v]>\e\}\,.
  \end{eqnarray*}
  This concludes the proof of the lemma.
\end{proof}

\begin{proof}
  [Proof of Theorem \ref{thm characterization poliedri SUPERIOR}] {\it Step one:} We prove that, if $F[v]$ is a generalized polyhedron, then $v\in SBV(\R^{n-1})$, $S_v$ and $\{v^\vee=0\} \setminus\{v=0\}^{(1)}$ are $\H^{n-2}$-finite, and $\{v>0\}$ is of finite perimeter. Indeed, by assumption, there exist a finite disjoint family of indecomposable sets of finite perimeter and volume $\{A_j\}_{j\in J}$ in $\R^{n-1}$, and a family of functions $\{v_j\}_{j\in J}\subset W^{1,1}(\R^{n-1})$, such that
  \begin{eqnarray}\label{gp1*}
  \hspace{1.6cm}v=\sum_{j\in J}v_j\,1_{A_j}\,,\qquad \Big(\{v^\wedge=0\} \setminus\{v=0\}^{(1)} \Big)\cup S_v\subset_{\H^{n-2}}\bigcup_{j\in J}\pae A_j\,.
  \end{eqnarray}
  By \cite[Example 4.5]{AFP}, $v_j\,1_{A_j}\in SBV(\R^{n-1})$ for every $j\in J$, so that $v\in SBV(\R^{n-1})$ as $J$ is finite. Similarly, \eqref{gp1*} gives that $\{v^\wedge=0\}\setminus\{v=0\}^{(1)}$ and $S_v$ are both $\H^{n-2}$-finite. Since $\{v^\vee=0\}\setminus\{v=0\}^{(1)}$ and $\pae \{v>0\}$ are both subsets of $\{v^\wedge=0\}\setminus\{v=0\}^{(1)}$, we deduce that $\{v^\vee=0\}\setminus\{v=0\}^{(1)}$ and $\pae \{v>0\}$ are $\H^{n-2}$-finite. In particular, by Federer's criterion, $\{v>0\}$ is a set of finite perimeter.

  \medskip

  \noindent {\it Step two:} By step one,  if $F[v]$ is a generalized polyhedron, then $v$ satisfies the assumptions of Lemma \ref{lemma remark noioso}. In particular, if $\{v^\wedge=0\}\cup\{[v]>\e\}$ essentially disconnects $\{v>0\}$, then rigidity fails. This shows the implication $(i)\Rightarrow(ii)$ in the theorem.

  \medskip

  \noindent {\it Step three:} We show that if rigidity fails, then $\{v^\wedge=0\}\cup\{[v]>\e\}$ essentially disconnects $\{v>0\}$. By step one, if $F[v]$ is a generalized polyhedron, then $v$ satisfies the assumptions of Theorem \ref{thm mv per v sbv}. In particular, if $E\in\M(v)$, then $\nabla b_E=0$, $S_{b_E}\cap\{v^\wedge>0\}\subset S_v$, $2\,[b_E]\le [v]$ $\H^{n-2}$-a.e. on $\{v^\wedge>0\}$, and $|D^cb_E|^+=0$, so that, by \eqref{gp2} and \eqref{coarea-a-ah ottimali} we find
  \begin{eqnarray}
    \label{la0}
    S_{b_E}\subset_{\H^{n-2}}\bigcup_{j\in J}\pae A_j\,,\hspace{3cm}
  \\
   \label{la1}
      \int_{\R}\H^{n-2}(G\cap\pae \{b_E>t\})\,dt=\int_{G\cap S_{b_E}}[b_E]\,d\H^{n-2}\,,
  \end{eqnarray}
  for every Borel set $G\subset\{v^\wedge>0\}$.  We now combine \eqref{la0} and \eqref{la1} to deduce that
  \[
  \int_{\R}\H^{n-2}(A_j^{(1)}\cap\pae \{b_E>t\})\,dt=0\,,\qquad\forall j\in J\,.
  \]
  Since each $A_j$ indecomposable, by arguing as in the proof of Theorem \ref{thm sufficient bv}, we see that there exists $\{c_j\}_{j\in J}\subset\R$ such that $b_E=\sum_{j\in J}c_j\,1_{A_j}$ $\H^{n-1}$-a.e. on $\R^{n-1}$. In particular, we have $b_E=\sum_{j\in J_0}a_j\,1_{B_j}$ $\H^{n-1}$-a.e. on $\R^{n-1}$, where $\#\,J_0\le\#\,J$, $\{a_j\}_{j\in J_0}\subset\R$ with $a_j\ne a_i$ if $i,j\in J_0$, $i\ne j$, and $\{B_j\}_{j\in J_0}$ is a partition modulo $\H^{n-1}$ of $\R^{n-1}$ into sets of finite perimeter. (Notice that each $B_j$ may fail to be indecomposable.) Let us now assume, in addition to $E\in\M(v)$, that $\H^n(E\Delta(t\,e_n+F[v]))>0$ for every $t\in\R$. In this case the formula for $b_E$ we have just proved implies that $\#\,J_0\ge 2$. We now set,
  \[
  \e=\min\Big\{|a_i-a_j|:i,j\in J_0\,,i\ne j\Big\}\,,
  \]
  so that $\e>0$, and, for some $j_0\in J_0$, we set $G_+=B_{j_0}$ and $G_-=\bigcup_{j\in J_0\,,j\ne j_0}B_j$. In this way $\{G_+,G_-\}$ defines a non-trivial Borel partition $\{G_+,G_-\}$ of $\{v>0\}$ modulo $\H^{n-1}$, with the property that
  \[
  [v]\ge 2\,[b_E]\ge 2\,\e\,,\qquad\mbox{$\H^{n-2}$-a.e. on $\{v^\wedge>0\}\cap\pae G_+\cap\pae G_-$}\,.
  \]
  Thus, $\{v^\wedge=0\}\cup\{[v]>\e\}$ essentially disconnects $\{v>0\}$, and the proof of Theorem \ref{thm characterization poliedri SUPERIOR} is  complete.
\end{proof}

\subsection{Characterization of indecomposability on Steiner's symmetrals}\label{section F indecomposable} We show here that asking $\{v^\wedge=0\}$ does not essentially disconnect $\{v>0\}$ is in fact equivalent to saying that $F[v]$ is an indecomposable set of finite perimeter. This result shall be used to provide a second type of characterization of rigidity when $F[v]$ has no vertical parts, as well as in the planar case; see Theorem \ref{thm characterization no vertical SUFF} and Theorem \ref{thm characterization R2}.

\begin{theorem}
  \label{thm indecomponibili} If $v\in BV(\R^{n-1};[0,\infty))$ with $\H^{n-1}(\{v>0\})<\infty$, then $F[v]$ is indecomposable if and only if $\{v^\wedge=0\}$ does not essentially disconnect $\{v>0\}$.
\end{theorem}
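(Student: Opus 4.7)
The plan is to prove both implications, building the correspondence between decompositions $\{E_+,E_-\}$ of $F[v]$ and non-trivial Borel partitions $\{G_+,G_-\}$ of $\{v>0\}$ via the slice-length functions $v_\pm(z)=\H^1((E_\pm)_z)$ and their symmetrals $F[v_\pm]$; throughout, I would use the perimeter formula for $F[v]$ given by Corollary \ref{lemma W2}, Steiner's inequality, and the BV slicing theorem (i.e.\ the content of Theorem \ref{thm volpert}). A preliminary density remark is that, for any non-trivial partition $\{G_+,G_-\}$ of $\{v>0\}$, the identity $1_{G_+}+1_{G_-}=1_{\{v>0\}}$ a.e.\ forces $\pae G_+\cap\{v>0\}^{(1)}=\pae G_-\cap\{v>0\}^{(1)}$, so the hypothesis ``$\{v^\wedge=0\}$ essentially disconnects $\{v>0\}$'' amounts to $\H^{n-2}(\pae G_+\cap\pae G_-\cap\{v^\wedge>0\})=0$ (since $\{v^\wedge>0\}\subset\{v>0\}^{(1)}$).

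\textbf{Direction ($F[v]$ decomposable $\Rightarrow$ essential disconnection).} Let $\{E_+,E_-\}$ be a non-trivial partition of $F[v]$ into sets of finite perimeter with $\H^{n-1}(F[v]^{(1)}\cap\pae E_+\cap\pae E_-)=0$. Set $v_\pm(z):=\H^1((E_\pm)_z)$, so $v_++v_-=v$ a.e. By BV slicing, for a.e.\ $z$ the slice $(F[v])_z$ is partitioned by $(E_\pm)_z$ into sets of finite perimeter in $\R$, and the $\H^0$-slice of $\pae E_\pm$ coincides with $\pa^*(E_\pm)_z$. I would use the elementary 1D fact that any partition of an open segment into two measurable sets of positive length must create at least one essential interior boundary point common to both; combined with a Fubini-type slicing of $\H^{n-1}\llcorner(F[v]^{(1)}\cap\pae E_+\cap\pae E_-)$, this forces $v_+\cdot v_-=0$ a.e., so $G_\pm:=\{v_\pm>0\}$ gives a non-trivial Borel partition of $\{v>0\}$. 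Since $P(F[v_\pm])\le P(E_\pm)$ by Steiner's inequality, we have $P(F[v_+])+P(F[v_-])\le P(F[v])$. On the other hand, a direct computation of approximate limits at a point $z\in\pae G_+\cap\pae G_-\cap\{v^\wedge>0\}$ gives $v_\pm^\wedge(z)=0$ and $v_\pm^\vee(z)\in[v^\wedge(z),v^\vee(z)]$, so $[v_+](z)+[v_-](z)\ge v^\vee(z)\ge[v](z)$ with strict inequality due to $v^\wedge(z)>0$; feeding this into the perimeter formula \eqref{perimetro di F} applied to $v_+$, $v_-$, and $v$ (using also $v_+\cdot v_-=0$ to handle the absolutely continuous and Cantorian parts) shows that $\H^{n-2}(\pae G_+\cap\pae G_-\cap\{v^\wedge>0\})>0$ would force $P(F[v_+])+P(F[v_-])>P(F[v])$, a contradiction. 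Hence the partition $\{G_+,G_-\}$ witnesses essential disconnection of $\{v>0\}$ by $\{v^\wedge=0\}$.

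\textbf{Direction (essential disconnection $\Rightarrow$ $F[v]$ decomposable).} Suppose $\{G_+,G_-\}$ is a non-trivial Borel partition of $\{v>0\}$ with $\H^{n-2}(\pae G_+\cap\pae G_-\cap\{v^\wedge>0\})=0$. For a.e.\ $\de>0$ the set $\{v>\de\}$ has finite perimeter; since $\{v>\de\}^{(1)}\subset\{v^\wedge\ge\de\}\subset\{v^\wedge>0\}$, the hypothesis implies $\H^{n-2}(\pae G_+\cap\{v>\de\}^{(1)})=0$, so $\pae(G_+\cap\{v>\de\})\subset_{\H^{n-2}}\pae\{v>\de\}$ and Federer's criterion gives that $G_\pm^\de:=G_\pm\cap\{v>\de\}$ is of finite perimeter; therefore $E_\pm^\de:=F[v]\cap(G_\pm^\de\times\R)$ is of finite perimeter. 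Applying Corollary \ref{lemma W2} to $v_\pm^\de:=v\,1_{G_\pm^\de}$ and carefully summing the absolutely continuous, jump, and Cantorian contributions to $P(E_+^\de)+P(E_-^\de)$---using $v_+^\de+v_-^\de=v\,1_{\{v>\de\}}$, $v_+^\de\cdot v_-^\de=0$, and the fact that all extra jump contributions on $\pae G_\pm^\de$ concentrate on $\{v\le\de\}^{(1)}\cup\pae\{v>\de\}$---one obtains $\limsup_{\de\to 0^+}[P(E_+^\de)+P(E_-^\de)]\le P(F[v])$. Since $E_\pm^\de\to E_\pm:=F[v]\cap(G_\pm\times\R)$ in $L^1$, lower semicontinuity of perimeter gives $P(E_+)+P(E_-)\le P(F[v])$; the opposite inequality being automatic, equality holds, and $\{E_+,E_-\}$ is a non-trivial decomposition.

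\textbf{Main obstacle.} The critical technical difficulty lies in the first direction, in comparing $P(F[v_+])+P(F[v_-])$ with $P(F[v])$ when $\pae G_+\cap\pae G_-\cap\{v^\wedge>0\}$ has positive $\H^{n-2}$-measure: the Cantorian and jump contributions of $v_\pm$ must be tracked precisely, since a priori the set $\pae G_\pm$ need not be $\H^{n-2}$-finite and $v_\pm$ need not belong to $BV$ without further truncation. A parallel difficulty in the second direction is controlling what happens on $\pae G_+\cap\pae G_-\cap\{v^\wedge=0\}$ (where the hypothesis gives no information), for which I would ultimately invoke Proposition \ref{corollario v uguale 0}: the perimeter $P(E;\{v^\wedge=0\}\times\R)$ depends only on $v^\vee$ and not on the particular $v$-distributed set $E$ with segment sections, so no ``spurious'' perimeter is generated on the set where $v^\wedge$ vanishes. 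Careful interplay between BV slicing (converting $\H^{n-1}$-measure on $\R^n$ into $\H^0$-integrals over $\R^{n-1}$) and the perimeter formula (converting $\H^{n-2}$-measure on $\R^{n-1}$ into $\H^{n-1}$-measure on $\R^n$) is the technical core of the argument.
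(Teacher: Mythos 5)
Your proposal is correct, but the forward direction (decomposability implies essential disconnection) takes a genuinely different and more computational route than the paper. After identifying $G_\pm$ as the supports of the slice-length functions $v_\pm$ and showing $v_+v_-=0$ a.e.\ by slicing (this part mirrors the paper's step one, which uses Theorem~\ref{thm volpert}), the paper proves the geometric inclusion $F^{(1)}\cap\big((\pae G_+\cap\pae G_-)\times\R\big)\subset\pae F_+\cap\pae F_-$ by a blow-up/density argument on vertical cylinders, and then combines this with the description \eqref{F[v] densita 1} of $(F^{(1)})_z$ and a compactness lemma of Mattila to obtain a direct lower bound $\H^{n-1}(F^{(1)}\cap\pae F_+\cap\pae F_-)\ge c(n)\,\e\,\H^{n-2}(K)>0$ whenever $\H^{n-2}(\pae G_+\cap\pae G_-\cap\{v^\wedge>0\})>0$, contradicting the decomposition directly. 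You instead run a perimeter comparison: Steiner's inequality applied to each $E_\pm$ gives $P(F[v_+])+P(F[v_-])\le P(E_+)+P(E_-)=P(F[v])$, the gradient contributions in \eqref{perimetro di F} match exactly (by $v_+v_-=0$ and \eqref{gino2}), the Cantorian contributions of the pieces dominate that of $v$ by the triangle inequality ($D^cv=D^cv_++D^cv_-$), and the jump excess on $\pae G_+\cap\pae G_-\cap\{v^\wedge>0\}$ is bounded below by $\int v^\wedge\,d\H^{n-2}>0$, since there $v_+^\wedge=v_-^\wedge=0$ while $v_+^\vee+v_-^\vee\ge v^\vee=[v]+v^\wedge$, and this set is contained in $S_{v_+}$ so the integral makes sense and is finite. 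Both routes work; the paper's is geometrically more elementary (it never invokes Steiner's inequality or the perimeter formula), while yours has the small bonus of isolating the quantitative jump excess $[v_+]+[v_-]-[v]\ge v^\wedge$ explicitly. For the reverse direction, your $\de$-truncation plus lower-semicontinuity strategy is in the same spirit as the proof of Proposition~\ref{corollario v uguale 0} and can be made to work, but note that the paper actually avoids the limiting argument in this direction: Lemma~\ref{lemma yayyy} establishes $\H^{n-1}(\pae F_\pm)<\infty$ outright (and thus $F_\pm$ of finite perimeter, by Federer's criterion) by stratifying $\{v^\wedge=0\}$ into $\{v^\vee=0\}$ (negligible) and the countably $\H^{n-2}$-rectifiable set $\{v^\wedge=0<v^\vee\}$, and then obtains $P(F_+)+P(F_-)\le P(F)$ by a blow-up argument showing $\H^{n-2}(\{v^\wedge=0<v^\vee\}\cap S_{v_+}\cap S_{v_-})=0$; with your route, the boundary terms on $\pae\{v>\de\}$ must be controlled via estimates of the type \eqref{superpippo6}--\eqref{superpippo9}, which is feasible but noticeably more delicate than the direct argument.
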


We start by recalling a version of Vol'pert's theorem, see \cite[Theorem 2.4]{barchiesicagnettifusco}.

\begin{theoremletter}\label{thm volpert}
  If $E$ is a set of  finite perimeter in $\R^n$, then there exists a Borel set $G_E\subset\{v>0\}$ with $\H^{n-1}(\{v>0\}\setminus G_E)=0$ such that $E_z$ is a set of finite perimeter in $\R$ with $\pa^*(E_z)=(\pa^*E)_z$ for every $z\in G_E$. Moreover, if  $z\in G_E$ and $s\in\pa^*E_z$, then
    \begin{equation}
    \label{volpert 2}
  \q\nu_E(z,s)\ne 0\,,\qquad \nu_{E_z}(s)=\frac{\q\nu_E(z,s)}{|\q\nu_E(z,s)|}\,.
  \end{equation}
\end{theoremletter}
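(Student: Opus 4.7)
The plan is to derive the theorem from the standard slicing theory of $BV$ functions applied to $1_E$, coupled with the area formula for the horizontal projection restricted to $\partial^*E$. In particular, I would work in three steps: the existence of a full measure set of ``good'' slices, the suppression of vertical tangencies on most fibers, and the identification of the outer unit normal.

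First, since $E$ is a set of finite perimeter, $1_E \in BV(\R^n)$ with $|D 1_E| = \H^{n-1}\llcorner \pa^* E$ and $D 1_E = -\nu_E\,\H^{n-1}\llcorner \pa^*E$. Apply the $BV$-slicing theorem (cf.\ \cite[Theorem 3.108]{AFP}) to the function $1_E$ along the vertical direction: for $\H^{n-1}$-a.e.\ $z \in \R^{n-1}$ the one-dimensional section $1_{E_z}(\cdot) = 1_E(z,\cdot)$ belongs to $BV(\R)$, so that $E_z$ is a set of finite perimeter in $\R$; moreover, one has the disintegration
\[
D_n 1_E = \int_{\R^{n-1}} D(1_{E_z})\,d\H^{n-1}(z)
\]
as a vector-valued measure. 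Let $N_0 \subset \R^{n-1}$ be the $\H^{n-1}$-null exceptional set, and set $G_E^0 = \{v>0\} \setminus N_0$.

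Next, apply the area formula to the Lipschitz projection $\p\colon \pa^*E \to \R^{n-1}$. Since $\pa^*E$ is countably $\H^{n-1}$-rectifiable with approximate tangent plane orthogonal to $\nu_E$ $\H^{n-1}$-a.e., the tangential Jacobian of $\p|_{\pa^*E}$ equals $|\q \nu_E|$; thus for every Borel $\varphi\ge 0$ on $\R^n$,
\[
\int_{\pa^* E} \varphi(x)\,|\q\nu_E(x)|\,d\H^{n-1}(x) = \int_{\R^{n-1}} \Big(\sum_{s\in (\pa^* E)_z} \varphi(z,s)\Big)\,d\H^{n-1}(z).
\]
Taking $\varphi = 1_{\{\q\nu_E = 0\}}$ makes the left-hand side vanish, so for $\H^{n-1}$-a.e.\ $z$ the finite set $(\pa^*E)_z \cap \{\q \nu_E = 0\}$ is empty. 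Let $G_E^1 \subset G_E^0$ be the set of $z$ satisfying this additional property, so $\H^{n-1}(\{v>0\}\setminus G_E^1)=0$ and \eqref{volpert 2} (first assertion) will hold for all $z \in G_E^1$.

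Finally, I identify normals by comparing the two representations of $D_n 1_E$. On the $\pa^*E$-side, $D_n 1_E = -\q\nu_E\,\H^{n-1}\llcorner \pa^*E$, and the area formula above (now with $\varphi = \psi \cdot (-\q\nu_E/|\q\nu_E|)$ for an arbitrary continuous test $\psi$) yields
\[
D_n 1_E = \int_{\R^{n-1}} \Big(\sum_{s \in (\pa^*E)_z} -\tfrac{\q\nu_E(z,s)}{|\q\nu_E(z,s)|}\,\delta_s\Big)\,d\H^{n-1}(z).
\]
On the slice side, $D(1_{E_z}) = -\nu_{E_z}\,\H^0\llcorner \pa^*E_z$. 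Comparing the two disintegrations and invoking the uniqueness of disintegration of Radon measures, I conclude that for $\H^{n-1}$-a.e.\ $z$ the signed atomic measures
\[
\sum_{s \in (\pa^*E)_z} \tfrac{\q\nu_E(z,s)}{|\q\nu_E(z,s)|}\,\delta_s \qquad \text{and} \qquad \nu_{E_z}\,\H^0\llcorner \pa^*E_z
\]
coincide on $\R$. Intersecting with $G_E^1$ gives the required $G_E$: on it, $\pa^*E_z = (\pa^*E)_z$ and $\nu_{E_z}(s) = \q\nu_E(z,s)/|\q\nu_E(z,s)|$ for every $s$ in this common finite set.

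The main obstacle is the bookkeeping in the last step: the $BV$-slicing identity and the area formula each live on different underlying spaces ($\R^n$ versus the rectifiable set $\pa^*E$), and one must argue that their vertical disintegrations genuinely match as measure-valued maps $z \mapsto \mu_z$, not merely in integral form. The cleanest route is to test both sides against $\psi(z)\eta(s)$ with $\psi \in C_c(\R^{n-1})$ and $\eta \in C_c(\R)$ and invoke a standard uniqueness-of-disintegration argument; the vertical-tangent step ensures that no mass is ``lost'' on either side when restricting to $\{\q\nu_E \neq 0\}$.
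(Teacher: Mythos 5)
The paper does not actually prove this statement: it is quoted as a known version of Vol'pert's theorem, with the proof deferred to \cite[Theorem 2.4]{barchiesicagnettifusco}. So there is no internal argument to compare against; what you have written is, in substance, the standard proof of that cited result, and it is correct. The three ingredients are the right ones: (a) one-dimensional slicing of $1_E\in BV(\R^n)$ in the $e_n$-direction, giving that $E_z$ has finite perimeter for $\H^{n-1}$-a.e.\ $z$ together with the disintegration of $D_n1_E$; (b) the coarea formula for the Lipschitz map $\p$ restricted to the countably $\H^{n-1}$-rectifiable set $\pa^*E$, whose tangential Jacobian is $|\q\nu_E|$, which both kills the vertical part $\{\q\nu_E=0\}$ on a.e.\ fiber and shows that $(\pa^*E)_z$ is finite for a.e.\ $z$ (since $\int_{\R^{n-1}}\#(\pa^*E)_z\,d\H^{n-1}(z)\le P(E)<\infty$ --- worth stating explicitly, as you implicitly use finiteness of the fibers); and (c) matching the two disintegrations of $D_n1_E$ by testing against products $\psi(z)\eta(s)$ and using that both fiber measures are purely atomic with weights $\pm1$, so that equality of the signed measures forces equality of supports, i.e.\ $\pa^*(E_z)=(\pa^*E)_z$, and equality of the signs, i.e.\ $\nu_{E_z}(s)=\q\nu_E(z,s)/|\q\nu_E(z,s)|$. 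The only point requiring a little care, which you flag, is that in step (c) the quotient $\q\nu_E/|\q\nu_E|$ must be inserted only on $\pa^*E\cap\{\q\nu_E\ne0\}$; since $D_n1_E=-\q\nu_E\,\H^{n-1}\llcorner\pa^*E$ gives no mass to the vertical set and, by step (b), the vertical set has empty fiber over a.e.\ $z$, nothing is lost in the comparison. With those two routine remarks made explicit, the argument is complete.
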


\begin{proof}[Proof of Theorem \ref{thm indecomponibili}]
  The fact that, if $F=F[v]$ is indecomposable, then $\{v^\wedge=0\}$ does not essentially disconnect $\{v>0\}$, is proved in Lemma \ref{lemma yayyy} below. We prove here the reverse implication. Precisely, let us assume the existence of a non-trivial partition $\{F_+,F_-\}$ of $F$ into sets of finite perimeter such that
  \begin{equation}
    \label{2steps1}
    0=\H^{n-1}(F^{(1)}\cap\pae F_+\cap\pae F_-)=\H^{n-1}(F^{(1)}\cap\pae F_+)\,.
  \end{equation}
  We aim to prove that if we set
  \[
  G_+=\{z\in\R^{n-1}:\H^1((F_+)_z)>0\}\,,\qquad G_-=\{z\in\R^{n-1}:\H^1((F_-)_z)>0\}\,,
  \]
  then $\{G_+,G_-\}$ defines a non-trivial Borel partition modulo $\H^{n-1}$ of $\{v>0\}$ such that
  \begin{equation}
    \label{2steps2}
  \{v>0\}^{(1)}\cap\pae G_+\cap\pae G_-\subset_{\H^{n-2}}\{v^\wedge=0\}\,.
  \end{equation}

  \noindent {\it Step one:} We prove that $\{G_{+},G_{-}\}$ is a non-trivial Borel partition (modulo $\H^{n-1}$) of $\{v>0\}$. The only non-trivial fact to obtain is that $\H^{n-1}(G_{+}\cap G_{-})=0$. By Theorem \ref{thm volpert} there exists $G_+^*\subset G_+$ with $\H^{n-1}(G_+\setminus G_+^*)=0$ such that, if $z\in G_+^*$, then
  \begin{eqnarray*}
    &&\mbox{$(F_+)_z$ is a set of finite perimeter in $\R$ with $(\pa^*F_+)_z=\pa^*((F_+)_z)$}\,,
    \\
    &&\mbox{$(F_-)_z$ is a set of finite perimeter in $\R$}\,,
    \\
    &&\mbox{$\{(F_+)_z,(F_-)_z\}$ is a partition modulo $\H^1$ of $(F^{(1)})_z$}\,,
  \end{eqnarray*}
  where the last property follows by Fubini's theorem and $\H^n(F\Delta F^{(1)})=0$. Let now
  \[
  G_+^{**}=\{z\in G_+^*:\H^1((F^{(1)})_z\setminus (F_+)_z)>0\}=G_+^*\cap G_-\,.
  \]
  If $z\in G_+^{**}$, then $\{(F_+)_z,(F_-)_z\}$ is a non-trivial partition modulo $\H^1$ of $(F^{(1)})_z$ into sets of finite perimeter. Since $(F^{(1)})_z$ is an interval for every $z\in\R^{n-1}$ (see \cite[Lemma 14.6]{maggiBOOK}), we thus have
  \[
  \H^0\Big([(F^{(1)})_z]^{(1)}\cap\pa^*((F_+)_z)\cap\pa^*((F_-)_z)\Big)\ge 1\,,\qquad\forall z\in G_+^{**}\,.
  \]
  In particular, since $(\pa^*F_+)_z=\pa^*((F_+)_z)$, $[(F^{(1)})_z]^{(1)}\subset(F^{(1)})_z$, and $(A\cap B)_z=A_z\cap B_z$ for every $A, B\subset\R^n$, we have
  \[
  \H^0\Big((F^{(1)}\cap\pa^*F_+)_z\Big)\ge 1\,,\qquad\forall z\in G_+^{**}\,.
  \]
  Hence, $G_+^{**}\subset\p(F^{(1)}\cap\pa^*F_+)$, and by \eqref{2steps1} and \cite[Proposition 3.5]{maggiBOOK} we conclude
  \begin{eqnarray*}
   0=\H^{n-1}(F^{(1)}\cap\pa^*F_+)\ge \H^{n-1}(\p(F^{(1)}\cap\pa^*F_+))\ge\H^{n-1}(G_+^{**})=\H^{n-1}(G^*_+\cap G_-)\,,
  \end{eqnarray*}
  that is,  $\H^{n-1}(G_+\cap G_-)=0$.

 \medskip

  \noindent {\it Step two:} We now show that
  \begin{equation}
    \label{jino}
      F^{(1)}\cap\Big(\Big(\pae G_{+}\cap\pae G_{-}\Big)\times\R\Big)\subset \pae F_{+}\cap \pae F_{-}\,.
  \end{equation}
  Indeed, let $(z,s)$ belong to the set on the left-hand side of this inclusion. If, by contradiction, $(z,s)\not\in\pae F_{+}\cap \pae F_{-}$, then either $(z,s)\in F_{-}^{(1)}$ or $(z,s)\in F_{+}^{(1)}$. In the former case,
  \begin{eqnarray*}
    \H^n(\C_{(z,s),r})=\H^n(F_{-}\cap \C_{(z,s),r})+o(r^n)
    \le 2r\,\H^{n-1}(G_{-}\cap\D_{z,r})+o(r^n)\,,
  \end{eqnarray*}
  that is $z\in G_{-}^{(1)}$, against $z\in\pae G_-$; the latter case is treated analogously.

  \medskip

  \noindent {\it Step three:} We conclude the proof. Arguing by contradiction, we can assume that
  \begin{eqnarray*}
  0&<&\H^{n-2}(  \{v>0\}^{(1)}\cap\pae G_+\cap\pae G_-\setminus \{v^\wedge=0\}) \\
  &=&\H^{n-2}(  \pae G_+\cap\pae G_- \cap \{v^\wedge>0\} )
  \\
  &=& \lim_{\e \to 0^+} \H^{n-2}( \pae G_+\cap\pae G_-\cap \{v^\wedge>\e\} )\,,
  \end{eqnarray*}
  where it should be noticed that all these measures could be equal to $+\infty$. However, by \cite[Theorem 8.13]{mattila}, if $\e$ is sufficiently small, then there exists a compact set $K$ with $0<\H^{n-2}(K)<\infty$ and $K\subset \pae G_+\cap\pae G_-\cap \{v^\wedge>\e\} $. Therefore,  by \eqref{jino},
  \begin{eqnarray*}
    \H^{n-1}\Big(F^{(1)}\cap\pae F_{+}\cap \pae F_{-}\Big)
    &\ge&
    \H^{n-1}\Big(F^{(1)}\cap\Big(\Big(\pae G_{+}\cap\pae G_{-}\Big)\times\R\Big)\Big)
    \\
    &\ge&
    \H^{n-1}(F^{(1)}\cap (K\times\R))
    \\
        {\small \mbox{ by \eqref{F[v] densita 1}}}\qquad
   &\ge&
    \H^{n-1}\Big(\Big\{x\in\R^n:\p x \in K\,,|\q x| <\frac{v^\wedge(\p x)}2\Big\}\Big)
    \\
    {\small \mbox{ by $K\subset\{v^\wedge>\e\}$}}\qquad
    &\ge&
    \H^{n-1}\Big(\Big\{x\in\R^n:\p x \in K\,,|\q x| <\frac{\e}2\Big\}\Big)
    \\
    {\small \mbox{ by \cite[2.10.45]{FedererBOOK}}}\qquad &\ge& c(n)\,\H^{n-2}(K) \, \e>0\,,
  \end{eqnarray*}
  a contradiction to \eqref{2steps1}.
\end{proof}

\begin{lemma}\label{lemma yayyy}
  Let $v\in BV(\R^{n-1};[0,\infty))$ with $\H^{n-1}(\{v>0\})<\infty$. If $\{G_+,G_-\}$ is a Borel partition of $\{v>0\}$ such that
  \begin{equation}
    \label{cool}
      \{v>0\}^{(1)}\cap\pae G_+\cap\pae G_-\subset_{\H^{n-2}}\{v^\wedge=0\}\,,
  \end{equation}
  then $F_+=F[v]\cap(G_+\times\R)$ and $F_-=F[v]\cap(G_-\times\R)$ are sets of finite perimeter, with
  \[
  P(F_+)+P(F_-)=P(F[v])\,.
  \]
\end{lemma}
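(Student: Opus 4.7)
The plan is to show that $\pae F_\pm$ is $\H^{n-1}$-close to $\pae F[v]$, yielding finite perimeter via Federer's criterion, and then that $F_+$ and $F_-$ share no reduced boundary of positive $\H^{n-1}$-measure, which will give the equality $P(F_+)+P(F_-)=P(F[v])$. The central observation is that for an arbitrary Borel set $G_+\subset\R^{n-1}$ one has $\H^{n-1}(\pae G_+)=0$, since $G_+^{(0)}\cup G_+^{(1)}$ covers $\R^{n-1}$ up to an $\H^{n-1}$-null set by Lebesgue differentiation.

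First I would examine $\pae G_+$. Since $\{G_+,G_-\}$ partitions $\{v>0\}$ modulo $\H^{n-1}$, any $z\in\pae G_+$ either lies in $\pae\{v>0\}$ or is a density-one point of $\{v>0\}$, in which case it also lies in $S:=\pae G_+\cap\pae G_-$. Combined with $\{v^\wedge>0\}\subset\{v>0\}^{(1)}$ (from \eqref{dens3}) and $\pae\{v>0\}\cap\{v>0\}^{(1)}=\emptyset$, this gives $\pae G_+\cap\{v^\wedge>0\}\subset S\cap\{v>0\}^{(1)}$, which is $\H^{n-2}$-null by the standing hypothesis. Hence $(\pae G_+\cap\{v^\wedge>0\})\times\R$ is $\H^{n-1}$-null by \cite[2.10.45]{FedererBOOK}.

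Next I would show $\H^{n-1}\big(F[v]^{(1)}\cap(\pae G_+\times\R)\big)=0$. On the $\{v^\wedge>0\}$ part this is immediate from the previous step. On the $\{v^\wedge=0\}$ part, \eqref{F[v] densita 1} forces $(F[v]^{(1)})_z\subset\{0\}$, so the set is contained in $(\pae G_+\cap\{v^\wedge=0\})\times\{0\}$, which has $\H^{n-1}$-measure at most $\H^{n-1}(\pae G_+)=0$. The standard intersection formula
\[
\pae F_+\subset_{\H^{n-1}}\Big(\pae F[v]\cap (G_+\times\R)^{(1)}\Big)\cup\Big(F[v]^{(1)}\cap(\pae G_+\times\R)\Big)\cup\Big(\pae F[v]\cap(\pae G_+\times\R)\Big)
\]
then yields $\H^{n-1}(\pae F_+)\le 2P(F[v])<\infty$, so Federer's criterion delivers that $F_+$ is a set of finite perimeter; the same argument handles $F_-$.

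For the perimeter identity I would exploit that $F_\pm$ are disjoint with union $F[v]$, and that at any $x\in\pa^*F_+\cap\pa^*F_-$ the outer unit normals are opposite (by a standard blow-up), so the classical decomposition gives
\[
P(F_+)+P(F_-)=P(F[v])+2\,\H^{n-1}\Big(\pa^*F_+\cap\pa^*F_-\Big).
\]
To kill the last term, another blow-up argument shows $\pa^*F_+\cap\pa^*F_-\subset_{\H^{n-1}}F[v]^{(1)}\cap(S\times\R)$: at $x\in F[v]^{(1)}$ the density of $F_\pm$ at $x$ coincides with that of $G_\pm$ at $\p x$, so being in both reduced boundaries forces $\p x\in S$. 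Re-running the two-case splitting of the previous paragraph with $S$ in place of $\pae G_+$ (using $S\cap\{v^\wedge>0\}\subset S\cap\{v>0\}^{(1)}$ is $\H^{n-2}$-null by hypothesis, and $\H^{n-1}(S)\le\H^{n-1}(\pae G_+)=0$) gives $\H^{n-1}(F[v]^{(1)}\cap(S\times\R))=0$, concluding the proof. The main technicality will be consistently handling the $\{v^\wedge=0\}$ portions of the relevant boundaries: since $\{v>0\}$ is not assumed to be of finite perimeter, one cannot absorb these contributions into $P(\{v>0\})$, and one must instead exploit that $F[v]^{(1)}$ has degenerate (single-point) vertical sections over $\{v^\wedge=0\}$ together with the cost-free bound $\H^{n-1}(\pae G_+)=0$.
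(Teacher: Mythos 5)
Your proof is correct, and it departs from the paper's own argument in a way that is worth noting. The paper proves finiteness of $P(F_\pm)$ by decomposing $\pae F_+$ over $G_{+\,0}^{(1)}$, $G_{+\,0}^{(0)}$, $\pae G_{+\,0}$ and then, over $\{v^\wedge=0\}$, further splitting into $\{v=0\}^{(1)}$, $\{v^\vee=0\}\setminus\{v=0\}^{(1)}$, and the countably $\H^{n-2}$-rectifiable set $\{v^\wedge=0<v^\vee\}$, invoking the explicit perimeter formula \eqref{perimetro di F} on the last piece. You bypass all of this by using the elementary set-theoretic inclusion for $\pae(A\cap B)$: the only term not already controlled by $\H^{n-1}(\pae F[v])=P(F[v])$ is $F[v]^{(1)}\cap(\pae G_+\times\R)$, and on $\{v^\wedge=0\}$ this sits inside $(\pae G_+\cap\{v^\wedge=0\})\times\{0\}$, whose $\H^{n-1}$-measure is zero thanks to the cost-free bound $\H^{n-1}(\pae G_+)=0$ from Lebesgue differentiation, while on $\{v^\wedge>0\}$ the set $\pae G_+$ is $\H^{n-2}$-null by \eqref{cool}. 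For the perimeter identity, the paper shows $P(F_+)+P(F_-)\le P(F[v])$ by computing both sides via \eqref{perimetro di F} for $v_\pm=1_{G_\pm}v$ and proving \eqref{coolz}, i.e.\ that $S_{v_+}\cap S_{v_-}\cap\{v^\wedge=0<v^\vee\}$ is $\H^{n-2}$-null; you instead use the exact identity $P(F_+)+P(F_-)=P(F[v])+2\H^{n-1}(\pa^*F_+\cap\pa^*F_-)$ for disjoint sets of finite perimeter with $\H^n$-equivalent union, reduce $\pa^*F_+\cap\pa^*F_-$ into $F[v]^{(1)}\cap(S\times\R)$ by the blow-up observation $(G_\pm\times\R)^{(1)}=G_\pm^{(1)}\times\R$, and re-run the same two-case measure estimate. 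Your route is shorter and more elementary -- it never invokes the explicit perimeter formula for $F[v]$ -- at the cost of the slightly less explicit overhead of introducing the general $\pae(A\cap B)$ inclusion and the disjoint-union perimeter identity, both of which are standard. Both approaches use the same two geometric inputs (single-point vertical fibers of $F[v]^{(1)}$ over $\{v^\wedge=0\}$, and $\H^{n-2}$-nullity of $\pae G_\pm$ over $\{v^\wedge>0\}$); the extra ingredient that makes your argument work without \eqref{perimetro di F} is precisely the Lebesgue-differentiation fact $\H^{n-1}(\pae G_+)=0$.
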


\begin{proof} {\it Step one:} We prove that $F_+$ is a set of finite perimeter (the same argument works of course in the case of $F_-$). Indeed, let $G_{+\,0}=G_+\cup\{v=0\}$. Since $F[v]\cap(G_{+\,0}\times\R)=F_+\cap(G_{+\,0}\times\R)$, we find that
\begin{eqnarray}\label{cool6}
  \H^{n-1}(\pae F\cap(G_{+\,0}^{(1)}\times\R))=\H^{n-1}(\pae F_+\cap(G_{+\,0}^{(1)}\times\R))\,,
\end{eqnarray}
where we have set $F=F[v]$. Since $\pae F_+\cap(G_{+\,0}^{(0)}\times\R)=\emptyset$, we find
\begin{equation}\label{cool1000}
  \H^{n-1}(\pae F_+\cap(G_{+\,0}^{(0)}\times\R))=0\,.
\end{equation}
We now notice that
\begin{eqnarray*}
\R^{n-1}\setminus(G_{+\,0}^{(1)}\cup G_{+\,0}^{(0)})=\pae  G_{+\,0}=\pae G_-\,.
\end{eqnarray*}
Since $\{v>0\}^{(0)}\cap\pae G_-=\emptyset$, $\pae \{v>0\}\subset\{v^\wedge=0\}$, and $\{v>0\}^{(1)}\cap\pae G_+\cap\pae G_-=\{v>0\}^{(1)}\cap\pae G_-$, by \eqref{cool} we find that
\begin{equation}
    \label{cool2}
    \pae G_-\subset_{\H^{n-2}}\{v^\wedge=0\}\,.
\end{equation}
Thus, by \eqref{cool6}, \eqref{cool1000}, \eqref{cool2}, and by Federer's criterion, in order to prove that $F_+$ is a set of finite perimeter, we are left to show that
\begin{equation}
    \label{cool5}
\H^{n-1}\Big(\pae F_+\cap(\{v^\wedge=0\}\times\R)\Big)<\infty\,.
\end{equation}
Since $(\pae F_+)_z=\emptyset$ whenever $z\in\{v=0\}^{(1)}$, we find that
\begin{equation}
  \label{cool3}
\H^{n-1}\Big(\pae F_+\cap(\{v=0\}^{(1)}\times\R)\Big)=0\,.
\end{equation}
Since $F_+\subset F$, then $\pae F_+\subset F^{(1)}\cup \pae F$. At the same time, if $z\in\{v^\vee=0\}$, then $(\pae F)_z\cup(F^{(1)})_z\subset\{0\}$ by \eqref{F[v] densita 1} and \eqref{F[v] paM}, so that, if $G\subset\{v^\vee=0\}$, then
\[
\H^{n-1}\Big(\pae F_+\cap(G\times\R)\Big)\le\H^{n-1}(G\times\{0\})=\H^{n-1}(G)\,.
\]
By Lebesgue's points theorem, $\H^{n-1}(\{v^\vee=0\}\setminus\{v=0\}^{(1)})=0$, thus, if we plug in the above identity $G=\{v^\vee=0\}\setminus\{v=0\}^{(1)}$, then \eqref{cool3} gives
\begin{equation}
  \label{cool4}
\H^{n-1}\Big(\pae F_+\cap(\{v^\vee=0\}\times\R)\Big)=0\,.
\end{equation}
Finally, if $z\in\{v^\wedge=0<v^\vee\}$, then $(F^{(1)})_z\subset\{0\}$ and $(\pae F)_z\subset[-v^\vee(z)/2,v^\vee(z)/2]$ by Corollary \ref{lemma W2}. Since $\{v^\wedge=0<v^\vee\}$ is countably $\H^{n-2}$-rectifiable, by \cite[3.2.23]{FedererBOOK} and \eqref{perimetro di F} we find
\begin{eqnarray}\nonumber
\H^{n-1}\Big(\pae F_+\cap(G\times\R)\Big)&=&\int_{G}\H^1((\pae F_+)_z)\,d\H^{n-2}(z)\le
\int_{G}v^\vee\,d\H^{n-2}
\\\label{cool9}
&=&P(F;G\times\R)\,,
\end{eqnarray}
for every Borel set $G\subset \{v^\wedge=0<v^\vee\}$. By combining \eqref{cool9} (with $G=\{v^\wedge=0<v^\vee\}$) and \eqref{cool4}, we prove \eqref{cool5} for $F_+$. The proof for $F_-$ is of course entirely analogous.

\medskip

\noindent {\it Step two:} We now prove that $P(F_+)+P(F_-)=P(F)$. Since $F$ is $\H^n$-equivalent to $F_+\cup F_-$, by \cite[Lemma 12.22]{maggiBOOK} it suffices to prove that $P(F_+)+P(F_-)\le P(F)$. By \eqref{cool6}, \eqref{cool4}, and the analogous relations for $F_-$, we are actually left to show that
\begin{equation}\label{cool*}
P(F_+;G\times\R)+P(F_-;G\times\R)\le P(F;G\times\R)\,,
\end{equation}
for every Borel set $G\subset\{v^\wedge=0<v^\vee\}$. Since $F_+=F[1_{G_+}v]$ is of finite perimeter, by Corollary \ref{lemma W2} we have $v_+=1_{G_+}v\in BV(\R^{n-1})$, with
  \begin{equation}
    \label{perimetro di F+}
    P(F_+;G\times\R)=2\int_{G\cap\{v_+>0\}}\sqrt{1+\Big|\frac{\nabla v_+}2\Big|^2}
    +\int_{G\cap S_{v_+}}\,[v_+]\,d\H^{n-2}+|D^cv_+|(G)\,,
  \end{equation}
for every Borel set $G\subset\R^{n-1}$. Since $\{v^\wedge=0<v^\vee\}$ is countably $\H^{n-2}$-rectifiable, we find
\[
P\Big(F_+;G\times\R\Big)=\int_{G\cap S_{v_+}}\,[v_+]\,d\H^{n-2}=P(F_+;G\cap S_{v_+})\,,
\]
for every Borel set $G\subset \{v^\wedge=0<v^\vee\}$; moreover, an analogous formula holds true for $F_-$. Thus, \eqref{cool*} takes the form
\begin{equation}\label{cool**}
P(F_+;G\cap S_{v_+})+P(F_-;G\cap S_{v_-}) \le P(F;G\times\R)\,,
\end{equation}
for every Borel set $G\subset\{v^\wedge=0<v^\vee\}$. If $G\subset\{v^\wedge=0<v^\vee\}\setminus S_{v_-}$, then \eqref{cool**} reduces to $P(F_+;G\cap S_{v_+})\le P(F;G\times\R)$, which follows immediately by \eqref{cool9}. Similarly, if we choose $G\subset\{v^\wedge=0<v^\vee\}\setminus S_{v_+}$. We may thus conclude the proof of the lemma, by showing that
\begin{equation}
  \label{coolz}
  \H^{n-2}\Big(\{v^\wedge=0<v^\vee\}\cap S_{v_+}\cap S_{v_-}\Big)=0\,.
\end{equation}
To prove \eqref{coolz}, let us notice that for $\H^{n-2}$-a.e. $z\in \{v^\wedge=0<v^\vee\}\cap S_{v_+}\cap S_{v_-}$, we have
\begin{eqnarray}\label{lamb1}
  \{v>t\}_{z,t}\toloc H_0\,,&&\qquad\forall t\in(0,v^\vee(z))\,,
  \\\label{lamb2}
  \{v_+>t\}_{z,t}\toloc H_1\,,&&\qquad\forall t\in(v_+^\wedge(z),v_+^\vee(z))\,,
  \\\nonumber
  \{v_->t\}_{z,t}\toloc H_2\,,&&\qquad\forall t\in(v_-^\wedge(z),v_-^\vee(z))\,,
\end{eqnarray}
as $r\to 0^+$. Now, $v_+^\vee(z)\le v^\vee(z)$, therefore $(v_+^\wedge(z),v_+^\vee(z))\subset (0,v^\vee(z))$. We may thus pick $t>0$ such that \eqref{lamb1} and \eqref{lamb2} hold true, and correspondingly,
\[
\{v>t\}_{z,t}\toloc H_0\,,\qquad (G_+\cap\{v>t\})_{z,r}=\{v_+>t\}_{z,t}\toloc H_1\,,
\]
as $r\to 0^+$. Since $G_+\cap\{v>t\}\subset\{v>t\}$, it must be $H_1\subset H_0$, and thus $H_1=H_0$. This implies that
\[
\H^{n-1}(\D_{z,r}\cap((z+H_0)\setminus G_+))=o(r^{n-1})\,,\qquad\mbox{as $r\to 0^+$}\,.
\]
The same argument applies to $v_-$, and gives
\[
\H^{n-1}(\D_{z,r}\cap((z+H_0)\setminus G_-))=o(r^{n-1})\,,\qquad\mbox{as $r\to 0^+$}\,.
\]
Hence, $\theta^*(G_+\cap G_-,z)\ge \theta(z+H_0,z)=1/2$, a contradiction to $\H^{n-1}(G_+\cap G_-)=0$.
\end{proof}

\subsection{Characterizations of rigidity without vertical boundaries}\label{section proof of charact W11} We now prove Theorem \ref{thm characterization no vertical SUFF}, by combining Theorem \ref{thm sufficient bv} and the results from section \ref{section F indecomposable}.

\begin{proof}[Proof of Theorem \ref{thm characterization no vertical SUFF}] We start noticing that the equivalence between (ii) and (iii) was proved in Theorem \ref{thm indecomponibili}. We are thus left to prove the equivalence between (i) and (ii).

\medskip

\noindent {\it Step one:} We prove that (ii) implies (i). By Lemma \ref{lemma Dcv}, we have that $D^cv\llcorner\{v^\wedge=0\}=0$; since we are now assuming that $D^sv\llcorner \{v^\wedge>0\}=0$, we conclude that $D^cv=0$. We now show that $\{v^\wedge=0\}\cup S_v$ does not essentially disconnect $\{v>0\}$. Otherwise, there exists a non-trivial Borel partition $\{G_+,G_-\}$ modulo $\H^{n-1}$ of $\{v>0\}$ such that
  \begin{equation}
    \label{hazel}
    \{v^\wedge>0\}\cap\pae G_+\cap\pae G_-\subset \{v>0\}^{(1)}\cap\pae G_+\cap\pae G_-\subset_{\H^{n-2}} \{v^\wedge=0\}\cup S_v\,,
  \end{equation}
  where the first inclusion follows from \eqref{dens3}. Since $\{v^\wedge=0\}$ does not essentially disconnect $\{v>0\}$ and since $D^sv\llcorner\{v^\wedge>0\}=0$ implies $\H^{n-2}(S_v\cap\{v^\wedge>0\})=0$, we conclude
  \begin{eqnarray*}
\nonumber
  0&<& \H^{n-2}\Big(\Big(\{v>0\}^{(1)}\cap\pae G_+\cap\pae G_-\Big)\setminus \{v^\wedge=0\}\Big)\\
   &=& \H^{n-2}\Big(\{v^\wedge>0\}\cap\pae G_+\cap\pae G_-\Big)
      = \H^{n-2}\Big( \Big(\{v^\wedge>0\}\cap\pae G_+\cap\pae G_- \Big) \setminus S_v\Big)\,,
  \end{eqnarray*}
  a contradiction to \eqref{hazel}.   This proves that $\{v^\wedge=0\}\cup S_v$ does not essentially disconnect $\{v>0\}$. Since, as said, $D^cv=0$, we can thus apply Theorem \ref{thm sufficient bv} to deduce (i).

  \medskip

  \noindent {\it Step two:} We prove that (i) implies (ii). Indeed, if (ii) fails, then there exists a non-trivial Borel partition $\{G_+,G_-\}$ of $\{v>0\}$ modulo $\H^{n-1}$, such that $\{v>0\}^{(1)}\cap\pae G_+\cap\pae G_-\subset_{\H^{n-2}}\{v^\wedge=0\}$. By Lemma \ref{lemma yayyy}, we find that $F_+=F\cap(G_+\times\R)$ and $F_-=F\cap(G_-\times\R)$ are sets of finite perimeter with $P(F_+)+P(F_-)=P(F)$. Let us now set
  $E=(e_n+F_+)\cup F_-$.  By \cite[Lemma 12.22]{maggiBOOK}, we have that $E$ is a $v$-distributed set of finite perimeter, with
  \[
  P(F)\le P(E)\le P(e_n+F_+)+P(F_-)=P(F_+)+P(F_-)=P(F)\,,
  \]
  that is $E\in\M(v)$. However, $\H^n(E\Delta(t\,e_n+F))>0$ for every $t\in\R$ since $\{G_+,G_-\}$ was a non-trivial Borel partition of $\{v>0\}$.
\end{proof}

\subsection{Characterizations of rigidity on planar sets}\label{section planar sets} We finally prove Theorem \ref{thm characterization R2}, that fully addresses the rigidity problem for planar sets.

\begin{proof}
  [Proof of Theorem \ref{thm characterization R2}] {\it Step one:} Let us assume that (ii) holds true. We first notice that, in this case, $D^cv=0$, so that, thanks to Theorem \ref{thm sufficient bv}, we are left to prove that
  \begin{equation}\label{fedex1}
    \mbox{$\{v^\wedge=0\}\cup S_v$ does not essentially disconnect $\{v>0\}$}\,,
  \end{equation}
  in order to show the validity of (i). Since (ii) implies that $\{v^\wedge=0\}\cup S_v\subset\R\setminus(a,b)$ where $\{v>0\}$ is $\H^1$-equivalent to $(a,b)$, \eqref{fedex1} follows from the fact that $\R\setminus(a,b)$ does not essentially disconnect $(a,b)$.

  \medskip

  \noindent {\it Step two:} We now assume the validity of (i). Let $[a,b]$ be the least closed interval which contains $\{v>0\}$ modulo $\H^1$. (Note that $[a,b]$ could a priori be unbounded.) Let us assume without loss of generality that $\H^1(\{v>0\})>0$, so that $(a,b)$ is non-empty. We now show that $v^\wedge(c)>0$ for every $c\in(a,b)$. Indeed, let $F=F[v]$, $F_+=F\cap[[c,\infty)\times\R]$, and $F_-=F\cap[(-\infty,c)\times\R]$. Since $F_+=F[1_{[c,\infty)}v]$ and $F_-=F[1_{(-\infty,c)}v]$, we can apply \eqref{perimetro di F} to find that
  \begin{eqnarray}\label{+}
    P(F_+)&=&2\,\int_{\{v>0\}\cap(c,\infty)}\sqrt{1+\Big|\frac{v'}2\Big|^2}+\int_{S_v\cap(c,\infty)}[v]d\H^0+v(c^+)
    \\\nonumber
    &&+|D^cv|\Big(\{\widetilde{v}>0\}\cap(c,\infty)\Big)\,,
  \end{eqnarray}
  and
  \begin{eqnarray} \label{-}
    P(F_-)&=&2\,\int_{\{v>0\}\cap(-\infty,c)}\sqrt{1+\Big|\frac{v'}2\Big|^2}+\int_{S_v\cap(-\infty,c)}[v]d\H^0+v(c^-)
    \\\nonumber
    &&+|D^cv|\Big(\{\widetilde{v}>0\}\cap(-\infty,c)\Big)\,,
  \end{eqnarray}
  where we have set $v(c^+)=\aplim(v,(c,\infty),c)$, $v(c^-)=\aplim(v,(-\infty,c),c)$, and we have used the fact that $D^c(1_{(c,\infty)}v)$ is the restriction of $D^cv$ to $(c,\infty)$, that
  \[
  [1_{(c,\infty)}v](z)=\left\{\begin{array}
    {l l}
    [v](z)\,,&\mbox{if $z>c$}\,,
    \\
    v(c^+)\,,&\mbox{if $z=c$}\,,
    \\
    0\,,&\mbox{if $z<c$}\,,
  \end{array}\right .
  \]
  as well as the analogous facts for $1_{(-\infty,c)}v$. Notice that, if $v^\wedge(c)=0$, then either $v(c^+)=0$ or $v(c^-)=0$, and, correspondingly, $P(F_+)+P(F_-)=P(F)$ by \eqref{perimetro di F}, \eqref{+}, and \eqref{-}. As a consequence, if we set $E=F_+\cup(e_2+F_-)$, then by,
  arguing as in step two of the proof of Theorem \ref{thm characterization no vertical SUFF}, we find that
  \[
  P(F)\le P(E)\le P(F_+)+P(e_2+F_-)=P(F_+)+P(F_-)= P(F)\,,
  \]
  that is $E\in\M(v)$, in contradiction to (i). This proves that $v^\wedge(c)>0$ for every $c\in(a,b)$. In particular, since $\{v>0\}$ is $\H^1$-equivalent to $\{v^\wedge>0\}$, we find that $\{v>0\}$ is $\H^1$-equivalent to $(a,b)$. We now prove $(a,b)$ to be bounded. Let us now decompose $v$ as $v=v_1+v_2$ where $v_1\in W^{1,1}(\R)$ and $v_2\in BV(\R)$ with $D^av_2=0$ (see \cite[Corollary 3.33]{AFP}. If $v_2$ is non-constant (modulo $\H^1$) in $(a,b)$, then we find a contradiction with (i) by Proposition \ref{proposition dsv}. Thus, there exists $t\in\R$ such that $v_2=t$ on $(a,b)$, and, in particular, $v=v_1+t\in W^{1,1}(a,b)$. In particular, since $\{v>0\}=_{\H^1}(a,b)$ and $\H^1(\{v>0\})<\infty$, we find that $(a,b)$ is bounded.

  \medskip

  \noindent {\it Step three:} We prove that (ii) implies (iii). Indeed, since $\{v>0\}$ is $\H^1$-equivalent to $(a,b)$ and $v^\wedge>0$ on $(a,b)$, then, by Remark \ref{remark essential connected}, we have that $\{v^\wedge=0\}$ does not essentially disconnect $\{v>0\}$. In particular, by Theorem \ref{thm indecomponibili}, we have that $F[v]$ is indecomposable. Since $v\in W^{1,1}(a,b)$, by \cite[Proposition 1.2]{ChlebikCianchiFuscoAnnals05}, we find that
  \begin{equation}
    \label{pauraaaa}
      \H^1\Big(\Big\{x\in\pa^*F[v]:\q\nu_{F[v]}=0\,,\p x\in(a,b)\Big\}\Big)=0\,.
  \end{equation}
  Since  $\{v^\wedge>0\}=(a,b)$, we deduce \eqref{paura}.

  \medskip

  \noindent {\it Step four:} We prove that (iii) implies (ii). Since $F[v]$ is now indecomposable, by Theorem \ref{thm indecomponibili} we have that $\{v^\wedge=0\}$ does not essentially disconnect $\{v>0\}$. In particular, $\{v>0\}$ is an essentially connected subset of $\R$, and thus, by \cite[Proof of Theorem 1.6, step one]{ccdpmGAUSS}, $\{v>0\}$ is $\H^1$-equivalent to an interval. Since $\H^1(\{v>0\})<\infty$, we thus have that $\{v>0\}=_{\H^1}(a,b)$, with $(a,b)$ bounded. Since $\{v^\wedge=0\}$ does not essentially disconnect $\{v>0\}$, it must be $v^\wedge>0$ on $(a,b)$. Finally, by \eqref{paura} and the fact that $v^\wedge>0$ on $(a,b)$, we find \eqref{pauraaaa}. Again by \cite[Proposition 1.2]{ChlebikCianchiFuscoAnnals05}, we conclude that $v\in W^{1,1}(a,b)$.
\end{proof}

\appendix

\section{Equality cases in the localized Steiner's inequality}\label{section fusco} The rigidity results described in this paper for the equality cases in Steiner's inequality $P(E)\ge P(F[v])$ can be suitably formulated and proved for the localized Steiner's inequality $P(E;\Om\times\R)\ge P(F[v];\Om\times\R)$ under the assumption that $\Om$ is an open connected set. This generalization does not require the introduction of new ideas, but, of course, requires a clumsier notation. Another possible approach is that of obtaining the localized rigidity results through an approximation process. For the sake of clarity, we exemplify this by showing a proof of Theorem \ref{thm ccf2} based on Theorem \ref{thm sufficient bv}. The required approximation technique is described in the following lemma.

\begin{lemma}\label{lemma omega}
  If $\Om$ is a connected open set in $\R^{n-1}$, $v\in BV(\Om;[0,\infty))$ with $\H^{n-1}(\{v>0\})<\infty$, $E$ is a $v$-distributed set with $P(E;\Om\times\R)<\infty$ and segments as vertical sections, then there exists an increasing sequence $\{\Om_k\}_{k\in\N}$ of bounded open connected sets of finite perimeter such that $\Om=\bigcup_{k\in\N}\Om_k$, $\Om_k$ is compactly contained in $\Om$, $v_k=1_{\Om_k}\,v\in BV(\R^{n-1};[0,\infty))$ with $\H^{n-1}(\{v_k>0\})<\infty$, $E_k=E\cap(\Om_k\times\R)$ is a $v_k$-distributed set of finite perimeter, and
  \begin{eqnarray}
    \label{per Ek}
    P(E_k)&=&P(E;\Om_k\times\R)+P(F[v_k];\pa^*\Om_k\times\R)\,,
    \\
    \label{per Fk}
    P(F[v_k])&=&P(F[v];\Om_k\times\R)+P(F[v_k];\pa^*\Om_k\times\R)\,.
  \end{eqnarray}
  Finally, if $E\in\M_\Om(v)$, see \eqref{MGv}, then $E_k\in\M(v_k)$.
\end{lemma}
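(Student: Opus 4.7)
The plan is to construct the exhaustion $\{\Om_k\}$ by smooth superlevel sets chosen generically, and then to read off \eqref{per Ek}, \eqref{per Fk} via \cite[Theorem 16.3]{maggiBOOK}. Since $\Om$ is open and connected, I would first pick a smooth proper function $\rho:\Om\to(0,\infty)$ (for instance obtained by mollifying $\max\{|x|,1/\dist(x,\pa\Om)\}$) so that $\{\rho<s\}$ is relatively compact in $\Om$ for every $s>0$. Fix $p_0\in\Om$ and, for a divergent sequence $s_k$, let $\Om_k$ be the connected component of $\{\rho<s_k\}$ containing $p_0$. Sard's theorem guarantees that a.e.\ $s_k$ gives $C^1$-smooth $\pa\Om_k$; connectedness of $\Om$ and compactness of paths imply $\Om_k\subset\subset\Om_{k+1}$ and $\bigcup_k\Om_k=\Om$. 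Since $|Dv|$, $\H^{n-2}\llcorner S_v$, and $\H^{n-1}\llcorner(\pa^*E\cap\{\q\nu_E=0\})$ are all locally finite Borel measures, a coarea argument along $\rho$ (or $\rho\circ\p$) further allows us to pick $s_k$ so that, in addition, (a) $\H^{n-2}(\pa\Om_k\cap S_v)=0$, (b) $v$ has an $L^1(\H^{n-2}\llcorner\pa\Om_k)$ interior trace from $\Om_k$, and (c) $\H^{n-1}(\pa^*E\cap(\pa\Om_k\times\R)\cap\{\q\nu_E=0\})=0$.

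With these choices $v_k=1_{\Om_k}v$ extended by zero outside $\Om_k$ lies in $BV(\R^{n-1};[0,\infty))$ by the standard product rule for $BV$ functions (using $1_{\Om_k}\in BV$ together with the trace (b); see e.g.\ \cite[Theorem 3.84]{AFP}), with $\H^{n-1}(\{v_k>0\})\le\H^{n-1}(\{v>0\})<\infty$, and hence $F[v_k]$ is of finite perimeter by Corollary \ref{lemma W2}. Clearly $E_k=E\cap(\Om_k\times\R)$ is $v_k$-distributed. Applying \cite[Theorem 16.3]{maggiBOOK} to $E$ and $\Om_k\times\R$ gives, modulo $\H^{n-1}$-null sets,
\begin{equation*}
\pa^e E_k=\bigl(\pa^*E\cap(\Om_k^{(1)}\times\R)\bigr)\cup\bigl(E^{(1)}\cap(\pa^*\Om_k\times\R)\bigr)\cup\Sigma_k\,,
\end{equation*}
where $\Sigma_k\subset\pa^*E\cap(\pa^*\Om_k\times\R)\cap\{\nu_E=\nu_{\Om_k\times\R}\}\subset\pa^*E\cap(\pa\Om_k\times\R)\cap\{\q\nu_E=0\}$ has $\H^{n-1}$-measure zero by (c), because $\nu_{\Om_k\times\R}$ has vanishing vertical component. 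The first piece has measure $P(E;\Om_k\times\R)$, and the second will be shown finite below; this will yield in particular that $E_k$ is of finite perimeter.

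The core of the argument is the identification $\H^{n-1}(E^{(1)}\cap(\pa^*\Om_k\times\R))=P(F[v_k];\pa^*\Om_k\times\R)$. At each $z\in\pa^*\Om_k$ where (a) holds and Vol'pert's identity $(\pa^*E)_z=\pa^*(E_z)$ applies, Theorem \ref{thm ccf1} together with approximate continuity of $v$ at $z\notin S_v$ shows that $E_z$ is $\H^1$-equivalent to an open segment of length $v(z)=v^\wedge(z)$ around $b_E(z)$, and a blow-up argument then gives $\H^1((E^{(1)})_z)=v^\wedge(z)$. A Fubini-type identity on $\pa^*\Om_k\times\R$ (cf.\ \cite[2.10.45]{FedererBOOK}) yields
\begin{equation*}
\H^{n-1}\bigl(E^{(1)}\cap(\pa^*\Om_k\times\R)\bigr)=\int_{\pa^*\Om_k}v^\wedge\,d\H^{n-2}\,.
\end{equation*}
On the other hand, a blow-up at $z\in\pa^*\Om_k\setminus S_v$ gives $v_k^\vee(z)=v^\wedge(z)$ and $v_k^\wedge(z)=0$, so by \eqref{perimetro di F} applied to $v_k$ (noting that the $W^{1,1}$-part and the Cantor part of $Dv_k$ are concentrated on $\Om_k^{(1)}$, hence away from $\pa^*\Om_k$),
\begin{equation*}
P(F[v_k];\pa^*\Om_k\times\R)=\int_{\pa^*\Om_k\cap S_{v_k}}[v_k]\,d\H^{n-2}=\int_{\pa^*\Om_k}v^\wedge\,d\H^{n-2}\,,
\end{equation*}
which proves \eqref{per Ek}. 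Identity \eqref{per Fk} follows by the same argument applied to $F[v]$ in place of $E$, since $F[v]$ has segments as sections and $F[v]\cap(\Om_k\times\R)=_{\H^n}F[v_k]\cap(\Om_k\times\R)$.

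Finally, if $E\in\M_\Om(v)$ then the pointwise Steiner inequality $P(E;G\times\R)\ge P(F[v];G\times\R)$ applied with $G=\Om_k$ and $G=\Om\setminus\Om_k$, combined with $P(E;\Om\times\R)=P(F[v];\Om\times\R)$, forces $P(E;\Om_k\times\R)=P(F[v];\Om_k\times\R)=P(F[v_k];\Om_k\times\R)$. Subtracting \eqref{per Fk} from \eqref{per Ek} then gives $P(E_k)=P(F[v_k])$, i.e.\ $E_k\in\M(v_k)$. The main obstacle in executing this plan will be arranging the three genericity conditions (a), (b), (c) simultaneously through a single coarea selection of $s_k$, and carefully carrying out the blow-up identifications of $\H^1((E^{(1)})_z)$ with $v^\wedge(z)$ and of $v_k^\vee$, $v_k^\wedge$ on $\pa^*\Om_k$; once these are secured, the remaining perimeter bookkeeping is a clean application of \cite[Theorem 16.3]{maggiBOOK} and~\eqref{perimetro di F}.
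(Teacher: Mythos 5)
The key step of your plan is the identification $\H^{n-1}\bigl(E^{(1)}\cap(\pa^*\Om_k\times\R)\bigr)=\int_{\pa^*\Om_k}v^\wedge\,d\H^{n-2}$, which you obtain by claiming that ``a blow-up argument then gives $\H^1((E^{(1)})_z)=v^\wedge(z)$'' at $\H^{n-2}$-a.e.\ $z\in\pa^*\Om_k$. This claim has a genuine gap. From step three of the proof of Proposition \ref{proposition insieme u1u2} one only knows $(u_1^\vee(z),u_2^\wedge(z))\subset(E^{(1)})_z\subset[u_1^\wedge(z),u_2^\vee(z)]$, and the individual endpoint functions $u_1,u_2$ can jump at a point $z$ where $v=u_2-u_1$ is approximately continuous; in that case $\H^1((E^{(1)})_z)$ is strictly smaller than $v^\wedge(z)$ and may even vanish. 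For instance with $n=2$, $v=1_{(-1,1)}$ and $E=\bigl((-1,0)\times(1/2,3/2)\bigr)\cup\bigl((0,1)\times(-1/2,1/2)\bigr)$, one has $0\notin S_v$ and $v^\wedge(0)=1$, yet $(E^{(1)})_0=\emptyset$. Approximate continuity of $v$ therefore does \emph{not} yield the fiberwise identity, and Vol'pert's theorem cannot repair it either, since it gives $(\pa^*E)_z=\pa^*(E_z)$ only for $\H^{n-1}$-a.e.\ $z$ in $\{v>0\}$, which says nothing about the $(n-2)$-dimensional set $\pa^*\Om_k$. Your genericity condition (c) is indeed violated in the example above (the vertical part of $\pa^*E$ over $z=0$ has positive $\H^1$-measure), so it is plausible that a suitable genericity selection could rescue the argument, but you neither show that (c) implies the desired slice identity nor verify that a simultaneous coarea selection of the level $s_k$ can enforce all conditions at once; what would really be needed is an additional genericity requirement that $\pa^*\Om_k$ avoids, modulo $\H^{n-2}$, the countably $\H^{n-2}$-rectifiable set $(S_{u_1}\cup S_{u_2})\setminus(\{u_2^\vee=u_1^\vee\}\cap\{u_2^\wedge=u_1^\wedge\})$ appearing in \eqref{casa}, together with the pointwise blow-up analysis that this entails.

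The paper circumvents this difficulty entirely. After establishing, via the coarea formula along the distance function, that $E_k=E\cap(\Om_k\times\R)$ has finite perimeter, the boundary contribution is handled in one line by appealing to Proposition \ref{corollario v uguale 0}: since $\pae\Om_k$ is a countably $\H^{n-2}$-rectifiable subset of $\{v_k^\wedge=0\}$, that proposition yields $P(E_k;\pae\Om_k\times\R)=P(F[v_k];\pae\Om_k\times\R)$ directly, without any fiberwise identification of $E^{(1)}$. Note that Proposition \ref{corollario v uguale 0} is itself proved not by slicing but by the monotone approximation $E_h=E\cap(\{\de_h<v<L_h\}\times\R)$ together with Corollary \ref{lemma W}; this is precisely the machinery you would have to rebuild locally if you insisted on the Fubini-plus-blow-up route. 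The remaining bookkeeping in your proposal (the application of \cite[Theorem 16.3]{maggiBOOK}, and the final rigidity argument from the pointwise Steiner inequality) is sound, but it rests on the unproved slice identity.
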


\begin{proof}
  By intersecting $\Om$ with increasingly larger balls, and by a diagonal argument, we may assume that $\Om$ is bounded. Let $u$ be the distance function from $\R^{n-1}\setminus\Om$. By \cite[Remark 18.2]{maggiBOOK}, $\{u>\e\}$ is an open bounded set of finite perimeter with $\pa^*\{u>\e\}=_{\H^{n-2}}\{u=\e\}$ for a.e. $\e>0$.  Moreover, if we set $f(x)=u(\p x)$, $x\in\R^n$, then $f:\R^n\to\R$ is a Lipschitz function with $|\nabla f|=1$ a.e. on $\Om\times\R$, and $\{f=\e\}=\{u=\e\}\times\R$ for every $\e>0$, so that, by the coarea formula for Lipschitz functions \cite[Theorem 18.1]{maggiBOOK},
  \[
  \int_0^\infty\,\H^{n-1}\Big(E^{(1)}\cap(\{u=\e\}\times\R)\Big)\,d\e=\int_{E^{(1)}\cap(\Om\times\R)}|\nabla f|\,d\H^n=\|v\|_{L^1(\Om)}<\infty\,.
  \]
  We may thus claim that for a.e. $\e>0$,
  \begin{equation}
    \label{staycool}
    \H^{n-1}\Big(E^{(1)}\cap(\pa^*\{u>\e\}\times\R)\Big)<\infty\,.
  \end{equation}
  We now fix a sequence $\{\e_k\}_{k\in\N}$ such that $\e_k\to 0^+$ as $k\to\infty$, $\{u>\e_k\}$ is an open set of finite perimeter and $\e=\e_k$ satisfies \eqref{staycool} for every $k\in\N$. Let now $\{A_{k,i}\}_{i\in I_k}$ be the family of connected components of $\{u>\e_k\}$. Since $\pa A_{k,i}\subset\{u=\e_k\}$ and $\{u=\e_k\}=_{\H^{n-2}}\pa^*\{u>\e_k\}$ is $\H^{n-2}$-finite, we conclude by Federer's criterion that $A_{k,i}$ is of finite perimeter for every $k\in\N$ and $i\in I_k$. Let us now fix $z\in\Om$, and let $k_0\in\N$ be such that $z\in \{u>\e_k\}$ for every $k\ge k_0$. In this way, for every $k\ge k_0$, there exists $i_k(z)\in I_k$ such that $z\in A_{k,i_k(z)}$. We shall set
  \[
  \Om_k=A_{k,i_k(z)}\,.
  \]
  By construction, each $\Om_k$ is a bounded open connected set of finite perimeter, and $\Om_k\subset\Om_{k+1}$ for every $k\ge k_0$. Let us now prove $\Om=\bigcup_{k\in\N}\Om_k$. Indeed, let $y\in\Om$, let $\g\in C^0([0,1];\Om)$ such that $\g(0)=z$, $\g(1)=y$, and consider $K=\g([0,1])$. Since $K$ is compact, there exists $k_1\in\N$ such that $K\subset\{u>\e_k\}$ for every $k\ge k_1$. Since $K$ is connected and $\{z\}\subset K\cap \Om_k$ for every $k\ge k_1$, we find that $K\subset\Om_k$, thus $y\in\Om_k$, for every $k\ge k_1$. We now prove that $E_k$ is a set of finite perimeter. Indeed, since $E_k=E\cap(\Om_k\times\R)$, $\pae E_k\subset\Big[\pae  E\cap(\ov{\Om}_k\times\R)\Big]\cup \Big[E^{(1)}\cap(\pae \Om_k\times\R)\Big]$.  Since $\Om_k$ is compactly contained in $\Om$, we find $\H^{n-1}(\pae  E\cap(\ov{\Om}_k\times\R))\le P(E;\Om\times\R)<\infty$; thus, by taking \eqref{staycool} into account, we find $\H^{n-1}(\pae E_k)<\infty$, and thus, that $E_k$ is a set of finite perimeter thanks to Federer's criterion. By Proposition \ref{proposition insieme u1u2}, $v_k\in BV(\R^{n-1})$ with $\H^{n-1}(\{v_k>0\})<\infty$, and $F[v_k]$ is a set of finite perimeter too. Since $E_k$ is a $v_k$-distributed set of finite perimeter and $\pae \Om_k$ is a countably $\H^{n-2}$-rectifiable set contained in $\{v_k^\wedge=0\}$, by Proposition \ref{corollario v uguale 0}\,
  \[
  P(E_k;\pae \Om_k\times\R)=P(F[v_k];\pae \Om_k\times\R)\,.
  \]
  Moreover, by $E_k=E\cap(\Om_k\times\R)$ and $F[v_k]=F[v]\cap(\Om_k\times\R)$,
  \begin{eqnarray*}
    P(E_k;\Om_k^{(1)}\times\R)=P(E;\Om_k^{(1)}\times\R)\,,\qquad P(F[v_k];\Om_k^{(1)}\times\R)=P(F[v];\Om_k^{(1)}\times\R)\,.
  \end{eqnarray*}
  Since $\Om_k^{(0)}\times\R\subset E_k^{(0)}\cap F[v_k]^{(0)}$, we have proved \eqref{per Ek} and \eqref{per Fk}. Finally, if $E\in\M_\Om(v)$, then by \eqref{steiner inequality} we have $P(E;\Om_k\times\R)=P(F[v];\Om_k\times\R)$, and thus, by \eqref{per Ek} and \eqref{per Fk}, that $P(E_k)=P(F[v_k])$.
\end{proof}

\begin{proof}[Proof of Theorem \ref{thm ccf2}] Let $v\in BV(\Om;[0,\infty))$ with $\H^{n-1}(\{v>0\})<\infty$, $D^sv\llcorner\{v^\wedge>0\}=0$ and $v^\wedge>0$ $\H^{n-2}$-a.e. on $\Om$ (so that $D^sv\llcorner\Om=0$). Let $E\in\M_\Om(v)$, and assume by contradiction that $\H^n(E\Delta(t\,e_n+F[v]))>0$ for every $t\in\R$. Let $\Om_k$ be defined as in Lemma \ref{lemma omega}, and let $v_k=1_{\Om_k}\,v$, $E_k=E\cap(\Om_k\times\R)$, so that $E_k\in\M(v_k)$ for every $k\in\N$. However, as it is easily seen, $\H^n(E_k\Delta(t\,e_n+F[v_k]))>0$ for every $t\in\R$ and for every $k$ large enough. Thus, rigidity fails for $v_k$ if $k$ is large enough. By Theorem \ref{thm sufficient bv},
\begin{equation}
  \label{hofame}
  \mbox{$\{v_k^\wedge=0\}\cup S_{v_k}\cup M_k$ essentially disconnects $\{v_k>0\}$,}
\end{equation}
where $M_k$ is a concentration set for $D^cv_k$. Since $v_k^\wedge=1_{\Om_k^{(1)}}v^\wedge$ in $\Om$, $v^\wedge>0$ $\H^{n-2}$-a.e. on $\Om$, and $\Om_k$ is compactly contained in $\Om$, we find that
\[
\{v_k^\wedge=0\}=(\R^{n-1}\setminus\Om_k^{(1)})\cup(\{v^\wedge=0\}\cap\Om_k^{(1)})=_{\H^{n-2}}\R^{n-1}\setminus\Om_k^{(1)}\,.
\]
By $D^sv\llcorner\Om=0$, by Lemma \ref{lemma fgE},  and again by taking into account that $\Om_k$ is compactly contained in $\Om$, we find
\[
S_{v_k}\cap\Om_k^{(1)}=S_v\cap\Om_k^{(1)}=_{\H^{n-2}}S_v\cap(\Om_k^{(1)}\setminus\Om)=\emptyset\,,
\]
Moreover, by Lemma \ref{lemma fgE}, $D^cv_k=D^cv\llcorner\Om_k^{(1)}=D^cv\llcorner(\Om_k^{(1)}\setminus\Om)=0$,
so that we may take $M_k=\emptyset$. Finally, $\{v_k>0\}$ is $\H^{n-1}$-equivalent to $\Om_k$, and thus, by Remark \ref{remark essential connected}, \eqref{hofame} can be equivalently rephrased as
\begin{equation}
  \label{hofame33}
  \mbox{$(\R^{n-1}\setminus\Om_k^{(1)})\cup (S_{v_k}\setminus\Om_k^{(1)})$ essentially disconnects $\Om_k$\,.}
\end{equation}
In turn, this is equivalent to saying that $\Om_k$ is not essentially connected. Since $\Om_k$ is of finite perimeter, by Remark \ref{remark iiii}, $\Om_k$ is not indecomposable. By \cite[Proposition 2]{ambrosiocaselles}, $\Om_k$ is not connected. We have thus reached a contradiction.
\end{proof}

\section{A perimeter formula for vertically convex sets}\label{section perimeter formula} We summarize here a perimeter formula for sets with segments as vertical sections that can be obtained as a consequence of Corollary \ref{lemma W} and Proposition \ref{corollario v uguale 0}, and that may be of independent interest.

\begin{theorem}
  If $E=\{x\in\R^n:u_1(x')<x_n<u_2(x')\}$ is a set of finite perimeter and volume defined by $u_1,u_2:\R^{n-1}\to\R$ with $u_1\le u_2$ on $\R^{n-1}$, then $u_1$ and $u_2$ are approximately differentiable $\H^{n-1}$-a.e. on $\{u_2>u_1\}$, and
  \begin{eqnarray*}\nonumber
  P(E)
  &=&\int_{\{v>0\}}\sqrt{1+|\nabla u_1|^2}+\sqrt{1+|\nabla u_2|^2}\,d\H^{n-1}
  \\
  &&+\int_{S_v\cup S_b}\,\min\Big\{v^\vee+v^\wedge,\max\{[v],2[b]\}\Big\}\,d\H^{n-2}
  \\
  \nonumber
  &&+|D^cu_1|^+(\{v^\wedge>0\})+|D^cu_2|^+(\{v^\wedge>0\})\,,
  \end{eqnarray*}
  where $v=u_2-u_1$, $b=(u_1+u_2)/2$ and, for every Borel set $G\subset\R^{n-1}$ we set
  \begin{eqnarray}\label{3D}
    |D^cu_i|^+(G)=\lim_{h\to\infty}|D^c(1_{\S_h}\,u_i)|(G)\,,\qquad i=1,2\,,
  \end{eqnarray}
  where $\S_h=\{\de_h<v<L_h\}$ for sequences $\de_h\to 0$ and $L_h\to\infty$ as $h\to\infty$ such that $\{v>\de_h\}$ and $\{v<L_h\}$ are sets of finite perimeter. (Notice that $1_{\S_h}\,u_i\in GBV(\R^{n-1})$ for $i=1,2$, so that  $|D^c(1_{\S_h}\,u_i)|$ are well-defined as Borel measures, and the right-hand side of \eqref{3D} makes sense by monotonicity.)
\end{theorem}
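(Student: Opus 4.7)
The plan is to combine the perimeter formula of Corollary \ref{lemma W} (valid for $v\in BV\cap L^\infty$ and $b\in GBV$) with the boundary analysis of Proposition \ref{corollario v uguale 0}, through the truncations $\S_h=\{\de_h<v<L_h\}$ that exhaust $\R^{n-1}$.

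First, since $E$ has finite perimeter and segments as vertical sections, Proposition \ref{proposition insieme u1u2} gives $v=u_2-u_1\in BV(\R^{n-1};[0,\infty))$ with $\H^{n-1}(\{v>0\})<\infty$, so that $v$ is approximately differentiable $\H^{n-1}$-a.e.; since the barycenter of $E$ coincides with $b=(u_1+u_2)/2$ on $\{v>0\}$, Theorem \ref{thm tauM b delta} yields approximate differentiability of $b$, and hence of $u_1=b-v/2$ and $u_2=b+v/2$, $\H^{n-1}$-a.e. on $\{v>0\}$. Next I would pick $\de_h\to 0^+$ and $L_h\to+\infty$ with $\{v>\de_h\}$, $\{v<L_h\}$ of finite perimeter and $\de_h\,P(\{v>\de_h\})+L_h\,P(\{v<L_h\})\to 0$, exactly as in the proof of Proposition \ref{corollario v uguale 0}. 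Setting $v_h=1_{\S_h}v\in(BV\cap L^\infty)(\R^{n-1})$, $b_h=1_{\S_h}b\in GBV(\R^{n-1})$ (the latter by Theorem \ref{thm tauM b delta}), one has $E_h=W[v_h,b_h]=E\cap(\S_h\times\R)$; since $E_h$ coincides with $E$ on $\S_h^{(1)}\times\R$ while $\S_h^{(0)}\times\R\subset E_h^{(0)}$, one decomposes $P(E_h)=P(E;\S_h^{(1)}\times\R)+P(E_h;\pae\S_h\times\R)$.

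Applying Corollary \ref{lemma W} to $(v_h,b_h)$ on $G=\S_h^{(1)}$, and using Lemma \ref{lemma fgE} together with \eqref{gino1} and \eqref{gino2} to replace $v_h,b_h,1_{\S_h}u_i$ by $v,b,u_i$ on $\S_h^{(1)}$, one obtains
\begin{align*}
P(E;\S_h^{(1)}\times\R)&=\int_{\S_h^{(1)}\cap\{v>0\}}\Big(\sqrt{1+|\nabla u_1|^2}+\sqrt{1+|\nabla u_2|^2}\Big)\,d\H^{n-1}\\
&\quad+\int_{\S_h^{(1)}\cap(S_v\cup S_b)}\min\Big\{v^\vee+v^\wedge,\max\{[v],2[b]\}\Big\}\,d\H^{n-2}\\
&\quad+|D^c(1_{\S_h}u_1)|(\S_h^{(1)})+|D^c(1_{\S_h}u_2)|(\S_h^{(1)}).
\end{align*}
Since $\{\S_h^{(1)}\}_h$ is increasing with union $\H^{n-2}$-equivalent to $\{v^\wedge>0\}$, letting $h\to\infty$ yields by monotone convergence and by \eqref{3D} the three ``interior'' contributions in the statement (integration over $\{v^\wedge>0\}$ agrees with integration over $\{v>0\}$ for the $\H^{n-1}$-integrand, as the two sets agree $\H^{n-1}$-a.e.). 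For the boundary term, the chain of estimates in the proof of Proposition \ref{corollario v uguale 0}---which only uses the decay of $\de_h P(\{v>\de_h\})$ and $L_h P(\{v<L_h\})$, together with Corollary \ref{lemma W} applied to $E_h$---gives $\lim_h P(E_h;\pae\S_h\times\R)=\int_{\{v^\wedge=0\}}v^\vee\,d\H^{n-2}$. A direct case analysis---on $\{v^\wedge=0\}\cap S_v$ one has $v^\vee+v^\wedge=[v]=v^\vee\le\max\{[v],2[b]\}$, while $\{v^\wedge=0\}\cap S_b\subset_{\H^{n-2}}S_v$ because $v^\vee=v^\wedge=0$ forces $b^\vee=b^\wedge=0$---shows that on $\{v^\wedge=0\}\cap(S_v\cup S_b)$ the integrand $\min\{v^\vee+v^\wedge,\max\{[v],2[b]\}\}$ reduces to $v^\vee$. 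Hence this limit is precisely the restriction to $\{v^\wedge=0\}$ of the jump integral in the statement, and summing the interior and boundary contributions completes the proof.

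The main technical obstacle I expect is verifying that the ``positive Cantor part'' $|D^cu_i|^+$ in \eqref{3D} is well-defined, and in particular that the sequence $h\mapsto|D^c(1_{\S_h}u_i)|(G)$ is non-decreasing. This will follow from Lemma \ref{lemma fgE}: $|D^c(1_{\S_h}u_i)|$ is concentrated on $\S_h^{(1)}$ and coincides there with $|D^c(1_{\S_k}u_i)|$ for every $k\ge h$ (since $1_{\S_h}u_i=1_{\S_k}u_i$ on $\S_h^{(1)}$). Once this monotonicity is in hand the convergence of the Cantor terms is immediate, and the remainder of the argument reduces to combining Corollary \ref{lemma W} and Proposition \ref{corollario v uguale 0} as above, without additional substantial difficulties.
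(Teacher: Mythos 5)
Your proposal follows the paper's own argument closely: truncate on $\S_h=\{\de_h<v<L_h\}$, apply Corollary \ref{lemma W} to $E_h=W[v_h,b_h]$ on $\S_h^{(1)}$, exploit the monotonicity of the Cantor terms (justified correctly via Lemma \ref{lemma fgE} and Lemma \ref{lemma Dcv}), pass to the limit over $\S_h^{(1)}\nearrow\{v^\wedge>0\}$ modulo $\H^{n-2}$, and handle the complementary cylinder $\{v^\wedge=0\}\times\R$ through Proposition \ref{corollario v uguale 0}. This is the same decomposition and the same set of lemmas the paper uses.

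One step in your concluding case analysis is, however, wrong as stated: the claim that $\{v^\wedge=0\}\cap S_b\subset_{\H^{n-2}}S_v$ because ``$v^\vee=v^\wedge=0$ forces $b^\vee=b^\wedge=0$'' does not hold. Here $b=(u_1+u_2)/2$ is \emph{not} the barycenter $b_E$ (which is set to $0$ on $\{v=0\}$), so there is no reason for $b$ to vanish, or even to be approximately continuous, where $v^\vee=0$. Indeed, if $v^\vee(z)=0$ then Proposition \ref{proposition insieme u1u2} gives $u_1^\vee(z)=u_2^\vee(z)$ and $u_1^\wedge(z)=u_2^\wedge(z)$, and since $u_1\le b\le u_2$ this forces $b^\vee(z)=u_1^\vee(z)$ and $b^\wedge(z)=u_1^\wedge(z)$: if $u_1$ has an approximate jump at $z$, so does $b$, even though $z\notin S_v$. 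The conclusion you need is nevertheless correct, for a simpler reason: on $\{v^\vee=0\}$ one has $v^\vee+v^\wedge=0$, so the integrand $\min\{v^\vee+v^\wedge,\max\{[v],2[b]\}\}$ vanishes there regardless of whether $z\in S_b$. Thus the restriction of the jump integral to $\{v^\wedge=0\}\cap(S_v\cup S_b)$ is $\int_{\{v^\wedge=0\}\cap S_v}v^\vee\,d\H^{n-2}=\int_{\{v^\wedge=0\}}v^\vee\,d\H^{n-2}$, as desired, and the rest of the argument goes through.
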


\begin{proof}
  By construction and by Theorem \ref{thm tauM b delta}, if we set $v_h=1_{\S_h}v$ and $b_h=1_{\S_h}\,b$, then $v_h\in (BV\cap L^\infty)(\R^{n-1})$ and $b_h\in GBV(\R^{n-1})$ for every $h\in\N$, so that $1_{\S_h}\,u_1=b_h-(v_h/2)\in GBV(\R^{n-1})$, $1_{\S_h}\,u_2=b_h+(v_h/2)\in GBV(\R^{n-1})$, and, by Corollary \ref{lemma W}, we find
  \begin{eqnarray*}\nonumber
  P(E_h;G\times\R)
  &=&\int_{G\cap\{v_h>0\}}\sqrt{1+\Big|\nabla b_h+\frac{\nabla v_h}2\Big|^2}+\sqrt{1+\Big|\nabla b_h-\frac{\nabla v_h}2\Big|^2}\,d\H^{n-1}
  \\\nonumber
  &&+\Big|D^c\Big(b_h+\frac{v_h}2\Big)\Big|\Big(G\cap\{v_h^\wedge>0\}\Big)+\Big|D^c\Big(b_h-\frac{v_h}2\Big)\Big|\Big(G\cap\{v_h^\wedge>0\}\Big)
  \\
  &&+\int_{G\cap(S_{v_h}\cup S_{b_h})}\,\min\Big\{v_h^\vee+v_h^\wedge,\max\{[v_h],2[b_h]\}\Big\}\,d\H^{n-2}\,,
  \end{eqnarray*}
  for every Borel set $G\subset\R^{n-1}$, provided we set $E_h=W[v_h,b_h]$. By taking into account that  $P(E;\S_h^{(1)}\times\R)=P(E_h;\S_h^{(1)}\times\R)$, the above formula gives
  \begin{eqnarray*}\nonumber
  P(E;\S_h^{(1)}\times\R)
  &=&\int_{\S_h}\sqrt{1+\Big|\nabla b+\frac{\nabla v}2\Big|^2}+\sqrt{1+\Big|\nabla b-\frac{\nabla v}2\Big|^2}\,d\H^{n-1}
  \\
  &&+\int_{\S_h^{(1)}\cap(S_v\cup S_{b})}\,\min\Big\{v^\vee+v^\wedge,\max\{[v],2[b]\}\Big\}\,d\H^{n-2}
  \\\nonumber
  &&+\Big|D^c\Big(b_h+\frac{v_h}2\Big)\Big|\Big(\{v^\wedge>0\}\Big)+\Big|D^c\Big(b_h-\frac{v_h}2\Big)\Big|\Big(\{v^\wedge>0\}\Big)\,,
  \end{eqnarray*}
  where we have also taken into account that, for every $h\in\N$,
  \[
  |D^c(b_h\pm(v_h/2))|(\S_h^{(1)})=|D^c(b_h\pm(v_h/2))|(\R^{n-1})=|D^c(b_h\pm(v_h/2))|(\{v^\wedge>0\})\,.
  \]
  By monotonicity, and since $\bigcup_{h\in\N}\S_h^{(1)}=\{v^\wedge>0\}\cap\{v^\vee=\infty\}=_{\H^{n-2}}\{v^\wedge>0\}$ (thanks to \cite[4.5.9(3)]{FedererBOOK} and since $v\in BV(\R^{n-1})$ by Proposition \ref{proposition insieme u1u2}), we find that
  \begin{eqnarray*}\nonumber
  P(E;\{v^\wedge>0\}\times\R)  &=&\int_{\{v>0\}}\sqrt{1+|\nabla u_1|^2}+\sqrt{1+|\nabla u_2|^2}\,d\H^{n-1}
  \\
  &&+\int_{\{v^\wedge>0\}\cap(S_v\cup S_{b})}\,\min\Big\{v^\vee+v^\wedge,\max\{[v],2[b]\}\Big\}\,d\H^{n-2}
  \\
  \nonumber
  &&+|D^cu_1|^+(\{v^\wedge>0\})+|D^cu_2|^+(\{v^\wedge>0\})\,.
  \end{eqnarray*}
  At the same time, by Proposition \ref{corollario v uguale 0}, we have $ P(E;\{v^\wedge=0\}\times\R)=\int_{S_v\cap\{v^\wedge=0\}}v^\vee\,d\H^{n-2}$.
  Adding up the last two identities we complete the proof of the formula for $P(E)$.
\end{proof}

\bibliography{references}
\bibliographystyle{is-alpha}

\end{document}